\documentclass[hidelinks,onefignum,onetabnum]{siamonline220329}

\makeatletter

\renewcommand{\tableofcontents}{%
    \section*{\contentsname}
     \par\noindent
     \vskip 1em
    \@starttoc{toc}%
    \vskip 2em
}

\newcommand*\l@section[2]{%
  \ifnum \c@tocdepth >\z@
    \addpenalty\@secpenalty
    \addvspace{1.0em \@plus\p@}%
    \setlength\@tempdima{1.5em}%
    \begingroup
      \parindent \z@
      \rightskip \@pnumwidth
      \parfillskip -\@pnumwidth
      \leavevmode \bfseries
      \advance\leftskip\@tempdima
      \hskip -\leftskip
      #1\nobreak\hfil
      \nobreak\hb@xt@\@pnumwidth{\hss #2%
                                 \kern-\p@\kern\p@}\par
    \endgroup
  \fi
}

\newcommand*\l@subsection{%
  \@dottedtocline{2}{1.5em}{2.3em}
}

\newcommand*\l@subsubsection{%
  \@dottedtocline{3}{3.8em}{3.2em}
}

\newcommand*\l@paragraph{%
  \@dottedtocline{4}{7.0em}{4.1em}
}

\newcommand*\l@subparagraph{%
  \@dottedtocline{5}{10em}{5em}
}

\makeatother

\usepackage{amsmath}
\usepackage{amssymb}
\usepackage{mathtools}
\usepackage{cases}
\usepackage{mathrsfs}
\usepackage{enumerate}
\usepackage{tikz}
\usepackage{graphicx}
\usepackage{booktabs}

\usepackage{multirow}
\usepackage{makecell}
\usepackage{longtable} % 支持跨页表格
\usepackage{array}
\usepackage{color}
\usepackage{xcolor}

\usepackage{supertabular}
\usepackage{placeins}

\usepackage{siunitx}
\usepackage{cleveref}

\definecolor{darkblue}{rgb}{0.0, 0.0, 0.55}
\definecolor{bordeaux}{rgb}{0.34, 0.01, 0.1}
\newsiamthm{example}{Example}

\usepackage{lipsum}
\usepackage{amsfonts}
\usepackage{graphicx}
\usepackage{epstopdf}
\usepackage{algorithmic}
\ifpdf
  \DeclareGraphicsExtensions{.eps,.pdf,.png,.jpg}
\else
  \DeclareGraphicsExtensions{.eps}
\fi

\newsiamremark{remark}{Remark}
\newsiamremark{hypothesis}{Hypothesis}
\crefname{hypothesis}{Hypothesis}{Hypotheses}
\newsiamthm{claim}{Claim}

\headers{Quaternionic SOS, Moments, and Quaternionic Polynomial Optimization}{Yanqing Liu and Jie Wang}

\title{Quaternionic Sums of Squares, Moments, and Quaternionic Polynomial Optimization\thanks{Submitted to the editors DATE.
}}

\author{Yanqing Liu\thanks{State Key Laboratory of Mathematical Sciences, Academy of Mathematics and Systems Science, Chinese Academy of Sciences, Beijing, China (\email{liuyanqing@amss.ac.cn})} \and Jie Wang\thanks{State Key Laboratory of Mathematical Sciences, Academy of Mathematics and Systems Science, Chinese Academy of Sciences, Beijing, China
  (\email{wangjie212@amss.ac.cn}, \url{https://wangjie212.github.io/jiewang/})}}

\usepackage{amsopn}

\def\R{{\mathbb{R}}}
\def\C{{\mathbb{C}}}
\def\H{{\mathbb{H}}}
\def\N{{\mathbb{N}}}

\def\x{{\mathbf{x}}}

\def\q{{\mathbf{q}}}

\def\y{{\mathbf{y}}}

\def\v{{\mathbf{v}}}

\def\ba{{\mathbf{a}}}

\def\bd{{\mathbf{d}}}
\def\be{{\mathbf{e}}}
\def\bg{{\mathbf{g}}}
\def\bh{{\mathbf{h}}}

\def\bv{{\boldsymbol{v}}}

\newcommand{\bs}{\mathbf{s}}
\def\A{{\mathscr{A}}}

\def\M{{\mathbf{M}}}
\def\i{\hbox{\bf{i}}}
\def\j{\hbox{\bf{j}}}
\def\k{\hbox{\bf{k}}}
\def\supp{\hbox{\rm{supp}}}

\def\Tr{\hbox{\rm{Tr}}}

\def\rank{\hbox{\rm{rank}}}

\def\RR{{\mathcal{R}}}
\def\II{{\mathcal{I}}}
\def\JJ{{\mathcal{J}}}
\def\KK{{\mathcal{K}}}

\def\P{{\mathscr{P}}}
\def\cH{{\mathcal{H}}}
\def\cW{{\mathcal{W}}}

\newcommand{\sym}{\operatorname{Sym}}

\begin{document}

\maketitle

\begin{abstract}
We develop a moment-quaternionic-sum-of-squares (Moment--QSOS) framework for global polynomial optimization over the standard Hamilton quaternions.  The main difficulty is the hybrid algebraic structure of scalar quaternionic variables: multiplication is noncommutative, while scalar evaluations satisfy additional polynomial identities inherited from the four commuting real coordinates of each quaternion.  To capture this structure, we introduce quaternionic sums of squares together with real and quaternionic scalar-identity ideals.  We establish Archimedean Positivstellens\"atze for real-coefficient quaternionic polynomials and for objectives with quaternionic middle coefficients and real-coefficient constraints; a counterexample shows that the latter constraint hypothesis cannot, in general, be removed for the proposed pure-word certificate cone.  On the dual side, we solve the corresponding full real-valued and quaternion-valued moment problems through Riesz--Haviland-type characterizations, and we prove a flat-extension theorem for truncated quaternion-valued moments under an explicit scalar-consistency condition.  These results yield convergent Moment--QSOS hierarchies and finite-exactness criteria based on flatness.  For quaternionic quadratically constrained quadratic programs (QCQPs), we further prove that the first-order quaternionic moment relaxation has the same optimal value as the first-order relaxation of the equivalent real QCQP.  Numerical experiments on quadratic, quartic, and sparse problems, together with applications to quaternion-based maximum margin feature extraction and rotation synchronization, show that the quaternionic formulation attains the same relaxation values as the tested real SOS formulations while requiring substantially shorter computation times in the reported instances.
\end{abstract}

% REQUIRED
\begin{keywords}
quaternionic semidefinite programming, quaternionic polynomial optimization, Moment--QSOS hierarchy, sum of squares, moment relaxation
\end{keywords}

% REQUIRED
\begin{MSCcodes}
90C22, 90C23
\end{MSCcodes}

\tableofcontents

\section{Introduction}
Introduced by Hamilton in 1843, quaternions provide a compact algebraic representation of three-dimensional rotations and coupled multichannel data. Quaternionic models arise naturally in signal processing~\cite{sun2023convex}, image processing~\cite{qi2022quaternion}, computer vision~\cite{yang2019quaternion}, robotics~\cite{chen2024regularization,horn1987closed}, and attitude or rotation estimation~\cite{schmidt2001using,shuster1993survey}.  Optimization problems in these settings often have quaternionic decision variables and objectives or constraints built from quadratic and higher-degree quaternionic polynomial expressions.  Developing optimization methods that preserve this algebraic structure is therefore both theoretically and computationally relevant.

A standard approach to quaternionic optimization is to replace each variable by four real variables ($q_i=x_{i1}+x_{i2}\mathbf{i}+x_{i3}\mathbf{j}+x_{i4}\mathbf{k}$) and solve the resulting real problem.  This reduction makes the extensive toolbox of real optimization immediately available.  A complementary line of work develops quaternionic convex analysis, matrix optimization, duality, and algorithms directly over the quaternion skew field; see, for example,~\cite{flamant2021general}. For polynomial optimization, however, a comparably systematic theory of positivity certificates, moment representations, and convergent semidefinite hierarchies over the standard quaternion algebra remains unavailable.

Over the real numbers, polynomial optimization is closely connected with sums of squares (SOS), moment problems, and semidefinite programming.  The Moment--SOS hierarchy~\cite{lasserre2001global} replaces a generally nonconvex polynomial optimization problem by a sequence of moment and SOS semidefinite relaxations whose optimal values converge to the global optimum under an Archimedean assumption.  Related hierarchies have been developed over the complex field~\cite{josz2018lasserre} and extended to noncommutative polynomial optimization~\cite{klep2024state,pironio2010convergent}. These methods have become effective tools in structured applications, including power systems~\cite{josz2014application,molzahn2014moment}, neural networks~\cite{latorre2020lipschitz}, dynamical systems~\cite{henrion2013convex}, quantum information~\cite{navascues2008convergent}, quantum many-body computation~\cite{wang2026scalable}, and trajectory optimization~\cite{kang2024fast}. See also the monographs~\cite{magron2023sparse,nie2023moment}.

The quaternionic setting does not fit directly into either the commutative or the free noncommutative framework.  Quaternion multiplication is noncommutative, but scalar quaternions are determined by four commuting real coordinates.  Consequently, the free $*$-algebra in formal quaternionic variables contains distinctions that disappear under scalar quaternionic evaluation.  For instance, identities such as $\bar q_iq_i=q_i\bar q_i=|q_i|^2$ hold for scalar quaternions but not in a free $*$-algebra model.  A positivity certificate that ignores these scalar identities can therefore describe a strictly larger matrix-representation problem rather than the intended scalar quaternionic problem. This hybrid structure is one of the main algebraic obstacles to extending the Moment--SOS methodology to quaternions. A second obstacle is the more intricate form of a general quaternionic polynomial: a monomial term may contain multiple coefficients interspersed among the variables, in sharp contrast to the usual real and complex settings.

The purpose of this paper is to develop a Moment--SOS framework for quaternionic polynomial optimization problems (QPOPs) that preserves the standard Hamilton multiplication rules. To overcome the first obstacle, we introduce scalar-identity ideals that encode exactly the relations valid under scalar quaternionic evaluations and combine them with quaternionic sums of squares (QSOS). To address the second obstacle, we restrict attention to real-coefficient quaternionic polynomials and quaternionic middle-coefficient polynomials. These considerations lead to a Moment--QSOS framework in which both the primal moment conditions and the dual positivity certificates distinguish scalar quaternionic tuples from arbitrary matrix representations. In contrast to direct realification, this approach avoids quadrupling the number of variables; the resulting semidefinite models preserve the quaternionic monomial structure and are substantially smaller.
The main contributions of this paper are summarized as follows.
\begin{enumerate}[(i)]
\item \emph{Quaternionic semidefinite programming.}
As a foundation for solving Moment--QSOS relaxations, we derive an economical real reformulation of a quaternionic semidefinite program (SDP). The reformulation avoids explicitly imposing the full block-structure constraints of the standard realification, while preserving the optimal value and allowing recovery of an optimal quaternionic solution.

\item \emph{QSOS certificates and scalar-identity quotients.}
We introduce QSOS polynomials based on left-coefficient quaternionic polynomials and establish quaternionic Gram-matrix characterizations.  We identify the real and quaternionic scalar-identity ideals associated with scalar evaluation and describe generators for them.  For real coefficients, quotienting by the scalar-identity ideal makes the symmetric part central, which permits a reduction to a commutative Archimedean quadratic-module problem.

\item \emph{Archimedean Positivstellens\"atze.}
For real-coefficient data, we prove a Putinar-type Positivstellensatz modulo the real scalar-identity ideal.  For a symmetric objective with quaternionic middle coefficients and real-coefficient constraints, we develop an operator-valued lift and apply an Archimedean $*$-Positivstellensatz to obtain a quaternionic certificate modulo the full scalar-identity ideal.  We also construct a univariate counterexample showing that, for the present pure-word certificate cone, the real-coefficient assumption on the constraints cannot generally be dropped, even after imposing the full scalar-identity quotient and a sphere relation.

\item \emph{Quaternionic moment problems.}
Quaternionic moment theory remains largely underdeveloped; the univariate case was addressed recently in~\cite{taher2026quaternionic}. We initiate the study of a general quaternionic moment theory and establish Riesz--Haviland-type characterizations for full real-valued and quaternion-valued moment sequences. Under an Archimedean hypothesis, these results yield complete moment--localizing matrix criteria. For truncated quaternion-valued moments, we prove a flat-extension theorem: positivity and flatness yield a finite-dimensional representation, and an additional scalar-consistency condition is precisely what is needed to obtain a finitely atomic measure supported on scalar quaternionic tuples. We also provide finite tests for scalar consistency.

\item \emph{Moment--QSOS hierarchies and finite exactness.}
Using the Positivstellens\"atze and moment characterizations, we construct convergent moment and QSOS hierarchies for real-coefficient problems and for objectives with quaternionic middle coefficients and real-coefficient constraints.  We give a flatness criterion for detecting finite convergence.  When the moment matrix has rank one, multiplicativity is automatic and the global optimizer can be extracted directly, without a separate scalar-consistency test. For quaternionic quadratically constrained quadratic programs (QCQPs), we prove that the first-order quaternionic moment relaxation has exactly the same optimal value as the first-order Shor relaxation of the equivalent real QCQP.

\item \emph{Numerical performance.}
Numerical experiments on random quadratic, quartic, and sparse problems show that the quaternionic SOS approach attains the same relaxation values, to the displayed precision, as the corresponding real SOS approach, while the quaternionic SOS approach is substantially faster in the reported instances.  We further illustrate the approach on quaternion-based maximum margin feature extraction and quaternionic rotation synchronization.
\end{enumerate}

The scope of the convergence theory warrants emphasis. The general Archimedean convergence results proved here cover real-coefficient QPOPs and objectives with quaternionic middle coefficients subject to real-coefficient constraints.  The counterexample for general constraints with quaternionic middle coefficients shows that a direct extension using the same pure-word QSOS cone is false; stronger certificate spaces would be required for a fully quaternionic constraint theory.

The remainder of the paper is organized as follows. 
Section~\ref{sec:nota} introduces notation and basic concepts.
Section~\ref{sec:qsdp} develops quaternionic semidefinite programming and its real reformulations.  Section~\ref{sec:qsos} introduces QSOS polynomials and scalar-identity ideals, and proves the Archimedean Positivstellens\"atze, together with the counterexample for general constraints with quaternionic middle coefficients.  The following section treats the full and truncated quaternionic moment problems.  Section~\ref{sec:qpop} constructs the Moment--QSOS hierarchies, proves convergence and finite-exactness results, and compares first-order quaternionic and real relaxations of QCQPs.  Section~\ref{sec:numerical-experiments} reports numerical experiments.  We then present two applications in Section~\ref{sec:app} and conclude with directions for further work in Section~\ref{sec:con}.

\section{Notation and preliminaries}\label{sec:nota}
Let $\N$ and $\N^*$ denote the sets of nonnegative and positive integers, respectively. For a vector $\v$, let $|\v|$ denote its dimension. Let $\mathcal{S}^n$ denote the space of real symmetric $n\times n$ matrices. We write $[a,b]=ab-ba$ for the commutator of $a$ and $b$.

\subsection{Quaternions and quaternionic matrices}
Let $\H$ denote the skew field of quaternions. Every $q\in\H$ has the unique representation
\[
q=q_R+q_I\i+q_J\j+q_K\k,
\]
where $q_R,q_I,q_J,q_K\in\R$ and $\{\mathbf{i},\mathbf{j},\mathbf{k}\}$ are the imaginary units satisfying
\[
\i^2 = \j^2 = \k^2 = \i\j\k = -1, \quad\i\j = \k, \quad\j\k = \i, \quad\k\i = \j,
\]
and
\[
\quad\i\j=-\j\i,\quad\j\k=-\k\j,\quad\k\i=-\i\k.
\]
The conjugate and modulus of $q$ are defined by
$\bar q=q_R-q_I\i-q_J\j-q_K\k$ and $|q|=\sqrt{q\bar q}$, respectively. For a quaternionic vector $\mathbf{v}$, let $\mathbf{v}^*$ denote its conjugate transpose. Likewise, every quaternionic matrix $A\in\H^{m\times n}$ can be written as
\[
A=A_R+A_I\i+A_J\j+A_K\k,
\]
where $A_R,A_I,A_J,A_K\in\R^{m\times n}$. The conjugate transpose of $A$ is denoted by $A^*$. A matrix $A\in\H^{n\times n}$ is Hermitian if $A=A^*$, and the space of $n\times n$ Hermitian quaternionic matrices is denoted by $\cH^n$. A matrix $A\in\cH^n$ is positive semidefinite (PSD), written as $A\succeq0$, if $\mathbf{v}^*A\mathbf{v}\geq0$ for every $\mathbf{v}\in\H^n$. For $A,B\in\H^{n\times n}$, define the trace pairing by $\langle A,B\rangle=\Tr(A^*B)$.
For \(q=q_R+q_I\i+q_J\j+q_K\k\in\H\), define
\(
\RR(q)\coloneqq q_R,
\II(q)\coloneqq q_I,
\JJ(q)\coloneqq q_J,
\KK(q)\coloneqq q_K.
\)
We extend these maps componentwise to quaternionic vectors and matrices. The real part is invariant under cyclic permutation; in particular, $\RR(ab)=\RR(ba)$ for all $a,b\in\H$.
% Viewing $\H^n$ as a real vector space, we equip it with the real inner product
% \begin{equation*}
%     \langle \u, \v\rangle_{\R}=\RR(\u^{*}\v)=\RR(\u)^{\intercal}\RR(\v)+\II(\u)^{\intercal}\II(\v)+\JJ(\u)^{\intercal}\JJ(\v)+\KK(\u)^{\intercal}\KK(\v), \quad \u,\v\in\H^n.
% \end{equation*}

\subsection{Quaternionic polynomials}
Let $\q=(q_1,\ldots,q_n)$ be a tuple of quaternionic variables, and let $\cW$ denote the set of quaternionic words (also called monomials) in $\q$ and $\bar\q$. Suppose that $w=w_1w_2\cdots w_t\in\cW$, where $w_i\in\{q_1,\ldots,q_n,\bar q_1,\ldots,\bar q_n\}$. Whenever $w_iw_{i+1}=|q_j|^2$, we may write $w=|q_j|^2w_1\cdots w_{i-1}w_{i+2}\cdots w_t$. More generally, every quaternionic word can be written uniquely in the form
\begin{equation}\label{normform}
    w=\prod_{i}|q_i|^{2\alpha_i} \tilde{w}
\end{equation}
where $\alpha_i\in\N$ and $\tilde w$ contains no squared-norm factor; we call this expression the \emph{normal form} of $w$. Monomials with the same normal form are identified. The involution of $w=w_1w_2\cdots w_t$ is defined by $w^*=\bar w_t\cdots\bar w_2\bar w_1$. The length of $w$ is denoted by $|w|$. Let $\cW_d$ denote the column vector consisting of all quaternionic monomials of length at most $d$.

Let $\R\langle\q,\bar\q\rangle$ be the real $*$-algebra of quaternionic polynomials with real coefficients. The involution of $p=\sum_{w\in\cW}p_ww\in\R\langle\q,\bar\q\rangle$ is defined by
\(p^*\coloneqq \sum_{w\in\cW}p_{w}w^*\). The degree of $p$ is defined by
$\deg(p)\coloneqq 
\max\left\{
|w|\,:\,p_{w}\neq 0\right\}$.
We call $p$ \emph{symmetric} if $p=p^*$.
Let $\sym\R\langle\q,\bar\q\rangle$ denote the symmetric part of $\R\langle\q,\bar\q\rangle$. 

As quaternion multiplication is noncommutative, quaternionic polynomials with quaternionic coefficients may have more complicated forms. We restrict attention to quaternionic polynomials
of the middle-coefficient form $p=\sum_{a,b\in\cW}a p_{a,b}b$ with $p_{a,b}\in\H$, and denote their space by $\P\langle\q,\bar\q\rangle$.
The degree of $p$ is defined by
$\deg(p)\coloneqq 
\max\left\{
|a|+|b|\,:\,p_{a,b}\neq 0\right\}$.
The involution of $p$ is defined by
\(p^*\coloneqq \sum_{a,b\in\cW}b^*\bar p_{a,b}a^*\).
We call $p$ \emph{symmetric} if $p=p^*$. Every symmetric quaternionic polynomial is real-valued under scalar quaternionic evaluations, and the set of such polynomials is denoted by $\sym\P\langle\q,\bar\q\rangle$.

\section{Quaternionic semidefinite programming}\label{sec:qsdp}
We begin by introducing quaternionic semidefinite programming, which provides a fundamental tool for the Moment--QSOS framework. A quaternionic SDP has the form
\begin{equation}\label{qsdpp}
    \begin{cases}
     \sup\limits_{H\in\cH^n} &\RR(\langle C,H\rangle)\\
      \,\,\,\,\mathrm{s.t.}&\A(H)= b,\\
      &H\succeq0,
    \end{cases}\tag{QSDP-\mbox{$\H$}}
\end{equation}
where $C\in\cH^n$, $b\in\H^m$, and $\A\colon\cH^n\to\H^m$ is the linear operator
$\A(H)\coloneqq(\langle A_i,H\rangle)_{i=1}^m$ associated with matrices $A_1,\ldots,A_m\in\H^{n\times n}$.

\begin{lemma}\label{lm1}
Let $H=H_R+H_I\mathbf{i}+H_J\mathbf{j}+H_K\mathbf{k}\in\cH^n$ with $H_R,H_I,H_J,H_K\in\R^{n\times n}$. Then
\[H\succeq0 \iff
\Lambda(H)\;\coloneqq\;\begin{bmatrix}H_R&-H_I&-H_J&-H_K\\
      H_I&H_R&-H_K&H_J\\
      H_J&H_K&H_R&-H_I\\
      H_K&-H_J&H_I&H_R\end{bmatrix}\succeq0.\]
\end{lemma}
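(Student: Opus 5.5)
The plan is to identify $\Lambda(H)$ as the matrix of the real quadratic form $\mathbf{v}\mapsto\mathbf{v}^*H\mathbf{v}$ on $\H^n\cong\R^{4n}$. Write $\mathbf{v}=\mathbf{v}_R+\mathbf{v}_I\i+\mathbf{v}_J\j+\mathbf{v}_K\k$ and set $\hat{\mathbf{v}}\coloneqq(\mathbf{v}_R^\intercal,\mathbf{v}_I^\intercal,\mathbf{v}_J^\intercal,\mathbf{v}_K^\intercal)^\intercal\in\R^{4n}$. The map $\mathbf{v}\mapsto\hat{\mathbf{v}}$ is a real-linear bijection $\H^n\to\R^{4n}$. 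So it suffices to establish the identity
\[
\mathbf{v}^*H\mathbf{v}=\hat{\mathbf{v}}^\intercal\Lambda(H)\hat{\mathbf{v}}\qquad\text{for all }\mathbf{v}\in\H^n,
\]
together with the fact that $\Lambda(H)$ is real symmetric.

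\textbf{Step 1: preliminary facts.} Since $H$ is Hermitian, $\mathbf{v}^*H\mathbf{v}$ equals its own conjugate, so it is real, and hence $\mathbf{v}^*H\mathbf{v}=\RR(\mathbf{v}^*H\mathbf{v})$. Also $H=H^*$ gives $H_R^\intercal=H_R$ and $H_X^\intercal=-H_X$ for $X\in\{I,J,K\}$. From this the symmetry of $\Lambda(H)$ is read off block by block. For example, the $(2,1)$ block is $H_I$ and the $(1,2)$ block is $-H_I=H_I^\intercal$; the $(3,2)$ block is $H_K$ and the $(2,3)$ block is $-H_K=H_K^\intercal$.

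\textbf{Step 2: the key identity.} Observe that $\Lambda(H)\hat{\mathbf{v}}=\widehat{H\mathbf{v}}$, i.e. $\Lambda(H)$ is the real matrix of left multiplication by $H$. To check this, expand
\[
H\mathbf{v}=\sum_{X,Y}H_X\mathbf{v}_Y\,e_Xe_Y,\qquad e_R=1,\;e_I=\i,\;e_J=\j,\;e_K=\k,
\]
and collect components using the Hamilton relations. For instance, the real part of $H\mathbf{v}$ is $H_R\mathbf{v}_R-H_I\mathbf{v}_I-H_J\mathbf{v}_J-H_K\mathbf{v}_K$, which matches the first block row of $\Lambda(H)$. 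The $\i$-part is $H_I\mathbf{v}_R+H_R\mathbf{v}_I-H_K\mathbf{v}_J+H_J\mathbf{v}_K$, using $\k\j=-\i$ and $\j\k=\i$; this matches the second block row. The $\j$- and $\k$-parts are handled the same way.

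Next, for $\mathbf{u},\mathbf{w}\in\H^n$ we have $\RR(\mathbf{u}^*\mathbf{w})=\hat{\mathbf{u}}^\intercal\hat{\mathbf{w}}$, since $\RR(\bar a b)$ is the Euclidean inner product of the coordinate vectors of $a,b\in\H$. Combining the two facts,
\[
\mathbf{v}^*H\mathbf{v}=\RR(\mathbf{v}^*(H\mathbf{v}))=\hat{\mathbf{v}}^\intercal\Lambda(H)\hat{\mathbf{v}}.
\]

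\textbf{Step 3: conclusion.} Since $\mathbf{v}\mapsto\hat{\mathbf{v}}$ is onto $\R^{4n}$, the condition $\mathbf{v}^*H\mathbf{v}\ge0$ for all $\mathbf{v}\in\H^n$ is equivalent to $\mathbf{x}^\intercal\Lambda(H)\mathbf{x}\ge0$ for all $\mathbf{x}\in\R^{4n}$. By symmetry of $\Lambda(H)$, the latter is exactly $\Lambda(H)\succeq0$.

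The only real obstacle is the sign bookkeeping in Step 2. The convention matters: $\Lambda(H)$ must represent left multiplication $\mathbf{v}\mapsto H\mathbf{v}$ with $\mathbf{v}$ expanded as $\mathbf{v}_R+\mathbf{v}_I\i+\cdots$, with imaginary units on the right. I would verify all sixteen products $e_Xe_Y$ carefully against the displayed blocks. If a sign pattern came out transposed, I would switch to right multiplication or conjugate coordinates accordingly; this does not affect the conclusion, since any such variant is orthogonally similar or transposed, and PSD-ness is preserved.
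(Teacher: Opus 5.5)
Your proposal is correct and follows the paper's own argument: the paper expands $\mathbf{v}^*H\mathbf{v}$ directly and recognizes it as $\hat{\mathbf v}^{\intercal}\Lambda(H)\hat{\mathbf v}$. Factoring through $\Lambda(H)\hat{\mathbf v}=\widehat{H\mathbf v}$ and $\RR(\mathbf u^*\mathbf w)=\hat{\mathbf u}^{\intercal}\hat{\mathbf w}$, and checking explicitly that $\Lambda(H)$ is symmetric, just makes that expansion more transparent. The $\j$- and $\k$-rows you left unchecked also match the displayed blocks (for example, the $\k$-part is $H_K\mathbf v_R-H_J\mathbf v_I+H_I\mathbf v_J+H_R\mathbf v_K$), so the hedge in your final paragraph is unnecessary.
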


\begin{proof}
For any $\q=\q_R+\q_I\mathbf{i}+\q_J\mathbf{j}+\q_K\mathbf{k}\in \H^n$ with $\q_R,\q_I,\q_J,\q_K\in\R^n$, we have
\begin{align*}
        \q^*H\q&=(\q_R-\q_I\mathbf{i}-\q_J\mathbf{j}-\q_K\mathbf{k})^{\intercal}(H_R+H_I\mathbf{i}+H_J\mathbf{j}+H_K\mathbf{k})(\q_R+\q_I\mathbf{i}+\q_J\mathbf{j}+\q_K\mathbf{k})\\
&=\begin{bmatrix}\q_R;&\q_I;&\q_J;&\q_K\end{bmatrix}^{\intercal}\Lambda(H)\begin{bmatrix}\q_R;&\q_I;&\q_J;&\q_K\end{bmatrix}.
    \end{align*}
Therefore, $H\succeq0$ if and only if $\Lambda(H)\succeq0$.
\end{proof}

Let
\(
H=H_R+H_I\i+H_J\j+H_K\k,\, C=C_R+C_I\i+C_J\j+C_K\k,\, b=b_R+b_I\i+b_J\j+b_K\k
\)
with $H_R,H_I,H_J,H_K,C_R,C_I,C_J,C_K\in\R^{n\times n}$, $b_R,b_I,b_J,b_K\in\R^m$, and introduce four real linear operators $\A_R,\A_I,\A_J,\A_K\colon\R^{n\times n}\rightarrow\R^m$ associated with $\A$ by
\[
\begin{aligned}
\A_R(S) &\coloneqq \big(\langle \RR(A_i),S\rangle\big)_{i=1}^m,\,\,\
\A_I(S) \coloneqq \big(\langle \II(A_i),S\rangle\big)_{i=1}^m,\\
\A_J(S) &\coloneqq \big(\langle \JJ(A_i),S\rangle\big)_{i=1}^m,\,\
\A_K(S) \coloneqq \big(\langle \KK(A_i),S\rangle\big)_{i=1}^m,
\end{aligned}
\qquad \forall S\in\R^{n\times n}.
\]
Separating the four real components of $\A(H)=b$ and applying Lemma~\ref{lm1} yield the following real SDP reformulation of \eqref{qsdpp}:
\begin{equation}\label{qsdpp1}
    \begin{cases}
    \sup\limits_{Y\in\mathcal{S}^{4n}} &\langle C_R,H_R\rangle+\langle C_I,H_I\rangle+\langle C_J,H_J\rangle+\langle C_K,H_K\rangle\\
\,\,\,\,\mathrm{s.t.}&\A_R(H_R)+\A_I(H_I)+\A_J(H_J)+\A_K(H_K)= b_R,\\
      &\A_R(H_I)-\A_I(H_R)-\A_J(H_K)+\A_K(H_J)= b_I,\\
      &\A_R(H_J)+\A_I(H_K)-\A_J(H_R)-\A_K(H_I)= b_J,\\
      &\A_R(H_K)-\A_I(H_J)+\A_J(H_I)-\A_K(H_R)= b_K,\\
      &Y=\Lambda(H)\succeq0.
    \end{cases}\tag{QSDP-\mbox{$\R$}}
\end{equation}

Formulating \eqref{qsdpp1} for a standard SDP solver requires $3n(2n+1)$ additional linear constraints to impose the prescribed block structure of $Y$, which may substantially increase the size of the problem. Following the approach of~\cite{wang2026more}, we next derive a more economical real reformulation.

Let
\begin{equation*}
X=\left[\begin{smallmatrix}X_1&X_5^{\intercal}&X_6^{\intercal}&X_7^{\intercal}\\X_5&X_2&X_8^{\intercal}&X_9^{\intercal}\\X_6&X_8&X_3&X_{10}^{\intercal}\\
X_7&X_9&X_{10}&X_4
\end{smallmatrix}\right]\in\mathcal{S}^{4n}
\end{equation*}
be a block matrix with $n\times n$ blocks $X_i$. Define
\begin{equation*}
% \boxed{\quad
\begin{aligned}
R_X &\coloneqq X_1+X_2+X_3+X_4,\\
I_X &\coloneqq X_5-X_5^{\intercal}+X_{10}-X_{10}^{\intercal},\\
J_X &\coloneqq X_6-X_6^{\intercal}-X_9+X_9^{\intercal},\\
K_X &\coloneqq X_7-X_7^{\intercal}+X_8-X_8^{\intercal}.
\end{aligned}
\end{equation*}
Consider the following real SDP:
\begin{equation}\label{qsdpp2}
    \begin{cases}  \sup\limits_{X\in\mathcal{S}^{4n}} &\langle C_R,R_X \rangle+\langle C_I,I_X\rangle+\langle C_J,J_X\rangle+\langle C_K,K_X\rangle\\
\,\,\,\,\mathrm{s.t.}&\A_R(R_X)+\A_I(I_X)+\A_J(J_X)+\A_K(K_X)=b_R,\\
      &\A_R(I_X)-\A_I(R_X)-\A_J(K_X)+\A_K(J_X)=b_I,\\
      &\A_R(J_X)+\A_I(K_X)-\A_J(R_X)-\A_K(I_X)=b_J,\\      
      &\A_R(K_X)-\A_I(J_X)+\A_J(I_X)-\A_K(R_X)=b_K,\\     
      &X\succeq0.
    \end{cases}\tag{QSDP-\mbox{$\R$}'}
\end{equation}
The following theorem establishes the equivalence of \eqref{qsdpp2} and \eqref{qsdpp}.
\begin{theorem}\label{thm1}
Programs~\eqref{qsdpp2} and~\eqref{qsdpp1} have the same optimal value. Moreover, if $X^{\star}$ is an optimal solution of \eqref{qsdpp2}, then
$H^{\star}=R_{X^{\star}}+I_{X^{\star}}\i+J_{X^{\star}}\j+K_{X^{\star}}\k$ 
is an optimal solution of \eqref{qsdpp}.
\end{theorem}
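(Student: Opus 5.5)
The plan is to build value-preserving maps between the feasible sets of \eqref{qsdpp1} and \eqref{qsdpp2} in both directions. The linear map $X\mapsto (R_X,I_X,J_X,K_X)$ makes the objective and constraints of \eqref{qsdpp2} literally those of \eqref{qsdpp1} evaluated at $H_X\coloneqq R_X+I_X\i+J_X\j+K_X\k$. So it is enough to show two things. First, $X\succeq0$ implies $H_X\in\cH^n$ and $\Lambda(H_X)\succeq0$. Second, every feasible $H$ of \eqref{qsdpp1} arises, up to a positive scalar, as $H_X$ for some $X\succeq0$.

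\textbf{Step 1 ($X\succeq0\Rightarrow H_X\succeq0$).} Hermitian structure is immediate. $R_X$ is symmetric because each $X_i$ ($i\le4$) is a diagonal block of the symmetric matrix $X$. $I_X,J_X,K_X$ are skew-symmetric by construction, since each is a sum of terms $M-M^\intercal$. For positivity, I will write $X=\sum_\ell x_\ell x_\ell^\intercal$ with $x_\ell=[a;b;c;d]$ and $a,b,c,d\in\R^n$. Set $v_\ell\coloneqq a-b\i-c\j-d\k\in\H^n$; the signs will be fixed so that they match the definitions of $I_X,J_X,K_X$. Then I expect a direct check to give
\[
H_X=\sum_\ell v_\ell v_\ell^*.
\]
For example, the real part of $v v^*$ is $aa^\intercal+bb^\intercal+cc^\intercal+dd^\intercal$, which matches $X_1+\dots+X_4$. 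The $\i$-part collects the cross terms $ba^\intercal-ab^\intercal$ together with a $dc^\intercal-cd^\intercal$-type term, and this should match $X_5-X_5^\intercal+X_{10}-X_{10}^\intercal$. A sum of rank-one Hermitian dyads is PSD, so $H_X\succeq0$. By Lemma~\ref{lm1}, $\Lambda(H_X)\succeq0$, and hence $H_X$ is feasible for \eqref{qsdpp1} (and for \eqref{qsdpp}) with the same objective value. This gives $\mathrm{val}\eqref{qsdpp2}\le\mathrm{val}\eqref{qsdpp1}$.

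\textbf{Step 2 (reverse inequality).} Given a feasible $H\succeq0$, I will factor $H=\sum_\ell v_\ell v_\ell^*$ with $v_\ell\in\H^n$. This is possible by the quaternionic spectral theorem, or alternatively from a real factorization of $\Lambda(H)$ via Lemma~\ref{lm1}. Reading off the components $v_\ell=a_\ell-b_\ell\i-c_\ell\j-d_\ell\k$, I set $X=\sum_\ell x_\ell x_\ell^\intercal\succeq0$. By the identity from Step 1, $H_X=H$, so $X$ is feasible for \eqref{qsdpp2} with the same value. An alternative route that avoids factorization is to take $X=\tfrac14\Lambda(H)$ directly. One then checks that $R_X=H_R$, $I_X=\tfrac14(2H_I+2H_I)=H_I$, and similarly for $J_X$ and $K_X$, using that $H_I,H_J,H_K$ are skew. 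This version is purely mechanical, and I would use whichever sign bookkeeping comes out cleaner.

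\textbf{Conclusion.} The two optimal values coincide. If $X^\star$ is optimal for \eqref{qsdpp2}, Step 1 shows that $H^\star=H_{X^\star}$ is feasible for \eqref{qsdpp}. Its objective value equals $\mathrm{val}\eqref{qsdpp2}=\mathrm{val}\eqref{qsdpp1}=\mathrm{val}\eqref{qsdpp}$, so $H^\star$ is optimal. The main obstacle is the sign and index bookkeeping in the identity $H_X=\sum_\ell v_\ell v_\ell^*$. Specifically, I must verify that the chosen block pairings, $(5,10)$ for $\i$, $(6,-9)$ for $\j$ and $(7,8)$ for $\k$, reproduce exactly the imaginary parts of $vv^*$. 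If a sign fails, I would adjust the sign convention in $v_\ell$, for instance $a+b\i+\dots$ versus $a-b\i-\dots$, or conjugate, and recheck against the structure of $\Lambda(H)$ in Lemma~\ref{lm1}.
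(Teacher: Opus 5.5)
Your proof is correct. Its reverse direction, $X=\tfrac14\Lambda(H)$, is exactly the paper's $X=\tfrac14 Y$. The key direction, that $X\succeq0$ makes $H_X$ feasible, is argued differently.

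\textbf{The paper's route.} The paper never factors $X$. It conjugates $X$ by the three orthogonal signed block permutations $\widehat P_i=P_i\otimes I_n$ and checks that $X+\sum_{i=1}^3\widehat P_i^{-1}X\widehat P_i=\Lambda(H_X)$. So $\Lambda(H_X)\succeq0$ directly, as a sum of four PSD matrices.

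\textbf{Your route, and a sign correction.} You factor $X=\sum_\ell x_\ell x_\ell^{\intercal}$, identify $H_X$ as a quaternionic Gram matrix, and apply Lemma~\ref{lm1} in the other direction. The sign you wrote down does not work. For $v=a-b\i-c\j-d\k$, the $\i$-part of $vv^*$ is $ab^{\intercal}-ba^{\intercal}+dc^{\intercal}-cd^{\intercal}$, which differs from $I_X$. The correct choice is $v_\ell=a_\ell+b_\ell\i+c_\ell\j+d_\ell\k$. For $X=xx^{\intercal}$ one has $X_5=ba^{\intercal}$, $X_{10}=dc^{\intercal}$, and so on, and the real, $\i$-, $\j$- and $\k$-parts of $vv^*$ are then exactly $R_X,I_X,J_X,K_X$. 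Linearity of $X\mapsto H_X$ gives $H_X=\sum_\ell v_\ell v_\ell^*$.

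\textbf{What each route buys.} The paper's symmetrization needs no factorization and no entrywise quaternion bookkeeping. It also exhibits $X\mapsto\tfrac14\Lambda(H_X)$ as an average of $X$ over orthogonal symmetries, i.e.\ a projection onto the range of $\Lambda$. Your route shows directly that $X\mapsto H_X$ sends real rank-one PSD matrices to quaternionic rank-one dyads, and hence maps the real PSD cone onto the quaternionic one. This is the same mechanism the paper later uses in Theorem~\ref{prop:first-order-qcqp-realification} via $\mathcal E(\bv\bv^{\intercal})=\tau(\bv)\tau(\bv)^*$. It also gives $H_{X^\star}\succeq0$ in \eqref{qsdpp} without passing through $\Lambda$.
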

\begin{proof}
Let $v$ and $v'$ denote the optimal values of \eqref{qsdpp1} and \eqref{qsdpp2}, respectively.
Let $Y$ be feasible for \eqref{qsdpp1}. Direct substitution shows that $X\coloneqq\frac14Y$ is feasible for \eqref{qsdpp2} and has the same objective value. Hence $v\leq v'$. Conversely, let $X$ be any feasible solution of \eqref{qsdpp2}.
Define 
\[ P_1\coloneqq \begin{bmatrix} 0&-1&0&0\\ 1&0&0&0\\ 0&0&0&1\\ 0&0&-1&0 \end{bmatrix}, \, P_2\coloneqq \begin{bmatrix} 0&0&-1&0\\ 0&0&0&-1\\ 1&0&0&0\\ 0&1&0&0 \end{bmatrix}, \,P_3\coloneqq \begin{bmatrix} 0&0&0&-1\\ 0&0&1&0\\ 0&-1&0&0\\ 1&0&0&0 \end{bmatrix}. \]
For \(i=1,2,3\), let 
\(\widehat P_i\coloneqq P_i\otimes I_n. \)
Because each $\widehat P_i$ is orthogonal, the matrices
$X^{(i)}\coloneqq\widehat P_i^{-1}X\widehat P_i$ are PSD. Therefore,
\begin{equation*}
Y\coloneqq X+X^{(1)}+X^{(2)}+X^{(3)}=\begin{bmatrix}R_X&-I_X&-J_X&-K_X\\I_X&R_X&-K_X&J_X\\
        J_X&K_X&R_X&-I_X\\K_X&-J_X&I_X&R_X
        \end{bmatrix}
\end{equation*}
is feasible for \eqref{qsdpp1}.
The corresponding objective values coincide, so $v\geq v'$. Consequently, $v=v'$. The final assertion follows directly from the construction.
\end{proof}

\section{Quaternionic sums of squares}\label{sec:qsos}
We now develop quaternionic sums of squares and the Positivstellensatz machinery needed for the Moment--QSOS hierarchy. The main algebraic issue is that the free word algebra contains distinctions that disappear under evaluation at scalar quaternions. We therefore begin by introducing QSOS polynomials and the scalar-identity ideals that encode these evaluation identities. We then prove Archimedean Positivstellens\"atze, first for real-coefficient polynomials and subsequently for quaternionic-coefficient objectives with real-coefficient constraints. The section concludes with a counterexample showing that the real-coefficient hypothesis on the constraints in the latter result cannot, in general, be removed.

For the remainder of the paper, let $\q=(q_1,\ldots,q_n)$ be a fixed tuple of quaternionic variables.
Let $\mathcal L$ denote the space of left-coefficient quaternionic polynomials
$p=\sum_{w\in\cW}p_w w$, where $p_w\in\H$. Note that $\R\langle\q,\bar\q\rangle\subset\mathcal L$ as a subspace.
\begin{definition}[Quaternionic sum of squares]
Let $p\in\sym\P\langle\q,\bar\q\rangle$.
We call $p$ a QSOS if there exist left-coefficient quaternionic polynomials $p_1,\ldots,p_N\in\mathcal L$ such that $p=\sum_{i=1}^N p_i^*p_i$. When $p\in\sym\R\langle\q,\bar\q\rangle$, we additionally require each $p_i$ to have real coefficients.
\end{definition}
Let $\Sigma^{\H}$ and $\Sigma^{\R}$ denote the cones of QSOS polynomials in $\P\langle\q,\bar\q\rangle$ and $\R\langle\q,\bar\q\rangle$, respectively.

The first basic observation connects the algebraic definition of QSOS with a semidefinite Gram representation, thereby making the QSOS certificates amenable to semidefinite programming.
\begin{lemma}
Let $p\in\sym\P\langle\q,\bar\q\rangle$ satisfy $\deg(p)=2d$. Then $p$ is a QSOS if and only if there exists a PSD quaternionic matrix $G$ such that
$p=\cW_d^*G\cW_d$. If $p\in\sym\R\langle\q,\bar\q\rangle$, then $G$ may be chosen real.
\end{lemma}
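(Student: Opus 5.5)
The argument runs in the two directions of the standard Gram-matrix characterization, with quaternionic coefficient vectors in place of real ones. A left-coefficient polynomial $p_i=\sum_w p_{i,w}w\in\mathcal L$ has, by definition, scalars on the left. Its adjoint is therefore $p_i^*=\sum_w w^*\bar p_{i,w}$, with scalars on the right, and so
\[
p_i^*p_i=\sum_{u,v} u^*\,\bar p_{i,u}p_{i,v}\,v,
\]
which is exactly a middle-coefficient expression in $\P\langle\q,\bar\q\rangle$. In vector notation, let $\mathbf{c}_i\in\H^{|\cW_d|}$ be the column of coefficients of $p_i$. Then $p_i=\mathbf{c}_i^{\intercal}\cW_d$ (scalars on the left), $p_i^*=\cW_d^*\bar{\mathbf{c}}_i$, and $p_i^*p_i=\cW_d^*(\bar{\mathbf{c}}_i\mathbf{c}_i^{\intercal})\cW_d$. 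The key point is that the product $\bar p_{i,u}p_{i,v}$ sits in the middle slot, so no commutation of scalars past words is ever needed.

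\emph{Degree reduction.} Suppose $p=\sum_i p_i^*p_i$ with $\deg p=2d$. I would show that every $p_i$ has degree at most $d$. Let $e=\max_i\deg p_i$ and suppose $e>d$. Consider the top-degree part $\sum_i \sum_{|u|=|v|=e} u^*\bar p_{i,u}p_{i,v}v$. Take its coefficient at the pair $(u^*,u)$, where $u$ is a fixed word of length $e$. With the normal-form identification \eqref{normform}, I would restrict to words $u$ whose normal form $u^*u$ cannot arise from any other pair of length-$e$ words. Alternatively, and more simply, I would track the middle coefficient attached to the split point. Either way, the coefficient is $\sum_i|p_{i,u}|^2\ge 0$, with equality only if all $p_{i,u}=0$. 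This is the leading-term cancellation argument from the free setting.

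\textbf{This step is the main obstacle.} The identification of monomials with equal normal form, for instance $q_1\bar q_1=\bar q_1q_1$, may allow cancellations between distinct pairs $(u,v)$. I would handle it by choosing a $u$ whose diagonal term $u^*u$ is extremal in a suitable ordering, for example the lexicographically largest reduced word. Failing that, I would argue directly in the middle-coefficient representation, where the position of the coefficient keeps the pairs $(u,v)$ distinguishable.

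\emph{Assembling both directions.} Given the decomposition, set
\[
G=\sum_i\bar{\mathbf{c}}_i\mathbf{c}_i^{\intercal}.
\]
This $G$ is Hermitian. It is also PSD, because for $\mathbf{x}\in\H^{|\cW_d|}$ we have $\mathbf{x}^*\bar{\mathbf{c}}_i\mathbf{c}_i^{\intercal}\mathbf{x}=|\mathbf{c}_i^{\intercal}\mathbf{x}|^2\ge0$, and then $p=\cW_d^*G\cW_d$.

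Conversely, given PSD $G\in\cH^{|\cW_d|}$, factor $G=B^*B$ with $B$ quaternionic. This factorization is available through the spectral theorem for quaternionic Hermitian matrices, or equivalently through Lemma~\ref{lm1} and the structure of $\Lambda(G)$. Take $\mathbf{c}_i^{\intercal}$ to be the rows of $\bar B$ after matching conjugations, so that $p_i=\mathbf{c}_i^{\intercal}\cW_d\in\mathcal L$. The computation above then gives $p=\sum p_i^*p_i$.

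For $p\in\sym\R\langle\q,\bar\q\rangle$ with real $p_i$, the resulting $G$ is real symmetric PSD. Conversely, from a real PSD $G$ the factor $B$ can be taken real via a real Cholesky or eigendecomposition, which yields real $p_i$ as the definition of $\Sigma^{\R}$ requires.
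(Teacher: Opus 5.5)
Your route is the paper's. Its proof is a one-line appeal to the Gram factorization of PSD quaternionic matrices, and your identity $p_i^*p_i=\cW_d^*(\bar{\mathbf c}_i\mathbf c_i^{\intercal})\cW_d$ combined with $G=B^*B$ is exactly that. (With $G=B^*B$, the vectors $\mathbf c_i^{\intercal}$ are simply the rows of $B$, since $(B^*B)_{u,v}=\sum_i\overline{B_{iu}}B_{iv}$.)

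You are right that the ``only if'' direction needs $\deg p_i\le d$, which the paper leaves implicit. You are also right that the normal-form identification makes this nontrivial. For example, the off-diagonal pair $(|q_1|^2,|q_2|^2)$ contributes $2\RR(\bar p_{i,|q_1|^2}\,p_{i,|q_2|^2})$, which can have either sign, to the word $|q_1|^2|q_2|^2=(q_1q_2)^*(q_1q_2)$. That is the same word as the diagonal term of $u=q_1q_2$, so ``the coefficient is $\sum_i|p_{i,u}|^2$'' is false in general.

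However, you do not close this step. Your fallback fails. The diagonal coefficients $|p_{i,u}|^2$ are real, and in the decomposition \eqref{eq:P-reduced-sector} real scalars live in the $\mathcal F_n$-summand, where $a\,r\,b=r\,ab$ and the split point is forgotten. The position of a coefficient records only its imaginary part, which carries no sign information. Your first option is the right idea, but ``lexicographically largest reduced word'' is not yet an argument. The crux is to show that no off-diagonal pair reaches the chosen diagonal word.

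One way to finish is as follows. Write $u=\prod_j|q_j|^{2\alpha_j}\tilde u$ in normal form and set $\gamma(u)\coloneqq2\alpha+c(\tilde u)\in\N^n$, where $c(\tilde u)_j$ counts the letters $q_j,\bar q_j$ in $\tilde u$; thus $|\gamma(u)|=|u|$.
\begin{enumerate}
\item Because $\tilde u$ is freely reduced (read $\bar q_j$ as $q_j^{-1}$), $u^*v$ is a pure product of squared norms only if $\tilde u=\tilde v$. In that case $u^*v=\prod_j|q_j|^{2(\alpha_u+\alpha_v+c(\tilde u))_j}$.
\item Let $e=\max_i\deg p_i>d$ and fix a monomial order $\prec$ on $\N^n$. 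Among words of length $e$, pick $u_0$ with some $p_{i,u_0}\neq0$ and $\gamma(u_0)$ maximal.
\item If $\tilde u=\tilde v$, $\alpha_u\prec\alpha_v$ and $p_{i,u},p_{i,v}\neq0$, then $\alpha_u+\alpha_v+c(\tilde v)\prec\gamma(v)\preceq\gamma(u_0)$. So no off-diagonal pair lands on $\prod_j|q_j|^{2\gamma(u_0)_j}$.
\item Hence the $\mathcal F_n$-coefficient of this length-$2e$ word in $\sum_ip_i^*p_i$ is $\sum_i\sum_{\gamma(u)=\gamma(u_0)}|p_{i,u}|^2>0$. This contradicts $\deg p=2d<2e$.
\end{enumerate}

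Finally, the real clause has a smaller gap. You show only that real $p_i$ give a real $G$ and that a real $G$ gives real $p_i$. The equivalence as stated also needs that a quaternionic PSD Gram matrix of a real $p$ can be replaced by a real one. Take $\RR(G)$: it is PSD as the leading block of $\Lambda(G)$ in Lemma~\ref{lm1}. Moreover, $\cW_d^*\RR(G)\cW_d$ is the $\mathcal F_n$-component of $\cW_d^*G\cW_d=p$, so it equals $p$ because $p$ has real coefficients.
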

\begin{proof}
The result follows directly from the Gram-factorization characterization of positive semidefinite quaternionic matrices.
\end{proof}

A Gram representation alone does not account for identities that hold for scalar quaternionic variables but not in the free $*$-algebra. To distinguish genuine scalar evaluations from purely formal word relations, we now introduce the corresponding scalar-identity quotient. Let $\H\langle\q,\bar\q\rangle$ be the free quaternionic $*$-algebra, and let $\H[x_{11},\ldots,x_{n4}]$ be the polynomial ring in the commuting real variables $x_{11},\ldots,x_{n4}$ with quaternionic coefficients.
Define a unital $*$-homomorphism
\begin{equation}
\rho:\H\langle\q,\bar\q\rangle\to\H[x_{11},\ldots,x_{n4}]
\end{equation}
by fixing $\H$ and setting
\begin{equation}
    \rho(q_i) = x_{i1} + x_{i2}\mathbf{i} + x_{i3}\mathbf{j} + x_{i4}\mathbf{k}, \quad i =1,\ldots,n.
\end{equation}
Necessarily,
\begin{equation}
    \rho(\bar q_i) = x_{i1} - x_{i2}\mathbf{i} - x_{i3}\mathbf{j} - x_{i4}\mathbf{k}, \quad i =1,\ldots,n.
\end{equation}

For $i=1,\ldots,n$, define
\begin{equation}
\chi_{i1} = \frac{q_i+\bar{q}_i}{2},\quad
\chi_{i2} = \frac{\mathbf{i}\bar{q}_i-q_i\mathbf{i}}{2},\quad
\chi_{i3} = \frac{\mathbf{j}\bar{q}_i-q_i\mathbf{j}}{2},\quad
\chi_{i4} = \frac{\mathbf{k}\bar{q}_i-q_i\mathbf{k}}{2}.
\end{equation}
For $i=1,\ldots,n$, $j=1,\ldots,4$, define
\begin{equation}
    C_{ij,1} \coloneqq [\chi_{ij}, \mathbf{i}],\quad C_{ij,2} \coloneqq [\chi_{ij}, \mathbf{j}], \quad C_{ij,3} \coloneqq [\chi_{ij}, \mathbf{k}].
\end{equation}
For $i,k=1,\ldots,n$ and $j,l=1,\ldots,4$, define
\begin{equation}
    D_{ij,kl} \coloneqq [\chi_{ij},\chi_{kl}].
\end{equation}
For $i=1,\ldots,n$, define
\begin{equation}
    R_{i} \coloneqq q_i-\chi_{i1}-\chi_{i2}\mathbf{i}-\chi_{i3}\mathbf{j}-\chi_{i4}\mathbf{k}.
\end{equation}

The kernel $\mathcal I_n^\H\coloneqq\ker(\rho)$ is called the \emph{quaternionic scalar-identity ideal}.  The following lemma gives explicit generators and identifies the resulting quotient with the ordinary coordinate polynomial algebra.
\begin{lemma}
The quaternionic scalar-identity ideal is generated, as a two-sided $*$-ideal in $\H\langle\q,\bar\q\rangle$, by the $C_{ij,s}, D_{ij,kl}, R_i$'s, i.e.,
\begin{equation}
\mathcal I_{n}^\H=\left\langle
\{C_{ij,s}\}_{i,j,s}
\cup
\{D_{ij,kl}\}_{i,j,k,l}
\cup
\{R_i\}_i
\right\rangle_*.
\end{equation}
Moreover, $\H\langle\q,\bar\q\rangle/\mathcal I_{n}^\H\simeq\H[x_{11},\ldots,x_{n4}]$.
\end{lemma}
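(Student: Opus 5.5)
Write $\mathcal J$ for the two-sided $*$-ideal generated by the $C_{ij,s}$, $D_{ij,kl}$ and $R_i$. The plan is to prove $\mathcal J\subseteq\ker\rho$ by direct computation, and then to prove the reverse inclusion together with the quotient isomorphism by building an inverse map. That is, we construct an $\H$-algebra homomorphism $\sigma\colon\H[x_{11},\ldots,x_{n4}]\to\H\langle\q,\bar\q\rangle/\mathcal J$ with $\sigma\circ\bar\rho=\mathrm{id}$ on the quotient, where $\bar\rho$ denotes the map induced by $\rho$.

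\emph{Step 1: $\mathcal J\subseteq\ker\rho$.} A direct computation using $\mathbf{i}\bar q\mathbf{i}$-type identities shows that $\rho(\chi_{ij})=x_{ij}$. For instance, with $q=x_1+x_2\mathbf{i}+x_3\mathbf{j}+x_4\mathbf{k}$ one checks $\mathbf{i}\bar q-q\mathbf{i}=2x_2$. Since the $x_{ij}$ are central in $\H[x]$, $\rho$ kills every $C_{ij,s}$ and every $D_{ij,kl}$. Moreover $\rho(R_i)=\rho(q_i)-\sum_j x_{ij}e_j=0$, where $e_1=1$, $e_2=\mathbf{i}$, $e_3=\mathbf{j}$, $e_4=\mathbf{k}$. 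Because $\rho$ is a $*$-homomorphism, $\ker\rho$ is a two-sided $*$-ideal, so it contains $\mathcal J$.

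\emph{Step 2: construct $\sigma$.} In $A\coloneqq\H\langle\q,\bar\q\rangle/\mathcal J$, the classes $\xi_{ij}$ of $\chi_{ij}$ commute with $\mathbf{i}$, $\mathbf{j}$ and $\mathbf{k}$ because the $C$'s vanish. They therefore commute with all of $\H$, since $\H$ is generated over $\R$ by $1,\mathbf{i},\mathbf{j},\mathbf{k}$ and the $\chi_{ij}$ act $\R$-linearly. The $\xi_{ij}$ also commute with each other because the $D$'s vanish. Hence the assignment $x_{ij}\mapsto\xi_{ij}$ extends, by the universal property of the polynomial ring $\H[x]=\H\otimes_\R\R[x]$ with central variables, to an $\H$-algebra homomorphism $\sigma$. (Note that $\H$ itself is noncommutative, so $\sigma$ is defined on $\H\otimes_\R\R[x]$ by $a\otimes f\mapsto a\,f(\xi)$; this is well defined precisely because the $\xi_{ij}$ are central.)

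\emph{Step 3: $\sigma\circ\bar\rho=\mathrm{id}_A$.} Since $R_i\in\mathcal J$, the class of $q_i$ equals $\sum_j\xi_{ij}e_j$. Each $\chi_{ij}$ is symmetric: for example $(\mathbf{i}\bar q-q\mathbf{i})^*=(-q\mathbf{i})+\mathbf{i}\bar q$ up to checking signs, using $\bar{\mathbf{i}}=-\mathbf{i}$. Hence the class of $\bar q_i=q_i^*$ equals $\sum_j\xi_{ij}\bar e_j$, because $\mathcal J$ is a $*$-ideal. Consequently $A$ is generated as an $\H$-algebra by the $\xi_{ij}$, and $\sigma$ is surjective. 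For the composite we have $\bar\rho(\xi_{ij})=x_{ij}$, so $\sigma(\bar\rho(\xi_{ij}))=\xi_{ij}$. Both maps fix $\H$, and $\sigma\circ\bar\rho$ is an algebra map, so it is the identity on the generators of $A$ and hence on all of $A$. This forces $\bar\rho$ to be injective, i.e.\ $\ker\rho=\mathcal J$. Combining this with $\rho\circ\sigma=\mathrm{id}$ (which is immediate on generators), $\bar\rho$ is an isomorphism $A\simeq\H[x]$.

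The main obstacle is Step 2: making rigorous that the polynomial ring with quaternionic coefficients is the universal $\H$-algebra generated by $4n$ pairwise commuting central elements. This requires treating $\H[x]$ as $\H\otimes_\R\R[x]$ and using the centrality supplied by the $C$-relations. The remaining work consists of routine sign checks in the computations $\rho(\chi_{ij})=x_{ij}$ and $\chi_{ij}^*=\chi_{ij}$.
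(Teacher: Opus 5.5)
Your proposal is correct and follows essentially the same route as the paper. It computes $\rho(\chi_{ij})=x_{ij}$ to get the inclusion in $\ker\rho$, uses the $C$- and $D$-relations to build the inverse $x_{ij}\mapsto\widetilde\chi_{ij}$ on $\H[x_{11},\ldots,x_{n4}]$, and uses the $R_i$-relations to show the composite on the quotient is the identity. The only cosmetic difference is that you check $\sigma\circ\bar\rho=\mathrm{id}$ on the generating set $\H\cup\{\xi_{ij}\}$, getting the class of $\bar q_i$ directly from $\chi_{ij}^*=\chi_{ij}$, whereas the paper checks it on $\widetilde q_i$ and obtains $\widetilde{\bar q_i}$ by taking adjoints, which is why it needs $\Psi$ to be a $*$-homomorphism.
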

\begin{proof}
Let $\mathcal J_n^{\H}$ denote the two-sided $*$-ideal on the right-hand side of the displayed identity. A direct calculation using the definitions and the quaternion multiplication rules gives
\begin{equation}\label{eq:coordinate-polynomials-rho}
\rho(\chi_{ij})=x_{ij},
\qquad
 i=1,\ldots,n,\quad j=1,\ldots,4.
\end{equation}
Each $\chi_{ij}$ is self-adjoint. Since the coordinate variables $x_{ij}$ are central in $\H[x_{11},\ldots,x_{n4}]$, \eqref{eq:coordinate-polynomials-rho} implies
\[
\rho(C_{ij,k})=0,
\qquad
\rho(D_{ij,kl})=0.
\]
The definition of $R_i$, together with \eqref{eq:coordinate-polynomials-rho}, also gives
$\rho(R_i)=
\rho(q_i)-x_{i1}-x_{i2}\mathbf i-x_{i3}\mathbf j-x_{i4}\mathbf k
=0$.
Hence,
\begin{equation}\label{eq:JH-contained-kernel}
\mathcal J_n^{\H}\subseteq\ker(\rho)=\mathcal I_{n}^\H.
\end{equation}

Let $\mathcal B
\coloneqq
\H\langle\q,\bar\q\rangle/\mathcal J_n^{\H}$,
and write $\widetilde a$ for the class of $a$ in $\mathcal B$.  The relations $C_{ij,k}=0$ imply that every $\widetilde\chi_{ij}$ commutes with $\mathbf i,\mathbf j,\mathbf k$, and hence with every quaternionic constant.  The relations $D_{ij,kl}=0$ imply that the $\widetilde\chi_{ij}$ commute pairwise.  Therefore there is a unique unital $*$-homomorphism
\begin{equation}\label{eq:coordinate-inverse-map}
\Psi:
\H[x_{11},\ldots,x_{n4}]
\longrightarrow
\mathcal B
\end{equation}
which fixes $\H$ and satisfies
$\Psi(x_{ij})=\widetilde\chi_{ij}$.
Here the involution on the polynomial ring fixes the real variables and restricts to quaternionic conjugation on $\H$.

By \eqref{eq:JH-contained-kernel}, $\rho$ factors through a unital $*$-homomorphism
$\overline\rho:
\mathcal B
\longrightarrow
\H[x_{11},\ldots,x_{n4}]$.
Equation \eqref{eq:coordinate-polynomials-rho} yields
$(\overline\rho\circ\Psi)(x_{ij})=x_{ij}$
and $\overline\rho\circ\Psi$ fixes $\H$, so
$\overline\rho\circ\Psi=
\operatorname{id}_{\H[x_{11},\ldots,x_{n4}]}$.
Conversely, the relation $R_i=0$ in $\mathcal B$ gives
$\widetilde q_i=
\widetilde\chi_{i1}
+\widetilde\chi_{i2}\mathbf i
+\widetilde\chi_{i3}\mathbf j
+\widetilde\chi_{i4}\mathbf k$,
and so
$(\Psi\circ\overline\rho)(\widetilde q_i)
=\widetilde q_i$.
Taking adjoints gives the same identity for $\widetilde{\bar q_i}$, and the composition also fixes all quaternionic constants.  Since these elements generate $\mathcal B$,
$\Psi\circ\overline\rho=
\operatorname{id}_{\mathcal B}$.
Thus $\overline\rho$ is an isomorphism.  In particular, its kernel is zero; hence \eqref{eq:JH-contained-kernel} is an equality:
$\mathcal I_{n}^\H=\mathcal J_n^{\H}$.
The same argument gives the asserted isomorphism
$\H\langle\q,\bar\q\rangle/\mathcal I_{n}^\H
\simeq\H[x_{11},\ldots,x_{n4}]$.
\end{proof}

For the real-coefficient theory below, we need the corresponding identity ideal inside the real free $*$-algebra.  Let $\rho_{\R}$ be the restriction of $\rho$ to $\R\langle\q,\bar\q\rangle$. We call $\mathcal I_n^\R\coloneqq\ker(\rho_{\R})$ the \emph{real scalar-identity ideal}.  In this case the generators admit a particularly simple commutator description.
\begin{lemma}\label{real-scalar-identity}
The real scalar-identity ideal is generated, as a two-sided $*$-ideal in $\R\langle\q,\bar\q\rangle$, by
\begin{equation}
\mathcal I_{n}^\R=\langle[w+w^*,q_i]:w\in\cW,i=1,\ldots,n\rangle_*.
\end{equation}
\end{lemma}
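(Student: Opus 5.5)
Let $\mathcal J$ denote the two-sided $*$-ideal generated by the commutators $[w+w^*,q_i]$. The plan is to prove $\mathcal J\subseteq\mathcal I_n^\R$ directly and then obtain the reverse inclusion from a quotient argument, in the same spirit as the proof of the preceding lemma.

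\textbf{Step 1: $\mathcal J\subseteq\mathcal I_n^\R$.} For any real-coefficient word $w$, the element $w+w^*$ is symmetric with real coefficients. Hence $\rho(w+w^*)$ is a polynomial with quaternionic coefficients that is fixed by the involution. I will show it is real-valued, i.e., it lies in $\R[x]$. Since $w$ has real coefficients, $\rho(w)$ evaluated at any real point is the quaternion $w(q)$, and $\rho(w^*)$ evaluates to $\overline{w(q)}$. Therefore $\rho(w+w^*)=2\RR(\rho(w))$ is a real polynomial, so it is central in $\H[x]$. This gives $\rho([w+w^*,q_i])=0$. Since $\rho$ is a $*$-homomorphism, its kernel is a $*$-ideal, and the inclusion follows.

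\textbf{Step 2: describing the quotient.} Let $\mathcal B=\R\langle\q,\bar\q\rangle/\mathcal J$. Taking $w=1$ shows that $\tilde x_{i1}:=(\tilde q_i+\tilde{\bar q}_i)/2$ is central. Taking $w=\bar q_i q_i$ (whose symmetrization is $2\bar q_iq_i$) and $w=q_j$, $w=\bar q_j q_k$, and so on, shows that every symmetric element of the subalgebra generated by the symmetrizations is central. In particular $N_i=\bar q_iq_i$, $q_iq_i^*$, and the mixed real parts $S_{jk}=\tfrac12(\bar q_jq_k+\bar q_kq_j)$ are central. Writing $u_i=\tilde q_i-\tilde x_{i1}$ (the pure part), we get $u_i^*=-u_i$, and $u_i^2=-(N_i-\tilde x_{i1}^2)$ is central. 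Moreover, $u_ju_k+u_ku_j$ is central. So $\mathcal B$ is spanned over the central subalgebra $Z$, generated by the $x_{i1}$, the $|u_i|^2$ and the $u_ju_k+u_ku_j$, by ordered products of the $u$'s. This is a Clifford-type algebra over $Z$.

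\textbf{Step 3: injectivity of $\bar\rho\colon\mathcal B\to\H[x]$.} This is the main obstacle. In contrast to the quaternionic case, we cannot separate the coordinates $x_{i2},x_{i3},x_{i4}$ using $\i,\j,\k$. Instead, I must show that any element killed by $\rho$ already reduces to zero modulo $\mathcal J$. My approach is as follows.

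First, I show that the symmetrizations of words generate, modulo $\mathcal J$, exactly the algebra of $O(3)$-invariant... more precisely $SO(3)$-invariant polynomials in the imaginary vectors $v_i=(x_{i2},x_{i3},x_{i4})$. Indeed, $\rho(\R\langle\q,\bar\q\rangle)$ is exactly the set of polynomials of the form $a+b$, where $a$ is an $SO(3)$-invariant real polynomial and $b$ is an $SO(3)$-equivariant $\R^3$-valued polynomial. This follows because real-coefficient expressions commute with conjugation by unit quaternions. The first fundamental theorem for $SO(3)$ then says these are generated by the inner products $v_j\cdot v_k$ and the determinants $\det(v_j,v_k,v_l)$, together with the vectors $v_i$ and the cross products.

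Second, I compare bases. Modulo $\mathcal J$, every element can be reduced to a $Z$-combination of $1$, $u_j$, $u_ju_k$ with $j<k$ (the product $u_ju_k$ corresponds to $-v_j\cdot v_k+v_j\times v_k$), and triple products. The second fundamental theorem relations among these generators (the Gram-determinant identities) must then be shown to lie in $\mathcal J$. I would verify these by expanding products such as $u_ju_ku_l$ and using centrality of the symmetrized words, for example $u_ju_ku_l+u_lu_ku_j\in Z$.

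If the invariant-theory route proves too heavy, my fallback is a dimension and leading-term argument. I would fix a monomial order, show that the reduced words modulo $\mathcal J$ map to $\rho$-images with distinct leading monomials, and conclude that $\ker\bar\rho=0$.
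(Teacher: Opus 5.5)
Your Step~1 is correct and matches the paper's proof of $\mathcal J\subseteq\mathcal I_n^\R$. Your observation in Step~2 that every symmetric element becomes central modulo $\mathcal J$ is also right, and the paper uses it too. One slip there: $w=1$ only gives the trivial commutator $[2,q_i]$; centrality of $x_{i1}$ comes from $w=q_i$ together with $*$-closedness of $\mathcal J$.

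The genuine gap is Step~3. It carries the entire content of the reverse inclusion $\mathcal I_n^\R\subseteq\mathcal J$, and it is only sketched.
\begin{itemize}
\item Your ``First'' part concerns the \emph{image} of $\rho_\R$: the first fundamental theorem gives surjectivity onto invariants and covariants. It says nothing about the kernel. Calling the symmetrizations modulo $\mathcal J$ ``exactly'' the invariant algebra presupposes the injectivity you are trying to prove.
\item Your ``Second'' part would require deriving in $\R\langle\q,\bar\q\rangle/\mathcal J$, from centrality of symmetric elements alone, three things. First, the reduction of products of four or more distinct $u$'s. Second, all of Weyl's second-fundamental-theorem relations for $\mathrm{SO}(3)$: $t_It_J=\det(s_{i_aj_b})_{a,b=1}^3$, the vanishing $4\times4$ Gram minors, and $\sum_a(-1)^a s_{ji_a}t_{i_1\cdots\widehat{i_a}\cdots i_4}=0$. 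Third, a separate argument for the skew part. Injectivity on symmetric elements only tells you that a skew $u$ with $\rho(u)=0$ anticommutes with every skew element, so $u^2=0$. Concluding $u=0$ needs a torsion-freeness statement, essentially a second fundamental theorem for covariants.
\item None of this is carried out, and the one sample step you give is wrong. The element $u_ju_ku_l+u_lu_ku_j$ is skew-adjoint: at pure imaginary $a,b,c$ it evaluates to $2\operatorname{Im}(abc)$, so it is not central. The central combination is $u_ju_ku_l-u_lu_ku_j$.
\item The leading-term fallback likewise presupposes a normal form for the quotient, which is exactly what is missing.
\end{itemize}

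The paper closes this gap with a structural theorem rather than a computation.
\begin{itemize}
\item By Levchenko's basis theorem, the $*$-T-ideal of identities of $M_2$ with the symplectic involution is generated by $[X+X^*,Y]$.
\item Since $\H\otimes_\R\C\simeq M_2(\C)$ turns quaternionic conjugation into the symplectic involution, and $\H^n$ is Zariski dense, every element of $\mathcal I_n^\R$ is a consequence of $[X+X^*,Y]$.
\item It then only remains to check that $[a+a^*,b]\in\mathcal J$ for all $a,b$, which follows from the derivation rule for commutators.
\end{itemize}
If you prefer to build on your Step~2, there is an equivalent route. Modulo $\mathcal J$, the map $t(a)\coloneqq a+a^*$ is a central trace with $t(ab)=t(ba)$. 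It satisfies the degree-two Cayley--Hamilton identity $a^2-t(a)a+\tfrac12\bigl(t(a)^2-t(a^2)\bigr)=0$, because $[a,a^*]=[a,t(a)-a]=0$. One checks that the quotient is the free degree-two Cayley--Hamilton algebra on $q_1,\dots,q_n$, with involution $a\mapsto t(a)-a$. The Procesi--Razmyslov theorem then embeds it into generic $2\times2$ matrices with trace, and density of $\H^n$ in $M_2(\C)^n$ finishes the argument. Either way, some genuine structural input of this kind is needed; it does not fall out of expanding products.
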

\begin{proof}
Let
\[
\mathcal J_n^{\R}
\coloneqq
\langle[w+w^*,q_i]:w\in\cW,\ i=1,\ldots,n\rangle_*.
\]
For every scalar quaternionic tuple $\ba\in\H^n$,
\[
(w+w^*)(\ba,\bar\ba)
=
w(\ba,\bar\ba)+\overline{w(\ba,\bar\ba)}
=
2\RR\bigl(w(\ba,\bar\ba)\bigr)
\in\R.
\]
Thus it commutes with every $a_i$, so each generator $[w+w^*,q_i]$ vanishes under scalar evaluation. This proves
\begin{equation}\label{eq:JR-contained-IR}
\mathcal J_n^{\R}\subseteq \mathcal I_{n}^\R.
\end{equation}

For the reverse inclusion, we use the classical basis theorem for polynomial identities with involution of the second-order matrix algebra equipped with the symplectic involution: over a field of characteristic zero, its $*$-T-ideal of identities is generated by
\begin{equation}\label{eq:basic-symplectic-star-identity}
[X+X^*,Y].
\end{equation}
See, for example, \cite{levchenko1980identities}. This theorem applies to the Hamilton quaternions with standard conjugation. Indeed,
$\H\otimes_{\R}\C\simeq M_2(\C)$,
and the complexification of quaternionic conjugation is the symplectic involution on $M_2(\C)$.  Moreover, a real-coefficient $*$-polynomial is an identity of $\H$ if and only if its complexification is an identity of $M_2(\C)$: after choosing a real basis of $\H$, every matrix entry of an evaluation is a polynomial in the $4n$ real coordinates, and vanishing on all of $\R^{4n}$ implies vanishing identically after complexification.  Thus the real $*$-identities of $(\H,\overline{\phantom q})$ are exactly the consequences of \eqref{eq:basic-symplectic-star-identity}.

It remains to verify that all such consequences belong to $\mathcal J_n^{\R}$. Let
$a,b\in\R\langle\q,\bar\q\rangle$.  Since $a+a^*$ is a real linear combination of elements $w+w^*$, the generators of $\mathcal J_n^{\R}$ give
$[a+a^*,q_i]\in\mathcal J_n^{\R}$.
Because $\mathcal J_n^{\R}$ is $*$-closed,
$[w+w^*,q_i]^*
=-[w+w^*,\bar q_i]\in\mathcal J_n^{\R}$,
so also $[a+a^*,\bar q_i]\in\mathcal J_n^{\R}$.
Therefore, $a+a^*$ commutes modulo $\mathcal J_n^{\R}$ with every letter.  If
$b=v_1\cdots v_m$ is a word, the commutator identity
\[
[a+a^*,b]
=
\sum_{i=1}^m
v_1\cdots v_{i-1}
[a+a^*,v_i]
v_{i+1}\cdots v_m
\]
shows that
\begin{equation}\label{eq:substituted-basic-star-identity-in-JR}
[a+a^*,b]\in\mathcal J_n^{\R}.
\end{equation}
By real linearity, the same conclusion holds for arbitrary $b$. Consequently, every $*$-endomorphic image of \eqref{eq:basic-symplectic-star-identity}, and hence every element of the $*$-T-ideal that it generates, belongs to $\mathcal J_n^{\R}$.

Now let $p\in \mathcal I_{n}^\R$.  By definition, $p$ is a real $*$-polynomial identity of the Hamilton quaternions.  The identity-basis theorem and \eqref{eq:substituted-basic-star-identity-in-JR} imply $p\in\mathcal J_n^{\R}$.  Together with \eqref{eq:JR-contained-IR}, this proves
\[
\mathcal I_{n}^\R
=
\mathcal J_n^{\R}
=
\langle[w+w^*,q_i]:w\in\cW,\ i=1,\ldots,n\rangle_*.
\]
\end{proof}

With the scalar-identity quotients in place, we can formulate positivity relative to polynomial constraints.  For $\bg\coloneqq\{g_1,\ldots,g_m\}\subset\sym\P\langle\q,\bar\q\rangle$ and $\bh\coloneqq\{h_1,\ldots,h_\ell\}\subset\P\langle\q,\bar\q\rangle$, define the quaternionic basic semialgebraic set
\begin{equation}\label{eq:Kgh-real}
K_{\bg,\bh} \coloneqq  \{\q \in \H^n \mid g_i(\q,\bar\q) \ge 0,i=1,\ldots,m,\,h_j(\q,\bar\q)=0,j=1,\ldots,\ell\}.
\end{equation}

\subsection{The Archimedean Positivstellensatz for quaternionic polynomials with real coefficients}\label{sec:putinar-real}
We first establish a Putinar-type theorem in the real-coefficient setting.  Quotienting by $\mathcal I_n^\R$ has a decisive effect here: symmetric classes become central, so the positivity problem reduces to an Archimedean quadratic module in a commutative real algebra.  The proof proceeds by establishing this centrality, identifying the relevant positive characters with scalar quaternionic evaluations, and then applying Jacobi's representation theorem.  Throughout this subsection, all quaternionic polynomials are assumed to have real coefficients. We define
\begin{equation}\label{eq:QR-module}
\begin{split}
\mathcal Q^{\R}(\bg,\bh)\coloneqq\bigg\{&\sigma_0+\frac12\sum_{i=1}^m(\sigma_ig_i+g_i\sigma_i)
+\sum_{j=1}^\ell\sum_{\alpha}
\left(\tau_{j\alpha}h_j\nu_{j\alpha}
+\nu_{j\alpha}^*h_j^*\tau_{j\alpha}^*\right):\\
&\sigma_i\in\Sigma^{\R},\quad
\tau_{j\alpha},\nu_{j\alpha}\in\R\langle\q,\bar\q\rangle\bigg\}.
\end{split}
\end{equation}
We call $\mathcal Q^{\R}(\bg,\bh)$ \emph{Archimedean} if there exists $R>0$ such that
$R-\sum_{i=1}^n|q_i|^2\in \mathcal Q^{\R}(\bg,\bh)$.
Let
\[
\mathcal F_n\coloneqq\R\langle\q,\bar\q\rangle,
\qquad
\pi:\mathcal F_n\longrightarrow
\mathcal A\coloneqq\mathcal F_n/\mathcal I_{n}^\R
\]
be the canonical quotient map, and set
$\mathcal B\coloneqq\sym\mathcal A\coloneqq\{a\in\mathcal A:a^*=a\}$.
Let \(\mathcal J_\bh^{\R}\) be the two-sided \(*\)-ideal of \(\mathcal A\) generated by
\(\pi(h_1),\ldots,\pi(h_\ell)\), and define
\[
\mathcal A_\bh\coloneqq\mathcal A/\mathcal J_\bh^{\R},
\qquad
\mathcal B_\bh\coloneqq\sym\mathcal A_\bh.
\]
We denote the quotient map by
$\eta:\mathcal A\longrightarrow\mathcal A_\bh$
and, after establishing surjectivity below, denote its restriction to the symmetric part by
$\eta|_{\mathcal B}:\mathcal B\longrightarrow\mathcal B_\bh$.
For \(p\in\mathcal F_n\), write
\(\widehat p\coloneqq\eta(\pi(p))\).
Define
\begin{equation}\label{eq:Mh-real}
\mathcal Q_\bh(\bg)\coloneqq\bigg\{\sum_{\alpha}p_{0\alpha}^*p_{0\alpha}
+\sum_{i=1}^m\sum_{\alpha}p_{i\alpha}^*\widehat g_i p_{i\alpha}:p_{i\alpha}\in\mathcal A_\bh\bigg\}.
\end{equation}
We call $\mathcal Q_\bh(\bg)$ \emph{Archimedean} if there exists $R>0$ such that
$R-\sum_{i=1}^n|\widehat q_i|^2\in \mathcal Q_\bh(\bg)$.
For a ring $\mathcal D$, let $Z(\mathcal D)$ denote its center.  The first structural step is to show that the symmetric quotient is indeed commutative and that no symmetric class is lost when the equality constraints are imposed.
\begin{lemma}\label{lem:symmetric-central-quotient}
Every element of \(\mathcal B_\bh\) has a symmetric representative in \(\mathcal B\). Moreover,
\[
\mathcal B\subseteq Z(\mathcal A)
\qquad\text{and}\qquad
\mathcal B_\bh\subseteq Z(\mathcal A_\bh).
\]
In particular, \(\mathcal B_\bh\) is a commutative unital real algebra and
\(\eta|_{\mathcal B}:\mathcal B\to\mathcal B_\bh\) is surjective.
\end{lemma}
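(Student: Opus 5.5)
The plan is to work first in $\mathcal A$ and then push everything down to $\mathcal A_\bh$ along the surjection $\eta$.

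\emph{Step 1: $\mathcal B\subseteq Z(\mathcal A)$.} Take $b\in\mathcal B$ and choose any representative $p\in\mathcal F_n$ with $\pi(p)=b$. Because $\pi$ is a $*$-homomorphism and $b=b^*$, the element $s\coloneqq\frac12(p+p^*)$ is symmetric and still satisfies $\pi(s)=b$. Expanding $s$ in words gives $s=\frac12\sum_w p_w(w+w^*)$, so $s$ is of the form $a+a^*$ with $a=\frac12 p$. By Lemma~\ref{real-scalar-identity}, every $[w+w^*,q_i]$ lies in $\mathcal I_n^\R$. Since $\mathcal I_n^\R$ is $*$-closed, the adjoints $-[w+w^*,\bar q_i]$ also lie in it. The derivation (Leibniz) expansion of a commutator with a word, already used in the proof of Lemma~\ref{real-scalar-identity}, then yields $[s,v]\in\mathcal I_n^\R$ for every word $v$. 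Extending by real linearity gives $[s,f]\in\mathcal I_n^\R$ for every $f\in\mathcal F_n$. Applying $\pi$, which is surjective, we conclude that $b$ commutes with all of $\mathcal A$.

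\emph{Step 2: symmetric representatives and surjectivity of $\eta|_{\mathcal B}$.} Take $c\in\mathcal B_\bh$ and pick any $a\in\mathcal A$ with $\eta(a)=c$. Set $a'\coloneqq\frac12(a+a^*)$. Then $a'$ lies in $\mathcal B$. Moreover $\eta(a')=\frac12(c+c^*)=c$, using that $\mathcal J_\bh^\R$ is a $*$-ideal, so that $\eta$ is a $*$-homomorphism. This gives both the existence of a symmetric representative and the surjectivity of $\eta|_{\mathcal B}$.

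\emph{Step 3: $\mathcal B_\bh\subseteq Z(\mathcal A_\bh)$.} Take $c\in\mathcal B_\bh$ and $x\in\mathcal A_\bh$. Write $c=\eta(a')$ with $a'\in\mathcal B$ as in Step 2, and $x=\eta(y)$ with $y\in\mathcal A$. Then
\[
[c,x]=\eta([a',y])=0,
\]
where the last equality uses Step 1.

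\emph{Step 4: commutativity.} $\mathcal B_\bh$ is closed under addition and real scalars. For products, take $c,d\in\mathcal B_\bh$. Since $c$ is central, $(cd)^*=d^*c^*=dc=cd$, so $cd\in\mathcal B_\bh$. The unit is symmetric. The algebra is commutative because its elements are central.

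The only substantive point is Step 1, namely passing from the generators $[w+w^*,q_i]$ to commutation with arbitrary elements of $\mathcal F_n$. This is handled by the $*$-closure of the ideal, which also supplies the $\bar q_i$-commutators, together with the Leibniz expansion. Everything else is formal bookkeeping with the $*$-homomorphisms $\pi$ and $\eta$.
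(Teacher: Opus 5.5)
Your proof is correct. Steps 2--4 coincide with the paper's argument: the paper also takes the symmetric representative $\frac12(a+a^*)$ and pushes centrality through $\eta$. Your Step 4 additionally checks that $\mathcal B_\bh$ is closed under products, which the paper leaves implicit.

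The genuine difference is in Step 1. The paper argues semantically. From $\pi(\tilde b)=\pi(\tilde b^*)$ it gets $\tilde b-\tilde b^*\in\mathcal I_n^\R$, so $\tilde b$ is real-valued at every scalar point. Hence $[\tilde b,\tilde a]$ vanishes at every $\q\in\H^n$ and lies in $\mathcal I_n^\R=\ker\rho_\R$ simply by definition of that ideal.

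You instead argue syntactically. You symmetrize the representative into a real combination of elements $w+w^*$. You then use only the generators $[w+w^*,q_i]$, their adjoints $-[w+w^*,\bar q_i]$, and the Leibniz expansion, which is the same computation that appears inside the proof of Lemma~\ref{real-scalar-identity}.

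The paper's route is shorter and needs nothing beyond the kernel description of $\mathcal I_n^\R$. Yours relies only on the easy inclusion in Lemma~\ref{real-scalar-identity}, not on the identity-basis theorem, and it proves slightly more. Symmetric classes become central modulo any two-sided $*$-ideal containing the elements $[w+w^*,q_i]$, so the conclusion does not depend on knowing the full kernel of scalar evaluation.
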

\begin{proof}
Let \(\beta\in\mathcal B_\bh\) and choose \(a\in\mathcal A\) such that
\(\eta(a)=\beta\). Since \(\beta=\beta^*\),
\(\eta(a-a^*)=0\). Hence
\[
b\coloneqq\frac{a+a^*}{2}\in\mathcal B
\qquad\text{and}\qquad
\eta(b)=\beta.
\]
Thus every symmetric class in $\mathcal A_\bh$ has a symmetric representative in $\mathcal A$, and $\eta|_{\mathcal B}$ is surjective.

We next establish centrality. Let $b\in\mathcal B$ and $a\in\mathcal A$, and choose representatives
\(\widetilde b,\widetilde a\in\mathcal F_n\). Since
\(\pi(\widetilde b)=b=b^*=\pi(\widetilde b^*)\), one has
\(\widetilde b-\widetilde b^*\in \mathcal I_{n}^\R\). Hence, for every
\(\q\in\H^n\),
\[
\widetilde b(\q,\bar\q)
=\widetilde b^*(\q,\bar\q)
=\overline{\widetilde b(\q,\bar\q)}\in\R.
\]
Consequently,
\[
[\widetilde b,\widetilde a](\q,\bar\q)=0
\qquad\text{for every }\q\in\H^n.
\]
By the definition of $\mathcal I_n^\R$, it follows that
\([\widetilde b,\widetilde a]\in \mathcal I_{n}^\R\), and therefore
\([b,a]=0\) in \(\mathcal A\). Thus \(\mathcal B\subseteq Z(\mathcal A)\).
If \(\beta\in\mathcal B_\bh\), choose the symmetric representative
\(b\in\mathcal B\) constructed above. Then \(b\) is central in \(\mathcal A\), so
\(\beta=\eta(b)\) is central in \(\mathcal A_\bh\). This proves
\(\mathcal B_\bh\subseteq Z(\mathcal A_\bh)\), and hence \(\mathcal B_\bh\) is commutative.
\end{proof}

Centrality makes commutative representation theory available, but we must still determine which positive characters of the symmetric quotient correspond to genuine scalar quaternionic points.  The following lemma provides precisely this identification; its proof also gives an intrinsic description of the symmetric scalar algebra.

\begin{lemma}\label{lem:scalar-character-real}
Let \(\phi:\mathcal B\to\R\) be a unital real-algebra homomorphism satisfying
\begin{equation}\label{eq:positive-character-condition}
\phi(a^*a)\geq0
\qquad\text{for every }a\in\mathcal A.
\end{equation}
Then there exists \(\ba=(a_1,\ldots,a_n)\in\H^n\) such that $\phi(b)=b(\ba,\bar\ba)$ for every $b\in\mathcal B$.
Consequently, every character of $\mathcal B$ satisfying \eqref{eq:positive-character-condition} arises from a scalar quaternionic evaluation.
\end{lemma}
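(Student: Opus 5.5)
The idea is to recover the coordinates of the point $\ba$ from $\phi$ applied to suitable symmetric elements, and then to show that $\phi$ agrees with evaluation at $\ba$ on all of $\mathcal B$.

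First, for each $i$ the elements
\[
s_i\coloneqq\pi(q_i+\bar q_i),\qquad n_i\coloneqq\pi(\bar q_iq_i),
\]
are symmetric, so they lie in $\mathcal B$. More generally, for every word $w$ the class $\pi(w+w^*)$ lies in $\mathcal B$. The quaternionic coordinates $\chi_{i2},\chi_{i3},\chi_{i4}$ carry the constants $\i,\j,\k$ and are therefore not available in $\R\langle\q,\bar\q\rangle$. So $\ba$ cannot be read off coordinatewise; it has to be built from rotation-invariant data.

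Under scalar evaluation, $\mathcal B$ is the algebra of polynomial functions of the form $\RR(w(\ba,\bar\ba))$ for real-coefficient words. These functions are invariant under simultaneous conjugation $a_i\mapsto u a_i\bar u$ with $|u|=1$, which is an $SO(3)$ action on the imaginary parts. Step 1 is to identify $\rho_\R(\mathcal B)$ with the $SO(3)$-invariant real polynomials in the $4n$ coordinates. These are generated by:
\begin{itemize}
\item the real parts $x_{i1}$;
\item the inner products $\langle v_i,v_k\rangle$ of the imaginary parts $v_i=(x_{i2},x_{i3},x_{i4})$;
\item the triple products $\det(v_i,v_k,v_l)$.
\end{itemize}
The first fundamental theorem for $SO(3)$ gives this generating set. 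Each generator is realized by a real part of a word:
\begin{itemize}
\item $2x_{i1}=\RR(2q_i)$;
\item $\langle v_i,v_k\rangle$ comes from $\RR(q_iq_k)$ together with the real parts;
\item $\det(v_i,v_k,v_l)$ comes, up to sign, from $\RR(q_iq_kq_l)$ minus lower terms.
\end{itemize}
Since $\mathcal A$ is $\mathcal F_n$ modulo the kernel of $\rho_\R$, the map $\rho_\R$ identifies $\mathcal B$ with this invariant algebra. This holds because, by the proof of Lemma~\ref{lem:symmetric-central-quotient}, every symmetric class has the symmetric representative $(a+a^*)/2$, and evaluations of symmetric elements are real.

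Step 2 is to set
\[
t_i\coloneqq\phi(x_{i1}),\qquad G_{ik}\coloneqq\phi(\langle v_i,v_k\rangle),\qquad \delta_{ikl}\coloneqq\phi(\det(v_i,v_k,v_l)).
\]
Positivity \eqref{eq:positive-character-condition} makes $G$ positive semidefinite. Indeed, for real $c_k$ the element $\sum_k c_k(q_k-\chi_{k1})$ is realized in $\mathcal A$ by
\[
a=\sum_k c_k\,\tfrac{q_k-\bar q_k}{2},
\]
which has real coefficients. Then $a^*a$ evaluates to $|\sum_k c_kv_k|^2=\sum_{i,k}c_ic_kG_{ik}$, so $\phi(a^*a)\ge0$ gives $G\succeq0$. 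Note that $G$ is a Gram matrix in $\R^3$ only if $\rank G\le3$. This rank bound is the main obstacle, because it is not a consequence of positivity alone. It should follow from the algebraic relations among the invariants, namely the second fundamental theorem: all $4\times4$ minors of the Gram matrix vanish identically, and products of determinants equal $3\times 3$ Gram determinants. Since $\phi$ is multiplicative, $\phi$ annihilates those minors. This forces $\rank G\le3$ and
\[
\delta_{ikl}\delta_{i'k'l'}=\det\bigl(G_{\{ikl\},\{i'k'l'\}}\bigr).
\]

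Step 3 is to choose vectors $v_1,\dots,v_n\in\R^3$ with Gram matrix $G$. The determinant relations fix the triple products up to one global sign. Applying a reflection if necessary, which is allowed because we only need some point, matches $\delta$. Then set
\[
a_i\coloneqq t_i+(v_i)_1\i+(v_i)_2\j+(v_i)_3\k.
\]

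Step 4 is to conclude. The homomorphisms $\phi$ and $b\mapsto b(\ba,\bar\ba)$ agree on a generating set of the commutative algebra $\mathcal B$, so they agree on all of $\mathcal B$.

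If invoking the first and second fundamental theorems for $SO(3)$ is too heavy, the fallback is Step 2' below, which would replace Steps 1 and 2. Step 2': use positivity together with Archimedean-free boundedness of $\phi$ on the finitely generated invariant algebra to realize $\phi$ as a point of the real spectrum. By the real Nullstellensatz applied to the invariant ring, whose real points are exactly the orbit space $\R^{4n}/SO(3)$ because orbits of compact groups are separated by invariants, one gets a real point of the quotient. That point lifts to some $\ba$.
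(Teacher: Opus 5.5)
Your argument is correct. Steps 2--4 coincide with the paper's reconstruction of the point:
\begin{itemize}
\item positivity applied to $a^*a$ with $a=\sum_k c_k(q_k-\bar q_k)/2$ gives $G\succeq0$;
\item vanishing $4\times4$ Gram determinants give $\operatorname{rank}G\le3$;
\item the Cauchy--Binet relation $\delta_I\delta_J=\det G_{I,J}$ fixes the triple products up to one global sign, and a reflection matches it.
\end{itemize}
You do not need the second fundamental theorem here. Those relations are polynomial identities on $(\R^3)^n$, so they hold in $\mathcal B$ once $\rho_\R$ is injective on $\mathcal A$, and that is how the paper obtains them.

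The real difference is Step 1. You show that $\mathcal B$ is generated by the preimages of $x_{i1}$, $\langle v_i,v_k\rangle$, $\det(v_i,v_k,v_l)$ by embedding $\mathcal B$ into $\R[V]^{\mathrm{SO}(3)}$ and invoking the first fundamental theorem. The paper uses exactly this argument, but only later, in the proof of Theorem~\ref{thm:real-quaternionic-riesz-haviland}.

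In the lemma itself the paper proves generation by hand:
\begin{itemize}
\item $\mathcal B'$ is the subalgebra generated by those symmetric elements, which is central by Lemma~\ref{lem:symmetric-central-quotient};
\item $\mathcal C$ is the $\mathcal B'$-module spanned by the skew elements $v_i=(q_i-\bar q_i)/2$ and their half-commutators $c_{ij}$;
\item the dot/cross-product identities show that $\mathcal B'+\mathcal C$ is a subalgebra containing $q_i,\bar q_i$, hence equal to $\mathcal A$;
\item a symmetric element of $\mathcal B'+\mathcal C$ must therefore lie in $\mathcal B'$.
\end{itemize}

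Your route is shorter and yields the stronger isomorphism $\mathcal B\simeq\R[V]^{\mathrm{SO}(3)}$. However, it rests on classical invariant theory and on $\mathcal B$ being faithfully realized as functions on $\H^n$. The paper's route is elementary and describes all of $\mathcal A$, not just its symmetric part. It also needs only centrality of the generators plus finitely many identities of degree at most four. That is why it can be rerun in Corollary~\ref{scalar-consistency-quartic} for the operator algebra $\pi(\mathcal F_n)$, where no faithful evaluation is available.

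You should drop the fallback Step 2', which does not work as stated. Orbit separation makes $V/\mathrm{SO}(3)$ inject into the real points of the quotient, but it does not make that map surjective. Real characters of $\R[V]^{\mathrm{SO}(3)}$ need not lift: for $n=1$ the invariants $x_{11}$ and $|v_1|^2$ are algebraically independent, so there is a character sending $|v_1|^2$ to $-1$. Excluding such characters is exactly the job of the positivity hypothesis. Your Steps 2--3 use that hypothesis; the fallback essentially does not.
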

\begin{proof}
We suppress the quotient map \(\pi\) in this proof and regard
\(q_i,\bar q_i\) as elements of \(\mathcal A\). Put
\[
x_i\coloneqq\frac{q_i+\bar q_i}{2},
\qquad
v_i\coloneqq\frac{q_i-\bar q_i}{2}.
\]
Then \(x_i^*=x_i\) and \(v_i^*=-v_i\). For skew-adjoint elements \(u,v\in\mathcal A\), define
\[
u\mathbin{\boldsymbol\cdot}v
\coloneqq-\frac12(uv+vu),
\qquad
u\mathbin{\boldsymbol\times}v
\coloneqq\frac12(uv-vu).
\]
Under scalar quaternionic evaluations, these operations coincide with the Euclidean dot and cross products, respectively, of the corresponding imaginary vectors. Define
\begin{equation}\label{sec4:lm3:eq1}
s_{ij}\coloneqq v_i\mathbin{\boldsymbol\cdot}v_j,
\qquad
c_{ij}\coloneqq v_i\mathbin{\boldsymbol\times}v_j,
\qquad
t_{ijk}\coloneqq v_i\mathbin{\boldsymbol\cdot}c_{jk}.
\end{equation}
The elements \(x_i,s_{ij},t_{ijk}\) are symmetric, whereas
\(v_i,c_{ij}\) are skew-adjoint.

We first prove that
\begin{equation}\label{eq:B-generators}
\mathcal B
=
\R[x_i,s_{ij},t_{ijk}:1\leq i,j,k\leq n],
\end{equation}
where the right-hand side denotes the subalgebra of \(\mathcal A\) generated by the displayed elements, rather than a freely generated polynomial ring. These generators satisfy relations; for example,
$t_{ijk}^2=
\det\bigl(s_{\alpha\beta}\bigr)_{\alpha,\beta\in\{i,j,k\}}$.
Let $\mathcal B'\coloneqq
\R[x_i,s_{ij},t_{ijk}:1\leq i,j,k\leq n]$ and let \(\mathcal C\) be the \(\mathcal B'\)-module generated by
\(v_i\) and \(c_{ij}\). By Lemma~\ref{lem:symmetric-central-quotient},
\(\mathcal B'\) is central in \(\mathcal A\).
The usual three-dimensional vector identities yield
\begin{align}
 c_{ij}\mathbin{\boldsymbol\cdot}c_{kr}
 &=s_{ik}s_{jr}-s_{ir}s_{jk},\label{eq:dot-cross-identities-a}\\
 v_i\mathbin{\boldsymbol\times}c_{jk}
 &=s_{ik}v_j-s_{ij}v_k,\label{eq:dot-cross-identities-b}\\
 c_{ij}\mathbin{\boldsymbol\times}v_k
 &=s_{ik}v_j-s_{jk}v_i,\label{eq:dot-cross-identities-c}\\
 c_{ij}\mathbin{\boldsymbol\times}c_{kr}
 &=t_{ijr}v_k-t_{ijk}v_r.\label{eq:dot-cross-identities-d}
\end{align}
Together with \eqref{sec4:lm3:eq1}
and the symmetry of the dot product, these identities imply
\[
\mathcal C\mathbin{\boldsymbol\cdot}\mathcal C\subseteq\mathcal B',
\qquad
\mathcal C\mathbin{\boldsymbol\times}\mathcal C\subseteq\mathcal C.
\]
For completeness, identities~\eqref{eq:dot-cross-identities-a}--\eqref{eq:dot-cross-identities-d} hold for vectors in $\R^3$ and therefore under every scalar quaternionic evaluation. The differences between left-hand sides and right-hand sides consequently belong to $\mathcal I_n^\R$, so the identities hold in $\mathcal A$.
For \(a,b\in\mathcal B'\) and \(u,v\in\mathcal C\), centrality of
\(\mathcal B'\) and the identity
\(uv=-u\mathbin{\boldsymbol\cdot}v+u\mathbin{\boldsymbol\times}v\) give
\[
(a+u)(b+v)
=
\bigl(ab-u\mathbin{\boldsymbol\cdot}v\bigr)
+
\bigl(av+bu+u\mathbin{\boldsymbol\times}v\bigr)
\in\mathcal B'+\mathcal C.
\]
Thus $\mathcal B'+\mathcal C$ is a subalgebra of $\mathcal A$. Since
$q_i=x_i+v_i$ and $\bar q_i=x_i-v_i$,
it contains all generators of \(\mathcal A\), and hence
\(\mathcal A=\mathcal B'+\mathcal C\). If \(z=a+u\in\mathcal A\), with
\(a\in\mathcal B'\) and \(u\in\mathcal C\), is symmetric, then
$a+u=z=z^*=a-u$. Hence $u=0$, and therefore $z\in\mathcal B'$, proving
\eqref{eq:B-generators}.

We now reconstruct a scalar quaternionic tuple from \(\phi\). Set
\[
X_i\coloneqq\phi(x_i),
\qquad
S_{ij}\coloneqq\phi(s_{ij}),
\qquad
T_{ijk}\coloneqq\phi(t_{ijk}),
\]
and let \(S=\left(S_{ij}\right)_{i,j=1}^n\). For every
\(\boldsymbol{\lambda}=(\lambda_1,\ldots,\lambda_n)\in\R^n\),
\begin{equation*}
\boldsymbol{\lambda}^{\intercal}S\boldsymbol{\lambda}
=\phi\!\left(\left(\sum_{i=1}^n\lambda_i v_i\right)^*\left(\sum_{j=1}^n\lambda_j v_j\right)\right)\geq0
\end{equation*}
by \eqref{eq:positive-character-condition}. Hence \(S\succeq0\).
For any indices \(i_1,\ldots,i_4\), the identity $\det\bigl(s_{i_a i_b}\bigr)_{a,b=1}^4=0$
holds under any scalar quaternionic evaluation because the matrix is the Gram matrix of four vectors in \(\R^3\). It therefore holds in $\mathcal B$, and applying $\phi$ shows that every $4\times4$ principal minor of $S$ vanishes. Since \(S\succeq0\), it follows that
$\rank(S)\leq3$.
Thus, there exists \(U\in\R^{3\times n}\) such that
\(S=U^{\intercal}U\). Denote the columns of \(U\) by
\(\mathbf u_1,\ldots,\mathbf u_n\in\R^3\).
For ordered triples \(I=(i_1,i_2,i_3)\) and
\(J=(j_1,j_2,j_3)\), write
\[
T_I\coloneqq T_{i_1i_2i_3},
\qquad
S_{I,J}\coloneqq(S_{i_a j_b})_{a,b=1}^3.
\]
The scalar triple-product identity gives the polynomial identity
\begin{equation}\label{eq:triple-Gram-identity}
t_{i_1i_2i_3}t_{j_1j_2j_3}
=
\det(s_{i_a j_b})_{a,b=1}^3
\end{equation}
in \(\mathcal B\). Applying \(\phi\) yields
\begin{equation}\label{eq:T-Gram-relation}
T_IT_J=\det S_{I,J}.
\end{equation}

If \(\rank(S)\leq2\), then
$T_I^2=\det S_{I,I}=0$
for every \(I\), while all determinants
\(\det(\mathbf u_{i_1},\mathbf u_{i_2},\mathbf u_{i_3})\) also vanish. Hence
$T_I=\det(\mathbf u_{i_1},\mathbf u_{i_2},\mathbf u_{i_3})$ for every $I$.
Suppose now that \(\rank(S)=3\). Choose a triple \(I_0\) such that
\(\det S_{I_0,I_0}>0\). Then
\(T_{I_0}\neq0\). If necessary, replace every vector $\mathbf u_i$ by $O\mathbf u_i$, where $O$ is an orthogonal matrix with $\det O=-1$. This transformation preserves the Gram matrix and allows us to arrange $\det(\mathbf u_{I_0})=T_{I_0}$,
where \(\mathbf u_I\) denotes the \(3\times3\) matrix whose columns are indexed by \(I\). For every ordered triple \(J\),
\begin{equation*}
T_{I_0}T_J=\det S_{I_0,J}=\det(\mathbf u_{I_0}^{\intercal}\mathbf u_J)=\det(\mathbf u_{I_0})\det(\mathbf u_J)=T_{I_0}\det(\mathbf u_J).
\end{equation*}
Since \(T_{I_0}\neq0\), it follows that
$T_J=\det(\mathbf u_J)$ for every $J$.
Finally, write
\(\mathbf u_i=((u_i)_1,(u_i)_2,(u_i)_3)\) and define
\[
a_i\coloneqq
X_i+(u_i)_1\i+(u_i)_2\j+(u_i)_3\k,
\qquad i=1,\ldots,n.
\]
At $\ba=(a_1,\ldots,a_n)$, the elements $x_i,s_{ij},t_{ijk}$ take the values $X_i,S_{ij},T_{ijk}$, respectively. Since these elements generate $\mathcal B$ by \eqref{eq:B-generators}, we have $\phi(b)=b(\ba,\bar\ba)$ for every $b\in\mathcal B$.
\end{proof}

The preceding two lemmas provide the bridge from scalar quaternionic positivity to the character space of a commutative Archimedean quadratic module.  We can therefore apply Jacobi's representation theorem to obtain the desired Positivstellensatz.
\begin{theorem}\label{thm:putinar-real-equalities}
Let $f,g_1,\ldots,g_m\in\sym\R\langle\q,\bar\q\rangle$, 
$h_1,\ldots,h_\ell\in\R\langle\q,\bar\q\rangle$.
Assume that \(\mathcal Q^{\R}(\bg,\bh)\) is Archimedean. If
$f>0$ on $K_{\bg,\bh}$, then $\widehat f\in \mathcal Q_\bh(\bg)$.
Equivalently, $f\in\mathcal Q^{\R}(\bg,\bh)+\sym\mathcal I_{n}^\R$.
\end{theorem}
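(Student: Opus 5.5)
The plan is to reduce the statement to Jacobi's representation theorem for Archimedean quadratic modules in the commutative real algebra $\mathcal B_\bh$, using Lemmas~\ref{lem:symmetric-central-quotient} and~\ref{lem:scalar-character-real}.

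\textbf{Step 1: the relevant quadratic module.} Let $M\coloneqq\mathcal Q_\bh(\bg)\cap\mathcal B_\bh$. Since every element of $\mathcal Q_\bh(\bg)$ is symmetric, in fact $M=\mathcal Q_\bh(\bg)$. We check that $M$ is a quadratic module in $\mathcal B_\bh$: it contains $1$, it is closed under addition, and it is closed under multiplication by squares $b^2$ with $b\in\mathcal B_\bh$. The last property holds because $b$ is central and symmetric by Lemma~\ref{lem:symmetric-central-quotient}, so $b^2p^*\widehat g_i p=(pb)^*\widehat g_i(pb)$. Centrality also lets us rewrite $p^*\widehat g_ip$ as $\widehat g_i\,p^*p$ whenever this is convenient.

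\textbf{Step 2: Archimedeanity.} Push the given identity $R-\sum|q_i|^2\in\mathcal Q^{\R}(\bg,\bh)$ through $\eta\circ\pi$. Each term transforms as follows.
\begin{itemize}
\item[(i)] The equality-constraint terms $\tau h\nu+\nu^*h^*\tau^*$ vanish in $\mathcal A_\bh$.
\item[(ii)] Each $\frac12(\sigma_ig_i+g_i\sigma_i)$ becomes $\widehat g_i\widehat\sigma_i$ by centrality of $\widehat g_i$, and this is a sum of $p^*\widehat g_ip$.
\end{itemize}
Hence $R-\sum|\widehat q_i|^2\in M$. Every element of $\mathcal B_\bh$ is the image of some $\frac{w+w^*}{2}$ combination, so to get Archimedeanity of $M$ in $\mathcal B_\bh$ it suffices to bound generators. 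Note that $|\widehat q_i|^2=\widehat x_i^2-\widehat v_i^2$, and that $\widehat x_i$, $\widehat s_{ij}$, $\widehat t_{ijk}$ generate $\mathcal B_\bh$ (they are images of the generators in \eqref{eq:B-generators}, and $\eta|_{\mathcal B}$ is surjective). We then use the standard fact that the bounded elements $\{b: N\pm b\in M\}$ form a subring. It therefore suffices to show that $N-\widehat x_i^2$, $N-\widehat s_{ii}$, $N\pm \widehat s_{ij}$ and $N\pm\widehat t_{ijk}$ lie in $M$.
\begin{itemize}
\item[(i)] $\widehat s_{ii}=\widehat v_i^*\widehat v_i$ is a square.
\item[(ii)] $R-\widehat x_i^2$ and $R-\widehat s_{ii}$ follow from the Archimedean element by adding squares.
\item[(iii)] $\widehat s_{ij}$ is handled by polarization, and $\widehat t_{ijk}$ by the ring property.
\end{itemize}

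\textbf{Step 3: Jacobi's theorem.} Jacobi's theorem gives that if $\phi(\widehat f)>0$ for every unital character $\phi:\mathcal B_\bh\to\R$ with $\phi(M)\ge0$, then $\widehat f\in M$. Take such a $\phi$ and compose with $\eta|_{\mathcal B}$ to obtain a character $\psi$ of $\mathcal B$. For $a\in\mathcal A$, the element $\eta(a)^*\eta(a)\in M$, so $\psi(a^*a)\ge0$. Lemma~\ref{lem:scalar-character-real} then yields $\ba\in\H^n$ with $\psi(b)=b(\ba,\bar\ba)$. It remains to check that $\ba\in K_{\bg,\bh}$.
\begin{itemize}
\item[(i)] For the inequalities, $g_i(\ba)=\phi(\widehat g_i)\ge0$.
\item[(ii)] For the equalities, note that $h_j^*h_j\in\mathcal J_\bh$, so its symmetric image is $0$ in $\mathcal B_\bh$. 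Therefore $|h_j(\ba)|^2=\psi(\pi(h_j^*h_j))=0$.
\end{itemize}
Thus $\phi(\widehat f)=f(\ba)>0$, and Jacobi gives $\widehat f\in\mathcal Q_\bh(\bg)$.

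\textbf{Step 4: lifting the certificate.} Write $\widehat f=\sum \widehat{p}_{0\alpha}^*\widehat p_{0\alpha}+\sum\widehat p_{i\alpha}^*\widehat g_i\widehat p_{i\alpha}$ with real-coefficient lifts. The difference $f-(\text{lifted sum})$ lies in $\pi^{-1}(\mathcal J_\bh)$, i.e.\ it is an element of $\mathcal I_n^\R$ plus a sum of terms $\tau h_j\nu$ and $\tau h_j^*\nu$. Symmetrize this by taking half of it plus half of its adjoint, which is allowed since $f$ and the lifted sum are symmetric. This produces the $h$-terms of $\mathcal Q^{\R}$; the $h_j^*$ terms are absorbed via the paired form $\nu^*h_j^*\tau^*$, and the remainder is symmetric and lies in $\mathcal I_n^\R$. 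Finally, $p^*g_ip$ differs from $\frac12(\sigma g_i+g_i\sigma)$ with $\sigma=p^*p$ only by an element of $\sym\mathcal I_n^\R$, again by centrality of $g_i$ modulo $\mathcal I_n^\R$.

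The converse direction of the equivalence follows by the same projection as in Step 2.

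The main obstacle I expect is Step 2, namely verifying that Archimedeanity transfers to all generators of $\mathcal B_\bh$, particularly the bound on $\widehat t_{ijk}$ and the precise form of Jacobi's theorem used (quadratic module versus preordering, and the unital requirement). If needed, I would use the identity $t_{ijk}^2=\det(s)$ together with the bounds on the $s_{ij}$.
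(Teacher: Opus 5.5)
Your proposal is correct and follows the paper's proof essentially step for step. You build the quadratic module in the commutative algebra $\mathcal B_\bh$, identify its nonnegative characters with points of $K_{\bg,\bh}$ via Lemma~\ref{lem:scalar-character-real}, apply Jacobi's theorem, and lift back to $\mathcal Q^{\R}(\bg,\bh)+\sym\mathcal I_n^\R$ by symmetrization and centrality; your Step~2 also supplies the Archimedean transfer to $\mathcal B_\bh$ that the paper only asserts.

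One correction in that step. The element $\widehat t_{ijk}$ is built from skew-adjoint elements and is not a ring expression in $\widehat x_i,\widehat s_{ij}$, so the subring property alone does not bound it. What is needed is your fallback: $t_{ijk}^2=\det(s_{\alpha\beta})$ together with $\tfrac{N+1}{2}\pm t=\tfrac12(N-t^2)+\tfrac12(t\pm1)^2$. Alternatively, $\mathcal Q_\bh(\bg)$ is closed under $c^*(\cdot)c$ for $c\in\mathcal A_\bh$, which bounds $N-a^*a$ for every $a\in\mathcal A_\bh$ directly.
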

\begin{proof}
By Lemma~\ref{lem:symmetric-central-quotient}, every summand in \eqref{eq:Mh-real} belongs to $\mathcal B_\bh$. The set $\mathcal Q_\bh(\bg)$ contains $1$ and is closed under addition and multiplication by squares from $\mathcal B_\bh$. Indeed, if \(b=b^*\in\mathcal B_\bh\), then centrality gives
\[
b^2a^*a=(ab)^*(ab),\qquad b^2a^*\widehat g_i a=(ab)^*\widehat g_i(ab).
\]
Thus \(\mathcal Q_\bh(\bg)\) is a quadratic module in the commutative algebra
\(\mathcal B_\bh\). 
% It is also closed under conjugation by arbitrary elements of
% \(\mathcal A_\bh\):
% \[
% c^*(a^*a)c=(ac)^*(ac), \qquad c^*(a^*\widehat g_i a)c=(ac)^*\widehat g_i(ac).
% \]
% This additional closure property is not needed for Jacobi's theorem, but will be useful for the ball criterion below.
Let
\[
X_{\mathcal Q_\bh(\bg)}\coloneqq
\left\{\chi\in\operatorname{Hom}_{\R\text{-alg}}(\mathcal B_\bh,\R)\,\middle|\,
\chi(\mathcal Q_\bh(\bg))\subseteq[0,\infty)\right\}.
\]
For $\ba\in K_{\bg,\bh}$, the corresponding scalar evaluation annihilates $\mathcal I_n^\R$ and \(\mathcal J_\bh^{\R}\), and therefore factors through $\mathcal A_\bh$. Its restriction to \(\mathcal B_\bh\), denoted by
\(\operatorname{ev}_{\ba}^{\bh}\), is a real character. Moreover,
\[
\operatorname{ev}_{\ba}^{\bh}\!\left(
\sum_{\alpha}p_{0\alpha}^*p_{0\alpha}
+\sum_{i,\alpha}p_{i\alpha}^*\widehat g_i p_{i\alpha}
\right)
=
\sum_{\alpha}|p_{0\alpha}(\ba)|^2
+\sum_{i,\alpha}g_i(\ba)|p_{i\alpha}(\ba)|^2
\geq0.
\]
Hence the map
\begin{equation}\label{eq:evaluation-surjection}
K_{\bg,\bh}\longrightarrow X_{\mathcal Q_\bh(\bg)},
\qquad
\ba\longmapsto\operatorname{ev}_{\ba}^{\bh},
\end{equation}
is well defined.

We next prove that \eqref{eq:evaluation-surjection} is surjective. Let
\(\chi\in X_{\mathcal Q_\bh(\bg)}\) and define
\[
\widetilde\chi\coloneqq\chi\circ\eta|_{\mathcal B}:\mathcal B\longrightarrow\R.
\]
For every \(a\in\mathcal A\), $\widetilde\chi(a^*a)
=\chi\bigl(\eta(a)^*\eta(a)\bigr)\geq0$,
because \(\eta(a)^*\eta(a)\in \mathcal Q_\bh(\bg)\). By
Lemma~\ref{lem:scalar-character-real}, there exists \(\ba\in\H^n\) such that
\begin{equation}\label{eq:chi-lift-evaluation}
\widetilde\chi(b)=b(\ba,\bar\ba)
\qquad\text{for every }b\in\mathcal B.
\end{equation}
For each equality constraint,
\[
0
=
\chi\!\left(\eta|_{\mathcal B}\bigl(\pi(h_j)^*\pi(h_j)\bigr)\right)
=
\widetilde\chi\bigl(\pi(h_j)^*\pi(h_j)\bigr)
=
|h_j(\ba,\bar\ba)|^2,
\]
so \(h_j(\ba,\bar\ba)=0\). Moreover, $\widehat g_i\in\mathcal Q_\bh(\bg)$, and hence
$g_i(\ba,\bar\ba)=
\widetilde\chi(\pi(g_i))
=\chi(\widehat g_i)
\geq0$.
Therefore \(\ba\in K_{\bg,\bh}\). Since
\(\eta|_{\mathcal B}\) is surjective, \eqref{eq:chi-lift-evaluation} implies
\(\chi=\operatorname{ev}_{\ba}^{\bh}\) on \(\mathcal B_\bh\). This proves surjectivity. The map need not be injective: for example, simultaneous conjugation of all quaternionic variables by a unit quaternion leaves every element of $\mathcal B$ invariant.

It follows from the strict positivity assumption that
\[
\chi(\widehat f)>0
\qquad\text{for every }\chi\in X_{\mathcal Q_\bh(\bg)}.
\]
If \(-1\in \mathcal Q_\bh(\bg)\), then \(\mathcal Q_\bh(\bg)=\mathcal B_\bh\). Indeed, for any
\(b\in\mathcal B_\bh\),
\[
b
=
\left(\frac{b+1}{2}\right)^2
+
(-1)\left(\frac{b-1}{2}\right)^2
\in \mathcal Q_\bh(\bg).
\]
Thus $\widehat f\in\mathcal Q_\bh(\bg)$. Now suppose that $-1\notin\mathcal Q_\bh(\bg)$. The Archimedean property of $\mathcal Q^{\R}(\bg,\bh)$ passes to $\mathcal Q_\bh(\bg)$; hence $\mathcal Q_\bh(\bg)$ is a proper Archimedean quadratic module. Jacobi's representation theorem \cite[Theorem 4]{jacobi2001representation} states that, for such a quadratic module in a commutative unital real algebra, every element that is strictly positive on all real characters nonnegative on the module belongs to the module. Applying it to \(\widehat f\) gives
$\widehat f\in \mathcal Q_\bh(\bg)$.
% Choose representatives \(p_{0\alpha},p_{j\alpha}\in\mathcal F_n\) of the finitely many factors occurring in such a representation of \(\widehat f\). Since
% \[
% \ker(\eta\circ\pi)
% =
% \mathcal I_{n}^\R+\langle h_1,\ldots,h_\ell\rangle_*,
% \]
% we obtain exactly \eqref{eq:putinar-congruence-real}.
It remains to relate this quotient formulation to
\(\mathcal Q^{\R}(\bg,\bh)\). Modulo \(\mathcal I_{n}^\R\), every symmetric element is central. Hence
\[
p^*g_ip
\equiv
 g_ip^*p
=
\frac12\bigl((p^*p)g_i+g_i(p^*p)\bigr)
\pmod{\mathcal I_{n}^\R}.
\]
Moreover, the symmetric part of \(\mathcal J_\bh^{\R}\) consists of finite sums of terms $\tau h_j\nu+\nu^*h_j^*\tau^*$.
Thus, $\widehat f\in \mathcal Q_\bh(\bg)$ is equivalent to \(f\in\mathcal Q^{\R}(\bg,\bh)+\sym\mathcal I_{n}^\R\).
\end{proof}

% \begin{corollary}[Ball criterion]\label{cor:putinar-ball-real}
% The Archimedean hypothesis in Theorem~\ref{thm:putinar-real-equalities} is satisfied if, for some \(R>0\),
% \begin{equation}\label{eq:ball-in-module-real}
% R-\sum_{i=1}^n\widehat q_i^*\widehat q_i\in \mathcal Q_\bh(\bg).
% \end{equation}
% \end{corollary}
% \begin{proof}
% Modulo \(\mathcal I_{n}^\R\), one has \(q_iq_i^*=q_i^*q_i\). It follows from
% \eqref{eq:ball-in-module-real} that, for every letter
% \(z\in\{\widehat q_1,\widehat q_1^*,\ldots,\widehat q_n,\widehat q_n^*\}\),
% \[
% R-z^*z\in \mathcal Q_\bh(\bg).
% \]
% If
% \(C-u^*u\in \mathcal Q_\bh(\bg)\) and
% \(D-v^*v\in \mathcal Q_\bh(\bg)\), then the closure under conjugation proved above gives
% \[
% CD-(uv)^*(uv)
% =
% v^*(C-u^*u)v+C(D-v^*v)
% \in \mathcal Q_\bh(\bg).
% \]
% By induction, every word \(w\) satisfies
% \(C_w-w^*w\in \mathcal Q_\bh(\bg)\) for some \(C_w>0\).
% Furthermore,
% \[
% 2C+2D-(u+v)^*(u+v)
% =
% 2(C-u^*u)+2(D-v^*v)+(u-v)^*(u-v)
% \in \mathcal Q_\bh(\bg).
% \]
% After absorbing real coefficients into the summands, another induction shows that, for every \(a\in\mathcal A_\bh\), there exists \(N_a>0\) such that
% \[
% N_a-a^*a\in \mathcal Q_\bh(\bg).
% \]
% In particular, if \(b=b^*\in\mathcal B_\bh\), then
% \(N_b-b^2\in \mathcal Q_\bh(\bg)\), and therefore
% \[
% \frac{N_b+1}{2}\pm b
% =
% \frac12\bigl(N_b-b^2+(b\pm1)^2\bigr)
% \in \mathcal Q_\bh(\bg).
% \]
% Hence \(\mathcal Q_\bh(\bg)\) is Archimedean.
% \end{proof}

\subsection{The Archimedean Positivstellensatz for quaternionic polynomials with quaternionic coefficients}\label{sec:putinar-quaternionic}
We next extend the positivity certificate to objectives with quaternionic coefficients.  The commutative reduction used in the preceding subsection is no longer directly available, because symmetric middle-coefficient quaternionic polynomials need not lie in the real symmetric part.  We instead lift such polynomials to operator-valued polynomial functions on the real Hilbert space $\H$ and apply an Archimedean Positivstellensatz for real $*$-algebras.  The constraints are still required to have real coefficients; this makes their operator lifts scalar and preserves the feasible set under simultaneous quaternionic conjugation.  Specifically, throughout this subsection,
\[
f\in\sym\P\langle\q,\bar\q\rangle,\quad g_i\in\sym\R\langle\q,\bar\q\rangle,\,i=1,\ldots,m,
\quad
h_j\in\R\langle\q,\bar\q\rangle,\,j=1,\ldots,\ell.
\]

Define
\begin{equation}\label{eq:QH-module}
\begin{split}
\mathcal Q^{\H}(\bg,\bh)\coloneqq\bigg\{&
\sigma_0+\frac12\sum_{i=1}^m(\sigma_i g_i+g_i\sigma_i)+\sum_{j=1}^\ell\sum_{\alpha}
\left(\tau_{j\alpha}h_j\nu_{j\alpha}^*
+\nu_{j\alpha}h_j^*\tau_{j\alpha}^*\right):\\
&\sigma_i\in\Sigma^{\H},\quad
\tau_{j\alpha},\nu_{j\alpha}\in\mathcal L
\bigg\}.
\end{split}
\end{equation}
We say that $\mathcal Q^{\H}(\bg,\bh)$ satisfies the \emph{Archimedean condition} if there exists $R>0$ such that
\begin{equation}\label{eq:QH-archimedean}
R-\sum_{i=1}^n|q_i|^2
\in\mathcal Q^{\H}(\bg,\bh).
\end{equation}

We first construct the operator algebra in which the lifted positivity problem will live.  Regard $\H$ as a four-dimensional real Hilbert space with inner product
$\langle\xi,\zeta\rangle_{\R}\coloneqq\RR(\bar\xi\zeta)$.
For \(a,b\in\H\), let
\[
L_a(\xi)\coloneqq a\xi,
\qquad
R_b(\xi)\coloneqq\xi b.
\]
Then $(L_aR_b)^*=L_{\bar a}R_{\bar b}$ and $L_aR_b=R_bL_a$.
\begin{lemma}
The $\R$-linear span
\begin{equation}\label{eq:operator-word-algebra}
\mathscr A_n
\coloneqq
\operatorname{span}_{\R}
\left\{
\q\longmapsto L_cR_{w(\q)}:
 c\in\H,\ w\in\cW
\right\}.
\end{equation}
is a unital real $*$-algebra of operator-valued polynomial functions on $\H^n$.
\end{lemma}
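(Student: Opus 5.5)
We check directly the four defining properties of a unital real $*$-algebra: $\mathscr A_n$ is an $\R$-subspace of the space of functions $\H^n\to\operatorname{End}_{\R}(\H)$, it contains the unit, it is closed under pointwise composition, and it is closed under the pointwise adjoint taken with respect to $\langle\cdot,\cdot\rangle_{\R}$. Linearity holds by construction, since $\mathscr A_n$ is defined as a real span. The unit is the constant function $\q\mapsto L_1R_{1}=\operatorname{id}_{\H}$, which corresponds to $c=1$ and the empty word. All operations are taken pointwise in $\q$, so it suffices to check closure on the spanning elements $\Phi_{c,w}(\q)\coloneqq L_cR_{w(\q)}$ and then extend by bilinearity for products and by real linearity for the adjoint.

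\textbf{Products.} For $c,c'\in\H$ and $w,w'\in\cW$, the relation $L_aR_b=R_bL_a$ together with $L_aL_{a'}=L_{aa'}$ and $R_bR_{b'}=R_{b'b}$ gives, for each $\q$,
\[
L_cR_{w(\q)}\,L_{c'}R_{w'(\q)}=L_{cc'}R_{w(\q)}R_{w'(\q)}=L_{cc'}R_{w'(\q)w(\q)}=L_{cc'}R_{(w'w)(\q)}.
\]
Here we use that evaluation is multiplicative on words: $(w'w)(\q)=w'(\q)w(\q)$. Because $w'w\in\cW$, we get $\Phi_{c,w}\Phi_{c',w'}=\Phi_{cc',w'w}\in\mathscr A_n$. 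The only subtlety is the normal-form identification of words in \eqref{normform}. Since $|q_j|^2$ is real under scalar evaluation, identified words evaluate to the same quaternion, so $\Phi_{c,w}$ is well defined. The order reversal $w'w$ is harmless because we only need the product to lie in the span.

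\textbf{Adjoint.} We first check $(L_a)^*=L_{\bar a}$ and $(R_b)^*=R_{\bar b}$ for $\langle\xi,\zeta\rangle_{\R}=\RR(\bar\xi\zeta)$. For $L_a$ this reads $\RR(\overline{a\xi}\zeta)=\RR(\bar\xi\bar a\zeta)$, which is immediate. For $R_b$ it reads $\RR(\bar b\bar\xi\zeta)=\RR(\bar\xi\zeta\bar b)$, which follows from cyclic invariance of $\RR$. This is the identity $(L_aR_b)^*=L_{\bar a}R_{\bar b}$ quoted before the lemma. Consequently
\[
\Phi_{c,w}(\q)^*=L_{\bar c}R_{\overline{w(\q)}}=L_{\bar c}R_{w^*(\q)}=\Phi_{\bar c,w^*}(\q),
\]
using $\overline{w(\q)}=w^*(\q,\bar\q)$ for scalar evaluation, which holds because conjugation reverses products and sends $q_i\leftrightarrow\bar q_i$. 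So the span is $*$-closed. The usual involution axioms, namely anti-multiplicativity, being an involution, and real-linearity, are inherited from the adjoint on $\operatorname{End}_{\R}(\H)$.

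The main point requiring care, and the step we expect to be the only obstacle, is to make sure the adjoint is taken in the correct real Hilbert structure, so that the $R_b$ adjoint uses cyclicity of $\RR$, and to handle the order reversal $R_bR_{b'}=R_{b'b}$ in products. Neither step produces anything outside $\mathscr A_n$, so the lemma follows. We may also remark that the map $\sum_w c_w w\mapsto\sum_w L_{c_w}R_{w(\cdot)}$ is then a $*$-anti-homomorphism in the word variable. This is what later links $\mathcal L$ to $\mathscr A_n$, but it is not needed for the statement itself.
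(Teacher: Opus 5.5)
Your proposal is correct and follows essentially the same route as the paper. Both arguments get closure under products from $L_cR_{u(\q)}L_dR_{v(\q)}=L_{cd}R_{(vu)(\q)}$ (with the same word-order reversal), the unit from the empty word, and $*$-closure from $L_c^*=L_{\bar c}$, $R_a^*=R_{\bar a}$ via cyclic invariance of $\RR$; your remark that identified normal-form words evaluate to the same quaternion is a small welcome addition. The one thing the paper checks that you leave implicit is that each map $\q\mapsto L_cR_{w(\q)}$ is polynomial in the $4n$ real coordinates, since its real $4\times4$ matrix depends linearly on the real components of $w(\q)$; this is immediate, but it is part of the statement.
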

\begin{proof}
For $c\in\H$ and $w\in\cW$, the map
$\q\mapsto L_cR_{w(\q)}$ is an operator-valued polynomial function on $\H^n$. Indeed, after identifying $\H$ with $\R^4$, the four real components of $w(\q)$ are ordinary real polynomials in the $4n$ real coordinates of $\q$, and the real matrix representing $L_cR_{w(\q)}$ depends real-linearly on these components. Hence every element of $\mathscr A_n$ is an operator-valued polynomial function.

By definition, $\mathscr A_n$ is a real vector space. We next verify closure under multiplication.  Let \(c,d\in\H\) and \(u,v\in\cW\).  For every
\(\xi\in\H\) and \(\q\in\H^n\),
\begin{equation*}
\bigl(L_cR_{u(\q)}\bigr)
\bigl(L_dR_{v(\q)}\bigr)\xi=
c\bigl(d\xi v(\q)\bigr)u(\q)=
cd\xi v(\q)u(\q).
\end{equation*}
Therefore
\begin{equation}\label{eq:operator-word-product}
\bigl(L_cR_{u(\q)}\bigr)
\bigl(L_dR_{v(\q)}\bigr)
=
L_{cd}R_{(vu)(\q)}.
\end{equation}
Note that the word order is reversed because right multiplication is an antihomomorphism:
$R_{u(\q)}R_{v(\q)}=R_{v(\q)u(\q)}$.
Since $vu\in\cW$ after reduction to normal form if necessary, the right-hand side of \eqref{eq:operator-word-product} belongs to $\mathscr A_n$. Bilinearity therefore establishes closure under multiplication. The empty word $1$ belongs to $\cW$ and $L_1R_1=I_{\H}$, so $\mathscr A_n$ is unital.

It remains to verify closure under the involution.  With respect to
\(\langle\xi,\zeta\rangle_{\R}=\RR(\bar\xi\zeta)\), one has
\[
L_c^*=L_{\bar c},
\qquad
R_a^*=R_{\bar a}
\qquad(c,a\in\H).
\]
Indeed, for \(\xi,\zeta\in\H\),
\[
\langle\xi,L_c\zeta\rangle_{\R}
=
\RR(\bar\xi c\zeta)
=
\langle L_{\bar c}\xi,\zeta\rangle_{\R},
\]
while, using \(\RR(ab)=\RR(ba)\),
\begin{equation*}
\langle\xi,R_a\zeta\rangle_{\R}=
\RR(\bar\xi\zeta a)=
\RR(a\bar\xi\zeta)=
\RR\!\left(\overline{\xi\bar a}\,\zeta\right)
=\langle R_{\bar a}\xi,\zeta\rangle_{\R}.
\end{equation*}
Since left and right multiplications commute,
\begin{equation*}
\bigl(L_cR_{w(\q)}\bigr)^*=
R_{w(\q)}^*L_c^*=
R_{\overline{w(\q)}}L_{\bar c}=
L_{\bar c}R_{\overline{w(\q)}}=
L_{\bar c}R_{w^*(\q)}.
\end{equation*}
Because \(w^*\in\cW\), the right-hand side belongs to \(\mathscr A_n\).
Real-linearity of the adjoint then gives \(A^*\in\mathscr A_n\) for every
\(A\in\mathscr A_n\).
Thus $\mathscr A_n$ is a unital real $*$-algebra of operator-valued polynomial functions on $\H^n$.
\end{proof}

Having established that $\mathscr A_n$ is a real $*$-algebra, we now define the two lifts used in the Positivstellensatz proof: $\Theta$ for left-coefficient polynomials and $\Phi$ for symmetric middle-coefficient quaternionic polynomials.
For a left-coefficient polynomial $p=\sum_w c_w w\in\mathcal L$, define its operator lift by
\begin{equation}\label{eq:Theta-lift}
\Theta(p)(\q)
\coloneqq
\sum_wL_{c_w}R_{w(\q)}\in\mathscr A_n.
\end{equation}
By construction, $\mathscr A_n=\Theta(\mathcal L)$.
For $p=\sum_{u,v\in\mathcal W} up_{u,v}v\in\sym\P\langle\q,\bar\q\rangle$,
define
\begin{equation}\label{eq:Phi-lift-definition}
T_p(\q)
\coloneqq
\sum_{u,v\in\mathcal W} L_{p_{u,v}}R_{v(\q)u(\q)},
\qquad
\Phi(p)\coloneqq\frac12(T_p+T_p^*)\in\mathscr A_n.
\end{equation}
For each equality constraint, set
$H_j(\q)\coloneqq R_{h_j(\q)}=\Theta(h_j)(\q)$,
and let
\begin{equation}\label{eq:operator-equality-ideal}
\mathscr J_\bh^{\H}
\coloneqq
\langle H_1,\ldots,H_\ell\rangle_*
\subseteq\mathscr A_n.
\end{equation}
Let
$\mathcal J_{\bh}^{\R}\coloneqq
\langle h_1,\ldots,h_\ell\rangle_*^{\R\langle\q,\bar\q\rangle}$ and
$\mathcal J_\bh^{\H}\coloneqq
\langle h_1,\ldots,h_\ell\rangle_*^{\H\langle\q,\bar\q\rangle}$
denote the two-sided $*$-ideals generated by the equality constraints in $\R\langle\q,\bar\q\rangle$ and $\H\langle\q,\bar\q\rangle$, respectively.  The next lemma is the technical bridge between polynomial certificates and the operator algebra: it identifies scalar evaluations through quadratic forms, sends QSOS terms to operator squares, and tracks both equality and scalar-identity remainders.

\begin{lemma}\label{lem:operator-lift-equality-ideal}
For every \(\q\in\H^n\), every
\(p\in\sym\P\langle\q,\bar\q\rangle\), and every nonzero
\(\xi=|\xi|u\in\H\) with \(|u|=1\),
\begin{equation}\label{eq:Phi-quadratic-form}
\langle\xi,\Phi(p)(\q)\xi\rangle_{\R}
=
|\xi|^2p(u\q\bar u),
\end{equation}
where $u\q\bar u\coloneqq(uq_1\bar u,\ldots,uq_n\bar u)$.
Moreover,
\begin{align}
\Phi(p^*p)&=\Theta(p)^*\Theta(p),
&&p\in\mathcal L,\label{eq:Phi-square}\\
\Phi\left(\frac12(\sigma g+g\sigma)\right)
&=\sum_s\Theta(p_s)^*G\Theta(p_s),
&&\sigma=\sum_sp_s^*p_s,\,
G(\q)\coloneqq R_{g(\q)},\label{eq:Phi-weighted}\\
\ker\!\left(\Phi|_{\sym\P\langle\q,\bar\q\rangle}\right)
&=\mathcal I_{n}^\H\cap\sym\P\langle\q,\bar\q\rangle,\label{eq:Phi-kernel}\\
\Phi\left(\mathcal J_\bh^{\H}\cap\sym\P\langle\q,\bar\q\rangle\right)
&=
\mathscr J_\bh^{\H}\cap\sym\mathscr A_n,\label{eq:equality-ideal-lift}
\end{align}
for every \(g\in\sym\R\langle\q,\bar\q\rangle\). Finally, for any $p\in\mathcal J_\bh^{\H}\cap\sym\P\langle\q,\bar\q\rangle$, there exist finitely many
\(\tau_{j\alpha},\nu_{j\alpha}\in\mathcal L\) such that
\begin{equation}\label{eq:explicit-equality-operator-lift}
p-
\sum_{j=1}^{\ell}\sum_{\alpha}
\left(
\tau_{j\alpha}h_j\nu_{j\alpha}^*
+\nu_{j\alpha}h_j^*\tau_{j\alpha}^*
\right)
\in\mathcal I_{n}^\H\cap\sym\P\langle\q,\bar\q\rangle,
\end{equation}
where in each summand one of
\(\tau_{j\alpha},\nu_{j\alpha}\) may be chosen to have real coefficients.
\end{lemma}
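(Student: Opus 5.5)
I would verify the identities in the order stated, starting with the quadratic-form identity \eqref{eq:Phi-quadratic-form}. Everything else rests on it.

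\textbf{Step 1: the quadratic form.} For a single term $u p_{u,v} v$, I would compute
\[
\langle\xi,L_{c}R_{v(\q)u(\q)}\xi\rangle_{\R}=\RR\bigl(\bar\xi c\,\xi\, v(\q)u(\q)\bigr).
\]
By cyclic invariance of $\RR$ this equals $\RR(u(\q)\bar\xi c\xi v(\q))$. Writing $\xi=|\xi|u_0$ with $|u_0|=1$, I would insert $u_0\bar u_0=1$ between letters. Because each letter of $u,v$ is $q_i$ or $\bar q_i$, and conjugation by $\bar u_0$ commutes with the quaternionic conjugation, this gives
\[
\RR\bigl(\bar u_0u(\q)u_0\cdot c\cdot \bar u_0 v(\q)u_0\bigr).
\]
Here $\bar u_0\q u_0$ appears where the statement has $u\q\bar u$. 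To reconcile the two, I would use that $\RR$ is invariant under conjugation and relabel the unit, so the statement's $u$ plays the role of $\bar u_0$. Since $\langle\xi,T^*\xi\rangle=\langle\xi,T\xi\rangle$, the symmetrization in $\Phi$ does not change the quadratic form. Summing over terms gives $|\xi|^2\RR(p(u\q\bar u))$, and this equals $|\xi|^2 p(u\q\bar u)$ because $p$ is symmetric, hence real-valued.

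\textbf{Step 2: the product identities \eqref{eq:Phi-square} and \eqref{eq:Phi-weighted}.} These are direct consequences of the product and adjoint rules from the previous lemma. For $p=\sum c_w w\in\mathcal L$ we have $p^*p=\sum \bar w\,(\bar c_w c_{w'})\,w'$, and \eqref{eq:operator-word-product} gives $\Theta(p)^*\Theta(p)=\sum L_{\bar c_w c_{w'}}R_{w'(\q)w^*(\q)}$, which is exactly $T_{p^*p}$. Since $T_{p^*p}$ is self-adjoint, $\Phi(p^*p)=T_{p^*p}$. For a real-coefficient $g$, the operator $G=R_{g(\q)}$ commutes with every $L_c$. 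A similar computation then shows that $T_{p_s^*gp_s}=\Theta(p_s)^*G\Theta(p_s)$. Modulo symmetrization, $\tfrac12(\sigma g+g\sigma)$ has the same $\Phi$ image, because $T$ of each half yields the same expression after taking $\Phi$. I would check this with the word-reversal bookkeeping of $R$.

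\textbf{Step 3: the kernel identity \eqref{eq:Phi-kernel}.} If $\Phi(p)=0$, then Step 1 gives $p(\q)=0$ for all $\q$ (take $u=1$), so $\rho(p)=0$ and $p\in\mathcal I_n^\H$. Conversely, suppose $p\in\mathcal I_n^\H$ is symmetric. Then $p$ vanishes at every scalar tuple, in particular at $u\q\bar u$, so the quadratic form of the self-adjoint operator $\Phi(p)(\q)$ vanishes identically. A self-adjoint operator with zero quadratic form is zero, so $\Phi(p)=0$.

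\textbf{Step 4: the equality-ideal statements \eqref{eq:equality-ideal-lift} and \eqref{eq:explicit-equality-operator-lift}.} This is the main obstacle. I would first reduce a general element of $\mathcal J_\bh^{\H}$ modulo $\mathcal I_n^\H$. Using the generators from the quaternionic identity-ideal lemma (the $\chi_{ij}$ are central modulo $\mathcal I_n^\H$ and $q_i\equiv\sum\chi_{ij}e_j$), every quaternionic constant can be moved to the left across letters at the cost of real-coefficient words. Hence any $a h_j b$ with $a,b\in\H\langle\q,\bar\q\rangle$ is congruent to a sum of terms $\tau h_j \nu^*$ in which $\tau\in\mathcal L$ and $\nu$ has real coefficients; this uses that $h_j$ has real coefficients. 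Symmetrizing a symmetric $p$ (that is, writing $p=\tfrac12(p+p^*)$) then gives \eqref{eq:explicit-equality-operator-lift}.

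Applying $\Phi$ to \eqref{eq:explicit-equality-operator-lift} and using Step 3 gives
\[
\Phi(p)=\sum \Theta(\nu)^* H_j\Theta(\tau)^{*}+\text{adjoint},
\]
up to the order reversal of $R$ (checked as in Step 2). This lies in $\mathscr J_\bh^{\H}\cap\sym\mathscr A_n$, proving ``$\subseteq$''.

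For ``$\supseteq$'', I would use $\mathscr A_n=\Theta(\mathcal L)$ to write a symmetric element of $\mathscr J_\bh^{\H}$ as $\sum A H_j B$ with $A,B\in\Theta(\mathcal L)$. I would then pull back each term to a polynomial $\tau h_j\nu^*$ with the same $T$-image, and take the symmetric part. The delicate point is checking that the reversed word order under $\Theta$ matches the polynomial order; I expect to handle this by taking $\nu$ real so that $R$ terms commute past $L$ terms.
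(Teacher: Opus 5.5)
The genuine gap is in Step~4. Your ``$\supseteq$'' construction there is the paper's, and Steps~1--3 can be repaired (two slips are noted at the end). The problem is the reduction on which your proof of \eqref{eq:explicit-equality-operator-lift}, and through it ``$\subseteq$'' in \eqref{eq:equality-ideal-lift}, rests.

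\emph{Why the reduction fails.} Modulo $\mathcal I_n^\H$, a quaternionic constant cannot be moved to the left ``at the cost of real-coefficient words.'' The $\chi_{ij}$ with $j\ge2$ themselves carry constants on both sides, e.g.\ $\chi_{i2}=\frac12(\i\bar q_i-q_i\i)$. So substituting $q_i\equiv\sum_j\chi_{ij}e_j$ only trades one constant for others. In fact $q_1\i\notin\mathcal L+\mathcal I_n^\H$:
\begin{itemize}
\item $\rho$ preserves multidegree, and the multidegree-$(1,0,\dots,0)$ part of $\rho(\mathcal L)$ is $\H x_{11}+\H v_1$, where $v_1=x_{12}\i+x_{13}\j+x_{14}\k$.
\item Since $\rho(q_1\i)=x_{11}\i+v_1\i$, membership would require $v\i=\beta v$ on $\operatorname{Im}\H$ for one fixed $\beta\in\H$.
\item But $v=\i$ forces $\beta=\i$, while $v=\j$ forces $\beta=-\i$.
\end{itemize}
Take $h_1=q_1$. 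The term $h_1\i$ equals $\tau h_1\nu^*$ with $\tau=1$, $\nu=-\i$, an admissible equality term of $\mathcal Q^\H(\bg,\bh)$. It is not congruent to any $\sum\tau h_1\nu^*$ with $\tau\in\mathcal L$ and $\nu$ real.

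\emph{What symmetry does and does not fix.} Because $\rho(p)$ is real-valued, cyclic invariance gives $\RR(c\,x\,\bar d)=\RR(\bar dc\,x)$. Hence a symmetrized two-coefficient term $y+y^*$ with $y=c\,wh_jw'\,\bar d$ is congruent modulo $\mathcal I_n^\H$ to $t+t^*$, where $t=(\bar dc)\,wh_jw'\in\mathcal L$. This is the correct replacement for your per-term step, and it handles sums of the equality terms of $\mathcal Q^\H(\bg,\bh)$. However, the lemma concerns an arbitrary $p\in\mathcal J_\bh^\H\cap\sym\P\langle\q,\bar\q\rangle$. Its presentation $p=\sum_ka_kh_{j_k}b_k$ may use multipliers with several coefficients separated by letters. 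Such terms are not individually reducible, and nothing in your argument merges coefficients separated by words.

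\emph{The missing idea.} You need a structural description of $\mathcal J_\bh^\H\cap\P\langle\q,\bar\q\rangle$ itself. The paper obtains it from $\H\langle\q,\bar\q\rangle\simeq\H*_\R\mathcal F_n$ together with $(\H*_\R\mathcal F_n)/\mathcal J_\bh^\H\simeq\H*_\R(\mathcal F_n/\mathcal J_\bh^\R)$. These show that the intersection is spanned by one-coefficient terms $acb$ with $a\in\mathcal J_\bh^\R$ or $b\in\mathcal J_\bh^\R$. Then $T_{acb}=L_cR_{a(\q)}R_{b(\q)}\in\mathscr J_\bh^\H$, which gives ``$\subseteq$'' directly. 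Only afterwards is \eqref{eq:explicit-equality-operator-lift} derived, in three moves:
\begin{itemize}
\item write $\Phi(p)=\sum_\beta A_\beta H_{j_\beta}^{\varepsilon_\beta}B_\beta=\sum_\beta\Theta(t_\beta)$, including the $H_j^*$ terms;
\item pull back as in your ``$\supseteq$'' argument;
\item apply the kernel identity.
\end{itemize}
Your logical order therefore has to be reversed.

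\emph{Two smaller slips.} In Step~1 the conjugation is backwards. Conjugating the whole product $u(\q)\bar u_0cu_0v(\q)$ by $u_0$ gives $\RR\bigl(u(u_0\q\bar u_0)\,c\,v(u_0\q\bar u_0)\bigr)$, which is exactly the statement with $u=u_0=\xi/|\xi|$. No relabeling is needed, and none is allowed: $u$ is fixed by $\xi$, and $p(u\q\bar u)\ne p(\bar u\q u)$ in general for quaternionic coefficients.

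In Step~2, $p_s^*gp_s$ has two coefficients separated by $g$. It is therefore not in $\P\langle\q,\bar\q\rangle$, and $T$ is undefined on it. Instead, for $\sigma=p^*p$ with $p=\sum_wc_ww$, compute that $T_{\sigma g}$ and $T_{g\sigma}$ both equal $\sum_{w,w'}L_{\bar c_wc_{w'}}R_{w'(\q)g(\q)w^*(\q)}=\Theta(p)^*G\Theta(p)$. With that fix, your direct algebraic check of \eqref{eq:Phi-square}--\eqref{eq:Phi-weighted} is a valid alternative to the paper's comparison of quadratic forms. Presentation-independence of $\Phi$ then follows from Step~1.
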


\begin{proof}
For a summand $acb$ in the representation of $p\in\sym\P\langle\q,\bar\q\rangle$, where $a,b\in\mathcal W$ and $c\in\H$, cyclic invariance of the real part gives
\begin{equation*}
\left\langle
u,L_cR_{b(\q)a(\q)}u
\right\rangle_{\R}=
\RR\!\left(\bar ucub(\q)a(\q)\right)=
\RR\!\left(
a(u\q\bar u)cb(u\q\bar u)
\right).
\end{equation*}
Summing over the presentation of $p$, using $p=p^*$, and multiplying by $|\xi|^2$ proves \eqref{eq:Phi-quadratic-form}.

For $p=\sum_w c_w w\in\mathcal L$, the telescoping identity $w(u\q\bar u)=u w(\q)\bar u$ gives
\begin{equation}\label{eq:Theta-intertwining}
\Theta(p)(\q)u
=p(u\q\bar u)u.
\end{equation}
It follows that
$\|\Theta(p)(\q)\xi\|^2=
|\xi|^2|p(u\q\bar u)|^2$.
Comparing this identity with \eqref{eq:Phi-quadratic-form} proves \eqref{eq:Phi-square}.

If \(g\in\sym\R\langle\q,\bar\q\rangle\), then its real coefficients imply
$g(u\q\bar u)=u g(\q)\bar u$.
Since \(g(\q)\in\R\), this reduces to
\begin{equation}\label{eq:g-conjugation-invariance}
g(u\q\bar u)=g(\q).
\end{equation}
Consequently, $G(\q)=R_{g(\q)}=g(\q)I_{\H}$ is a scalar self-adjoint element of $\mathscr A_n$. Comparing the quadratic forms on both sides, using \eqref{eq:Theta-intertwining} and \eqref{eq:g-conjugation-invariance}, proves \eqref{eq:Phi-weighted}.

If \(p\in \mathcal I_{n}^\H\cap\sym\P\langle\q,\bar\q\rangle\), then \eqref{eq:Phi-quadratic-form} gives
\(\Phi(p)=0\). Conversely, if \(\Phi(p)=0\), then
\eqref{eq:Phi-quadratic-form} with \(\xi=1\) gives
$p(\q)=0$ for every $\q\in\H^n$,
so \(p\in \mathcal I_{n}^\H\). This proves \eqref{eq:Phi-kernel}.

To prove \eqref{eq:equality-ideal-lift}, identify the full quaternionic free algebra with the real free product $\H\langle\q,\bar\q\rangle\simeq
\H*_{\R}\mathcal F_n$.
By the universal property of the free product,
\begin{equation}\label{eq:free-product-quotient}
\bigl(\H*_{\R}\mathcal F_n\bigr)/\mathcal J_\bh^{\H}
\simeq
\H*_{\R}\bigl(\mathcal F_n/\mathcal J_{\bh}^{\R}\bigr).
\end{equation}
Write
\[
\H=\R1\oplus\H_0,
\qquad
\H_0\coloneqq\operatorname{span}_{\R}\{\i,\j,\k\}.
\]
We have
\begin{equation}\label{eq:P-reduced-sector}
\P\langle\q,\bar\q\rangle
\simeq
\mathcal F_n
\oplus
\bigl(
\mathcal F_n\otimes_{\R}\H_0\otimes_{\R}\mathcal F_n
\bigr).
\end{equation}
Under the quotient map in \eqref{eq:free-product-quotient}, the kernel on $\P\langle\q,\bar\q\rangle$ is
\begin{equation}\label{eq:J-intersection-P}
\mathcal J_\bh^{\H}\cap\P\langle\q,\bar\q\rangle
\simeq
\mathcal J_{\bh}^{\R}
\oplus
\bigl(
\mathcal J_{\bh}^{\R}\otimes\H_0\otimes\mathcal F_n
\bigr)+
\bigl(
\mathcal F_n\otimes\H_0\otimes\mathcal J_{\bh}^{\R}
\bigr).
\end{equation}
The last two summands need not be disjoint.
The map
\[
\mathcal F_n\longrightarrow\mathscr A_n,
\qquad
a\longmapsto R_{a(\q)},
\]
is a unital $*$-antihomomorphism. In particular,
\[
a\in\mathcal J_{\bh}^{\R}
\quad\Longrightarrow\quad
R_{a(\q)}\in\mathscr J_\bh^{\H}.
\]
For a basic term \(acb\), with \(a,b\in\mathcal W\) and \(c\in\H\),
$T_{acb}(\q)=L_cR_{b(\q)a(\q)}
=L_cR_{a(\q)}R_{b(\q)}$.
Thus \eqref{eq:J-intersection-P} implies
\[
p\in \mathcal J_\bh^{\H}\cap\P\langle\q,\bar\q\rangle
\quad\Longrightarrow\quad
T_p\in\mathscr J_\bh^{\H}.
\]
If \(p=p^*\), then
$\Phi(p)=\frac12(T_p+T_p^*)\in\mathscr J_\bh^{\H}$,
which proves the inclusion
$\Phi(\mathcal J_\bh^{\H}\cap\sym\P\langle\q,\bar\q\rangle)
\subseteq
\mathscr J_\bh^{\H}\cap\sym\mathscr A_n$.
For the reverse inclusion, let $D=D^*\in\mathscr J_\bh^{\H}$. Each elementary ideal term has the form
$AH_j^\varepsilon B, \varepsilon\in\{1,*\}$.
By linearity, it suffices to consider
$A=L_cR_{u(\q)}, B=L_dR_{v(\q)}$,
with \(c,d\in\H\) and \(u,v\in\mathcal W\). Then
\begin{equation}\label{eq:operator-ideal-basic-lift}
AH_j^\varepsilon B
=L_{cd}R_{v(\q)h_j^\varepsilon(\q)u(\q)}
=
\Theta\bigl((cd)vh_j^\varepsilon u\bigr).
\end{equation}
The polynomial $(cd)vh_j^\varepsilon u$ belongs to $\mathcal J_\bh^{\H}\cap\mathcal L$. Summing the corresponding terms therefore yields $t\in\mathcal J_\bh^{\H}\cap\mathcal L$ such that $D=\Theta(t)$.
Since \(D=D^*\), the polynomial
$p\coloneqq\frac12(t+t^*)$
belongs to \(\mathcal J_\bh^{\H}\cap\sym\P\langle\q,\bar\q\rangle\), and
$\Phi(p)=\frac12\bigl(\Theta(t)+\Theta(t)^*\bigr)
=D$.
This proves the reverse inclusion.

We finally prove \eqref{eq:explicit-equality-operator-lift}. By \eqref{eq:Phi-kernel}, it suffices to find finitely many
\(\tau_{j\alpha},\nu_{j\alpha}\in\mathcal L\) such that
\begin{equation}\label{eq:explicit-equality-operator-lift-1}
\Phi(p)=\Phi\left(
\sum_{j=1}^{\ell}\sum_{\alpha}
\left(
\tau_{j\alpha}h_j\nu_{j\alpha}^*
+\nu_{j\alpha}h_j^*\tau_{j\alpha}^*
\right)\right).
\end{equation}
Write $\Phi(p)\in\mathscr J_\bh^{\H}\cap\sym\mathscr A_n$ as a finite sum of elementary ideal terms
$\Phi(p)=\sum_{\beta=1}^{N}A_\beta H_{j_\beta}^{\varepsilon_\beta}B_\beta$, 
$\varepsilon_\beta\in\{1,*\}$.
After expanding \(A_\beta,B_\beta\) linearly, it suffices to take
$A_\beta=L_{c_\beta}R_{u_\beta(\q)}$,
$B_\beta=L_{d_\beta}R_{v_\beta(\q)}$,
where \(c_\beta,d_\beta\in\H\) and
\(u_\beta,v_\beta\in\mathcal W\). By
\eqref{eq:operator-ideal-basic-lift},
\[
A_\beta H_{j_\beta}^{\varepsilon_\beta}B_\beta
=\Theta(t_\beta),
\qquad
t_\beta
\coloneqq
(c_\beta d_\beta)v_\beta
h_{j_\beta}^{\varepsilon_\beta}u_\beta
\in\mathcal L.
\]
Since \(\Phi(p)=\Phi(p)^*\),
\begin{equation}\label{eq:selfadjoint-operator-ideal-symmetrization}
\Phi(p)
=\frac12\sum_{\beta=1}^{N}
\left(\Theta(t_\beta)+\Theta(t_\beta)^*\right).
\end{equation}
If \(\varepsilon_\beta=1\), set $\tau_\beta
\coloneqq
\frac12(c_\beta d_\beta)v_\beta$, 
$\nu_\beta\coloneqq u_\beta^*$.
Then
$\tau_\beta h_{j_\beta}\nu_\beta^*
+\nu_\beta h_{j_\beta}^*\tau_\beta^*
=\frac12(t_\beta+t_\beta^*)$.
If \(\varepsilon_\beta=*\), set instead
$\tau_\beta\coloneqq u_\beta^*$,
$\nu_\beta\coloneqq
\frac12(c_\beta d_\beta)v_\beta$;
the same identity holds with the two terms interchanged. In either case, one multiplier has real coefficients, so each symmetrized term belongs to $\sym\P\langle\q,\bar\q\rangle$. For every $t\in\mathcal L$, the definition of $\Phi$ gives
$\Phi\left(\frac12(t+t^*)\right)
=\frac12\left(\Theta(t)+\Theta(t)^*\right)$.
Summing this identity and using
\eqref{eq:selfadjoint-operator-ideal-symmetrization} proves
\eqref{eq:explicit-equality-operator-lift-1}.
\end{proof}

The lift identities above reduce the desired polynomial certificate to membership in an operator quadratic module.  We now invoke the real Archimedean representation theorem for $*$-algebras, which supplies the corresponding operator-level Positivstellensatz~\cite{cimpric2009representation}.
\begin{theorem}[\cite{cimpric2009representation}, Theorem~4.4]\label{star-Pos}
Let \(\mathscr C\) be a unital real $*$-algebra and let
\(\mathscr M\subseteq\sym\mathscr C\) be a proper (meaning $-I\notin \mathscr M$) Archimedean quadratic module.
Suppose $F\in\sym\mathscr C$.
If $\pi(F)\succ0$ for every irreducible $\mathscr M$-positive $*$-representation $\pi$ of \(\mathscr C\), then $F\in\mathscr M$.
\end{theorem}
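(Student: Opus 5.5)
The plan is to argue by contradiction via a separation argument, the GNS construction, and an extremality argument that yields an \emph{irreducible} $\mathscr M$-positive representation. Suppose $F\in\sym\mathscr C$ satisfies the hypothesis but $F\notin\mathscr M$.

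\emph{Step 1: strengthen the Archimedean property.} The Archimedean condition makes $1$ an order unit of the real vector space $\sym\mathscr C$ relative to the cone $\mathscr M$: for every $a\in\sym\mathscr C$ there is $N>0$ with $N\pm a\in\mathscr M$. I will also establish, as in the standard argument, the stronger form: for every $a\in\mathscr C$ there is $N_a>0$ with $N_a-a^*a\in\mathscr M$. This works by showing that the set of such $a$ is a $*$-subalgebra containing the generators, using closure of $\mathscr M$ under $b^*(\cdot)b$ and identities of the type $CD-(uv)^*(uv)=v^*(C-u^*u)v+C(D-v^*v)$.

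\emph{Step 2: separation and choice of an extreme state.} Since $1$ is an algebraic interior point of the convex cone $\mathscr M$ and $F\notin\mathscr M$, the Eidelheit--Kakutani separation theorem gives a linear functional $L$ on $\sym\mathscr C$ with
\[
L(\mathscr M)\subseteq[0,\infty),\qquad L(1)=1,\qquad L(F)\le 0.
\]
Here properness ($-1\notin\mathscr M$) is used to normalize $L(1)=1$. Let $S$ be the set of all such normalized $\mathscr M$-positive functionals. By Step~1, $|L(a)|\le N_a$ for every $L\in S$, so $S$ is convex and compact in the product (weak$^*$) topology. The continuous linear map $L\mapsto L(F)$ attains its minimum on $S$, and this minimum is $\le0$. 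The set of minimizers is a closed face of $S$, so by Krein--Milman it contains an extreme point of $S$. I therefore take $L$ extreme in $S$ with $L(F)\le0$, and extend it to $\mathscr C$ by $L(a)\coloneqq L\bigl(\tfrac12(a+a^*)\bigr)$.

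\emph{Step 3: GNS and the contradiction.} I will run the GNS construction with $\langle a,b\rangle\coloneqq L(a^*b)$. The form is positive because $a^*a\in\mathscr M$. Step~1 gives
\[
\|\pi(a)b\|^2=L(b^*a^*ab)\le N_a\,L(b^*b),
\]
so left multiplication defines bounded operators $\pi(a)$ on the Hilbert completion $\mathcal K$ of $\mathscr C$ modulo null vectors. This $\pi$ is a $*$-representation, and it is $\mathscr M$-positive since $\langle b,\pi(m)b\rangle=L(b^*mb)\ge0$ for $m\in\mathscr M$. It has a cyclic vector $\xi=[1]$ with $\langle\xi,\pi(F)\xi\rangle=L(F)\le 0$ and $\|\xi\|^2=L(1)=1$. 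If $\pi$ is irreducible, this contradicts $\pi(F)\succ0$ (even mere positive definiteness suffices, since $\xi\neq0$), so $F\in\mathscr M$.

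\emph{Main obstacle: extremality implies irreducibility.} The step I expect to be hardest is showing that extremality of $L$ forces $\pi$ to be irreducible, in the \emph{real} setting. My first attempt is as follows. Suppose $P$ is a nontrivial orthogonal projection in the commutant $\pi(\mathscr C)'$. Then $P$ is self-adjoint with $0\le P\le I$, and
\[
L_1(a)\coloneqq\langle\xi,P\pi(a)\xi\rangle,\qquad L_2(a)\coloneqq\langle\xi,(I-P)\pi(a)\xi\rangle
\]
are $\mathscr M$-positive on $\sym\mathscr C$, because $P$ commutes with $\pi(m)^{1/2}$-type positivity via $\langle P^{1/2}\xi,\pi(m)P^{1/2}\xi\rangle\ge0$. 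We have $L=L_1+L_2$, and both $\langle\xi,P\xi\rangle$ and $\langle\xi,(I-P)\xi\rangle$ are positive by cyclicity. Extremality then forces $L_1=\langle\xi,P\xi\rangle L$. Since $\xi$ is cyclic, this gives $P=\langle\xi,P\xi\rangle I$, so $P\in\{0,I\}$.

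The subtlety is that over $\R$ the commutant may be a real division algebra ($\R$, $\C$, or $\H$). However, this argument only uses self-adjoint projections in the commutant, and these are exactly what governs invariant closed subspaces. So irreducibility in the sense of having no nontrivial closed invariant subspaces follows, which is the notion needed to apply the hypothesis of the theorem.
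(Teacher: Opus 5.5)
Your proof is correct, but it follows a different route from the one the paper relies on. The paper does not prove this theorem; it imports it from Cimpri\v{c}, where it comes out of C$^*$-algebraic representation theory, namely a noncommutative Gelfand--Naimark theory building on Fujimoto's work.

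You avoid any completion and work directly with the convex set $S$ of normalized $\mathscr M$-positive functionals on $\sym\mathscr C$:
\begin{itemize}
\item the order-unit property of $1$ gives both the separating functional and Tychonoff compactness of $S$;
\item the face of minimizers of $L\mapsto L(F)$ contains an extreme point with $L(F)\le0$;
\item GNS, together with ``extreme state $\Rightarrow$ no nontrivial projection in the commutant'', produces an irreducible $\mathscr M$-positive $\pi$ and a unit vector $\xi$ with $\langle\xi,\pi(F)\xi\rangle\le0$.
\end{itemize}
This is the standard pure-state argument. It is elementary and self-contained for real $*$-algebras, which is all the paper needs. It also uses the hypothesis $\pi(F)\succ0$ only through one vector, so positive definiteness suffices and no uniform lower bound is required. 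The C$^*$-route instead places the result inside general representation theory, and in Cimpri\v{c}'s formulation it covers $*$-rings rather than only real $*$-algebras.

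Three details should be tightened.
\begin{enumerate}
\item Step~1 needs no generators, since an abstract $\mathscr C$ comes with none. The paper uses the order-unit form of the Archimedean property (this is what it verifies in Step~2 of the proof of Theorem~\ref{thm:putinar-quaternionic}). Applying it to $a^*a\in\sym\mathscr C$ gives $N_a-a^*a\in\mathscr M$ immediately.
\item The normalization $L(1)>0$ does not come from properness. It follows from $L\neq0$ and the bound $|L(a)|\le N_aL(1)$ whenever $N_a\pm a\in\mathscr M$. Properness is in fact automatic once $F\notin\mathscr M$.
\item Extremality gives $L_1=tL$ only on $\sym\mathscr C$. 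To conclude $P=tI$ you need $\langle\pi(b)\xi,(P-tI)\pi(c)\xi\rangle=0$, that is $L_1(b^*c)=tL(b^*c)$, for $b^*c$ that are not symmetric. This does hold: in a real Hilbert space $\langle\xi,T\xi\rangle=\langle\xi,T^*\xi\rangle$, so $L_1(a)=L_1\bigl(\tfrac12(a+a^*)\bigr)$, which matches your symmetrized extension of $L$. Say this explicitly, since it is exactly where the real setting enters.
\end{enumerate}
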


Applying this representation theorem to the quotient operator algebra, and then pulling the resulting certificate back through $\Phi$, yields the quaternionic-coefficient Positivstellensatz.
\begin{theorem}\label{thm:putinar-quaternionic}
Let $f\in\sym\P\langle\q,\bar\q\rangle$,
$g_1,\ldots,g_m\in\sym\R\langle\q,\bar\q\rangle$,
$h_1,\ldots,h_\ell\in\R\langle\q,\bar\q\rangle$.
Assume that $\mathcal Q^{\H}(\bg,\bh)$ satisfies the Archimedean condition. If $f>0$ on $K_{\bg,\bh}$, then
$f\in\mathcal Q^{\H}(\bg,\bh)+\left(\mathcal I_{n}^\H\cap\sym\P\langle\q,\bar\q\rangle\right)$.
\end{theorem}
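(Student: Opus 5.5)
The plan is to lift everything to the operator algebra and apply Theorem~\ref{star-Pos} in the quotient $\mathscr C\coloneqq\mathscr A_n/\mathscr J_\bh^{\H}$. Let $\mathscr M$ be the quadratic module in $\sym\mathscr C$ generated by the classes of $\Theta(p)^*\Theta(p)$ and $\Theta(p)^*G_i\Theta(p)$, for $p\in\mathcal L$, where $G_i(\q)=R_{g_i(\q)}=g_i(\q)I_\H$ is scalar and central. Since $\mathscr A_n=\Theta(\mathcal L)$, every element $A^*A$ and every $A^*G_iA$ lies in $\mathscr M$, so $\mathscr M$ is genuinely a quadratic module. By \eqref{eq:Phi-square}, \eqref{eq:Phi-weighted} and \eqref{eq:equality-ideal-lift}, we have $\Phi(\mathcal Q^{\H}(\bg,\bh))$ modulo $\mathscr J_\bh^{\H}$ equal to $\mathscr M$. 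The Archimedean condition \eqref{eq:QH-archimedean} then gives $R-\Phi(\sum|q_i|^2)\in\mathscr M$, where $\Phi(|q_i|^2)=R_{|q_i|^2}$. I will then show that $\mathscr M$ is Archimedean in the sense of Theorem~\ref{star-Pos}: every element of $\mathscr C$ should be bounded. For this I use the standard argument that the set of bounded elements is a $*$-subalgebra. The generators $R_{q_i}$ and $R_{\bar q_i}$ are bounded because $R_{q_i}^*R_{q_i}=R_{|q_i|^2}$. The constants $L_c$ are bounded because $|c|^2-L_c^*L_c=0$. If $\mathscr M$ is not proper, the conclusion is trivial: in that case $\mathscr M=\sym\mathscr C$ by the same identity $b=((b+1)/2)^2-((b-1)/2)^2$, and we jump directly to the pullback step.

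Next I will verify the positivity hypothesis $\pi(\Phi(f))\succ0$ for every irreducible $\mathscr M$-positive $*$-representation $\pi$. This is the main obstacle. My approach is to show that such representations are finite-dimensional and factor through point evaluations. Archimedeanity makes $\pi$ act by bounded operators. The symmetric central elements should act as scalars by Schur's lemma, and these include the classes $R_{s}$ for real symmetric $s$, since those are central modulo $\mathcal I_n^\R$ (Lemma~\ref{lem:symmetric-central-quotient}). Their joint values define an $\mathscr M$-positive character of $\mathcal B$. Lemma~\ref{lem:scalar-character-real}, together with the argument in the proof of Theorem~\ref{thm:putinar-real-equalities}, then yields a point $\ba\in K_{\bg,\bh}$.

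What remains is to identify $\pi$ with a subrepresentation of the evaluation $A\mapsto A(\ba)$ on $\H\simeq\R^4$. The algebra generated by the $L_c$, by $R$ of the central scalars, and by the skew parts $R_{v_i}$ and $R_{c_{ij}}$ is a quotient of $\H\otimes\H^{op}\simeq M_4(\R)$ once the scalar relations are fixed. Its irreducible representations are, up to equivalence, the standard action on $\R^4$, possibly with the ambiguity of an orthogonal change corresponding to $u\ba\bar u$. That ambiguity is harmless, since $K_{\bg,\bh}$ is invariant under simultaneous conjugation by \eqref{eq:g-conjugation-invariance}. Then \eqref{eq:Phi-quadratic-form} gives $\langle\xi,\pi(\Phi(f))\xi\rangle=|\xi|^2f(u\ba\bar u)$. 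This quantity is at least $|\xi|^2\min_{K}f$, which is strictly positive by compactness of $K_{\bg,\bh}$, and compactness itself follows from the Archimedean condition. I expect the detailed classification of irreducible $\mathscr M$-positive representations, including the rank-deficient cases of the vectors $v_i$, to be the delicate part. Should this fail, the fallback is to note that Theorem~\ref{star-Pos} only needs $\pi(\Phi(f))\succ0$, and to bound $\pi(\Phi(f)-\varepsilon)$ via a finite-dimensional reduction using the commutative $\mathcal B$-spectrum.

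Finally, Theorem~\ref{star-Pos} gives $\Phi(f)\in\mathscr M$, that is,
\[
\Phi(f)=\sum\Theta(p_{0\alpha})^*\Theta(p_{0\alpha})+\sum_{i,\alpha}\Theta(p_{i\alpha})^*G_i\Theta(p_{i\alpha})+D,
\qquad D\in\mathscr J_\bh^{\H}\cap\sym\mathscr A_n .
\]
Using \eqref{eq:Phi-square} and \eqref{eq:Phi-weighted}, I rewrite the first two sums as $\Phi(\sigma_0+\frac12\sum(\sigma_ig_i+g_i\sigma_i))$. Using \eqref{eq:equality-ideal-lift} and \eqref{eq:explicit-equality-operator-lift}, I rewrite $D$ as $\Phi$ of an equality term of the required form, up to an element of $\mathcal I_n^\H$. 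Then \eqref{eq:Phi-kernel} converts the resulting identity $\Phi(f-q)=0$, with $q\in\mathcal Q^{\H}(\bg,\bh)$, into $f-q\in\mathcal I_n^\H\cap\sym\P\langle\q,\bar\q\rangle$, which is the claimed certificate.
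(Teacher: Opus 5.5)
Your overall architecture matches the paper's. You lift through $\Theta$ and $\Phi$ to $\mathscr A_n/\mathscr J_\bh^{\H}$, get Archimedeanity from the $*$-subalgebra of bounded elements, and dispose of the improper case. You then show every irreducible $\mathscr M$-positive representation is an evaluation at a point of $K_{\bg,\bh}$, apply Theorem~\ref{star-Pos}, and pull back with \eqref{eq:Phi-square}, \eqref{eq:Phi-weighted}, \eqref{eq:equality-ideal-lift}, \eqref{eq:explicit-equality-operator-lift} and \eqref{eq:Phi-kernel}.

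Your way of producing the point is a legitimate variant of the paper's Step~3. Each $\pi(R_b)$, $b\in\mathcal B$, is central and self-adjoint, hence a real scalar $\phi(b)$. Since $R_{a^*a}=R_aR_a^*$, you get $\phi(a^*a)\ge0$. Lemma~\ref{lem:scalar-character-real} then gives $\ba$, and $\ba\in K_{\bg,\bh}$ follows from $\pi(G_i)=g_i(\ba)I\succeq0$ and $0=\pi(H_jH_j^*)=|h_j(\ba)|^2I$. This also settles $K_{\bg,\bh}=\varnothing$: no such $\pi$ exists, so Theorem~\ref{star-Pos} applies vacuously. Your appeal to $\min_K f$ leaves that case implicit.

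The gap is the next step: identifying $\pi$ with the evaluation at $\ba$. It cannot be skipped, because $\Phi(f)$ contains $L_c$ with $c\notin\R$ and is not determined by the central character. For the same reason, your fallback via the $\mathcal B$-spectrum does not help. You justify the step by saying that, once the scalar relations are fixed, the image is a quotient of $\H\otimes\H^{\mathrm{op}}\simeq M_4(\R)$. That is false when the imaginary parts of $\ba$ span a space of dimension $r\le1$: the image of $\pi$ is then $\H$ or $\H\otimes_{\R}\C\simeq M_2(\C)$, neither of which is a quotient of the simple algebra $M_4(\R)$.

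More basically, imposing the values of $\phi$ purely algebraically leaves nilpotents. At a coordinate with $\operatorname{Im}a_i=0$ you only get $V_i^2=0$, and something must kill such elements. The paper does this with positivity of the $*$-representation:
\begin{itemize}
\item Set $\Gamma=(\phi(s_{jk}))\succeq0$ and take a Gram factorization $\phi(s_{jk})=\bd_j^\intercal\bd_k$.
\item The identity $(\sum_js_jV_j)^*(\sum_js_jV_j)=(\bs^\intercal\Gamma\bs)I$ shows that $\sum_js_jV_j=0$ whenever $\sum_js_j\bd_j=0$.
\item Hence $V_j=\sum_{a=1}^r d_{ja}W_a$ with $r=\rank\Gamma\le3$, where the $W_a$ are skew-adjoint, satisfy $W_aW_b+W_bW_a=-2\delta_{ab}I$, and commute with every $\pi(L_c)$.
\end{itemize}
Only then is the image a quotient of $\H\otimes_{\R}\mathrm{Cl}_{0,r}^{\mathrm{op}}$, that is, of $\H$, $M_2(\C)$, $M_4(\R)$ or $M_4(\R)\oplus M_4(\R)$. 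The irreducible real $*$-representations of these are four-dimensional and are realized on $\H$ by $L_c$ and right multiplication by orthonormal imaginary quaternions. For $r=3$ one must also pick the correct summand. In your setup that choice comes for free: the sign by which $W_1W_2W_3$ acts is read off from $\phi(t_{ijk})=t_{ijk}(\ba)$ for a triple with $\det(\phi(s_{\alpha\beta}))_{\alpha,\beta\in\{i,j,k\}}\neq0$, exactly as for the evaluation at $\ba$.

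With this classification supplied, the rest goes through as in the paper: your bound through \eqref{eq:Phi-quadratic-form} together with conjugation invariance of $K_{\bg,\bh}$, and the pullback steps.
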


\begin{proof}
If \(1\in\mathcal J_{\bh}^{\R}\), then
\(\mathcal J_{\bh}^{\H}=\H\langle\q,\bar\q\rangle\), so
\(f\in \mathcal J_{\bh}^{\H}\cap\sym\P\langle\q,\bar\q\rangle\).  Applying
\eqref{eq:explicit-equality-operator-lift} to \(f\) immediately yields
\[
f\in
\mathcal Q^{\H}(\bg,\bh)
+
\left(\mathcal I_n^{\H}\cap\sym\P\langle\q,\bar\q\rangle\right).
\]
Henceforth, assume that $1\notin\mathcal J_{\bh}^{\R}$. We divide the remainder of the proof into six steps.
\medskip

\noindent{\bf Step 1: lifting to the operator quadratic module \(\mathscr M\).}
For each inequality constraint, set
\[
G_i(\q)
\coloneqq
R_{g_i(\q)}
=g_i(\q)I_{\H}.
\]
The second equality follows from the real-coefficient assumption and the symmetry of $g_i$. Work in the quotient real $*$-algebra
$\mathscr B
\coloneqq
\mathscr A_n/\mathscr J_\bh^{\H}$.
We suppress quotient brackets when no confusion can arise. Define
\begin{equation}\label{eq:operator-quadratic-module}
\mathscr M
\coloneqq
\left\{
\sum_sA_{0s}^*A_{0s}
+
\sum_{i=1}^m\sum_sA_{is}^*G_iA_{is}:
A_{is}\in\mathscr B
\right\}.
\end{equation}
This cone contains the identity operator $I$, is closed under addition, and satisfies
\[
C^*\mathscr MC\subseteq\mathscr M,
\qquad
\forall C\in\mathscr B.
\]
At this stage, the module may be improper; that is, $-I\in\mathscr M$ has not yet been excluded.

Choose a representation witnessing the Archimedean condition
\eqref{eq:QH-archimedean}:
\begin{equation}\label{eq:QH-archimedean-expanded}
R-\sum_{i=1}^n|q_i|^2
=
\sigma_0^{A}
+
\frac12\sum_{i=1}^m
\left(\sigma_i^{A}g_i+g_i\sigma_i^{A}\right)
+E_A,
\end{equation}
where $\sigma_i^{A}\in\Sigma^{\H}$ and $E_A$ is the finite aggregate of equality-multiplier terms from \eqref{eq:QH-module}. The left-hand side and the weighted QSOS terms on the right-hand side belong to $\sym\P\langle\q,\bar\q\rangle$; hence the aggregate equality term satisfies
$E_A\in \mathcal J_{\bh}^{\H}\cap\sym\P\langle\q,\bar\q\rangle$.
Therefore Lemma~\ref{lem:operator-lift-equality-ideal} gives
\(\Phi(E_A)\in\mathscr J_{\bh}^{\H}\). Put $r_i(\q)\coloneqq R_{q_i}$. Applying \(\Phi\) to
\eqref{eq:QH-archimedean-expanded} and passing to the quotient yields
\begin{equation}\label{eq:operator-ball-condition}
RI-\sum_{i=1}^nr_i^*r_i
\in\mathscr M.
\end{equation}
Here \eqref{eq:Phi-square} treats the QSOS term, whereas \eqref{eq:Phi-weighted} gives
\[
\Phi\left(\frac12(\sigma_i^{A}g_i+g_i\sigma_i^{A})\right)
=
\sum_s\Theta(p_{is}^{A})^*G_i\Theta(p_{is}^{A})
\in\mathscr M
\]
when \(\sigma_i^{A}=\sum_s(p_{is}^{A})^*p_{is}^{A}\).

\medskip
\noindent{\bf Step 2: proving Archimedeanness of \(\mathscr M\).}
For every \(i\),
\begin{equation}\label{eq:individual-generator-bound}
RI-r_i^*r_i
=
\left(RI-\sum_{k=1}^nr_k^*r_k\right)
+
\sum_{k\neq i}r_k^*r_k
\in\mathscr M.
\end{equation}
Pointwise,
$r_i^*r_i=r_ir_i^*=|q_i|^2I$,
so the same bound holds with \(r_ir_i^*\).
Let \(\widetilde{\mathscr B}\) be the set of all \(A\in\mathscr B\) for which there exist \(C,D>0\) satisfying $CI-A^*A\in\mathscr M$,
$DI-AA^*\in\mathscr M$.
We claim that $\widetilde{\mathscr B}$ is a unital $*$-subalgebra. It is closed under real scalar multiplication, and the two defining bounds immediately imply closure under involution. To verify closure under sums and products, suppose
$\alpha^2I-A^*A\in\mathscr M$, $\beta^2I-B^*B\in\mathscr M$.
Then
\begin{equation*}
2(\alpha^2+\beta^2)I-(A+B)^*(A+B)
=
2(\alpha^2I-A^*A)
+2(\beta^2I-B^*B)\notag+(A-B)^*(A-B)
\in\mathscr M,
\end{equation*}
and
\begin{equation*}
\alpha^2\beta^2I-(AB)^*(AB)
=\alpha^2(\beta^2I-B^*B)
+B^*(\alpha^2I-A^*A)B
\in\mathscr M.
\end{equation*}
Applying the sum identity to \(A^*,B^*\), and the product identity to \(B^*,A^*\), gives the corresponding bounds for
\((A+B)(A+B)^*\) and \((AB)(AB)^*\). Thus
\(\widetilde{\mathscr B}\) is a unital $*$-subalgebra.
The generators \(r_i,r_i^*\) belong to \(\widetilde{\mathscr B}\) by
\eqref{eq:individual-generator-bound}. The constant left multiplications do as well, since
$L_a^*L_a=L_aL_a^*=|a|^2I$.
These elements generate $\mathscr B$; hence $\widetilde{\mathscr B}=\mathscr B$.
In particular, for every \(B=B^*\in\mathscr B\), there exists \(N>0\) such that $NI-B^2\in\mathscr M$.
Therefore
\begin{equation}\label{eq:order-unit-from-square-bound}
\frac{N+1}{2}I\pm B
=
\frac12\left(NI-B^2+(B\pm I)^*(B\pm I)\right)
\in\mathscr M.
\end{equation}
Hence \(\mathscr M\) is Archimedean.

\medskip
\noindent{\bf Step 3: classifying irreducible representations of $\mathscr B$.}
Let
$\pi:\mathscr B\to B(\mathscr H_{\R})$ be an irreducible bounded unital $\mathscr M$-positive real $*$-representation, where $\mathscr H_{\R}$ is a real Hilbert space and $B(\mathscr H_{\R})$ denotes the algebra of bounded real-linear operators on $\mathscr H_{\R}$. Composing with the quotient map, we also regard \(\pi\) as a representation of \(\mathscr A_n\) annihilating \(\mathscr J_\bh^{\H}\).
Let
\[
x_j\coloneqq\frac{r_j+r_j^*}{2},
\qquad
v_j\coloneqq\frac{r_j-r_j^*}{2},
\qquad
c_{jk}\coloneqq-\frac12(v_jv_k+v_kv_j).
\]
Pointwise,
$x_j(\q)=\RR(q_j)I$, $c_{jk}(\q)=
\left\langle\operatorname{Im}(q_j),
\operatorname{Im}(q_k)\right\rangle I$, where $\operatorname{Im}(q)\coloneqq (b,c,d)$ for $q=a+b\i+c\j+d\k\in\H$.
Therefore, all $x_j$ and $c_{jk}$ are central and self-adjoint.
By the real Schur lemma, the commutant of an irreducible real $*$-representation is a real division $*$-algebra of type $\R$, $\C$, or $\H$. The self-adjoint part of each such division algebra is $\R I$; hence every central self-adjoint element acts as a real scalar.
Hence there exist real numbers \(\alpha_j\) and \(\gamma_{jk}\) such that
\begin{equation}\label{eq:central-scalar-images}
\pi(x_j)=\alpha_jI,
\qquad
\pi(c_{jk})=\gamma_{jk}I.
\end{equation}
Put $V_j\coloneqq\pi(v_j)$, $\Gamma\coloneqq(\gamma_{jk})_{j,k=1}^n$.
Then
\begin{equation}\label{eq:V-Clifford-Gram}
V_j^*=-V_j,
\qquad
V_jV_k+V_kV_j=-2\gamma_{jk}I.
\end{equation}
For every \(\bs\in\R^n\),
\begin{equation}\label{eq:Gamma-positive}
\left(\sum_js_jV_j\right)^*
\left(\sum_js_jV_j\right)
=
(\bs^{\intercal}\Gamma \bs)I,
\end{equation}
so \(\Gamma\succeq0\).
Pointwise, $(c_{jk}(\q))_{j,k=1}^n$ is the Gram matrix of vectors in
\(\operatorname{Im}\H\coloneqq\operatorname{span}_{\R}\{\i,\j,\k\}\simeq\R^3\). Hence all of its \(4\times4\) minors vanish identically in \(\mathscr A_n\), and therefore $\operatorname{rank}\Gamma\leq3$.
Let \(r=\operatorname{rank}\Gamma\), and choose vectors
$\bd_1,\ldots,\bd_n\in\R^r$
such that
$\gamma_{jk}=\bd_j^{\intercal}\bd_k$.
Define the map $T$ by
\begin{equation}\label{eq:T-from-Gram}
T\left(\sum_js_j\bd_j\right)
\coloneqq
\sum_js_jV_j.
\end{equation}
This map is well defined. Indeed, if \(\sum_js_j\bd_j=0\), then
$\bs^{\intercal}\Gamma \bs
=\left\|\sum_js_j\bd_j\right\|^2
=0$,
and \eqref{eq:Gamma-positive} implies
$\sum_js_jV_j=0$.

Let \(\be_1,\ldots,\be_r\) be the standard orthonormal basis of \(\R^r\), and set $W_a\coloneqq T(\be_a)$ for $a=1,\ldots,r$.
Then
\begin{equation}\label{eq:V-in-W}
V_j=\sum_{a=1}^rd_{ja}W_a,
\end{equation}
and polarization of \eqref{eq:V-Clifford-Gram} gives
\begin{equation}\label{eq:W-Clifford-relations}
W_a^*=-W_a,
\qquad
W_aW_b+W_bW_a=-2\delta_{ab}I.
\end{equation}
At this stage, the $W_a$'s have not yet been identified with right-multiplication operators. Their commutation with the left quaternionic action follows algebraically: in \(\mathscr A_n\), $[v_j,L_c]=0$ for $c\in\H$, so $[V_j,\pi(L_c)]=0$. 
Since each \(W_a\) is a real linear combination of the \(V_j\)'s,
\begin{equation}\label{eq:W-commutes-left}
[W_a,\pi(L_c)]=0,
\qquad a=1,\ldots,r,\ c\in\H.
\end{equation}
We now invoke the Clifford-algebra classification. Relations
\eqref{eq:W-Clifford-relations}--\eqref{eq:W-commutes-left} show that the represented algebra is a quotient of
$\H\otimes_{\R}\mathrm{Cl}_{0,r}^{\mathrm{op}}$.
For \(r\leq3\),
\begin{equation}\label{eq:low-Clifford-table}
\H\otimes_{\R}\mathrm{Cl}_{0,r}^{\mathrm{op}}
\simeq
\begin{cases}
\H,&r=0,\\
M_2(\C),&r=1,\\
M_4(\R),&r=2,\\
M_4(\R)\oplus M_4(\R),&r=3.
\end{cases}
\end{equation}
Each irreducible real representation in \eqref{eq:low-Clifford-table} can be realized on the real Hilbert space $\H$ by left quaternionic multiplication together with right multiplication by $r$ orthonormal imaginary quaternions. For \(r=3\), the two irreducible summands correspond to the two possible orientations.

Choose orthonormal imaginary quaternions
\[
\varepsilon_1,\ldots,\varepsilon_r
\in\operatorname{Im}\H,
\qquad
\langle\varepsilon_a,\varepsilon_b\rangle=\delta_{ab},
\]
with the orientation corresponding to the irreducible component of \(\pi\) when
\(r=3\).  The right multiplication operators \(R_{\varepsilon_a}\) satisfy the
same Clifford relations as the \(W_a\)'s,
\[
R_{\varepsilon_a}^*=-R_{\varepsilon_a},
\qquad
R_{\varepsilon_a}R_{\varepsilon_b}
+R_{\varepsilon_b}R_{\varepsilon_a}
=-2\delta_{ab}I.
\]
Hence, by the classification in \eqref{eq:low-Clifford-table}, after an orthogonal identification \(\mathscr H_{\R}\simeq\H\), we may put the
irreducible representation into the standard form
\begin{equation}\label{eq:representation-standard-form}
\pi(L_c)=L_c,
\qquad
\pi(W_a)=R_{\varepsilon_a}.
\end{equation}
Now define
\[
\eta_j\coloneqq\sum_{a=1}^rd_{ja}\varepsilon_a
\in\operatorname{Im}\H,
\qquad
q_j^{(0)}\coloneqq\alpha_j+\eta_j.
\]
Since \(V_j=\pi(v_j)=\sum_{a=1}^rd_{ja}W_a\),
\eqref{eq:representation-standard-form} gives
$\pi(v_j)=\sum_{a=1}^rd_{ja}R_{\varepsilon_a}
=R_{\eta_j}$.
On the other hand, \eqref{eq:central-scalar-images} gives $\pi(x_j)=\alpha_j I=R_{\alpha_j}$.
Because \(r_j=x_j+v_j\), it follows that
\begin{equation}\label{eq:rj-evaluation-explicit}
\pi(r_j)
=\pi(x_j)+\pi(v_j)
=R_{\alpha_j}+R_{\eta_j}
=R_{q_j^{(0)}}.
\end{equation}
Taking adjoints also gives
$\pi(r_j^*)=R_{\overline{q_j^{(0)}}}$.
Thus \(\pi\) agrees with the scalar evaluation at
$\q^{(0)}=(q_1^{(0)},\ldots,q_n^{(0)})\in\H^n$
on the generators of \(\mathscr A_n\): it sends each constant left
multiplication \(L_c\) to \(L_c\), each \(r_j\) to \(R_{q_j^{(0)}}\), and each
\(r_j^*\) to \(R_{\overline{q_j^{(0)}}}\).  Hence \(\pi\) is unitarily equivalent to the evaluation
representation at \(\q^{(0)}\).
Because \(\pi\) annihilates \(\mathscr J_\bh^{\H}\),
$0=\pi(H_j)=R_{h_j(\q^{(0)})}$,
so $h_j(\q^{(0)})=0$.
Since \(G_i\in\mathscr M\) and \(\pi\) is \(\mathscr M\)-positive, $0\preceq\pi(G_i)=g_i(\q^{(0)})I$, and hence $g_i(\q^{(0)})\geq0$.
Thus $\q^{(0)}\in K_{\bg,\bh}$.
Conversely, evaluation at any point of $K_{\bg,\bh}$ factors through $\mathscr B$, is $\mathscr M$-positive, and is irreducible because the included left action of $\H$ on itself is irreducible as a real representation.

\medskip
\noindent{\bf Step 4: addressing the case \(K_{\bg,\bh}\neq\varnothing\).}
Evaluating a representation witnessing \eqref{eq:QH-archimedean} at a feasible point gives
$R-\sum_{i=1}^n|q_i|^2\geq0$. Hence $K_{\bg,\bh}$ is bounded.
Assume first that \(K_{\bg,\bh}\neq\varnothing\), and choose
\(\q^{(1)}\in K_{\bg,\bh}\). Evaluation at \(\q^{(1)}\) is an \(\mathscr M\)-positive representation. Therefore, $-I\notin\mathscr M$, because otherwise the evaluation would yield the impossible inequality $-I\succeq0$. Thus $\mathscr M$ is proper.
Set $\delta\coloneqq\min_{\q\in K_{\bg,\bh}}f(\q)>0$.
Take any $u\in\H$ with $|u|=1$. The real-coefficient assumptions imply
$g_i(u\q\bar u)=g_i(\q)$ and
$h_j(u\q\bar u)=u h_j(\q)\bar u$.
Hence \(K_{\bg,\bh}\) is invariant under simultaneous conjugation by unit quaternions. For \(\xi=|\xi|u\neq0\), Lemma~\ref{lem:operator-lift-equality-ideal} gives
\[
\langle\xi,\Phi(f)(\q)\xi\rangle_{\R}
=
|\xi|^2f(u\q\bar u)
\geq
\delta|\xi|^2,
\qquad
\forall\q\in K_{\bg,\bh}.
\]
By Step~3, every irreducible $\mathscr M$-positive representation is an evaluation at a feasible point. Therefore,
$\pi([\Phi(f)])\succeq\delta I$ for every irreducible $\mathscr M$-positive representation $\pi$. The proper Archimedean $*$-Positivstellensatz (Theorem~\ref{star-Pos}) yields $[\Phi(f)]\in\mathscr M$.

\medskip
\noindent{\bf Step 5: addressing the case \(K_{\bg,\bh}=\varnothing\).}
Suppose now that \(K_{\bg,\bh}=\varnothing\). 
The representation classification shows that there are no irreducible \(\mathscr M\)-positive representations.
If $-I\in\mathscr M$, there is nothing to prove. Otherwise, $\mathscr M$ is proper, and the Archimedean $*$-Positivstellensatz applies to $-I$; because there are no irreducible $\mathscr M$-positive representations, its strict-positivity hypothesis is vacuous. In either case,
\begin{equation}\label{eq:minus-I-in-M-empty}
-I\in\mathscr M.
\end{equation}
For $B=B^*\in\mathscr B$, the identity
\begin{equation}\label{eq:improper-module-all-symmetric}
4B
=(B+I)^*(B+I)
+(B-I)^*(-I)(B-I)
\end{equation}
has its right-hand side in $\mathscr M$: the first term is a Hermitian square, and the second belongs to $\mathscr M$ by \eqref{eq:minus-I-in-M-empty} and closure under conjugation. Since \(\mathscr M\) is a cone, division by the positive scalar \(4\) gives \(B\in\mathscr M\). Thus every self-adjoint element of \(\mathscr B\) belongs to \(\mathscr M\), and in particular
$[\Phi(f)]\in\mathscr M$.

\medskip
\noindent{\bf Step 6: recovering the explicit QSOS certificate.}
Since $[\Phi(f)]\in\mathscr M$, there exist finitely many elements
\(A_{0s},A_{is}\in\mathscr A_n\) such that
\begin{equation}\label{eq:operator-certificate-before-pullback}
D
\coloneqq
\Phi(f)
-
\sum_sA_{0s}^*A_{0s}
-
\sum_{i=1}^m\sum_sA_{is}^*G_iA_{is}
\in\mathscr J_\bh^{\H}.
\end{equation}
Every element of $\mathscr A_n$ has the form $\Theta(p)$ for some left-coefficient polynomial $p\in\mathcal L$. Choose
$A_{is}=\Theta(p_{is})$
and set
$\sigma_i\coloneqq\sum_sp_{is}^*p_{is}
\in\Sigma^{\H}$.
By \eqref{eq:Phi-square} and \eqref{eq:Phi-weighted},
\begin{equation}\label{eq:D-Phi-before-explicit-equalities}
D
=
\Phi\left(
f-
\sigma_0-
\frac12\sum_{i=1}^m(\sigma_i g_i+g_i\sigma_i)
\right).
\end{equation}
The operator $D$ is self-adjoint. The final assertion of Lemma~\ref{lem:operator-lift-equality-ideal} yields finitely many
\(\tau_{j\alpha},\nu_{j\alpha}\in\mathcal L\) such that
\begin{equation}\label{eq:explicit-equality-pullback-final}
D=\Phi(E),
\qquad
E
=
\sum_{j=1}^{\ell}\sum_{\alpha}
\left(
\tau_{j\alpha}h_j\nu_{j\alpha}^*
+\nu_{j\alpha}h_j^*\tau_{j\alpha}^*
\right)
\in\sym\P\langle\q,\bar\q\rangle.
\end{equation}
Combining \eqref{eq:D-Phi-before-explicit-equalities} and
\eqref{eq:explicit-equality-pullback-final} gives
\[
\Phi\left(
f-
\sigma_0-
\frac12\sum_{i=1}^m(\sigma_i g_i+g_i\sigma_i)
-E
\right)=0.
\]
The polynomial inside the parentheses belongs to
\(\sym\P\langle\q,\bar\q\rangle\). Hence, by \eqref{eq:Phi-kernel},
\begin{equation}\label{eq:final-scalar-identity-remainder}
f-
\sigma_0-
\frac12\sum_{i=1}^m(\sigma_i g_i+g_i\sigma_i)
-E
\in
\mathcal I_n^{\H}\cap\sym\P\langle\q,\bar\q\rangle.
\end{equation}
Therefore,
$f\in\mathcal Q^{\H}(\bg,\bh)+
\left(
\mathcal I_n^{\H}\cap\sym\P\langle\q,\bar\q\rangle
\right)$.
\end{proof}

The certificate is written with symmetrized weighted terms to ensure that it remains in $\sym\P\langle\q,\bar\q\rangle$.  Modulo scalar identities, however, it agrees with the more familiar weighted-sandwich form, as the following remark records.
\begin{remark}
For $p\in\mathcal L$ and $g\in\sym\R\langle\q,\bar\q\rangle$, the expressions
\[
p^*gp
\qquad\text{and}\qquad
\frac12\bigl((p^*p)g+g(p^*p)\bigr)
\]
have the same scalar quaternionic evaluations. Hence
\[
p^*gp-
\frac12\bigl((p^*p)g+g(p^*p)\bigr)
\in \mathcal I_{n}^\H.
\]
Thus, modulo the scalar-identity ideal, the certificate in Theorem~\ref{thm:putinar-quaternionic} can equivalently be viewed as a standard weighted sandwich certificate. The symmetrized formulation is used because it remains in
\(\sym\P\langle\q,\bar\q\rangle\).
\end{remark}

Theorem~\ref{thm:putinar-quaternionic} leaves the scalar-identity remainder in the abstract space $\mathcal I_n^\H\cap\sym\P\langle\q,\bar\q\rangle$.  For later formulations and finite algebraic tests, it is useful to have generators that already lie inside this symmetric intersection.  The next proposition supplies such an internal spanning family.

\begin{proposition}
\label{prop:Isa-internal-generators}
For $w\in\cW$ and $i=1,\ldots,n$, let
\begin{equation}\label{eq:Isa-real-basic-generator}
s_{w,i}\coloneqq[w+w^*,q_i].
\end{equation}
For $a,b,u,v,w\in\cW$, $e\in\{1,\i,\j,\k\}$,
$i\in\{1,\ldots,n\}$, and $\varepsilon\in\{1,*\}$, define
\begin{align}
\kappa_{a,e,b}
&\coloneqq
\bigl(aeb+(aeb)^*\bigr)
-
\bigl(eba+(eba)^*\bigr),
\label{eq:Isa-kappa-generator}\\
\lambda_{e,u,v;w,i}^{\varepsilon}
&\coloneqq
 eus_{w,i}^{\varepsilon}v
+
\bigl(eus_{w,i}^{\varepsilon}v\bigr)^*.
\label{eq:Isa-lambda-generator}
\end{align}
Then every polynomial defined in \eqref{eq:Isa-kappa-generator}--\eqref{eq:Isa-lambda-generator} belongs to $\mathcal I_n^{\H}\cap
\sym\P\langle\q,\bar\q\rangle$, and
\begin{equation}\label{eq:Isa-internal-span}
\boxed{
\mathcal I_n^{\H}\cap
\sym\P\langle\q,\bar\q\rangle
=
\operatorname{span}_{\R}
\left\{
\kappa_{a,e,b},\,
\lambda_{e,u,v;w,i}^{\varepsilon}:
\begin{array}{l}
a,b,u,v,w\in\cW,\ e\in\{1,\i,\j,\k\},\\
i=1,\ldots,n,\ \varepsilon\in\{1,*\}
\end{array}
\right\}.}
\end{equation}
\end{proposition}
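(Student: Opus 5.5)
We first check that each listed polynomial lies in the symmetric intersection, and then prove the reverse inclusion by showing that every element of $\mathcal I_n^{\H}\cap\sym\P\langle\q,\bar\q\rangle$ reduces to a real combination of the $\kappa$'s and $\lambda$'s.

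\emph{Easy inclusion.} Each $\kappa_{a,e,b}$ and $\lambda^{\varepsilon}_{e,u,v;w,i}$ has the form $t+t^*$ with $t$ a middle-coefficient term. Hence it is symmetric and lies in $\P\langle\q,\bar\q\rangle$.
\begin{itemize}
\item For $\kappa_{a,e,b}$: under scalar evaluation, $t+t^*=2\RR(t)$. Cyclic invariance $\RR(xy)=\RR(yx)$ gives $\RR(a e b)=\RR(e b a)$, so $\kappa$ evaluates to $0$.
\item For $\lambda$: by Lemma~\ref{real-scalar-identity}, $s_{w,i}$ and $s_{w,i}^*$ vanish at every scalar tuple, so $e u s^{\varepsilon}_{w,i} v$ and its adjoint vanish there as well.
\end{itemize}
Both families therefore lie in $\ker\rho=\mathcal I_n^{\H}$.

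\emph{Reverse inclusion.} Let $p=\sum a\,p_{a,b}\,b\in\mathcal I_n^{\H}\cap\sym\P\langle\q,\bar\q\rangle$, and write $V$ for the span on the right of \eqref{eq:Isa-internal-span}. Expanding each coefficient in the basis $\{1,\i,\j,\k\}$ and using $p=\frac12(p+p^*)$, we can write $p$ as a real combination of symmetrized basic terms $aeb+(aeb)^*$.
\begin{enumerate}
\item Modulo $\kappa$'s, each such term equals $eba+(eba)^*$ with $e$ at the front. Applying $\kappa_{1,1,w}$ (for $e=1$) collapses $w+w^*$ further. So modulo $V$,
\[
p\equiv \sum_{e\in\{1,\i,\j,\k\}}\ \sum_{w} c_{e,w}\bigl(ew+(ew)^*\bigr)=\sum_e\bigl(e\,P_e+P_e^*\,\bar e\bigr),
\]
where each $P_e\in\mathcal F_n$ has real coefficients.
\item Scalar evaluation of this expression is
\[
2\sum_e \RR\bigl(e\,P_e(\ba)\bigr)=2\bigl(\RR P_1-\II P_\i-\JJ P_\j-\KK P_\k\bigr)(\ba).
\]
We must show that vanishing of this function forces the expression into $V$.
\item Use the structure of $\mathcal A=\mathcal F_n/\mathcal I_n^{\R}=\mathcal B'+\mathcal C$ from the proof of Lemma~\ref{lem:scalar-character-real}. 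Terms $e\bigl(P-P'\bigr)+\cdots$ with $P-P'\in\mathcal I_n^{\R}$ are sums of $\lambda$'s. This holds because $\mathcal I_n^{\R}$ is spanned by elements $u\,s^{\varepsilon}_{w,i}\,v$: the two-sided $*$-ideal is generated by the $s_{w,i}$ with $*$-closure. Hence we may replace each $P_e$ by any representative of its class in $\mathcal A$.
\end{enumerate}

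\emph{Main obstacle.} It remains to show that if $\sum_e\RR(e\,\hat P_e)$ vanishes identically on $\H^n$, with $\hat P_e\in\mathcal A$, then the sum is already zero modulo $V$. I would decompose $\hat P_e=b_e+c_e$ with $b_e\in\mathcal B'$ central and $c_e\in\mathcal C$, where $\mathcal C$ is the module spanned by $v_i,c_{ij}$ over $\mathcal B'$. This gives:
\begin{itemize}
\item $\RR(e\hat P_e)=b_1$ for $e=1$;
\item $\RR(e\hat P_e)=-e\boldsymbol{\cdot}c_e$ for imaginary $e$.
\end{itemize}
The plan is to exploit conjugation equivariance. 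Replacing $\ba$ by $u\ba\bar u$ leaves $b_e$ and $c_e$ covariant but rotates the fixed units $e$. Averaging over $u\in S^3$ shows that $b_1$ and the map $e\mapsto c_e$ must vanish separately at scalar points, i.e., each lies in $\mathcal I_n^{\R}$. That would show $p\in V$.

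This decoupling argument is the delicate point. A rotation-invariance argument of the kind sketched in Step 4 of Theorem~\ref{thm:putinar-quaternionic} seems like the natural tool. Alternatively, one can use the isomorphism $\H\langle\q,\bar\q\rangle/\mathcal I^{\H}_n\simeq\H[x]$ and the presentation \eqref{eq:P-reduced-sector} to identify the symmetric sector with $\mathcal F_n\oplus(\H_0\otimes\mathcal F_n)$ modulo $\kappa$'s, and then apply $\rho$ componentwise.
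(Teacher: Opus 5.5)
Your outline is the paper's own route. You move the quaternionic coefficient to the front with the $\kappa$'s, absorb anything lying in $\mathcal I_n^{\R}$ with the $\lambda$'s, and then decouple the four components using $P(u\ba\bar u)=uP(\ba)\bar u$ and averaging over $\mathrm{SO}(3)$. But the step you leave as a ``plan'' is the whole content of the reverse inclusion, and as stated it has two gaps.

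\emph{First gap: averaging alone does not give $c_e=0$.} At a fixed $\ba$, equivariance gives
\[
b_1(\ba)-\sum_{s=1}^3 e_s^{\intercal}O\,c_{e_s}(\ba)=0
\qquad\text{for all } O\in\mathrm{SO}(3).
\]
Haar averaging only yields $b_1(\ba)=0$, since $\int O\,dO=0$. To get $c_{e_s}(\ba)=0$ you need one more fact. Put $C=\sum_s c_{e_s}(\ba)e_s^{\intercal}$; the functional $O\mapsto\operatorname{tr}(OC)$ vanishes on $\mathrm{SO}(3)$ only if $C=0$. This holds because $\operatorname{span}_{\R}\mathrm{SO}(3)=M_3(\R)$, which the paper checks with explicit rotations by $\pi$ and $\pm\pi/2$. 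Alternatively, use Schur orthogonality, $\int O_{kl}O_{ij}\,dO=\tfrac13\delta_{ik}\delta_{jl}$.

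\emph{Second gap: the $b_e$ for imaginary $e$ are never absorbed.} Your sentence ``that would show $p\in V$'' skips the symmetric parts $b_e$ with $e\in\{\i,\j,\k\}$. They do not enter the scalar evaluation and need not vanish. Yet $eb_e+b_e^*\bar e=eb_e-b_ee$ is a nonzero element of the free algebra. It does lie in $V$: a symmetric real polynomial is a real combination of $w+w^*$, and $e(w+w^*)-(w+w^*)e=-\kappa_{w,e,1}$. You need this identity, and your plan does not have it. Your remark about ``applying $\kappa_{1,1,w}$'' does not supply it, since $\kappa_{1,1,w}=0$ identically.

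\emph{Simplifications.} The $\mathcal B'+\mathcal C$ machinery is unnecessary. Take $\tfrac12(P_e\pm P_e^*)\in\mathcal F_n$ directly; these evaluate to the real and imaginary parts of $P_e(\ba)$. This is what the paper does: it concludes $\hat p_0+\hat p_0^*\in\mathcal I_n^{\R}$ and $\hat p_s-\hat p_s^*\in\mathcal I_n^{\R}$, and then absorbs the symmetric parts via $\kappa_{w,e_s,1}$. Your proposed alternative of ``applying $\rho$ componentwise'' does not decouple anything by itself. The condition is only that $\RR\bigl(\sum_e e\,\rho(P_e)\bigr)$ vanishes, which mixes the components, so you still need the rotation argument.
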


\begin{proof}
We first verify that the displayed generators belong to
$\mathcal I_n^{\H}\cap\sym\P\langle\q,\bar\q\rangle$. They are manifestly symmetric elements of $\P\langle\q,\bar\q\rangle$.  For every scalar quaternionic tuple
$\ba\in\H^n$, cyclic invariance of the real part gives
$\RR\!\left(a(\ba)eb(\ba)\right)=\RR\!\left(eb(\ba)a(\ba)\right)$,
and hence
$\kappa_{a,e,b}(\ba,\bar\ba)=0$.
Moreover, by the preceding description of the real scalar-identity ideal,
$s_{w,i}\in\mathcal I_n^{\R}\subseteq\mathcal I_n^{\H}$.  Therefore
$s_{w,i}^{\varepsilon}$ vanishes under every scalar quaternionic evaluation,
and so does
$\lambda_{e,u,v;w,i}^{\varepsilon}$.  Hence every generator on the
right-hand side of \eqref{eq:Isa-internal-span} lies in $\mathcal I_n^{\H}\cap
\sym\P\langle\q,\bar\q\rangle$.
This proves the inclusion ``$\supseteq$'' in \eqref{eq:Isa-internal-span}.

For the reverse inclusion, let $p\in \mathcal I_n^{\H}\cap
\sym\P\langle\q,\bar\q\rangle$ and write
\begin{equation}\label{eq:Isa-F-presentation}
p=\sum_{\alpha=1}^N a_\alpha c_\alpha b_\alpha,
\qquad
a_\alpha,b_\alpha\in\cW,
\quad
c_\alpha\in\H.
\end{equation}
Since $p=p^*$, $p=
\frac12\sum_{\alpha=1}^N
\left(
 a_\alpha c_\alpha b_\alpha
 +(a_\alpha c_\alpha b_\alpha)^*
\right)$.
Write each $c_\alpha$ as a real linear combination of $1,\i,\j,\k$. Applying \eqref{eq:Isa-kappa-generator} term by term allows us to move the unique quaternionic coefficient cyclically to the left.  Consequently,
there exists a finite real linear combination $\omega$ of the
$\kappa$-generators and a left-coefficient polynomial $\hat p\in\mathcal L$ such
that
\begin{equation}\label{eq:Isa-reduce-left-coefficient}
p=\omega+\frac12(\hat p+\hat p^*).
\end{equation}
More explicitly, writing
$c_\alpha=\sum_{e\in\{1,\i,\j,\k\}}\gamma_{\alpha,e}e,\gamma_{\alpha,e}\in\R$,
one may take
$$\hat p=\sum_{\alpha=1}^N\sum_{e\in\{1,\i,\j,\k\}}
\gamma_{\alpha,e}eb_\alpha a_\alpha.$$
Since both $p$ and every $\kappa$-generator are scalar identities,
\eqref{eq:Isa-reduce-left-coefficient} implies
\begin{equation}\label{eq:Isa-Re-p-zero}
\RR\bigl(\hat p(\ba,\bar\ba)\bigr)=0,
\qquad
\forall\ba\in\H^n.
\end{equation}
Decompose the quaternionic coefficients of $\hat p$ in the fixed basis:
\begin{equation}\label{eq:Isa-p-component-decomposition}
\hat p=\hat p_0+\i\hat p_1+\j\hat p_2+\k\hat p_3,
\qquad
\hat p_0,\hat p_1,\hat p_2,\hat p_3\in\R\langle\q,\bar\q\rangle.
\end{equation}
Fix $\ba\in\H^n$ and write
\[
\hat p_s(\ba,\bar\ba)=\alpha_s+v_s,
\qquad
\alpha_s\in\R,
\quad v_s\in\operatorname{Im}\H\simeq\R^3,
\quad
s=0,1,2,3.
\]
For a unit quaternion $u$, real coefficients give
$\hat p_s(u\ba\bar u, u\bar\ba\bar u)
=u\hat p_s(\ba,\bar\ba)\bar u$.
Let $O_u\in\mathrm{SO}(3)$ be the rotation induced by inner conjugation on
$\operatorname{Im}\H$, and let
$e_1,e_2,e_3$ denote the standard vectors
corresponding to $\i,\j,\k$.  Applying
\eqref{eq:Isa-Re-p-zero} to $u\ba\bar u$ yields
\begin{equation}\label{eq:Isa-rotation-identity}
\alpha_0-\sum_{s=1}^3
e_s^{\intercal}(O_u v_s)=0.
\end{equation}
As $u$ varies, $O_u$ ranges over $\mathrm{SO}(3)$. Averaging
\eqref{eq:Isa-rotation-identity} over $\mathrm{SO}(3)$ gives
$\alpha_0=0$.  The remaining linear functional of $O$ therefore vanishes for
every $O\in\mathrm{SO}(3)$.  Since the real linear span of
$\mathrm{SO}(3)$ is all of $M_3(\R)$, it follows that
\begin{equation}\label{eq:Isa-component-conclusions}
v_1=v_2=v_3=0.
\end{equation}
For completeness, this spanning assertion follows from the rotations by $\pi$ about the coordinate axes, the rotations by $\pi$ about the axes $(e_r+e_s)/\sqrt2$, and the rotations by $\pm\pi/2$ in the coordinate planes: their real span contains every matrix unit $E_{rs}$.

Since $\ba$ was arbitrary, \eqref{eq:Isa-component-conclusions} and
$\alpha_0=0$ imply
\begin{align}
\hat p_0+\hat p_0^*&\in\mathcal I_n^{\R},
\label{eq:Isa-p0-real-identity}\\
\hat p_s-\hat p_s^*&\in\mathcal I_n^{\R},
\qquad s=1,2,3.
\label{eq:Isa-ps-real-identity}
\end{align}
For $s=1,2,3$, put
\[
a_s\coloneqq\frac{\hat p_s+\hat p_s^*}{2},
\qquad
d_s\coloneqq\frac{\hat p_s-\hat p_s^*}{2}.
\]
Then $a_s=a_s^*$ and, by \eqref{eq:Isa-ps-real-identity},
$d_s\in\mathcal I_n^{\R}$ with $d_s^*=-d_s$.  Writing
$e_1=\i,e_2=\j,e_3=\k$, a direct expansion of
\eqref{eq:Isa-p-component-decomposition} gives
\begin{equation}\label{eq:Isa-p-plus-p-star-decomposition}
\hat p+\hat p^*
=
(\hat p_0+\hat p_0^*)
+
\sum_{s=1}^3
\left(
 e_sa_s-a_se_s
 +e_sd_s+d_se_s
\right).
\end{equation}

We show that every term on the right-hand side belongs to the span in \eqref{eq:Isa-internal-span}. By Lemma~\ref{real-scalar-identity},
$\mathcal I_n^{\R}=
\left\langle s_{w,i}:w\in\cW,\ i=1,\ldots,n\right\rangle_*
$ as a two-sided $*$-ideal.  Hence every element of $\mathcal I_n^{\R}$ is a
finite real linear combination of terms
$u s_{w,i}^{\varepsilon}v$.  Since
$\hat p_0+\hat p_0^*$ is symmetric, taking Hermitian parts of such a representation
shows that
$\hat p_0+\hat p_0^*\in
\operatorname{span}_{\R}
\left\{
\lambda_{1,u,v;w,i}^{\varepsilon}
\right\}$.
Likewise, $d_s\in\mathcal I_n^{\R}$ and $d_s^*=-d_s$, so $e_sd_s+d_se_s=
e_sd_s+(e_sd_s)^*$.
Expanding $d_s$ into two-sided multiples of the generators
$s_{w,i}^{\varepsilon}$ therefore gives
\begin{equation}\label{eq:Isa-ds-lambda-span}
e_sd_s+d_se_s
\in
\operatorname{span}_{\R}
\left\{
\lambda_{e_s,u,v;w,i}^{\varepsilon}
\right\}.
\end{equation}
Finally, every symmetric real polynomial $a_s$ is a finite real linear combination of polynomials of the form $w+w^*$.  For such a polynomial one has
\begin{equation}\label{eq:Isa-commutator-kappa}
e_s(w+w^*)-(w+w^*)e_s
=
-\kappa_{w,e_s,1}.
\end{equation}
Thus
\begin{equation}\label{eq:Isa-as-kappa-span}
e_sa_s-a_se_s
\in
\operatorname{span}_{\R}\{\kappa_{a,e,b}\}.
\end{equation}
Equations
\eqref{eq:Isa-p-plus-p-star-decomposition}--\eqref{eq:Isa-as-kappa-span}
show that $\hat p+\hat p^*$ belongs to the span on the right-hand side of
\eqref{eq:Isa-internal-span}.  Together with
\eqref{eq:Isa-reduce-left-coefficient}, the same is true of $p$.
This proves the reverse inclusion and completes the proof.
\end{proof}

\subsection{A counterexample to a more general Archimedean Positivstellensatz}\label{sec:counterexample-general-putinar}

The preceding theorem gives a positive result for quaternionic-coefficient objectives under real-coefficient constraints.  We now delimit the scope of that result by asking whether the same conclusion remains valid when the constraint polynomials themselves have quaternionic coefficients.  The answer is negative: the real-coefficient assumption on the constraints cannot generally be removed, even after quotienting by the full scalar-identity ideal and imposing a sphere constraint.  The following univariate counterexample exhibits this obstruction, and all of its polynomials admit quadratic Gram representations.

Let $q\in\H$ and write
$q=x_0+x_1\i+x_2\j+x_3\k$.
Set $h\coloneqq 1-|q|^2$, and
$g_1\coloneqq \frac{1}{10}+x_0$,
$g_2\coloneqq \frac{1}{10}-x_0$,
$g_3\coloneqq \frac12+x_1$,
$g_4\coloneqq \frac12+x_2$,
$g_5\coloneqq \frac12+x_3$.
Define
\begin{equation*}
 K\coloneqq
 \left\{
 q\in\H:h(q,\bar q)=0,\,g_i(q,\bar q)\ge0,\,i=1,\ldots,5\right\}.
\end{equation*}
Let $f\coloneqq x_1+x_2+x_3+\frac25$.
We show that $\min_{q\in K}f(q)=\frac{1}{10}>0$, whereas $f\notin\mathcal Q^{\H}(\bg,h)+
\left(
\mathcal I_1^{\H}\cap\sym\P\langle q,\bar q\rangle
\right)$.

For $q\in K$, put $r^2\coloneqq x_1^2+x_2^2+x_3^2$.
Since $|q|^2=1$ and $|x_0|\leq \frac{1}{10}$, $r^2=1-x_0^2\geq\frac{99}{100}$.
Set
\[
 y_s\coloneqq x_s+\frac12\geq0,
 \qquad s=1,2,3,
 \qquad
 T\coloneqq y_1+y_2+y_3.
\]
Then
\[
 r^2
 =\sum_{s=1}^3\left(y_s-\frac12\right)^2
 =\sum_{s=1}^3y_s^2-T+\frac34.
\]
This yields $\sum_{s=1}^3y_s^2\geq T+\frac{6}{25}$.
Since the $y_s$'s are nonnegative,
$\sum_s y_s^2\leq T^2$, and hence
$T^2-T-\frac{6}{25}\geq0$.
Therefore $T\geq6/5$ and
$x_1+x_2+x_3=T-\frac32\geq-\frac3{10}$.
Equality is attained at
$\hat q\coloneqq
\frac{1}{10}+\frac{7}{10}\i-\frac12\j-\frac12\k$.
Thus $\min_{q\in K}(x_1+x_2+x_3)=-\frac3{10}$, and consequently
$\min_{q\in K}f(q)
 =-\frac3{10}+\frac25
 =\frac1{10}>0$.

Put
\[
 u\coloneqq\frac{\i+\j+\k}{\sqrt3},
 \qquad
 \alpha\coloneqq\frac{\sqrt3-1}{12},
 \qquad
 \beta\coloneqq\frac{3-\sqrt3}{12}.
\]
Define the positive probability measure
\begin{equation}\label{eq:counterexample-separating-measure}
 \nu
 \coloneqq
 \frac12\delta_{-u}
 +\alpha\left(\delta_{\i}+\delta_{\j}+\delta_{\k}\right)
 +\beta\left(\delta_{-\i}+\delta_{-\j}+\delta_{-\k}\right).
\end{equation}
Indeed, $\alpha,\beta>0$ and $\frac12+3(\alpha+\beta)=1$. Let
$L(p)\coloneqq\int_{\H}p(q,\bar q)\,d\nu(q)$,
where the quaternion-valued integral is interpreted componentwise.  Since $L$ is constructed from scalar quaternion evaluations, $L(p)=0$ for $p\in\mathcal I_1^{\H}$.
Every point in $\supp(\nu)$ is a purely imaginary unit quaternion, so
\[
 \bar q=-q,\qquad q^2=-1,\qquad \bar q q=1,\qquad x_0=0
 \quad\text{on }\supp(\nu).
\]
It follows that $L(p)=0$ for $p\in\mathcal J_h^\H$.
A direct calculation gives, for $i,j\in\{1,2,3\}$,
\begin{equation}\label{eq:counterexample-first-second-moments}
 L(x_i)=-\frac13,
 \qquad
 L(x_i^2)=\frac13,
 \qquad
 L(x_ix_j)=\frac16\quad(i\neq j).
\end{equation}

Let $p\in\mathcal L$ be arbitrary. On $\supp(\nu)$, the relations $\bar q=-q$ and $q^2=-1$ reduce
every word in $q,\bar q$ to one of $\pm1,\pm q$.  Hence there exist $a,b\in\H$, independent of the support point, such that
$p(q,\bar q)=a+bq$ for every $q\in\supp(\nu)$.
Consequently,
\begin{equation}\label{eq:counterexample-L-square-positive}
 L(p^*p)=\int_{\H}|p(q,\bar q)|^2\,d\nu(q)\geq0.
\end{equation}
Because $x_0=0$ on $\supp(\nu)$,
\begin{equation}\label{eq:counterexample-g12-positive} L(p^*g_1p)=L(p^*g_2p)=\frac1{10}L(p^*p)\geq0.
\end{equation}

Write $e_1\coloneqq\i,e_2\coloneqq\j,e_3\coloneqq\k$. We have $L(q)=-\frac13(\i+\j+\k)$
and $L(x_iq)=\frac13e_i+\frac16\sum_{j\neq i}e_j$ for $i=1,2,3$.
Therefore, for $i=1,2,3$, $n_i\coloneqq L(g_{i+2}q)=\frac16e_i$. Moreover, let $\mu_i\coloneqq L(g_{i+2})=\frac16$.
Using the fact that $g_{i+2}$ is real-valued, we obtain
\begin{align*}
 L(p^*g_{i+2}p)
 &=\mu_i(|a|^2+|b|^2)
   +\bar a bn_i+\bar n_i\bar b a\\
 &=\mu_i(|a|^2+|b|^2)
   +2\RR(\bar a bn_i)\\
 &\geq
 \mu_i(|a|^2+|b|^2)-2|a||b||n_i|\\
 &=\frac16(|a|-|b|)^2\geq0.
\end{align*}
Together with \eqref{eq:counterexample-g12-positive}, this proves
$L(p^*g_ip)\geq0$ for every $p\in\mathcal L$ and $i=1,\ldots,5$. By linearity and the real-valuedness of the constraints, it follows that $L(p)\geq0$ for every $p\in\mathcal Q^{\H}(\bg,h)$.
By \eqref{eq:counterexample-first-second-moments}, $L(f)
 =L(x_1+x_2+x_3)+\frac25
 =-1+\frac25
 =-\frac35<0$.
Since $L$ vanishes on $\mathcal I_1^{\H}$, this is impossible if
$f\in\mathcal Q^{\H}(\bg,h)+\bigl(\mathcal I_1^{\H}\cap\sym\P\langle q,\bar q\rangle\bigr)$.

It remains to verify the asserted quadratic Gram representations.  If
$a_0,a_1,a_2,a_3,b,c\in\R$ and
\[
 \mathbf w\coloneqq\begin{bmatrix}1\\q\end{bmatrix},
 \qquad
 Q\coloneqq
 \begin{bmatrix}
 c & \frac12(a_0-a_1\i-a_2\j-a_3\k)\\[1mm]
 \frac12(a_0+a_1\i+a_2\j+a_3\k) & b
 \end{bmatrix},
\]
then $Q=Q^*$ and, modulo $\mathcal I_1^{\H}$, $\mathbf w^*Q\mathbf w
=c+a_0x_0+a_1x_1+a_2x_2+a_3x_3+b\bar q q$.
Hence $f,h,g_1,\ldots,g_5$ all have the asserted quadratic Gram
representations.

In summary, the scalar-identity quotient supports Archimedean QSOS certificates for real-coefficient data and, via the operator lift, for quaternionic-coefficient objectives with real-coefficient constraints; the counterexample shows that this latter constraint hypothesis is sharp for the present certificate cone.  These positivity results provide the dual algebraic foundation for the quaternionic moment problems studied next.

\section{The quaternionic moment problem}
The Positivstellens\"atze of Section~\ref{sec:qsos} provide algebraic certificates for positivity of quaternionic polynomials.  We now develop the corresponding quaternionic moment theory, which characterizes when linear functionals on quaternionic polynomials arise from positive Borel measures on scalar quaternionic tuples.  We proceed in three stages. First, we treat full real-valued moment sequences and derive unconstrained and constrained Riesz--Haviland-type characterizations. Second, we consider quaternion-valued moment sequences and obtain the analogous full moment criteria. Finally, we turn to truncated quaternion-valued data, for which flatness and scalar consistency lead to finitely atomic representing measures. Throughout the section, the scalar-identity ideals introduced earlier provide the link between formal word moments and genuine scalar quaternionic evaluations.

\subsection{The full real-valued quaternionic moment problem}\label{sec:full-real-valued-quaternionic-moment}
We begin with real-valued moment sequences.  The main objective of this subsection is to characterize representability first on the whole space $\H^n$ and then on a quaternionic basic semialgebraic set $K_{\bg,\bh}$.  Under the Archimedean hypothesis, these abstract positivity conditions will reduce to positivity of moment and localizing matrices together with the equality and scalar-identity relations.

Recall
\[
\mathcal F_n\coloneqq\R\langle\q,\bar\q\rangle,
\qquad
\pi:\mathcal F_n\longrightarrow
\mathcal A\coloneqq\mathcal F_n/\mathcal I_{n}^\R,
\qquad
\mathcal B\coloneqq\sym\mathcal A.
\]
Two quaternionic monomials $u$ and $v$ are \emph{cyclically equivalent}, written $u\stackrel{\mathrm{cyc}}{\sim}v$, if $u=w_1w_2$ and $v=w_2w_1$ for some quaternionic monomials $w_1,w_2$.
Let $\y=(y_w)_{w\in\cW}\in\R^{|\cW|}$ be a real-valued sequence. We call $\y$ a \emph{quaternionic pseudo-moment sequence} if
\begin{enumerate}[(i)]
    \item $y_w=y_{w^*}$ for $w\in\cW$;
    \item $y_u=y_{v}$ if $u\stackrel{\mathrm{cyc}}{\sim}v$ for $u,v\in\cW$.
\end{enumerate}
A quaternionic pseudo-moment sequence $\y$ defines a real-linear functional
$L_{\y}:\mathcal F_n\to\R$ by
\begin{equation}
p=\sum_{w}p_{w}w\longmapsto L_{\y}(p)=\sum_{w}p_{w}y_{w}.
\end{equation}
A positive Borel measure $\mu$ on $\H^n$ represents $\y$ if
\[
y_w=
\int_{\H^n}\RR\bigl(w(\ba,\overline\ba)\bigr)\,d\mu(\ba)
\qquad(w\in\mathcal W),
\]
where the quaternion-valued integral is interpreted componentwise.
Let
\[
\mathcal P_n^{\R}
\coloneqq
\left\{
 p\in\sym\mathcal F_n:
 p(\ba,\bar\ba)\geq0\ \text{for every }\ba\in\H^n
\right\}.
\]

With this notation in place, we first characterize the unconstrained full moment problem.  The result is the quaternionic analogue of the classical Riesz--Haviland theorem for the real-valued functional $L_{\y}$.
\begin{theorem}\label{thm:real-quaternionic-riesz-haviland}
Let \(\y=(y_w)_{w\in\cW}\in\R^{|\cW|}\) be a quaternionic pseudo-moment sequence. The following are equivalent.
\begin{enumerate}[(i)]
\item There exists a positive Borel representing measure \(\mu\) for \(\y\).
\item For every $p\in\mathcal P_n^{\R}$, $L_{\y}(p)\geq0$.
\item $L_{\y}(\mathcal I_{n}^\R)=0$ and
the induced functional $\widetilde{L}_{\y}:\mathcal B\longrightarrow\R$
is nonnegative on every \(b\in\mathcal B\) whose scalar evaluation is
nonnegative on \(\H^n\).
\end{enumerate}
Moreover, whenever a representing measure exists, one may choose it invariant
under simultaneous quaternionic conjugation
$(q_1,\ldots,q_n)\longmapsto
(uq_1\bar u,\ldots,uq_n\bar u)$ for any $u\in\H$ with $|u|=1$.
\end{theorem}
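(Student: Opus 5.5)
The plan is to reduce the statement to the classical Riesz--Haviland theorem on $\R^{4n}\simeq\H^n$. The reduction uses the real coordinate map $\rho$, the identification of $\mathcal B$ from the proof of Lemma~\ref{lem:scalar-character-real}, and an averaging (Reynolds) operator over the rotations induced by simultaneous conjugation.

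First, a preliminary remark used throughout. Because $y_w=y_{w^*}$, we have $L_{\y}(p)=L_{\y}\bigl(\tfrac12(p+p^*)\bigr)$ for every $p\in\mathcal F_n$. Moreover, if $\mu$ represents $\y$, then by linearity
\[
L_{\y}(p)=\int\RR\bigl(p(\ba,\bar\ba)\bigr)\,d\mu(\ba)\qquad\text{for all }p\in\mathcal F_n.
\]
\textbf{(i)$\Rightarrow$(ii)} is then immediate. For $p\in\mathcal P_n^{\R}$ the integrand $p(\ba,\bar\ba)$ is real and nonnegative, so $L_{\y}(p)\ge0$.

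\textbf{(ii)$\Rightarrow$(iii).} Let $p\in\mathcal I_n^{\R}$. The ideal is $*$-closed, so $p^s\coloneqq\frac12(p+p^*)$ is a symmetric scalar identity. Hence both $p^s$ and $-p^s$ lie in $\mathcal P_n^{\R}$, which gives $L_{\y}(p)=L_{\y}(p^s)=0$. Thus $L_{\y}$ descends to a functional $\widetilde L_{\y}$ on $\mathcal A$, and in particular on $\mathcal B$. Now take $b\in\mathcal B$ with nonnegative scalar evaluations. Any representative $\widetilde b$ has a symmetric version $\frac12(\widetilde b+\widetilde b^*)$, which is again a representative because $b=b^*$. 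This symmetric representative lies in $\mathcal P_n^{\R}$, so $\widetilde L_{\y}(b)\ge0$.

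\textbf{(iii)$\Rightarrow$(i).} This is the main step. Write $x\in\R^{4n}$ for the real coordinates and let $G\simeq\mathrm{SO}(3)$ act on $\R^{4n}$ by simultaneous rotation of the imaginary parts $(x_{i2},x_{i3},x_{i4})$. This is exactly the action of conjugation $\q\mapsto u\q\bar u$. The map $b\mapsto\rho(b)$ sends $\mathcal B$ into $\R[x]$, since symmetric elements evaluate to real numbers, and it is injective by the definition of $\mathcal I_n^{\R}$. Its image consists of $G$-invariant polynomials. Conversely, I will show that the image is all of $\R[x]^G$. By the proof of Lemma~\ref{lem:scalar-character-real}, $\mathcal B$ is generated by $x_i$, $s_{ij}$ and $t_{ijk}$. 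Under $\rho$ these become the real parts, the dot products, and the triple products (determinants) of the imaginary vectors. By the first fundamental theorem of invariant theory for $\mathrm{SO}(3)$, these generate $\R[x]^G$. I expect this identification to be the main obstacle. If citing the FFT proves delicate, a fallback is to note directly that only invariance is needed, together with the fact that every invariant polynomial is of the form $\RR\rho(p)$ for some $p\in\sym\mathcal F_n$; this could be checked via the explicit dot/cross calculus already developed in that proof.

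Given this identification, define $\Lambda$ on $\R[x]$ by
\[
\Lambda(\varphi)\coloneqq\widetilde L_{\y}\bigl(\rho^{-1}(\mathcal R\varphi)\bigr),\qquad \mathcal R\varphi\coloneqq\int_G\varphi\circ g\,dg,
\]
where $\mathcal R$ is the Haar-average Reynolds operator. It preserves nonnegativity and polynomial degree. If $\varphi\ge0$ on $\R^{4n}$, then $\mathcal R\varphi\ge0$, so $\rho^{-1}(\mathcal R\varphi)$ is an element of $\mathcal B$ with nonnegative scalar evaluation. By (iii), $\Lambda(\varphi)\ge0$. The classical Riesz--Haviland theorem therefore yields a positive Borel measure $\mu$ on $\R^{4n}=\H^n$ with $\Lambda(\varphi)=\int\varphi\,d\mu$.

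To check that $\mu$ represents $\y$, take any word $w$. The polynomial $\RR\rho(w)=\rho\bigl(\tfrac12(w+w^*)\bigr)$ is already $G$-invariant. Hence
\[
y_w=L_{\y}\bigl(\tfrac12(w+w^*)\bigr)=\Lambda\bigl(\RR\rho(w)\bigr)=\int\RR\bigl(w(\ba,\bar\ba)\bigr)\,d\mu(\ba).
\]
Finally, for the invariance assertion, replace $\mu$ by its average $\int_G g_*\mu\,dg$. This averaged measure is invariant under simultaneous conjugation, since every unit $u$ induces some element of $G$. It still represents $\y$, because each integrand $\RR\bigl(w(\ba,\bar\ba)\bigr)$ is $G$-invariant.
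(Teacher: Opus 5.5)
Your proposal is correct and follows the paper's route for the main step (iii)$\Rightarrow$(i): injective scalar evaluation of $\mathcal B$ onto the $\mathrm{SO}(3)$-invariants via the first fundamental theorem, the Reynolds-averaged functional fed into the classical Riesz--Haviland theorem, and orbit averaging of the measure for the invariance claim. The only deviation is in (ii)$\Rightarrow$(iii), where you annihilate $\mathcal I_n^{\R}$ directly by symmetrizing, using $\pm\frac12(p+p^*)\in\mathcal P_n^{\R}$ and $L_{\y}(p)=L_{\y}(\frac12(p+p^*))$, instead of the paper's Cauchy--Schwarz argument from $L_{\y}(r^*r)=0$; this is a valid and slightly simpler shortcut.
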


\begin{proof}
The implication (i)$\Rightarrow$(ii) is immediate, because the integral of a nonnegative symmetric polynomial is nonnegative.

We next prove (ii)$\Rightarrow$(iii). Since $p^*p$ is nonnegative at every quaternionic point,
$L_{\y}(p^*p)\geq0$ for every $p\in\mathcal F_n$. Define
\[
\langle p,r\rangle_{\y}\coloneqq L_{\y}(r^*p),
\qquad p,r\in\mathcal F_n.
\]
This is a real PSD bilinear form. Moreover,
$L_{\y}(r^*p)=L_{\y}((r^*p)^*)=L_{\y}(p^*r)$, so the form is symmetric and the Cauchy--Schwarz inequality applies.
Let \(r\in \mathcal I_{n}^\R\). Then \(r^*r\) is identically zero on \(\H^n\), so both
\(r^*r\) and \(-r^*r\) belong to \(\mathcal P_n^{\R}\). Hence
$L_{\y}(r^*r)=0$.
The Cauchy--Schwarz inequality, applied to \(1\) and \(r\), gives
\[
|L_{\y}(r^*)|^2
=|\langle1,r\rangle_{\y}|^2
\leq
L_{\y}(1)L_{\y}(r^*r)
=0.
\]
Thus $L_{\y}(r^*)=L_{\y}(r)=0$, which proves
$L_{\y}(\mathcal I_{n}^\R)=0$.

If \(b\in\mathcal B\) and \(p\in\mathcal F_n\) represents \(b\), define
\begin{equation}\label{eq:ell-y-definition}
\widetilde{L}_{\y}(b)
\coloneqq
L_{\y}\!\left(\frac{p+p^*}{2}\right).
\end{equation}
This definition is independent of the chosen representative. Indeed, if \(p'\) also represents
\(b\), then \(p-p'\in \mathcal I_{n}^\R\), and therefore
$(p+p^*)/2-(p'+(p')^*)/2
\in \mathcal I_{n}^\R$.
Since \(L_{\y}(\mathcal I_{n}^\R)=0\), the two values in
\eqref{eq:ell-y-definition} agree. If \(b\) is nonnegative on \(\H^n\), its
symmetric representative \((p+p^*)/2\) belongs to
\(\mathcal P_n^{\R}\), so \(\widetilde{L}_{\y}(b)\geq0\). This proves (iii).

The implication (iii)$\Rightarrow$(ii) follows directly from the definition of $\widetilde{L}_{\y}$. It remains to prove (iii)$\Rightarrow$(i).

By Lemma~\ref{lem:symmetric-central-quotient}, $\mathcal B$ is a central commutative real algebra.
In \(\mathcal A\), suppressing quotient brackets, put
\[
x_i\coloneqq\frac{q_i+\bar q_i}{2},
\qquad
v_i\coloneqq\frac{q_i-\bar q_i}{2}.
\]
Define the symmetric elements
\begin{equation}\label{eq:moment-invariant-generators}
s_{ij}
\coloneqq
-\frac12(v_iv_j+v_jv_i),
\qquad
t_{ijk}
\coloneqq
\frac12(v_kv_jv_i-v_iv_jv_k).
\end{equation}
Write each component of a scalar quaternionic point as
\[
a_i=\xi_i+\mathbf v_i,
\qquad
\xi_i\in\R,
\quad
\mathbf v_i\in\operatorname{Im}\H\simeq\R^3.
\]
The quaternion multiplication rules give
\begin{equation}\label{eq:moment-invariant-evaluations}
x_i(\ba)=\xi_i,
\qquad
s_{ij}(\ba)=\mathbf v_i\mathbin{\boldsymbol\cdot}\mathbf v_j,
\qquad
t_{ijk}(\ba)=\det(\mathbf v_i,\mathbf v_j,\mathbf v_k).
\end{equation}

Let
\[
V\coloneqq\R^n\oplus(\R^3)^n\simeq\H^n,
\qquad
G\coloneqq\mathrm{SO}(3),
\]
where \(G\) fixes the scalar coordinates and acts diagonally on the imaginary
vectors. Define
\[
\Theta_{\mathrm{inv}}:\mathcal B\longrightarrow\R[V]^G,
\qquad
\Theta_{\mathrm{inv}}(b)(\ba)\coloneqq b(\ba,\bar\ba).
\]
This is a well-defined unital real-algebra homomorphism. It is injective by the definition of $\mathcal I_n^\R$. Its image is contained in $\R[V]^G$: if \(|u|=1\),
then every real-coefficient polynomial is equivariant,
$p(u\ba\bar u,u\bar\ba\bar u)
=
up(\ba,\bar\ba)\bar u$,
and a scalar-valued equivariant polynomial is invariant. Conversely, the first fundamental theorem for $\mathrm{SO}(3)$ states that $\R[V]^G$ is generated by
the scalar coordinates \(\xi_i\), the inner products
\(\mathbf v_i\boldsymbol\cdot\mathbf v_j\), and the oriented triple products
\(\det(\mathbf v_i,\mathbf v_j,\mathbf v_k)\); see, e.g.,
\cite{weyl1946classical}. By \eqref{eq:moment-invariant-evaluations}, all these
generators belong to the image of \(\Theta_{\mathrm{inv}}\). Therefore,
\begin{equation}\label{eq:symmetric-invariant-isomorphism}
\Theta_{\mathrm{inv}}:\mathcal B\xrightarrow{\ \simeq\ }\R[V]^{\mathrm{SO}(3)}.
\end{equation}
Equivalently, $\mathcal B=
\R[x_i,s_{ij},t_{ijk}:1\leq i,j,k\leq n]$.

Let \(dg\) denote the normalized Haar measure on \(G\), and let
\[
\mathscr R_G:\R[V]\longrightarrow\R[V]^G,
\qquad
(\mathscr R_GP)(\mathbf z)
\coloneqq
\int_G P(g\cdot \mathbf z)\,dg
\]
be the Reynolds operator. Define
\begin{equation}\label{eq:Reynolds-extended-functional}
\widehat{L}_{\y}(P)
\coloneqq
\widetilde{L}_{\y}\!\left(\Theta_{\mathrm{inv}}^{-1}(\mathscr R_GP)\right),
\qquad P\in\R[V].
\end{equation}
If \(P\geq0\) on \(V\), then \(\mathscr R_GP\geq0\) on \(V\), so condition~(iii)
gives \(\widehat{L}_{\y}(P)\geq0\). The classical Riesz--Haviland theorem~\cite[Theorem~3.1.2]{marshall2008positive} therefore yields a positive Borel measure $\mu$ on $V\simeq\H^n$ such that
\begin{equation}\label{eq:classical-measure-for-Reynolds-extension}
\widehat{L}_{\y}(P)=\int_VP(\mathbf z)\,d\mu(\mathbf z)
\qquad(P\in\R[V]).
\end{equation}

For \(p\in\mathcal F_n\), let
$P_p(\ba)
\coloneqq
\RR\bigl(p(\ba,\bar\ba)\bigr)$.
This polynomial is \(G\)-invariant and, under
\eqref{eq:symmetric-invariant-isomorphism}, corresponds to
$\pi\!\left((p+p^*)/2\right)\in\mathcal B$.
Hence \(\mathscr R_GP_p=P_p\), and
\begin{equation*}
L_{\y}(p)=L_{\y}\!\left(\frac{p+p^*}{2}\right)=\widetilde{L}_{\y}\!\left(\pi\!\left(\frac{p+p^*}{2}\right)\right)=\widehat{L}_{\y}(P_p)=\int_{\H^n}\RR\bigl(p(\ba,\bar\ba)\bigr)\,d\mu(\ba).
\end{equation*}
Thus $\mu$ represents $\y$.

Finally, define the orbit average of $\mu$ by
\[
\mu_G(E)\coloneqq\int_G\mu(g^{-1}E)\,dg.
\]
Every function $P_p$ is $G$-invariant, so $\mu_G$ has the same quaternionic moments as $\mu$. This proves the final assertion.
\end{proof}

\begin{remark}\label{rem:full-real-cyclicity-automatic}
Before adding polynomial constraints, it is useful to record that the scalar-identity condition in the preceding theorem already contains the cyclic moment relations. Thus, cyclicity is compatible with, and in this sense subordinate to, scalar consistency. Indeed, let \(u,v\in\cW\)
and put \(c=uv-vu\). At every quaternionic point,
\[
\frac{c+c^*}{2}(\ba)
=
\RR\bigl(u(\ba)v(\ba)-v(\ba)u(\ba)\bigr)
=0.
\]
Thus \((c+c^*)/2\in \mathcal I_{n}^\R\). By the scalar-identity condition,
$L_{\y}(uv-vu)=
L_{\y}\!\left((c+c^*)/2\right)=0$.
\end{remark}

We now pass from the unconstrained problem to measures supported on a prescribed feasible set.  Let $g_1,\ldots,g_m\in\sym\mathcal F_n$ and $h_1,\ldots,h_\ell\in\mathcal F_n$.  The feasible set $K_{\bg,\bh}$ is closed and invariant under simultaneous quaternionic conjugation.  The same Reynolds-operator argument used above then yields the constrained Riesz--Haviland characterization.

\begin{theorem}
\label{thm:constrained-real-quaternionic-riesz-haviland}
A real-valued quaternionic pseudo-moment sequence \(\y\) has a positive Borel representing measure supported on
\(K_{\bg,\bh}\) if and only if
\begin{equation}\label{eq:constrained-full-haviland-positivity}
L_{\y}(f)\geq0
\end{equation}
for every \(f\in\sym\mathcal F_n\) satisfying $f\geq0$ on $K_{\bg,\bh}$.
Under these conditions,
\begin{equation}\label{eq:constrained-full-ideal-annihilation}
L_{\y}\!\left(
\mathcal I_{n}^\R+\mathcal J_{\bh}^{\R}
\right)=0.
\end{equation}
\end{theorem}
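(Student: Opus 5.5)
Necessity is immediate: if $\mu$ is supported on $K_{\bg,\bh}$ and represents $\y$, then for every $f\in\sym\mathcal F_n$ we have $f(\ba,\bar\ba)\in\R$, so
\[
L_{\y}(f)=\int_{K_{\bg,\bh}}\RR\bigl(f(\ba,\bar\ba)\bigr)\,d\mu(\ba)=\int_{K_{\bg,\bh}} f\,d\mu\geq0
\]
whenever $f\geq0$ on $K_{\bg,\bh}$. The ideal annihilation \eqref{eq:constrained-full-ideal-annihilation} also follows from the measure. Elements of $\mathcal I_n^\R$ vanish identically on $\H^n$. Elements of $\mathcal J_{\bh}^{\R}$ are sums of terms $uh_j^{\varepsilon}v$, and these vanish at every point of $K_{\bg,\bh}$ because $h_j$ does. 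Hence $\RR(p(\ba,\bar\ba))=0$ $\mu$-almost everywhere for every $p\in\mathcal I_n^\R+\mathcal J_{\bh}^{\R}$, so $L_{\y}(p)=0$. Alternatively, one can argue purely from \eqref{eq:constrained-full-haviland-positivity}: if $r\in\mathcal I_n^\R+\mathcal J_{\bh}^{\R}$, then $\pm r^*r\geq0$ on $K_{\bg,\bh}$, so $L_{\y}(r^*r)=0$. Applying Cauchy--Schwarz to the PSD form $\langle p,r\rangle_{\y}=L_{\y}(r^*p)$, exactly as in Theorem~\ref{thm:real-quaternionic-riesz-haviland}, then gives $L_{\y}(r)=0$.

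For sufficiency I follow the proof of Theorem~\ref{thm:real-quaternionic-riesz-haviland}, (iii)$\Rightarrow$(i), replacing $\H^n$ by $K_{\bg,\bh}$.
\begin{enumerate}
\item The hypothesis applies to every $r^*r$ and every $\pm r^*r$ with $r\in\mathcal I_n^\R$. So, as above, $L_{\y}$ vanishes on $\mathcal I_n^\R$ and induces a well-defined functional $\widetilde L_{\y}$ on $\mathcal B$.
\item Using the isomorphism $\Theta_{\mathrm{inv}}:\mathcal B\to\R[V]^{G}$ with $G=\mathrm{SO}(3)$, I define $\widehat L_{\y}=\widetilde L_{\y}\circ\Theta_{\mathrm{inv}}^{-1}\circ\mathscr R_G$ on $\R[V]$.
\item $K_{\bg,\bh}$ is $G$-invariant. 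The constraints have real coefficients, so $g_i(u\ba\bar u)=g_i(\ba)$ and $h_j(u\ba\bar u)=uh_j(\ba)\bar u$. Therefore, if $P\in\R[V]$ satisfies $P\geq0$ on $K_{\bg,\bh}$, then $\mathscr R_GP\geq0$ on $K_{\bg,\bh}$.
\item The symmetric representative of $\Theta_{\mathrm{inv}}^{-1}(\mathscr R_GP)$ is a polynomial $f\in\sym\mathcal F_n$ that is nonnegative on $K_{\bg,\bh}$. The hypothesis then gives $\widehat L_{\y}(P)\geq0$.
\item $K_{\bg,\bh}$ is closed in $V\simeq\R^{4n}$. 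The classical Riesz--Haviland theorem for closed sets \cite[Theorem~3.1.2]{marshall2008positive} therefore yields a positive Borel measure $\mu$ supported on $K_{\bg,\bh}$ that represents $\widehat L_{\y}$.
\item The same computation as before, $L_{\y}(p)=\widehat L_{\y}(P_p)$ with $P_p=\RR(p(\ba,\bar\ba))$ already $G$-invariant, shows that $\mu$ represents $\y$.
\end{enumerate}

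The one point needing care is step 4: nonnegativity of an invariant polynomial on $K_{\bg,\bh}$ has to transfer back to a symmetric word polynomial satisfying the hypothesis. This works because $\Theta_{\mathrm{inv}}$ is defined by evaluation, so the representative has the same values pointwise. The other point is the $G$-invariance of $K_{\bg,\bh}$ in step 3, which is exactly where the real-coefficient assumption on the $h_j$ and $g_i$ enters. Everything else is a verbatim repetition of the unconstrained proof.
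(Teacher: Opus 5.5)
Your proposal is correct and follows essentially the same route as the paper. The paper also derives the annihilation of $\mathcal I_n^\R+\mathcal J_{\bh}^\R$ from the positivity hypothesis via Cauchy--Schwarz, and then extends $\widetilde L_{\y}$ to $\R[V]$ through $\Theta_{\mathrm{inv}}^{-1}\circ\mathscr R_G$. It uses invariance of $K_{\bg,\bh}$ under simultaneous conjugation (coming from the real coefficients) to keep $\mathscr R_GP\geq0$ on $K_{\bg,\bh}$. Finally, it applies the classical Riesz--Haviland theorem on the closed set $K_{\bg,\bh}$ and recovers the moments from the invariant polynomials $P_p$.
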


\begin{proof}
Necessity follows immediately by integration. Conversely, assume \eqref{eq:constrained-full-haviland-positivity}. The bilinear form
\(\langle p,r\rangle_{\y}=L_{\y}(r^*p)\) is again symmetric and PSD, because \(p^*p\geq0\) on \(K_{\bg,\bh}\). If
$r\in \mathcal I_{n}^\R+\mathcal J_{\bh}^{\R}$,
then \(r\) vanishes on \(K_{\bg,\bh}\). Hence \(r^*r\) and \(-r^*r\) are both
nonnegative there, so \(L_{\y}(r^*r)=0\). The Cauchy--Schwarz inequality gives
$|L_{\y}(r^*)|^2\leq
L_{\y}(1)L_{\y}(r^*r)
=0$,
and adjoint symmetry gives \(L_{\y}(r)=0\). This proves \eqref{eq:constrained-full-ideal-annihilation}. In particular, the functional $\widetilde{L}_{\y}$ on $\mathcal B$ is well defined exactly as in \eqref{eq:ell-y-definition}.

Use the isomorphism \(\Theta_{\mathrm{inv}}:\mathcal B\to\R[V]^G\) from
\eqref{eq:symmetric-invariant-isomorphism} and define
\(\widehat{L}_{\y}\) by \eqref{eq:Reynolds-extended-functional}. Let
\(P\in\R[V]\) satisfy \(P\geq0\) on \(K_{\bg,\bh}\). For
\(\mathbf z\in K_{\bg,\bh}\), invariance of the feasible set gives
\(g\cdot \mathbf z\in K_{\bg,\bh}\) for every \(g\in G\), and therefore
$(\mathscr R_GP)(\mathbf z)=
\int_G P(g\cdot \mathbf z)\,dg
\geq0$.
Thus the invariant polynomial \(\mathscr R_GP\), viewed through \(\Theta_{\mathrm{inv}}^{-1}\),
is nonnegative on \(K_{\bg,\bh}\), and $\widehat{L}_{\y}(P)\geq0$.
The Riesz--Haviland theorem~\cite[Theorem~3.1.2]{marshall2008positive} yields a positive Borel measure
\(\mu\), supported on \(K_{\bg,\bh}\), such that
$\widehat{L}_{\y}(P)=\int_{K_{\bg,\bh}}P\,d\mu, P\in\R[V]$.
Applying this identity to
$P_p=\RR(p(\ba,\bar\ba))$, exactly as in the proof of
Theorem~\ref{thm:real-quaternionic-riesz-haviland}, gives
$L_{\y}(p)=
\int_{K_{\bg,\bh}}
\RR\bigl(p(\ba,\bar\ba)\bigr)\,d\mu(\ba)$.
Hence $\mu$ is the required representing measure.
\end{proof}

The constrained theorem is exact but requires positivity on every symmetric polynomial nonnegative on $K_{\bg,\bh}$.  Under the Archimedean hypothesis, Theorem~\ref{thm:putinar-real-equalities} converts this Haviland condition into positivity on the quadratic module together with annihilation of the equality and scalar-identity ideals.  This gives the following algebraic characterization.
\begin{corollary}
\label{cor:archimedean-full-real-quaternionic-moment}
Assume that $\mathcal Q^{\R}(\bg,\bh)$ is Archimedean. A real-valued quaternionic pseudo-moment sequence \(\y\) has a positive Borel representing measure supported on
\(K_{\bg,\bh}\) if and only if all of the following conditions hold:
\begin{align}
L_{\y}(p^*p)&\geq0
&& (p\in\mathcal F_n),
\label{eq:arch-full-moment-positive}\\
L_{\y}(p^*g_ip)&\geq0
&& (p\in\mathcal F_n,\ i=1,\ldots,m),\label{eq:arch-full-localizing-positive}\\
L_\y(u h_jv)&=0
&&(u,v\in\cW,\ j=1,\ldots,\ell),\label{eq:arch-full-equality}\\
L_\y(u[w+w^*,q_i]v)&=0
&&(u,v,w\in\cW,\ i=1,\ldots,n).\label{eq:arch-full-scalar}
\end{align}
\end{corollary}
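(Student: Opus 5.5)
The plan is to reduce both directions to Theorem~\ref{thm:constrained-real-quaternionic-riesz-haviland} and Theorem~\ref{thm:putinar-real-equalities}.

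\emph{Necessity.} Suppose $\mu$ is supported on $K_{\bg,\bh}$ and represents $\y$. At every point $\ba\in K_{\bg,\bh}$ we have $\RR\bigl((p^*p)(\ba)\bigr)=|p(\ba)|^2\ge0$. Since $g_i(\ba)$ is real, $\RR\bigl((p^*g_ip)(\ba)\bigr)=g_i(\ba)|p(\ba)|^2\ge0$. Integrating gives \eqref{eq:arch-full-moment-positive} and \eqref{eq:arch-full-localizing-positive}. Next, $uh_jv$ vanishes at every feasible point because $h_j(\ba)=0$. Also $u[w+w^*,q_i]v$ vanishes identically, since $[w+w^*,q_i]\in\mathcal I_n^{\R}$ by Lemma~\ref{real-scalar-identity}. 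Integrating the real parts of these gives \eqref{eq:arch-full-equality} and \eqref{eq:arch-full-scalar}; alternatively, both follow from \eqref{eq:constrained-full-ideal-annihilation}.

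\emph{Sufficiency.} By Theorem~\ref{thm:constrained-real-quaternionic-riesz-haviland}, it suffices to show $L_{\y}(f)\ge0$ for every $f\in\sym\mathcal F_n$ with $f\ge0$ on $K_{\bg,\bh}$. The plan has three steps.

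First, I show that $L_{\y}$ vanishes on $\mathcal I_n^{\R}+\mathcal J_{\bh}^{\R}$. By Lemma~\ref{real-scalar-identity}, $\mathcal I_n^{\R}$ is spanned by terms $u s^{\varepsilon} v$ with $s=[w+w^*,q_i]$ and $u,v$ words. For $\varepsilon=1$ this is \eqref{eq:arch-full-scalar}. For $\varepsilon=*$, use $L_{\y}(z^*)=L_{\y}(z)$, which holds by condition (i) of a pseudo-moment sequence, together with $(usv)^*=v^*s^*u^*$. This reduces the $*$-terms to the unstarred case. The same argument handles $\mathcal J_{\bh}^{\R}$ using \eqref{eq:arch-full-equality}.

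Second, fix $\varepsilon>0$ and apply Theorem~\ref{thm:putinar-real-equalities} to $f+\varepsilon>0$ on $K_{\bg,\bh}$. This gives
\[
f+\varepsilon=\sigma_0+\tfrac12\sum_i(\sigma_ig_i+g_i\sigma_i)+E+r,
\]
where $E$ is the equality part and $r\in\sym\mathcal I_n^{\R}$.

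Third, I evaluate $L_{\y}$ on each piece. We have $L_{\y}(E)=L_{\y}(r)=0$ by the first step. Also $L_{\y}(\sigma_0)\ge0$ by \eqref{eq:arch-full-moment-positive}. For the weighted terms, write $\sigma_i=\sum_sp_s^*p_s$. Then $\sigma_ig_i$ and $g_i\sigma_i$ are each congruent to $p_s^*g_ip_s$ modulo $\mathcal I_n^{\R}$, because $g_i$ is symmetric and hence central in $\mathcal A$ (Lemma~\ref{lem:symmetric-central-quotient}). So $L_{\y}$ of each weighted term equals $\sum_sL_{\y}(p_s^*g_ip_s)\ge0$ by \eqref{eq:arch-full-localizing-positive}. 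Hence $L_{\y}(f)\ge-\varepsilon L_{\y}(1)$, and letting $\varepsilon\to0$ gives $L_{\y}(f)\ge0$.

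The main obstacle is the bookkeeping in the first step. One must make sure that the two-sided $*$-ideal generated by the $s_{w,i}$ is really spanned by terms $u s^{\varepsilon} v$ with words $u,v$, and that the $*$-symmetry of $\y$ covers the adjoint terms. One should also check that the cyclic condition on $\y$ is not additionally needed; Remark~\ref{rem:full-real-cyclicity-automatic} indicates it is subsumed.
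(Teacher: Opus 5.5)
Your proof is correct and follows the same route as the paper. Like the paper, you perturb by $\varepsilon$, invoke Theorem~\ref{thm:putinar-real-equalities}, apply $L_{\y}$, and conclude via Theorem~\ref{thm:constrained-real-quaternionic-riesz-haviland}. The only difference is presentational: the paper compresses the middle step into ``the conditions induce a functional on $\mathcal B_\bh$ that is nonnegative on $\mathcal Q_\bh(\bg)$,'' whereas you work with the lifted certificate in $\mathcal Q^{\R}(\bg,\bh)+\sym\mathcal I_n^{\R}$. There you explicitly verify two facts the paper leaves implicit: the ideal annihilation (via $y_w=y_{w^*}$), and the reduction of the symmetrized weighted terms to localizing forms (via centrality of $g_i$ modulo $\mathcal I_n^{\R}$).
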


\begin{proof}
Necessity is immediate. Conversely, conditions
\eqref{eq:arch-full-moment-positive}--\eqref{eq:arch-full-scalar} induce a
well-defined linear functional on \(\mathcal B_\bh\) that is nonnegative on
\(\mathcal Q_\bh(\bg)\). Let \(f\in\sym\mathcal F_n\) be nonnegative on
\(K_{\bg,\bh}\). For every \(\varepsilon>0\), the polynomial
\(f+\varepsilon\) is strictly positive on \(K_{\bg,\bh}\), with the usual vacuous interpretation if the feasible set is empty. The real-coefficient
quotient Positivstellensatz, Theorem~\ref{thm:putinar-real-equalities}, gives
$\widehat f+\varepsilon\in \mathcal Q_\bh(\bg)$.
Applying the induced functional and using
\eqref{eq:arch-full-moment-positive}--\eqref{eq:arch-full-localizing-positive}
yields
$L_{\y}(f)+\varepsilon L_{\y}(1)\geq0$.
Letting \(\varepsilon\downarrow0\) gives \(L_{\y}(f)\geq0\). Theorem~\ref{thm:constrained-real-quaternionic-riesz-haviland} now gives a representing measure supported on \(K_{\bg,\bh}\).
\end{proof}

For subsequent semidefinite relaxations, it is convenient to express the preceding conditions in matrix form.  This leads to the quaternionic moment and localizing matrices. Given a real-valued quaternionic pseudo-moment sequence \(\y\),
the \emph{moment matrix} $\M_d(\y)$ is defined by
\begin{equation}\label{eq:full-moment-lmatrices}
\left[\M_d(\y)\right]_{u,v}\coloneqq L_{\y}(u^*v)=y_{u^*v},
\quad u,v\in\cW_{d}.
\end{equation}
Let $p=\sum_{w}p_ww\in\sym\R\langle\q,\bar\q\rangle$.
The \emph{localizing matrix} $\M_d(p\y)$ associated with $p$ is defined by
\begin{equation}\label{eq:full-local-lmatrices}
    \left[\M_{d}(p\y)\right]_{u,v}\coloneqq L_{\y}(u^*pv)=\sum_{w}p_wy_{u^*wv},
\quad u,v\in\cW_{d}.
\end{equation}
Conditions~\eqref{eq:arch-full-moment-positive} and~\eqref{eq:arch-full-localizing-positive} are equivalent to the positive semidefiniteness of all matrices in \eqref{eq:full-moment-lmatrices}--\eqref{eq:full-local-lmatrices}. Thus, under the Archimedean hypothesis,
\begin{equation}\label{eq:boxed-full-moment-characterization}
\boxed{
\begin{gathered}
\y\text{ has a representing measure on }K_{\bg,\bh}\\
\Longleftrightarrow\\[-1mm]
\begin{aligned}
&\M_d(\y)\succeq0\quad(d\geq0),\\
&\M_d(g_i\y)\succeq0
\quad(d\geq0,\ i=1,\ldots,m),\\
&L_\y(u h_jv)=0
\quad(u,v\in\cW,\ j=1,\ldots,\ell),\\
&L_\y(u[w+w^*,q_i]v)=0
\quad(u,v,w\in\cW,\ i=1,\ldots,n).
\end{aligned}
\end{gathered}}
\end{equation}

\subsection{The full quaternion-valued quaternionic moment problem}
\label{sec:full-quaternion-valued-moment}
We next turn to the quaternion-valued moment problem.  In contrast with the preceding real-valued setting, the moment functional now retains the full quaternionic value of each word, while positivity is tested through its real part on symmetric quaternionic polynomials.  We first establish a Riesz--Haviland-type theorem for an arbitrary closed set, then specialize it to the Archimedean semialgebraic setting and derive a moment--localizing matrix characterization.

Let
$\y=(y_w)_{w\in\cW}\in\H^{|\cW|}$ be a quaternion-valued sequence. We call $\y$ a \emph{quaternionic pseudo-moment sequence} if
\begin{enumerate}[(i)]
    \item $y_w=\bar y_{w^*}$ for $w\in\cW$;
    \item $\RR(y_u)=\RR(y_{v})$ if $u\stackrel{\mathrm{cyc}}{\sim}v$ for $u,v\in\cW$.
\end{enumerate}
A quaternionic pseudo-moment sequence $\y$ defines a real-linear functional
$L^{\RR}_{\y}:\P\langle\q,\bar\q\rangle\to\R$ by
\begin{equation}\label{eq:full-H-functional}
p=\sum_{a,b}ap_{a,b}b\longmapsto L^{\RR}_{\y}(p)=\RR\left(\sum_{a,b}p_{a,b}y_{ba}\right).
\end{equation}
One has
$L_{\y}^{\RR}(p^*)=L_{\y}^{\RR}(p)$ for every
$p\in\P\langle\q,\bar\q\rangle$.
Let \(K\subseteq\H^n\) be closed.  A positive Borel measure \(\mu\) on
\(K\) is a \emph{\(K\)-representing measure}
for \(\y\) if
\begin{equation}\label{eq:full-H-representing-measure}
y_w
=
\int_K w(\ba,\bar\ba)\,d\mu(\ba)
\qquad(w\in\cW),
\end{equation}
where the quaternion-valued integral is taken componentwise.
Let
\begin{equation}\label{eq:Pos-P-K}
\mathcal P^{\H}_K
\coloneqq
\left\{
p\in\sym\P\langle\q,\bar\q\rangle:
p(\ba,\bar\ba)\geq0\ \text{for every }\ba\in K
\right\}.
\end{equation}
The proof of the quaternion-valued Riesz--Haviland theorem requires extending a positive functional from the real coordinate polynomials obtained from symmetric quaternionic polynomials to the full real polynomial ring.  We isolate the required order-theoretic extension argument in the following elementary lemma.  It is formulated for a cone-induced preorder because the cone of polynomials nonnegative on a closed set need not be pointed.

\begin{lemma}
\label{lem:positive-extension-majorizing}
Let \(V\) be a real vector space, let \(V_+\subseteq V\) be a convex cone, and
write
\[
r\leq s
\quad\Longleftrightarrow\quad
s-r\in V_+.
\]
The relation \(\leq\) is allowed to be a preorder.  Let \(E\subseteq V\) be a
linear subspace that is \emph{majorizing}: for every \(r\in V\), there are
\(e_-,e_+\in E\) such that
$e_-\leq r\leq e_+$.
If \(\lambda:E\to\R\) is linear and nonnegative on \(E\cap V_+\), then
\(\lambda\) admits a linear extension
\(\widetilde\lambda:V\to\R\) that is nonnegative on \(V_+\).
\end{lemma}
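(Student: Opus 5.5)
This is the M.~Riesz extension theorem for a majorizing subspace, allowing a preorder. The plan is to extend \(\lambda\) one dimension at a time and then pass to all of \(V\) with Zorn's lemma.

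\emph{One-step extension.} Let \(F\supseteq E\) be a subspace and \(\varphi:F\to\R\) a linear map, nonnegative on \(F\cap V_+\). Take \(r\in V\setminus F\). Set
\[
\alpha\coloneqq\sup\{\varphi(f):f\in F,\ f\leq r\},
\qquad
\beta\coloneqq\inf\{\varphi(f'):f'\in F,\ r\leq f'\}.
\]
Since \(E\subseteq F\) is majorizing, both index sets are nonempty, so \(\alpha>-\infty\) and \(\beta<\infty\). If \(f\leq r\leq f'\), transitivity of the preorder (the cone is convex, hence closed under addition) gives \(f'-f\in V_+\cap F\). Then \(\varphi(f)\leq\varphi(f')\), so \(\alpha\leq\beta\). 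Pick any \(c\in[\alpha,\beta]\) and define \(\varphi(f+tr)\coloneqq\varphi(f)+tc\).

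To check positivity, suppose \(f+tr\in V_+\). For \(t>0\), dividing by \(t\) (cones are closed under positive scaling) gives \(-f/t\leq r\). Hence \(-\varphi(f)/t\leq\alpha\leq c\), that is, \(\varphi(f)+tc\geq0\). For \(t<0\), the analogous argument gives \(r\leq f/|t|\), so \(c\leq\beta\leq\varphi(f)/|t|\). For \(t=0\) the claim is the hypothesis on \(F\). This argument never uses antisymmetry, so a preorder is enough.

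\emph{Zorn's lemma.} Consider the set of pairs \((F,\varphi)\) with \(E\subseteq F\) and \(\varphi\) a positive linear extension of \(\lambda\), ordered by extension. Chains have upper bounds given by taking unions. A maximal element must have \(F=V\), since otherwise the one-step extension would enlarge it.

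The only delicate point is the nonemptiness and comparison \(\alpha\leq\beta\) in the one-step extension. The majorizing hypothesis on \(E\) is exactly what makes both sets nonempty, so \(c\) can be chosen finite. Everything else is routine.
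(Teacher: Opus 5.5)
Your proof is correct and follows the same route as the paper. Both use the one-dimensional extension with \(\alpha=\sup\), \(\beta=\inf\), choose \(c\in[\alpha,\beta]\), split into the cases \(t>0\), \(t<0\), \(t=0\), and finish with Zorn's lemma on positive extensions. You also carry out the one-step extension over an arbitrary intermediate subspace \(F\supseteq E\), which is majorizing because it contains \(E\); this makes explicit a point the paper leaves implicit when it applies the one-dimensional construction to a maximal extension.
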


\begin{proof}
We first extend the functional along one new direction. Let \(r\in V\setminus E\), and define
\begin{align*}
\alpha
&\coloneqq
\sup\{\lambda(e):e\in E,\ e\leq r\},\\
\beta
&\coloneqq
\inf\{\lambda(f):f\in E,\ r\leq f\}.
\end{align*}
Choose \(e_-,e_+\in E\) with \(e_-\leq r\leq e_+\).  If \(e\in E\) and
\(e\leq r\), then \(e\leq e_+\), so positivity gives
\(\lambda(e)\leq\lambda(e_+)\).  If \(f\in E\) and \(r\leq f\), then
\(e_-\leq f\), so \(\lambda(e_-)\leq\lambda(f)\).  Consequently,
\begin{equation}\label{eq:alpha-beta-finite}
\lambda(e_-)
\leq\alpha
\leq\beta
\leq\lambda(e_+),
\end{equation}
where \(\alpha\leq\beta\) follows from
\(e\leq r\leq f\Rightarrow\lambda(e)\leq\lambda(f)\).  Thus
\(\alpha,\beta\in\R\).

Choose \(c\in[\alpha,\beta]\) and set
\[
\lambda_c(e+tr)\coloneqq\lambda(e)+tc,
\qquad e\in E,\ t\in\R.
\]
If \(e+tr\in V_+\) and \(t>0\), then
$-\frac{e}{t}\leq r$,
so \(-\lambda(e)/t\leq\alpha\leq c\), which gives
\(\lambda(e)+tc\geq0\).  If \(e+tr\in V_+\) and \(t<0\), then
$r\leq-\frac{e}{t}$,
so \(c\leq\beta\leq-\lambda(e)/t\), again giving
\(\lambda(e)+tc\geq0\).  The case \(t=0\) follows from positivity of
\(\lambda\).  Hence \(\lambda_c\) is a positive extension to
\(E+\R r\).

Partially order all positive linear extensions of $\lambda$ by inclusion of their graphs. The union of any chain is an upper bound, so Zorn's lemma yields a maximal positive extension.  If its domain were not all of \(V\), the preceding one-dimensional
construction would extend it further, a contradiction.
\end{proof}

With the extension lemma available, we can now characterize the existence of a representing measure for a full quaternion-valued pseudo-moment sequence on an arbitrary closed set.
\begin{theorem}
\label{thm:quaternionic-riesz-haviland}
Let \(K\subseteq\H^n\) be closed, and let
\(\y=(y_w)_{w\in\cW}\in\H^{|\cW|}\) be a quaternionic pseudo-moment sequence.  The
following are equivalent.
\begin{enumerate}[(i)]
\item The sequence \(\y\) admits a positive Borel \(K\)-representing measure.
\item For every $p\in\mathcal P^{\H}_K$, one has $L_{\y}^{\RR}(p)\geq0$.
\end{enumerate}
\end{theorem}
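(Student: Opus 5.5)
The implication (i)$\Rightarrow$(ii) follows by integration. If $\mu$ is a $K$-representing measure and $p=\sum_{a,b}ap_{a,b}b\in\mathcal P^{\H}_K$, then, using $\RR(xy)=\RR(yx)$,
\[
L^{\RR}_{\y}(p)=\RR\Bigl(\sum_{a,b}p_{a,b}\textstyle\int_K (ba)(\ba,\bar\ba)\,d\mu\Bigr)=\int_K\RR\bigl(p(\ba,\bar\ba)\bigr)\,d\mu=\int_K p\,d\mu\geq0 .
\]
For (ii)$\Rightarrow$(i), I would reduce to the classical Riesz--Haviland theorem on $V\coloneqq\R^{4n}\simeq\H^n$, with Lemma~\ref{lem:positive-extension-majorizing} as the bridge. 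Let $V_+\subseteq\R[V]$ be the cone of real polynomials nonnegative on $K$. For $p\in\sym\P\langle\q,\bar\q\rangle$, the evaluation $P_p(\ba)\coloneqq p(\ba,\bar\ba)$ is a real polynomial in the $4n$ real coordinates. Let $E\coloneqq\{P_p:p\in\sym\P\langle\q,\bar\q\rangle\}$, which is a linear subspace of $\R[V]$, and define $\lambda(P_p)\coloneqq L^{\RR}_{\y}(p)$.

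The first step is to show that $\lambda$ is well defined. Suppose $P_p=P_{p'}$. Then $r\coloneqq p-p'\in\mathcal I^{\H}_n\cap\sym\P\langle\q,\bar\q\rangle$, so $r$ and $-r$ both vanish identically and hence both lie in $\mathcal P^{\H}_K$. Hypothesis (ii) then gives $L^{\RR}_{\y}(r)=0$. Unlike the real-valued case, no Cauchy--Schwarz argument is needed: since $\pm r$ are themselves admissible test polynomials, (ii) applies directly. Positivity of $\lambda$ on $E\cap V_+$ is precisely (ii), because $P_p\geq0$ on $K$ means $p\in\mathcal P^{\H}_K$.

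The main obstacle is the majorizing property, and it explains why the extension lemma is needed. The subspace $E$ need not be all of $\R[V]$: pure middle-coefficient polynomials carry a single quaternionic coefficient per term, so products such as $\chi_{i2}\chi_{k3}$ need not be representable. I would establish majorization as follows.
\begin{itemize}
\item The real-coefficient symmetric polynomials $\bigl(1+\sum_i|q_i|^2\bigr)^d$ lie in $\sym\R\langle\q,\bar\q\rangle\subseteq\sym\P\langle\q,\bar\q\rangle$, and they evaluate to $\bigl(1+\|\mathbf z\|^2\bigr)^d$.
\item Every real monomial $\mathbf z^\gamma$ with $|\gamma|\leq 2d$ satisfies $|\mathbf z^\gamma|\leq(1+\|\mathbf z\|^2)^d$ on all of $V$.
\item Consequently, for every $P\in\R[V]$ there is a constant $C$ with $-C(1+\|\mathbf z\|^2)^d\leq P\leq C(1+\|\mathbf z\|^2)^d$ on $V$, hence on $K$. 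Both bounds belong to $E$.
\end{itemize}
Lemma~\ref{lem:positive-extension-majorizing} then extends $\lambda$ to a linear functional $\widetilde\lambda$ on $\R[V]$ that is nonnegative on every polynomial nonnegative on $K$. The classical Riesz--Haviland theorem~\cite[Theorem~3.1.2]{marshall2008positive} yields a positive Borel measure $\mu$ supported on $K$ with $\widetilde\lambda(P)=\int_K P\,d\mu$.

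Finally, I would recover each quaternionic component of $y_w$. Fix $e\in\{1,\i,\j,\k\}$ and consider the symmetric polynomial $p_{e,w}\coloneqq\frac12\bigl(\bar e\,w+w^*e\bigr)$. Here the first term has $a=1$, middle coefficient $\bar e$, and $b=w$. Since $L^{\RR}_{\y}(p^*)=L^{\RR}_{\y}(p)$, we obtain $L^{\RR}_{\y}(p_{e,w})=\RR(\bar e\,y_w)$, which is the $e$-component of $y_w$. On the other hand, $P_{p_{e,w}}(\ba)=\RR\bigl(\bar e\,w(\ba,\bar\ba)\bigr)$ is the $e$-component of $w(\ba,\bar\ba)$. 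Comparing the two sides via $\widetilde\lambda(P_{p_{e,w}})=\int_K P_{p_{e,w}}\,d\mu$ component by component gives \eqref{eq:full-H-representing-measure}, so $\mu$ is a $K$-representing measure for $\y$.
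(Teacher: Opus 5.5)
Your proposal is correct and follows essentially the same route as the paper. Like the paper, you define $\lambda$ on $E=\rho(\sym\P\langle\q,\bar\q\rangle)$, obtain well-definedness from $\pm r\in\mathcal P^{\H}_K$, majorize with $(1+\sum_j|q_j|^2)^N$, extend via Lemma~\ref{lem:positive-extension-majorizing}, apply Riesz--Haviland, and recover $y_w$ from symmetric test polynomials $\frac12(cw+w^*\bar c)$. The only cosmetic difference is at the last step: you read off the four components using $c=\bar e$ with $e\in\{1,\i,\j,\k\}$, whereas the paper takes $c$ to be the conjugate of the difference $y_w-\int_K w\,d\mu$.
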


\begin{proof}
For (i)$\Rightarrow$(ii), let
$p=\sum_{u,v}u p_{u,v}v\in\mathcal P^{\H}_K$. Then
\begin{align*}
L_{\y}^{\RR}(p)
&=
\RR\!\left(
\sum_{u,v}p_{u,v}
\int_K v(\ba)u(\ba)\,d\mu(\ba)
\right)\notag\\
&=
\int_K
\RR\!\left(
\sum_{u,v}u(\ba)p_{u,v}v(\ba)
\right)d\mu(\ba)\notag\\
&=
\int_K p(\ba,\bar\ba)\,d\mu(\ba)\ge0.
\end{align*}

For (ii)$\Rightarrow$(i), identify \(\H^n\) with \(\R^{4n}\) by writing
$q_j=x_{j1}+x_{j2}\i+x_{j3}\j+x_{j4}\k$.
Let $\rho:\H\langle\q,\bar\q\rangle
\longrightarrow
\H[x_{11},\ldots,x_{n4}]$
be the scalar coordinate homomorphism introduced in Section~\ref{sec:qsos}.
For $p\in\sym\P\langle\q,\bar\q\rangle$, the coordinate polynomial $\rho(p)$ is real-valued and therefore belongs to \(\R[x_{11},\ldots,x_{n4}]\).  Put
\begin{equation}\label{eq:full-H-E-space}
E
\coloneqq
\rho\bigl(\sym\P\langle\q,\bar\q\rangle\bigr)
\subseteq
V\coloneqq\R[x_{11},\ldots,x_{n4}].
\end{equation}
Define
\begin{equation}\label{eq:full-H-lambda-on-E}
\lambda:E\longrightarrow\R,
\qquad
\lambda(\rho(p))\coloneqq L_{\y}^{\RR}(p).
\end{equation}
This definition is well defined. If \(\rho(p)=0\), then both \(p\) and \(-p\) vanish on
\(K\) and hence belong to \(\mathcal P_K^\H\).  Therefore
\(L_{\y}^{\RR}(p)=0\).  Moreover,
\begin{equation}\label{eq:full-H-lambda-positive}
r\in E,\quad r|_K\geq0
\quad\Longrightarrow\quad
\lambda(r)\geq0.
\end{equation}

Define the cone $V_+$ and its induced preorder by
\begin{equation}\label{eq:K-polynomial-cone}
V_+
\coloneqq
\{r\in V:r|_K\geq0\},
\qquad
r\leq_Ks
\quad\Longleftrightarrow\quad
s-r\in V_+.
\end{equation}
Unless $K$ is Zariski dense, $\leq_K$ is only a preorder. This causes no difficulty, because Lemma~\ref{lem:positive-extension-majorizing} does not require the cone to be pointed.  Equivalently, one may pass to the ordered
quotient by
\[
N_K
\coloneqq
V_+\cap(-V_+)
=
\{r\in V:r|_K=0\}.
\]
We claim that $E$ is majorizing in $(V,V_+)$. Given \(r\in V\), choose
\(C>0\) and \(N\in\N\) such that
\[
|r(\x)|\leq C(1+\|\x\|^2)^N
\qquad(\x\in\R^{4n}).
\]
The symmetric real-coefficient polynomial
\[
S(\q,\bar\q)
\coloneqq
C\left(1+\sum_{j=1}^n\bar q_j q_j\right)^N
\in\sym\P\langle\q,\bar\q\rangle
\]
satisfies $\rho(S)=C(1+\|\x\|^2)^N\in E$.
Thus $-\rho(S)\leq_K r\leq_K\rho(S)$,
which proves majorization. Lemma~\ref{lem:positive-extension-majorizing} therefore gives a linear extension
$\widetilde\lambda:V\longrightarrow\R$
with
\[
\widetilde\lambda(r)\geq0
\qquad
\text{whenever }r|_K\geq0.
\]
Since $K$ is closed in $\R^{4n}$, the classical Riesz--Haviland theorem~\cite[Theorem~3.1.2]{marshall2008positive} yields a positive Borel measure
\(\mu\), supported on \(K\) and having moments of all orders, such that
\begin{equation}\label{eq:full-H-coordinate-representation}
\widetilde\lambda(r)
=
\int_Kr(x)\,d\mu(x)
\qquad(r\in V).
\end{equation}

It remains to recover the quaternionic moments. Fix \(w\in\cW\) and
\(c\in\H\), and set
\begin{equation}\label{eq:full-H-test-polynomial}
F_{c,w}
\coloneqq
\frac12\bigl(cw+w^*\bar c\bigr)
\in\sym\P\langle\q,\bar\q\rangle.
\end{equation}
Pointwise, $F_{c,w}(\ba,\bar\ba)
=\RR\bigl(cw(\ba,\bar\ba)\bigr)$.
We have
\begin{align}
L_{\y}^{\RR}(F_{c,w})
&=
\frac12\RR\bigl(cy_w+\bar c\,y_{w^*}\bigr)
=
\RR(cy_w).
\label{eq:full-H-test-functional}
\end{align}
On the other hand, \eqref{eq:full-H-coordinate-representation} gives
$L_{\y}^{\RR}(F_{c,w})
=\RR\!\left(
 c\int_K w(\ba,\bar\ba)\,d\mu(\ba)
\right)$.
Hence
\[
\RR\!\left(
 c\left[y_w-
 \int_K w(\ba,\bar\ba)\,d\mu(\ba)
 \right]
\right)=0
\qquad(c\in\H).
\]
Taking $c$ to be the quaternionic conjugate of the bracketed difference shows that its squared modulus is zero.  Therefore
\eqref{eq:full-H-representing-measure} holds for every word \(w\), completing
the proof.
\end{proof}

We next specialize Theorem~\ref{thm:quaternionic-riesz-haviland} to the quaternionic basic semialgebraic set $K_{\bg,\bh}$ from \eqref{eq:Kgh-real}.  Throughout the remainder of this subsection, $g_i\in\sym\R\langle\q,\bar\q\rangle$ and $h_j\in\R\langle\q,\bar\q\rangle$.  Under the Archimedean condition, the quaternionic Positivstellensatz converts Haviland positivity into positivity on the certificate cone.

\begin{corollary}
\label{cor:archimedean-full-quaternionic-moment}
Assume the Archimedean condition
\eqref{eq:QH-archimedean}.  A full quaternion-valued quaternionic pseudo-moment sequence
\(\y\) has a positive Borel representing measure supported on
\(K_{\bg,\bh}\) if and only if
\begin{equation}\label{eq:full-H-module-positivity}
L_{\y}^{\RR}(p)\geq0,\qquad
\forall p\in\mathcal Q^{\H}(\bg,\bh)+\left(\mathcal I_{n}^\H\cap\sym\P\langle\q,\bar\q\rangle\right).
\end{equation}
\end{corollary}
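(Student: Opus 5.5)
The plan is to combine the quaternion-valued Riesz--Haviland theorem (Theorem~\ref{thm:quaternionic-riesz-haviland}) with the quaternionic-coefficient Positivstellensatz (Theorem~\ref{thm:putinar-quaternionic}), in the same way as Corollary~\ref{cor:archimedean-full-real-quaternionic-moment} was derived in the real-valued case.

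\emph{Necessity.} Suppose $\mu$ is a representing measure supported on $K_{\bg,\bh}$. By the computation in the implication (i)$\Rightarrow$(ii) of Theorem~\ref{thm:quaternionic-riesz-haviland}, we have
\[
L_{\y}^{\RR}(p)=\int_{K_{\bg,\bh}} p(\ba,\bar\ba)\,d\mu(\ba)
\qquad\text{for every } p\in\sym\P\langle\q,\bar\q\rangle.
\]
It therefore suffices to check that every element of $\mathcal Q^{\H}(\bg,\bh)+(\mathcal I_n^\H\cap\sym\P\langle\q,\bar\q\rangle)$ is nonnegative on $K_{\bg,\bh}$. This holds term by term:
\begin{itemize}
\item $\sigma_0$ evaluates to a sum of squared moduli $|p_s(\ba)|^2$;
\item since $g_i(\ba)$ is real and central, $\tfrac12(\sigma_ig_i+g_i\sigma_i)$ evaluates to $g_i(\ba)\sum_s|p_s(\ba)|^2\ge 0$;
\item the equality terms $\tau h_j\nu^*+\nu h_j^*\tau^*$ vanish, because $h_j(\ba)=0$;
\item elements of $\mathcal I_n^\H$ vanish identically.
\end{itemize}

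\emph{Sufficiency.} Assume \eqref{eq:full-H-module-positivity}. By Theorem~\ref{thm:quaternionic-riesz-haviland} with the closed set $K=K_{\bg,\bh}$, it suffices to show that $L_{\y}^{\RR}(f)\ge0$ for every $f\in\mathcal P^{\H}_{K_{\bg,\bh}}$. Fix such an $f$ and $\varepsilon>0$. Then $f+\varepsilon\in\sym\P\langle\q,\bar\q\rangle$ is strictly positive on $K_{\bg,\bh}$; if the set is empty, this holds vacuously, and the theorem still covers that case through Step~5 of its proof. The constraints $g_i$ and $h_j$ have real coefficients, and the Archimedean condition \eqref{eq:QH-archimedean} holds, so Theorem~\ref{thm:putinar-quaternionic} applies and gives
\[
f+\varepsilon\in\mathcal Q^{\H}(\bg,\bh)+\bigl(\mathcal I_n^\H\cap\sym\P\langle\q,\bar\q\rangle\bigr).
\]
Hence $L_{\y}^{\RR}(f)+\varepsilon L_{\y}^{\RR}(1)\ge0$, where $L_{\y}^{\RR}(1)=\RR(y_1)$. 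Letting $\varepsilon\downarrow0$ yields $L_{\y}^{\RR}(f)\ge0$.

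\emph{Main obstacle.} The delicate point is the empty-set case together with the value $L^{\RR}_{\y}(1)$. If $K_{\bg,\bh}=\varnothing$, the Positivstellensatz puts $-1$ into the module-plus-ideal. The hypothesis then gives $-\RR(y_1)\ge0$. Also $1=1^*1\in\Sigma^\H$, so $\RR(y_1)\ge0$. Together these force $\RR(y_1)=0$.

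It remains to show that $\y=0$, so that the zero measure represents it. The idea is a Cauchy--Schwarz argument. The bilinear form $(p,r)\mapsto L^{\RR}_{\y}(r^*p)$ on $\mathcal L$ is positive semidefinite, because $p^*p\in\Sigma^\H$. Take $p=1$ and $r=\bar c\,w$ with $c\in\H$ and $w\in\cW$; then $r^*=w^*c$, so Cauchy--Schwarz gives
\[
|L^{\RR}_{\y}(w^*c)|^2\le L^{\RR}_{\y}(1)\,L^{\RR}_{\y}(r^*r)=0.
\]
By \eqref{eq:full-H-functional}, $L^{\RR}_{\y}(w^*c)=\RR(c\,y_{w^*})$. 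Since this vanishes for all $c\in\H$, it follows that $y_{w^*}=0$ for every word $w$, and hence $\y=0$.

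In the nonempty case, no such argument is needed: the $\varepsilon$-limit above suffices, because $L_{\y}^{\RR}(1)=\RR(y_1)$ is finite and nonnegative.
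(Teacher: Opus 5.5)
Your proof is correct and follows the paper's argument essentially verbatim. Necessity is by termwise evaluation and integration. Sufficiency applies Theorem~\ref{thm:putinar-quaternionic} to $f+\varepsilon$, lets $\varepsilon\downarrow0$, and then invokes Theorem~\ref{thm:quaternionic-riesz-haviland} on the closed set $K_{\bg,\bh}$.

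Your extra Cauchy--Schwarz argument for the case $K_{\bg,\bh}=\varnothing$ is valid but unnecessary. The $\varepsilon$-limit works verbatim in that case, and Theorem~\ref{thm:quaternionic-riesz-haviland} already covers the empty closed set: its condition (ii) forces $L^{\RR}_{\y}$ to vanish on symmetric polynomials, and the theorem then yields the zero measure.
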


\begin{proof}
If $\mu$ is supported on $K_{\bg,\bh}$, then every QSOS term is nonnegative, every weighted QSOS term has the scalar value
$g_i(\ba)\sigma_i(\ba)\geq0$, and all equality-ideal and scalar-identity terms vanish.  Integration therefore gives
\eqref{eq:full-H-module-positivity}.

Conversely, let
\(p\in\sym\P\langle\q,\bar\q\rangle\) satisfy
\(p\geq0\) on \(K_{\bg,\bh}\).  For every \(\varepsilon>0\),
\(p+\varepsilon\) is strictly positive on the feasible set, with the usual vacuous interpretation if the feasible set is empty.  Theorem~\ref{thm:putinar-quaternionic}
gives $p+\varepsilon\in
\mathcal Q^{\H}(\bg,\bh)+\left(\mathcal I_{n}^\H\cap\sym\P\langle\q,\bar\q\rangle\right)$.
Thus
$L_{\y}^{\RR}(p)+\varepsilon L_{\y}^{\RR}(1)\geq0$.
Letting \(\varepsilon\downarrow0\) yields
\(L_{\y}^{\RR}(p)\geq0\).  Theorem~\ref{thm:quaternionic-riesz-haviland},
applied to the closed set \(K_{\bg,\bh}\), now provides the required
representing measure.
\end{proof}

For use in the hierarchy of the next section, we now translate the preceding certificate criterion into explicit moment and localizing matrix conditions.  Let
\[
p=\sum_{a\in\cW_d}c_a a\in\mathcal L,
\qquad
\widehat c_a\coloneqq\bar c_{a^*}
\quad(a\in\cW_d),
\]
and $g=\sum_{w}g_ww\in\sym\R\langle\q,\bar\q\rangle$.
Then
\begin{align}
\widehat c^*\M_d(g\y)\widehat c
=
\sum_{u,v,w}
c_{u^*}g_wy_{u^*wv}\bar c_{v^*}
=
\sum_{a,b,w}c_bg_wy_{bwa^*}\bar c_a.
\label{eq:full-H-localizing-vector-form}
\end{align}
Because $\M_d(g\y)$ is Hermitian, the quaternion in
\eqref{eq:full-H-localizing-vector-form} is real.  Cyclic invariance of the
real part therefore gives
\begin{equation}\label{eq:full-H-localizing-vector-real}
\widehat c^*\M_d(g\y)\widehat c
=
\RR\!\left(
\sum_{a,b,w}\bar c_ac_bg_wy_{bwa^*}
\right).
\end{equation}
Since $p^*p=
\sum_{a,b}a^*\bar c_a c_b b$,
the definition of \(L_{\y}^{\RR}\) yields
\begin{equation}
L_{\y}^{\RR}\bigl((p^*p)g\bigr)
=
\RR\!\left(
\sum_{a,b,w}\bar c_ac_bg_wy_{bwa^*}
\right),\quad
L_{\y}^{\RR}\bigl(g(p^*p)\bigr)
=
\RR\!\left(
\sum_{a,b,w}\bar c_ac_bg_wy_{bwa^*}
\right).
\notag
\end{equation}
Consequently,
\begin{equation}\label{eq:full-H-localizing-identity}
L_{\y}^{\RR}\!\left(
\frac12\bigl((p^*p)g+g(p^*p)\bigr)
\right)
=
\widehat c^*\M_d(g\y)\widehat c.
\end{equation}
The map \(c\mapsto\widehat c\) is a bijection of quaternionic coefficient
vectors.  Hence positivity against all left-coefficient polynomials \(p\) is
equivalent to $\M_d(g\y)\succeq0$.

The only additional work is to identify precisely how the equality and scalar-identity parts of the certificate are encoded by the quaternion-valued moments.
\begin{proposition}\label{prop:scalar-ideal-generator}
Let
\(\y=(y_w)_{w\in\cW}\in\H^{|\cW|}\) be a quaternionic pseudo-moment sequence. Then the following equivalences hold:
\begin{equation}\label{eq:full-H-equality-equivalence}
L_{\y}^{\RR}\bigl(\mathcal J_\bh^{\H}\cap\sym\P\langle\q,\bar\q\rangle\bigr)=0\Longleftrightarrow
L_\y(u h_jv)=0,\,
u,v\in\cW,\, j=1,\ldots,\ell,
\end{equation}
and
\begin{equation}\label{eq:full-scalar-equality-equivalence}
L_{\y}^{\RR}\left(\mathcal I_{n}^\H\cap\sym\P\langle\q,\bar\q\rangle\right)=0\Longleftrightarrow L_\y(u[w+w^*,q_i]v)=0,\, u,v,w\in\cW,\, i=1,\ldots,n.
\end{equation}
\end{proposition}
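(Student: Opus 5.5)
The plan is to prove both equivalences by the same two-part scheme. Throughout, $L_\y$ denotes the quaternion-valued linear extension $p=\sum_w p_w w\mapsto\sum_w p_wy_w$ on $\R\langle\q,\bar\q\rangle$, and we use the adjoint rule $L_\y(p^*)=\overline{L_\y(p)}$, which follows from $y_w=\bar y_{w^*}$.

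For ``$\Rightarrow$'' we use test polynomials, as in the proof of Theorem~\ref{thm:quaternionic-riesz-haviland}. For ``$\Leftarrow$'' we reduce every element of the relevant symmetric intersection to elementary terms on which $L^{\RR}_\y$ is computable. The key elementary formula is
\[
L^{\RR}_\y(a\,c\,b)=\RR\bigl(c\,L_\y(ba)\bigr)
\]
for $a,b\in\R\langle\q,\bar\q\rangle$ and $c\in\H$. It follows from \eqref{eq:full-H-functional} by real bilinearity.

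\emph{Equality part, ``$\Rightarrow$''.} For fixed $u,v,j$ and $c\in\H$, take
\[
F\coloneqq\tfrac12\bigl(c\,uh_jv+(c\,uh_jv)^*\bigr).
\]
This polynomial lies in $\mathcal J_\bh^\H\cap\sym\P\langle\q,\bar\q\rangle$. Using $L^{\RR}_\y(p^*)=L^{\RR}_\y(p)$, we get $0=L^{\RR}_\y(F)=\RR\bigl(c\,L_\y(uh_jv)\bigr)$. Choosing $c=\overline{L_\y(uh_jv)}$ then forces $L_\y(uh_jv)=0$.

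\emph{Equality part, ``$\Leftarrow$''.} Use the description \eqref{eq:J-intersection-P} of $\mathcal J_\bh^\H\cap\P\langle\q,\bar\q\rangle$. Every element is a real combination of terms $a\,e\,b$ with $e\in\{1,\i,\j,\k\}$, $a,b\in\mathcal F_n$, and $a\in\mathcal J_\bh^\R$ or $b\in\mathcal J_\bh^\R$. Expanding the ideal element, $ba$ is a real combination of words of the form $u\,h_j^{\varepsilon}\,v$.
\begin{itemize}
\item For $\varepsilon=1$, $L_\y$ vanishes on these by hypothesis.
\item For $\varepsilon=*$, the adjoint rule gives $L_\y(uh_j^*v)=\overline{L_\y(v^*h_ju^*)}=0$.
\end{itemize}
Hence $L^{\RR}_\y(a e b)=\RR\bigl(e\,L_\y(ba)\bigr)=0$ for each term, and linearity gives the claim. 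Symmetry of the element is not even needed here.

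\emph{Scalar-identity part.} The direction ``$\Rightarrow$'' is identical, with $h_j$ replaced by $s_{w,i}=[w+w^*,q_i]$. The test polynomial $\tfrac12\bigl(c\,us_{w,i}v+(c\,us_{w,i}v)^*\bigr)$ lies in $\mathcal I_n^\H\cap\sym\P\langle\q,\bar\q\rangle$ because $s_{w,i}\in\mathcal I_n^\R\subseteq\mathcal I_n^\H$ by Lemma~\ref{real-scalar-identity}.

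For ``$\Leftarrow$'', invoke Proposition~\ref{prop:Isa-internal-generators}. It suffices to check that $L^{\RR}_\y$ kills each $\kappa_{a,e,b}$ and each $\lambda^{\varepsilon}_{e,u,v;w,i}$.
\begin{itemize}
\item \emph{The $\kappa$ generators.} The elementary formula gives $L^{\RR}_\y(aeb)=\RR(e\,y_{ba})=L^{\RR}_\y(e\,ba)$, viewing $e\,ba$ as a middle-coefficient term with empty left word. The starred terms agree because $L^{\RR}_\y(p^*)=L^{\RR}_\y(p)$. So $L^{\RR}_\y(\kappa_{a,e,b})=0$ automatically; this uses only the pseudo-moment axioms.
\item \emph{The $\lambda$ generators.} We have $L^{\RR}_\y(\lambda^{\varepsilon}_{e,u,v;w,i})=2\RR\bigl(e\,L_\y(v\,u\,s_{w,i}^{\varepsilon})\bigr)$. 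For $\varepsilon=1$ this vanishes by hypothesis, applied with the left word $vu$ and the empty right word. For $\varepsilon=*$, the adjoint rule reduces it to $\overline{L_\y(s_{w,i}\,u^*v^*)}=0$.
\end{itemize}

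The only delicate point I anticipate is bookkeeping. One must confirm that the generator families supplied by \eqref{eq:J-intersection-P} and by Proposition~\ref{prop:Isa-internal-generators} produce exactly words of the form $u\,h_j^{\pm}v$ and $u\,s_{w,i}^{\pm}v$. The $*$-versions are then absorbed by the adjoint rule rather than by separate hypotheses. Once this is verified, the remaining steps are direct evaluations of $L^{\RR}_\y$.
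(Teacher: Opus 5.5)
Your proposal is correct and follows essentially the same route as the paper: test polynomials of the form $\tfrac12\bigl(cs+s^*\bar c\bigr)$ for the forward implications, and for the converses the decomposition \eqref{eq:J-intersection-P}, the spanning family of Proposition~\ref{prop:Isa-internal-generators}, and the adjoint rule. There is one small slip. The coefficient $e$ stands at the far left of $e\,u\,s^{\varepsilon}_{w,i}\,v$, so the correct evaluation is $L^{\RR}_\y(\lambda^{\varepsilon}_{e,u,v;w,i})=2\RR\bigl(e\,L_\y(u\,s^{\varepsilon}_{w,i}\,v)\bigr)$, not $2\RR\bigl(e\,L_\y(v\,u\,s^{\varepsilon}_{w,i})\bigr)$, because only real parts are cyclically invariant; since your hypothesis covers all $u,v$, both expressions vanish (directly for $\varepsilon=1$ and via the adjoint rule for $\varepsilon=*$), so the conclusion is unaffected.
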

\begin{proof}
First consider \eqref{eq:full-H-equality-equivalence}.
To prove the forward implication, put \(s=u h_j v\) and, for \(c\in\H\), set
\[
H_{c,s}
\coloneqq
\frac12\bigl(cs+s^*\bar c\bigr)
\in
\mathcal J_\bh^{\H}\cap\sym\P\langle\q,\bar\q\rangle.
\]
Then $L_{\y}^{\RR}(H_{c,s})=\RR(cL_\y(s))$.
If the left-hand side of \eqref{eq:full-H-equality-equivalence} holds, this quantity vanishes for every $c\in\H$.  Taking \(c=\overline{L_\y(s)}\) gives
\(L_\y(s)=0\).
Conversely, assume the right-hand side of
\eqref{eq:full-H-equality-equivalence}. By real linearity,
\(L_\y(s)=0\) for every two-sided multiple of an \(h_j\).  The same conclusion
holds for multiples of \(h_j^*\), because $L_\y(u h_j^*v)=\overline{L_\y(v^*h_ju^*)}$.
Thus
\begin{equation}\label{eq:full-H-Ih-moments-zero}
L_\y(s)=0
\qquad(s\in \mathcal J_{\bh}^\R).
\end{equation}
If \(1\in \mathcal J_{\bh}^\R\), then \(\mathcal J_{\bh}^\R=\mathcal F_n\), and \eqref{eq:full-H-Ih-moments-zero} immediately makes
\(L_{\y}^{\RR}\) vanish on all of \(\P\langle\q,\bar\q\rangle\), so the converse follows.  Suppose
that \(1\notin \mathcal J_{\bh}^\R\).  The free-product decomposition established in
\eqref{eq:J-intersection-P} gives
\begin{equation}\label{eq:full-H-Jh-P-decomposition}
\mathcal J_\bh^{\H}\cap\P\langle\q,\bar\q\rangle
\simeq
\mathcal J_{\bh}^\R
\oplus
\left(
\mathcal J_{\bh}^\R\otimes_{\R}\H_0\otimes_{\R}\mathcal F_n
+\mathcal F_n\otimes_{\R}\H_0\otimes_{\R}\mathcal J_{\bh}^\R
\right).
\end{equation}
For \(s\in \mathcal J_{\bh}^\R\), one has
\(L_{\y}^{\RR}(s)=\RR(L_{\y}(s))=0\).  For a basic one-coefficient term
\(acb\), where \(a\in \mathcal J_{\bh}^\R\) or \(b\in \mathcal J_{\bh}^\R\),
$L_{\y}^{\RR}(acb)=
\RR(cL_{\y}(ba))=0$,
because \(ba\in \mathcal J_{\bh}^\R\).  Equation
\eqref{eq:full-H-Jh-P-decomposition} therefore implies
$L_{\y}^{\RR}(\mathcal J_\bh^{\H}\cap\P\langle\q,\bar\q\rangle)=0$,
and in particular the left-hand side of
\eqref{eq:full-H-equality-equivalence} holds.

Now turn to \eqref{eq:full-scalar-equality-equivalence}.
For $w\in\cW$ and $i=1,\ldots,n$, set $s_{w,i}\coloneqq[w+w^*,q_i]$.
By Proposition~\ref{prop:Isa-internal-generators},
$\mathcal I_n^\H\cap\sym\P\langle\q,\bar\q\rangle$ is spanned over
$\R$ by the polynomials
\[
\kappa_{a,e,b}
=\bigl(aeb+(aeb)^*\bigr)-\bigl(eba+(eba)^*\bigr)
\]
and
\[
\lambda_{e,u,v;w,i}^{\varepsilon}
=eu s_{w,i}^{\varepsilon}v
+\bigl(eu s_{w,i}^{\varepsilon}v\bigr)^*,
\qquad \varepsilon\in\{1,*\},
\]
where $a,b,u,v,w\in\cW$ and $e\in\{1,\i,\j,\k\}$.
We first note that, for every $r\in\mathcal F_n$ and $e\in\H$,
$L_\y^{\RR}(er)=\RR\bigl(eL_\y(r)\bigr)$.
Since $L_\y^{\RR}(p^*)=L_\y^{\RR}(p)$, it follows that
\begin{equation}\label{eq:scalar-ideal-generator-sym-eval}
L_\y^{\RR}\bigl(er+(er)^*\bigr)
=2\RR\bigl(eL_\y(r)\bigr).
\end{equation}
Moreover, for $a,b\in\cW$,
\[
L_\y^{\RR}(aeb)=\RR(e y_{ba})=L_\y^{\RR}(eba),
\]
and hence
\begin{equation}\label{eq:scalar-ideal-generator-kappa-zero}
L_\y^{\RR}(\kappa_{a,e,b})=0.
\end{equation}
Thus the $\kappa$-generators are annihilated automatically.

Suppose first that
$L_\y^{\RR}(p)=0$ for every $p\in
\mathcal I_n^\H\cap\sym\P\langle\q,\bar\q\rangle$.
Fix $u,v,w\in\cW$ and $i\in\{1,\ldots,n\}$, and put
\[
z\coloneqq L_\y(us_{w,i}v)\in\H.
\]
For every $e\in\{1,\i,\j,\k\}$,
$\lambda_{e,u,v;w,i}^{1}$ belongs to
$\mathcal I_n^\H\cap\sym\P\langle\q,\bar\q\rangle$; therefore,
by \eqref{eq:scalar-ideal-generator-sym-eval},
\[
0=L_\y^{\RR}\bigl(\lambda_{e,u,v;w,i}^{1}\bigr)
=2\RR(ez).
\]
Since $1,\i,\j,\k$ form a real basis of $\H$, real linearity gives
$\RR(ez)=0$ for every $e\in\H$. Taking $e=\bar z$ yields
$|z|^2=\RR(\bar z z)=0$, and hence $z=0$. Thus
$L_\y\bigl(u[w+w^*,q_i]v\bigr)=0$
for all $u,v,w\in\cW$ and $i=1,\ldots,n$.

Conversely, suppose that
\begin{equation}\label{eq:scalar-ideal-generator-moment-zero}
L_\y(us_{w,i}v)=0
\qquad
(u,v,w\in\cW,\ i=1,\ldots,n).
\end{equation}
Because $\y$ is Hermitian,
$L_\y(r^*)=\overline{L_\y(r)}$ for every $r\in\mathcal F_n$. Hence
\[
L_\y(us_{w,i}^*v)
=
\overline{L_\y\bigl((us_{w,i}^*v)^*\bigr)}
=
\overline{L_\y(v^*s_{w,i}u^*)}
=0,
\]
where the last equality follows from
\eqref{eq:scalar-ideal-generator-moment-zero}. Therefore
$L_\y(us_{w,i}^{\varepsilon}v)=0, \varepsilon\in\{1,*\}$.
Using \eqref{eq:scalar-ideal-generator-sym-eval}, we obtain
\[
L_\y^{\RR}\bigl(\lambda_{e,u,v;w,i}^{\varepsilon}\bigr)
=
2\RR\bigl(eL_\y(us_{w,i}^{\varepsilon}v)\bigr)
=0.
\]
Together with \eqref{eq:scalar-ideal-generator-kappa-zero}, this shows that
$L_\y^{\RR}$ annihilates every generator in the spanning family of
Proposition~\ref{prop:Isa-internal-generators}. By real linearity,
$L_\y^{\RR}(p)=0$
for every $p\in
\mathcal I_n^\H\cap\sym\P\langle\q,\bar\q\rangle$.
This proves the equivalence.
\end{proof}

The moment/localizing positivity relations and the preceding ideal-annihilation equivalence account for every component of the Archimedean certificate.  Combining them with Corollary~\ref{cor:archimedean-full-quaternionic-moment} yields the following complete matrix criterion.

\begin{corollary}
\label{cor:full-H-matrix-characterization}
Assume the Archimedean condition \eqref{eq:QH-archimedean}, and let \(\y\) be a full quaternion-valued quaternionic pseudo-moment sequence.  Then \(\y\) has a positive Borel representing measure
supported on \(K_{\bg,\bh}\) if and only if
\begin{align}
\M_d(\y)&\succeq0
&& (d\geq0),
\label{eq:full-H-all-moment-PSD}\\
\M_d(g_i\y)&\succeq0
&& (d\geq0,\ i=1,\ldots,m),
\label{eq:full-H-all-localizing-PSD}\\
L_\y(u h_jv)&=0
&& (u,v\in\cW,\ j=1,\ldots,\ell),
\label{eq:full-H-all-equality-moments}\\
L_\y(u[w+w^*,q_i]v)&=0, &&(u,v,w\in\cW,\, i=1,\ldots,n).
\label{eq:full-H-all-scalar-identities}
\end{align}
\end{corollary}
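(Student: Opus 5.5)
The plan is to reduce Corollary~\ref{cor:full-H-matrix-characterization} to Corollary~\ref{cor:archimedean-full-quaternionic-moment}. Concretely, we show that conditions \eqref{eq:full-H-all-moment-PSD}--\eqref{eq:full-H-all-scalar-identities} together are equivalent to
\[
L_{\y}^{\RR}(p)\geq0\quad\text{for all } p\in\mathcal Q^{\H}(\bg,\bh)+\bigl(\mathcal I_{n}^\H\cap\sym\P\langle\q,\bar\q\rangle\bigr).
\]
Necessity is immediate from integration against a representing measure, as in the first half of the proof of Corollary~\ref{cor:archimedean-full-quaternionic-moment}. One point needs care: the moment matrices are quaternionic Hermitian, so PSD must be verified as $\widehat c^*\M_d\widehat c\ge0$. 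This follows from \eqref{eq:full-H-localizing-identity} (with $g=1$ for $\M_d(\y)$) together with nonnegativity of $|p|^2$ and of $g_i|p|^2$ on $K_{\bg,\bh}$.

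For sufficiency, we treat the four components of the certificate cone \eqref{eq:QH-module} separately, using real linearity of $L_{\y}^{\RR}$.

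\textbf{QSOS and weighted terms.} A term in $\Sigma^{\H}$ is a finite sum of $p^*p$ with $p\in\mathcal L$. Each such $p$ has degree at most some $d$. By \eqref{eq:full-H-localizing-identity} with $g=1$, we get $L_{\y}^{\RR}(p^*p)=\widehat c^*\M_d(\y)\widehat c\ge0$. Likewise, a weighted term $\tfrac12(\sigma_ig_i+g_i\sigma_i)$ expands into a sum of terms $\tfrac12((p^*p)g_i+g_i(p^*p))$. Each of these has value $\widehat c^*\M_d(g_i\y)\widehat c\ge0$, again by \eqref{eq:full-H-localizing-identity}.

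\textbf{Equality terms.} Each summand $\tau h_j\nu^*+\nu h_j^*\tau^*$ lies in $\mathcal J_\bh^{\H}\cap\sym\P\langle\q,\bar\q\rangle$. By \eqref{eq:full-H-equality-equivalence} of Proposition~\ref{prop:scalar-ideal-generator}, condition \eqref{eq:full-H-all-equality-moments} forces $L_{\y}^{\RR}$ to vanish on this intersection.

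\textbf{Scalar-identity remainder.} This part is annihilated by \eqref{eq:full-scalar-equality-equivalence}, using condition \eqref{eq:full-H-all-scalar-identities}.

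Summing the three pieces gives the required positivity, and Corollary~\ref{cor:archimedean-full-quaternionic-moment} then yields a representing measure on $K_{\bg,\bh}$.

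The main obstacle is bookkeeping rather than substance. The computation \eqref{eq:full-H-localizing-identity} must hold for left-coefficient polynomials with genuinely quaternionic coefficients. This relies on the cyclic invariance $\RR(y_u)=\RR(y_v)$ in the definition of a pseudo-moment sequence, and on Hermitian symmetry $y_w=\bar y_{w^*}$, which makes $\M_d(g\y)$ Hermitian. We will check that these two properties are exactly what is used when rewriting $\sum c_{u^*}g_wy_{u^*wv}\bar c_{v^*}$ as the real part of $\sum\bar c_ac_bg_wy_{bwa^*}$. We will also confirm that $c\mapsto\widehat c$ is a bijection, so that ranging over all $p\in\mathcal L$ of degree at most $d$ is equivalent to ranging over all vectors $\widehat c\in\H^{|\cW_d|}$; this gives both directions of the PSD equivalence.
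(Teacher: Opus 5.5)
Your proposal is correct and follows essentially the same route as the paper. You reduce to Corollary~\ref{cor:archimedean-full-quaternionic-moment}, treat the QSOS and weighted QSOS parts of $\mathcal Q^{\H}(\bg,\bh)$ via \eqref{eq:full-H-localizing-identity} and the bijection $c\mapsto\widehat c$, and treat the equality-ideal and scalar-identity parts via the two equivalences of Proposition~\ref{prop:scalar-ideal-generator}. The only difference is that you prove necessity by integrating directly against the measure, whereas the paper gets both directions from the equivalence of the matrix conditions with \eqref{eq:full-H-module-positivity}, using that $\pm\eta$ both lie in the certificate cone for equality and scalar-identity terms.
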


\begin{proof}
By \eqref{eq:full-H-localizing-identity}, conditions
\eqref{eq:full-H-all-moment-PSD}--\eqref{eq:full-H-all-localizing-PSD} are
exactly positivity on the QSOS and weighted QSOS parts of
\(\mathcal Q^{\H}(\bg,\bh)\).  Because both $\eta$ and $-\eta$ are admissible whenever
$\eta\in \mathcal J_\bh^{\H}\cap\sym\P\langle\q,\bar\q\rangle$, positivity on the module forces and is forced
by annihilation of the equality-ideal part; this is equivalent to
\eqref{eq:full-H-all-equality-moments} by
\eqref{eq:full-H-equality-equivalence}.  The same arbitrary-sign argument
shows that the scalar-identity part is equivalent to
\eqref{eq:full-H-all-scalar-identities}.  Thus
\eqref{eq:full-H-all-moment-PSD}--\eqref{eq:full-H-all-scalar-identities} are
equivalent to \eqref{eq:full-H-module-positivity}, and the conclusion follows
from Corollary~\ref{cor:archimedean-full-quaternionic-moment}.
\end{proof}

% The scalar-identity condition in \eqref{eq:full-H-all-scalar-identities} is essential rather than merely formal.  Even in one variable, positivity of all moment matrices does not prevent a sequence from arising from a matrix representation instead of scalar quaternionic evaluation.

% \begin{example}
% Let
% \[
% A=
% \begin{pmatrix}
% 0&1\\
% 0&0
% \end{pmatrix},
% \]
% and, for every word \(w\) in \(q,\bar q\), define
% \begin{equation}\label{eq:full-H-matrix-trace-sequence}
% y_w
% \coloneqq
% \frac12\operatorname{Tr}\bigl(w(A,A^{\intercal})\bigr)
% \in\R\subseteq\H.
% \end{equation}
% Then $y_1=1$. Adjoint symmetry follows from invariance of the trace under transposition, and cyclicity follows from the cyclic trace identity.  If
% \(W_u=u(A,A^{\intercal})\), then
% \[
% [\M_d(\y)]_{u,v}
% =
% \frac12\operatorname{Tr}(W_u^{\intercal}W_v),
% \]
% so \(\M_d(\y)\) is a real Gram matrix and therefore is quaternionic positive
% semidefinite.

% However,
% \[
% r
% \coloneqq
% (q\bar q-\bar qq)^*(q\bar q-\bar qq)
% \in \mathcal I_1^{\H}\cap\sym\P\langle\q,\bar\q\rangle
% \]
% vanishes at every scalar quaternion.  At the matrix tuple,
% \[
% AA^{\intercal}-A^{\intercal}A
% =
% \begin{pmatrix}
% 1&0\\
% 0&-1
% \end{pmatrix},
% \]
% and therefore $L_{\y}^{\RR}(r)=
% \frac12\operatorname{Tr}(I_2)
% =1$.
% Every scalar quaternionic representing measure would annihilate \(r\), a
% contradiction.
% \end{example}

Existence of a representing measure is therefore characterized by the preceding conditions, but uniqueness is a separate issue.  In general, even the full quaternion-valued moment sequence does not determine its representing measure uniquely, as the following example shows.
\begin{example}
\label{rem:full-H-nonunique-measure}
For one variable, consider
\[
\mu_1
=
\frac12\left(\delta_{\i}+\delta_{-\i}\right),
\qquad
\mu_2
=
\frac12\left(\delta_{\j}+\delta_{-\j}\right).
\]
Let \(w\) have length \(m\), and let \(r(w)\) be the number of occurrences of
\(\bar q\) in \(w\).  On either imaginary-axis support,
$w(q,\bar q)=(-1)^{r(w)}q^m$.
Consequently,
\begin{equation}\label{eq:full-H-nonunique-word-moments}
\int w\,d\mu_1
=
\int w\,d\mu_2
=
\begin{cases}
0,&m\ \text{odd},\\[1mm]
(-1)^{r(w)+m/2},&m\ \text{even}.
\end{cases}
\end{equation}
Thus each fixed even word has the same scalar value under the two measures, although different even words may take the values $1$ or $-1$. Hence, in general, the full moments do not determine the angular component of a representing measure.
\end{example}

\subsection{The truncated quaternionic moment problem}
\label{sec:truncated-quaternionic-moment}
The full moment criteria above involve moments of all orders.  In polynomial optimization, however, only finitely many moments are available at a fixed relaxation order.  We therefore study when a truncated positive moment functional already determines a representing measure.  The central mechanism is flatness: once the column space of the moment matrix stabilizes, one obtains a finite-dimensional $*$-representation.  Because such a representation need not automatically correspond to scalar quaternions, scalar consistency with $\mathcal I_n^\R$ must also be imposed in general.

Given a truncated quaternionic pseudo-moment sequence $\y=(y_w)_{w\in\cW_{2d}}\in\H^{|\cW_{2d}|}$, assume the following flatness condition:
\begin{equation}\label{eq:flat-truncated-PSD-rank}
\M_d(\y)\succeq0,
\qquad
\rank_{\H}\M_d(\y)=\rank_{\H}\M_{d-1}(\y)=r.
\end{equation}
Since $\M_d(\y)\succeq0$ has quaternionic rank $r$, there exist vectors $\bv_w\in\H^r$, indexed by $w\in\mathcal W_d$, such that
$\langle\bv_u,\bv_v\rangle=L_\y(u^*v)$ for all $u,v\in\mathcal W_d$.
The inner product is conjugate-linear in its first argument and right-linear in
its second argument.  For $0\leq t\leq d$, put
\[
\mathscr H_t
\coloneqq
\operatorname{span}_{\H}
\{\bv_w:w\in\mathcal W_t\}.
\]
Then $\dim_{\H}\mathscr H_t=\rank_{\H}\M_t(\y)$.
The flatness assumption and the inclusion
$\mathscr H_{d-1}\subseteq\mathscr H_d$ therefore give
$\mathscr H_{d-1}=\mathscr H_d$.
Set $\mathscr H\coloneqq\mathscr H_{d-1}=\mathscr H_d$ and $\xi\coloneqq\bv_1$. Then $\dim_{\H}\mathscr H=r$.

For a letter $a\in\{q_1,\bar q_1,\ldots,q_n,\bar q_n\}$,
define, initially on the spanning family of $\mathscr H$,
\begin{equation}\label{eq:flat-shift-definition}
X_a \bv_w\coloneqq \bv_{aw},
\qquad
w\in\mathcal W_{d-1}.
\end{equation}
We show that this rule is well defined. Suppose
$\sum_{w\in\mathcal W_{d-1}}\bv_wc_w=0,
c_w\in\H$.
For every $v\in\mathcal W_{d-1}$,
\begin{equation*}
\left\langle
\bv_v,
\sum_w\bv_{aw}c_w
\right\rangle=
\sum_wL_\y(v^*aw)c_w=
\sum_wL_\y\bigl((a^*v)^*w\bigr)c_w=
\left\langle
\bv_{a^*v},
\sum_w\bv_wc_w
\right\rangle
=0.
\end{equation*}
Here $|a^*v|\leq d$, so $\bv_{a^*v}\in\mathscr H_d=\mathscr H$.
Since the vectors $\bv_v$, $v\in\mathcal W_{d-1}$, span $\mathscr H$, it follows that $\sum_w\bv_{aw}c_w=0$.  Hence $X_a$ is a well-defined right-$\H$-linear operator on $\mathscr H$.

For $u,v\in\mathcal W_{d-1}$,
\begin{equation*}
\langle \bv_u,X_a\bv_v\rangle=L_\y(u^*av)=L_\y\bigl((a^*u)^*v\bigr)=\langle X_{a^*}\bv_u,\bv_v\rangle.
\end{equation*}
Therefore $X_{a^*}=X_a^*$.
For a word $w=a_1\cdots a_m$, define
\[
\pi(w)\coloneqq X_{a_1}\cdots X_{a_m},
\qquad
\pi(1)\coloneqq I_{\mathscr H},
\]
and extend the definition real-linearly. The resulting map
$\pi:\mathcal F_n\longrightarrow B_{\H}(\mathscr H)$
is a unital real $*$-representation.  For every
$w\in\mathcal W_d$, induction on $|w|$ gives $\pi(w)\xi=\bv_w$.
In particular,
\begin{equation}\label{eq:flat-cyclicity}
\mathscr H
=
\operatorname{span}_{\H}
\{\pi(w)\xi:w\in\mathcal W_{d-1}\},
\end{equation}
so $\xi$ is cyclic. We call the triple $(\mathscr H,\pi,\xi)$ the \emph{canonical flat representation} of $\y$.

Before proving the truncated quaternionic moment theorem, we record a structural lemma that will be used below.
\begin{lemma}\label{structural-result}
Let $\mathscr K$ be a finite-dimensional right quaternionic Hilbert space, and let
$\mathscr S\subseteq B_{\H}(\mathscr K)$ be a finite-dimensional unital real $*$-subalgebra satisfying
$\sym\mathscr S\subseteq Z(\mathscr S)$. Then $\mathscr K$ admits an orthogonal decomposition
\begin{equation}\label{eq:orthogonal-line-decomposition}
\mathscr K=E_1\oplus\cdots\oplus E_{\dim_{\H}\mathscr K},
\qquad
\dim_{\H}E_\ell=1,
\end{equation}
such that every $E_\ell$ reduces every operator in $\mathscr S$.
\end{lemma}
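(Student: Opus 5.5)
Since $\sym\mathscr S$ is central in $\mathscr S$, the plan is to use the self-adjoint central elements to cut $\mathscr K$ into joint eigenspaces. On each such piece the algebra should act as a skew-adjoint Clifford-type action commuting with a scalar centre. We then split further into quaternionic lines by choosing a common eigenvector for a maximal commuting family. We argue by induction on $\dim_{\H}\mathscr K$; the case $\dim_{\H}\mathscr K\le 1$ is trivial.

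\textbf{Step 1: joint eigenspaces of $\sym\mathscr S$.} The self-adjoint right-$\H$-linear operators in $\sym\mathscr S$ pairwise commute, because they are central. Such operators have real spectra and are diagonalizable over quaternionic Hilbert spaces by the quaternionic spectral theorem. So $\mathscr K$ decomposes orthogonally into joint eigenspaces $\mathscr K_\chi$ on which every $b\in\sym\mathscr S$ acts as a real scalar $\chi(b)$. Each $\mathscr K_\chi$ is invariant under $\mathscr S$, since its spectral projections are polynomials in central elements and hence commute with $\mathscr S$. Its orthogonal complement is invariant as well, because $\mathscr S$ is $*$-closed. If $\mathscr K$ splits nontrivially we apply induction, so we may assume that every $b\in\sym\mathscr S$ acts as a real scalar.

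\textbf{Step 2: a single skew operator gives a quaternionic line.} Now let $A\in\mathscr S$ with $A^*=-A$. Then $A^*A=-A^2\in\sym\mathscr S$ is a scalar $\alpha^2\ge0$. If $\mathscr S=\R I$ we pick any line; it is reducing and $\mathscr K$ is an orthogonal sum of lines. Otherwise, every element of $\mathscr S$ is $s+a$ with $s$ symmetric (hence scalar) and $a$ skew. For skew $a,b$, the element $ab+ba$ is symmetric, hence a scalar. So $\mathscr S=\R I\oplus W$, where $W$ is the skew part, carries the positive semidefinite form $\langle a,b\rangle=-\tfrac12(ab+ba)$. This form is definite, because $a^*a=0$ forces $a=0$. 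Choose an orthonormal basis $W_1,\dots,W_r$ of $W$. These satisfy the Clifford relations $W_aW_b+W_bW_a=-2\delta_{ab}I$, just as in Step~3 of the proof of Theorem~\ref{thm:putinar-quaternionic}.

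Each product $W_aW_b$ with $a\neq b$ lies in $\mathscr S$ and is skew, hence lies in $W$. Finite-dimensionality and closure of $\mathscr S$ under multiplication should force $r\le3$ and make $W$ a copy of $\operatorname{Im}\H$ or of a subspace of it. The key point is that $\mathscr S$ is then isomorphic to $\R$, $\C$, or $\H$ as a real $*$-algebra, or to a quotient of these. The main obstacle lies exactly here: showing that the centrality hypothesis makes $\mathscr S$ a real division $*$-algebra, rather than a larger Clifford algebra. The first thing I would try is to note that every nonzero $s\in\mathscr S$ satisfies $s^*s\in\R_{>0}I$, so $s$ is invertible with inverse $s^*/(s^*s)$. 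By Frobenius, $\mathscr S\cong\R,\C,$ or $\H$.

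\textbf{Step 3: splitting a representation of $\R$, $\C$ or $\H$.} It remains to show that a $*$-representation of $D\in\{\R,\C,\H\}$ on a right quaternionic Hilbert space, commuting with the right $\H$-scalars, splits orthogonally into one-dimensional quaternionic reducing lines. Pick a unit vector $\xi$ and set $E=D\xi\H$, the smallest reducing right-$\H$ subspace containing $\xi$.

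For $D=\C$, generated by a skew unitary $J$ with $J^2=-I$, the map $J$ is a right-linear unitary with $J^2=-I$. By the quaternionic spectral theorem for normal operators, there is an eigenvector $\xi$ with $J\xi=\xi\i$, and then $E=\xi\H$.

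For $D=\H$, generated by anticommuting $J,K$, first take $\xi$ with $J\xi=\xi\i$. Then $K\xi$ satisfies $J(K\xi)=-KJ\xi=-(K\xi)\i$. The vector $\xi'=\xi+K\xi\,\j$ (or an appropriate normalization of it) satisfies $J\xi'=\xi'\i$. Adjusting in this way, we aim for $\xi$ with $J\xi=\xi\i$ and $K\xi=\xi\j$, so that $\xi\H$ is invariant under all of $D$.

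In every case, the complement of the reducing line is again reducing, since $D$ is $*$-closed, and induction on dimension finishes the proof.
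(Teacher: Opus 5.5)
Your proposal is correct, but it takes a more elementary route than the paper.

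\textbf{The paper's route.} The paper invokes the classification of finite-dimensional real $C^*$-algebras, $\mathscr S\cong\bigoplus_\alpha M_{m_\alpha}(\mathbb F_\alpha)$. It rules out $m_\alpha\geq2$ because a matrix unit $E_{11}$ would be a noncentral Hermitian element. It then splits $\mathscr K$ by the central projections $e_\alpha$. Finally it treats each $e_\alpha\mathscr K$ as a module over the simple algebra $\mathbb F_\alpha\otimes_{\R}\H^{\mathrm{op}}$, which is $\H^{\mathrm{op}}$, $M_2(\C)$ or $M_4(\R)$. The simple modules of these algebras have real dimension four, so they are quaternionic lines, and orthogonality comes from passing to orthogonal complements.

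\textbf{Your route.} You make three substitutions:
\begin{itemize}
\item The classification and the central projections become the joint spectral decomposition of the commuting family $\sym\mathscr S$. Its joint eigenspaces are exactly the paper's $e_\alpha\mathscr K$. Your induction is legitimate because a Hermitian restriction $s|=s^*|$ is the restriction of the central element $(s+s^*)/2$.
\item The exclusion of matrix blocks becomes the observation that once Hermitian elements are scalars, $s^*s$ is a positive scalar for every $s\neq0$. So $\mathscr S$ is a division algebra and Frobenius applies.
\item The module theory over $\mathbb F_\alpha\otimes_{\R}\H^{\mathrm{op}}$ becomes the explicit construction of a common right eigenvector.
\end{itemize}
Your argument needs only the quaternionic spectral theorem, existence of right eigenvalues, and Frobenius, and it is constructive: it tells you how to find the lines. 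The paper's argument is shorter but rests on heavier structure theory. The detour in your Step 2 about forcing $r\leq3$ can be dropped, since the invertibility argument already does the job.

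\textbf{A small correction in the $\H$ case.} The vector $\xi'=\xi+(K\xi)\j$ does satisfy $J\xi'=\xi'\i$, but it satisfies $K\xi'=-\xi'\j$, not $K\xi'=\xi'\j$. The vector $\xi''=\xi-(K\xi)\j$ satisfies both $J\xi''=\xi''\i$ and $K\xi''=\xi''\j$. Either vector may vanish, but $\xi'+\xi''=2\xi\neq0$, so at least one is nonzero. That one spans a line invariant under $J$ and $K$, hence under $D$, which is all your induction needs.
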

\begin{proof}
Because $\mathscr S$ is finite-dimensional, it is norm closed and therefore a finite-dimensional real $C^*$-algebra.  By the standard classification of finite-dimensional real $C^*$-algebras~\cite{goodearl1987classification},
\begin{equation}\label{eq:real-Cstar-decomposition}
\mathscr S
\cong
\bigoplus_{\alpha=1}^sM_{m_\alpha}(\mathbb F_\alpha),
\qquad
\mathbb F_\alpha\in\{\R,\C,\H\}.
\end{equation}
If some $m_\alpha\geq2$, the matrix unit $E_{11}$ in that summand is
self-adjoint but noncentral, contradicting
$\sym\mathscr S\subseteq Z(\mathscr S)$.  Hence $m_\alpha=1$ for every
$\alpha$, and
\begin{equation}\label{eq:division-algebra-decomposition}
\mathscr S
\cong
\bigoplus_{\alpha=1}^s\mathbb F_\alpha.
\end{equation}

Let $e_\alpha\in\mathscr S$ be the identity of the $\alpha$-th summand.  The
$e_\alpha$ are mutually orthogonal central self-adjoint projections and
$\sum_\alpha e_\alpha=I$.  Therefore
\[
\mathscr K
=
\bigoplus_{\alpha=1}^s\mathscr K_\alpha,
\qquad
\mathscr K_\alpha\coloneqq e_\alpha\mathscr K,
\]
and $\mathbb F_\alpha$ acts on $\mathscr K_\alpha$ through a unital
$*$-representation
$\theta_\alpha:\mathbb F_\alpha
\longrightarrow B_{\H}(\mathscr K_\alpha)$.
Regard $\mathscr K_\alpha$ as a real vector space.  Since every
$\theta_\alpha(a)$ is right-$\H$-linear, the left $\mathbb F_\alpha$-action
commutes with right quaternionic multiplication.  Thus $\mathscr K_\alpha$ is
a left module over
$D_\alpha
\coloneqq
\mathbb F_\alpha\otimes_{\R}\H^{\mathrm{op}}$
through
\begin{equation}\label{eq:D-alpha-module-action}
(a\otimes b^{\mathrm{op}})\cdot v
\coloneqq
\theta_\alpha(a)(v)b.
\end{equation}
The three possibilities are
\begin{equation}\label{eq:D-alpha-types}
D_\alpha
\cong
\begin{cases}
\H^{\mathrm{op}},&\mathbb F_\alpha=\R,\\
M_2(\C),&\mathbb F_\alpha=\C,\\
M_4(\R),&\mathbb F_\alpha=\H.
\end{cases}
\end{equation}
Each $D_\alpha$ is simple Artinian.  Its irreducible real modules are,
respectively, $\H$, $\C^2$, and $\R^4$, and hence have real dimension four.
By semisimplicity, $\mathscr K_\alpha$ is an algebraic direct sum of
irreducible $D_\alpha$-modules.  If $E$ is one such irreducible module, then
$E$ is invariant under $1\otimes\H^{\mathrm{op}}$, so it is a right
quaternionic subspace.  Since $\dim_{\R}E=4$, one has
$\dim_{\H}E=1$.

It remains to choose this decomposition orthogonally.  Let $E$ be an irreducible $D_\alpha$-submodule, $x\in E$, $y\in E^\perp$, $a\in\mathbb F_\alpha$, and
$b\in\H$.  Then
\begin{equation*}
\langle x,\theta_\alpha(a)y\rangle=
\langle\theta_\alpha(a^*)x,y\rangle
=0,\qquad
\langle x,yb\rangle=
\langle x,y\rangle b
=0.
\end{equation*}
Thus $E^\perp$ is invariant both under
$\theta_\alpha(\mathbb F_\alpha)$ and under right quaternionic multiplication;
it is therefore again a $D_\alpha$-submodule.  Induction on the quaternionic dimension yields an orthogonal decomposition of each $\mathscr K_\alpha$ into irreducible $D_\alpha$-modules, each of quaternionic dimension one.  Combining
the summands over all $\alpha$ yields \eqref{eq:orthogonal-line-decomposition}.
\end{proof}

\begin{theorem}
\label{thm:flat-truncated-quaternionic-moment}
Let $d\geq1$ and $\y=(y_w)_{w\in\cW_{2d}}\in\H^{|\cW_{2d}|}$ be a truncated quaternionic pseudo-moment sequence with $y_1=1$ satisfying the flatness condition \eqref{eq:flat-truncated-PSD-rank}.
Let $(\mathscr H,\pi,\xi)$ be the canonical flat representation of $\y$ and assume the scalar-consistency condition $\pi(\mathcal I_{n}^\R)=0$.
Then there exist distinct points
$\mathbf q^{(1)},\ldots,\mathbf q^{(N)}\in\H^n,
N\leq r$,
and weights
$\lambda_1,\ldots,\lambda_N>0,
\sum_{\nu=1}^N\lambda_\nu=1$,
such that
$\mu=\sum_{\nu=1}^N\lambda_\nu\delta_{\mathbf q^{(\nu)}}$ is a positive atomic probability measure representing $\y$.
\end{theorem}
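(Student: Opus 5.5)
Since $\pi(\mathcal I_n^\R)=0$, the representation $\pi$ factors through $\mathcal A=\mathcal F_n/\mathcal I_n^\R$. The plan is to use the centrality result (Lemma~\ref{lem:symmetric-central-quotient}) on the image algebra $\mathscr S\coloneqq\pi(\mathcal F_n)\subseteq B_{\H}(\mathscr H)$. This is a finite-dimensional unital real $*$-subalgebra, because $\mathscr H$ is finite-dimensional. By Lemma~\ref{lem:symmetric-central-quotient}, every symmetric element of $\mathcal A$ is central. Moreover, every self-adjoint element of $\mathscr S$ has the form $\pi(p)$ with $p$ symmetric: replace $p$ by $(p+p^*)/2$. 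Hence $\sym\mathscr S\subseteq Z(\mathscr S)$, and Lemma~\ref{structural-result} yields an orthogonal decomposition $\mathscr H=E_1\oplus\cdots\oplus E_r$ into quaternionic lines, each reducing every $X_a$.

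Next I will show that each line is an evaluation at a scalar point. Fix a unit vector $e_\ell\in E_\ell$. Each $X_{q_i}$ maps $E_\ell$ into itself, so $X_{q_i}e_\ell=e_\ell a_i^{(\ell)}$ for a unique $a_i^{(\ell)}\in\H$. Taking adjoints on the one-dimensional reducing subspace gives $X_{\bar q_i}e_\ell=e_\ell\overline{a_i^{(\ell)}}$, since $\langle e_\ell,X_{\bar q_i}e_\ell\rangle=\overline{\langle e_\ell,X_{q_i}e_\ell\rangle}$. Because right scalars compose in reverse order, a word $w=w_1\cdots w_m$ acts by $\pi(w)e_\ell=e_\ell\,w(\mathbf a^{(\ell)},\bar{\mathbf a}^{(\ell)})^{\mathrm{rev}}$, where ``rev'' denotes the product of the letter values taken in reversed order. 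This is exactly the evaluation at the conjugated/reversed tuple. The main obstacle is this order reversal. I expect to resolve it by noting that a reversed word evaluated at $\mathbf a$ equals $\overline{w^*(\mathbf a)}$-type expressions, and then absorbing the reversal into the choice of point. Concretely, the composition $\pi(uv)e_\ell=X_u(X_v e_\ell)=X_u e_\ell\beta_v=e_\ell\beta_u\beta_v$, with $\beta_v$ the scalar of $v$. So in fact $\beta_w=\beta_{w_1}\cdots\beta_{w_m}$ in the correct order, because the scalar sits on the right of $e_\ell$ and $X_u$ is right-linear. I would verify this carefully, since it is the crux. It gives $\pi(w)e_\ell=e_\ell\,w(\mathbf q^{(\ell)},\bar{\mathbf q}^{(\ell)})$ with $\mathbf q^{(\ell)}\coloneqq(a_1^{(\ell)},\dots,a_n^{(\ell)})$.

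Now expand $\xi=\sum_\ell e_\ell c_\ell$ with $c_\ell\in\H$. For $w\in\mathcal W_{2d}$, write $w=u^*v$ with $u,v\in\mathcal W_d$; this gives $y_w=\langle\pi(u)\xi,\pi(v)\xi\rangle=\langle\xi,\pi(w)\xi\rangle$, using $\pi(u)^*=\pi(u^*)$ and $\pi(v)\xi=\bv_v$. Orthogonality of the reducing lines then yields
\[
y_w=\sum_\ell \bar c_\ell\, w(\mathbf q^{(\ell)},\bar{\mathbf q}^{(\ell)})\,c_\ell .
\]
With $c_\ell=|c_\ell|u_\ell$, this equals $\sum_\ell|c_\ell|^2\,\bar u_\ell w(\mathbf q^{(\ell)})u_\ell=\sum_\ell|c_\ell|^2\, w(\bar u_\ell\mathbf q^{(\ell)}u_\ell)$, by the telescoping conjugation identity for real-coefficient words. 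So set the atoms to be $\bar u_\ell\mathbf q^{(\ell)}u_\ell$ and the weights to be $\lambda_\ell=|c_\ell|^2$.

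Finally, discard the indices with $c_\ell=0$, and merge coincident points by adding their weights; this gives $N\leq r$ distinct atoms. Positivity of the weights is immediate. The weights sum to $\sum_\ell|c_\ell|^2=\|\xi\|^2=y_1=1$, so $\mu=\sum_\nu\lambda_\nu\delta_{\mathbf q^{(\nu)}}$ is an atomic probability measure representing $\y$.
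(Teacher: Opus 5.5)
Your proposal is correct and follows the paper's proof essentially step for step. The steps are: centrality of $\sym\mathscr R$ from scalar consistency, the line decomposition of Lemma~\ref{structural-result}, evaluation of $\pi(w)$ on each reducing line, and conjugation of the atoms by the unit parts of the $c_\ell$. Your right-linearity computation $X_u(e_\ell\beta_v)=e_\ell\beta_u\beta_v$ correctly dispels the order-reversal worry.

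The only differences are cosmetic. You obtain centrality by factoring $\pi$ through $\mathcal A$ and citing Lemma~\ref{lem:symmetric-central-quotient}, where the paper gives an inline commutator argument. You also discard indices with $c_\ell=0$, whereas the paper proves via cyclicity that none occur; discarding is enough for the stated conclusion.
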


\begin{proof}
Define
\begin{equation}\label{eq:flat-image-algebra}
\mathscr R\coloneqq\pi(\mathcal F_n)
\subseteq B_{\H}(\mathscr H),
\qquad
\sym\mathscr R
\coloneqq
\{A\in\mathscr R:A^*=A\}.
\end{equation}
Let $A\in\sym\mathscr R$ and $B\in\mathscr R$, and choose
$p,r\in\mathcal F_n$ such that
$\pi(p)=A, \pi(r)=B$.
Since $[p+p^*,r]\in \mathcal I_n^\R$,
\[
0=\pi([p+p^*,r])=[\pi(p)+\pi(p)^*,\pi(r)]=[2A,B].
\]
Therefore $[A,B]=0$, proving $\sym\mathscr R\subseteq Z(\mathscr R)$. Now applying Lemma~\ref{structural-result} to $\mathscr S=\mathscr R$ and $\mathscr K=\mathscr H$ gives the orthogonal decomposition
\begin{equation}\label{eq:R-reducing-lines}
\mathscr H=E_1\oplus\cdots\oplus E_r,
\qquad
\dim_{\H}E_\ell=1,
\end{equation}
such that every $E_\ell$ reduces $\mathscr R$.
Choose a unit vector $e_\ell\in E_\ell$, so that
$E_\ell=e_\ell\H$.  Put $X_j\coloneqq\pi(q_j)$.  Since $E_\ell$ reduces
$X_j$, there is a unique quaternion $a_j^{(\ell)}$ such that
$X_je_\ell=e_\ell a_j^{(\ell)}$.
Likewise, write $X_j^* e_\ell=e_\ell b_j^{(\ell)}$.  Then
\begin{equation*}
b_j^{(\ell)}=
\langle e_\ell,X_j^* e_\ell\rangle=
\langle X_je_\ell,e_\ell\rangle=
\langle e_\ell a_j^{(\ell)},e_\ell\rangle
=
\overline{a_j^{(\ell)}}.
\end{equation*}
Hence $X_j^* e_\ell=e_\ell\overline{a_j^{(\ell)}}$.
Set $\ba^{(\ell)}
\coloneqq
(a_1^{(\ell)},\ldots,a_n^{(\ell)})\in\H^n$.
Induction on word lengths gives
\begin{equation}\label{eq:line-word-evaluation}
\pi(w)e_\ell=e_\ell w\bigl(\ba^{(\ell)},\overline{\ba^{(\ell)}}\bigr), \qquad w\in\mathcal W.
\end{equation}
Expand the cyclic vector in the orthonormal basis from
\eqref{eq:R-reducing-lines}:
\begin{equation}\label{eq:cyclic-vector-expansion}
\xi=\sum_{\ell=1}^re_\ell c_\ell,
\qquad
c_\ell\in\H.
\end{equation}
Cyclicity implies $c_\ell\neq0$ for every $\ell$. Indeed, if
$c_\ell=0$ and $P_\ell$ denotes the orthogonal projection onto $E_\ell$, then
$P_\ell\xi=0$.  Since $E_\ell$ reduces $\mathscr R$, $P_\ell$ commutes with
$\mathscr R$, and hence
\[
P_\ell\pi(w)\xi=\pi(w)P_\ell\xi=0
\qquad
\bigl(w\in\mathcal W\bigr).
\]
This contradicts \eqref{eq:flat-cyclicity}.

Let
\begin{equation}\label{eq:canonical-flat-extension}
\widetilde L_\y(p)
\coloneqq
\langle\xi,\pi(p)\xi\rangle,
\qquad
p\in\mathcal F_n.
\end{equation}
For $w\in\mathcal W_{2d}$, write $w=ab$ with
$|a|,|b|\leq d$, and set $u=a^*$ and $v=b$.  Then $w=u^*v$, and
\begin{equation*}
\langle\xi,\pi(w)\xi\rangle=
\langle\xi,\pi(u)^*\pi(v)\xi\rangle=
\langle\pi(u)\xi,\pi(v)\xi\rangle=
\langle \bv_u,\bv_v\rangle=L_\y(u^*v)=L_\y(w).
\end{equation*}
Thus \eqref{eq:canonical-flat-extension} extends the original truncated functional $L_\y$ and it is positive because $\widetilde L_\y(p^*p)=
\|\pi(p)\xi\|^2\geq0$.
Using orthogonality and \eqref{eq:line-word-evaluation}, we have
\begin{equation}
\widetilde L_\y(w)=
\langle\xi,\pi(w)\xi\rangle=
\sum_{\ell=1}^r
\overline{c_\ell}\,
w\bigl(\ba^{(\ell)},\overline{\ba^{(\ell)}}\bigr)c_\ell.
\label{eq:vector-state-line-sum}
\end{equation}
Write
\[
c_\ell=\sqrt{\lambda_\ell}\,u_\ell,
\qquad
\lambda_\ell=|c_\ell|^2>0,
\qquad
|u_\ell|=1,
\]
and define
\begin{equation}\label{eq:rotated-atoms}
q_j^{(\ell)}
\coloneqq
u_\ell^{-1}a_j^{(\ell)}u_\ell,
\qquad
\mathbf q^{(\ell)}
\coloneqq
(q_1^{(\ell)},\ldots,q_n^{(\ell)}).
\end{equation}
Consequently,
$\overline{c_\ell}\,w\bigl(\ba^{(\ell)},\overline{\ba^{(\ell)}}\bigr)c_\ell=\lambda_\ell
w\bigl(\mathbf q^{(\ell)},\overline{\mathbf q^{(\ell)}}\bigr)$.
Substitution into \eqref{eq:vector-state-line-sum} gives
$\widetilde L_\y(w)=
\sum_{\ell=1}^r\lambda_\ell
w\bigl(\mathbf q^{(\ell)},\overline{\mathbf q^{(\ell)}}\bigr)$.
For $w\in\mathcal W_{2d}$, one has $\widetilde L_\y(w)=L_\y(w)=y_w$.  Finally,
$\sum_{\ell=1}^r\lambda_\ell
=\sum_{\ell=1}^r|c_\ell|^2
=\|\xi\|^2=L_\y(1)
=1$.
Combining equal tuples yields distinct atoms, with at most $r$ atoms in total. This proves that $\mu=\sum_{\nu=1}^N\lambda_\nu\delta_{\mathbf q^{(\nu)}}$ is a positive atomic measure representing $\y$.
\end{proof}

\begin{remark}\label{d-equal-one}
If $d=1$, the flatness condition gives $\rank_{\H}\M_1(\y)=\rank_{\H}\M_{0}(\y)=1$. The sequence $\y$ already admits a representing measure by the proof of Theorem~\ref{thm:rank-one-quaternionic-moment}. Thus in this case the assumption $\pi(\mathcal I_{n}^\R)=0$ in Theorem~\ref{thm:flat-truncated-quaternionic-moment} is redundant.
\end{remark}

Although scalar consistency is an algebraic condition on the entire ideal $\mathcal I_n^\R$, flatness compresses the problem to a finite-dimensional operator algebra.  The next proposition shows that scalar consistency can therefore be checked by finitely many commutator tests.
\begin{proposition}
\label{prop:finite-scalar-consistency-test}
Let $(\mathscr H,\pi,\xi)$ be the canonical flat representation in
Theorem~\ref{thm:flat-truncated-quaternionic-moment} and $\mathscr R=\pi(\mathcal F_n)$.
Let $F_1,\ldots,F_s$ be any real vector-space basis of $\mathscr R$.  The
following conditions are equivalent:
\begin{enumerate}[(i)]
\item $\pi(\mathcal I_n^\R)=0$;
\item $\sym\mathscr R\subseteq Z(\mathscr R)$;
\item $[F_a+F_a^*,F_b]=0,\,a,b=1,\ldots,s$;
\item $[F_a+F_a^*,\pi(q_i)]=0,\,a=1,\ldots,s,i=1,\ldots,n$.
\end{enumerate}
Consequently, scalar consistency can be verified by finitely many tests in the compressed shift algebra.  Moreover,
$s\leq\dim_{\R}B_{\H}(\mathscr H)=4r^2$.
\end{proposition}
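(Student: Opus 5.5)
I will prove the cycle (i)$\Rightarrow$(ii)$\Rightarrow$(iii)$\Rightarrow$(iv)$\Rightarrow$(i), and then bound $s$.

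For (i)$\Rightarrow$(ii) I will reuse the opening argument of the proof of Theorem~\ref{thm:flat-truncated-quaternionic-moment}. For $p,r\in\mathcal F_n$, Lemma~\ref{lem:symmetric-central-quotient} (or directly Lemma~\ref{real-scalar-identity}) gives $[p+p^*,r]\in\mathcal I_n^\R$. Applying $\pi$, which is a $*$-representation, yields $[\pi(p)+\pi(p)^*,\pi(r)]=0$. If $A=\pi(p)$ is self-adjoint, then $\pi(p)+\pi(p)^*=2A$, so $A$ commutes with all of $\mathscr R$.

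For (ii)$\Rightarrow$(iii), note that $F_a+F_a^*\in\sym\mathscr R$, since $\mathscr R$ is $*$-closed.

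For (iii)$\Rightarrow$(iv), expand $\pi(q_i)\in\mathscr R$ in the basis $F_b$. The commutator is real bilinear, so it vanishes.

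For (iv)$\Rightarrow$(i), first observe that every $A\in\sym\mathscr R$ can be written as $\tfrac12(A+A^*)$. Expanding $A$ in the basis shows it is a real combination of the $F_a+F_a^*$. Hence (iv) says that every self-adjoint element of $\mathscr R$ commutes with each $\pi(q_i)$. Taking adjoints of $[B,\pi(q_i)]=0$ with $B=B^*$ gives $[B,\pi(\bar q_i)]=[B,\pi(q_i)^*]=0$. Thus $B$ commutes with every generator $\pi(q_i),\pi(\bar q_i)$. By the Leibniz expansion already used in Lemma~\ref{real-scalar-identity}, $B$ then commutes with $\pi(b)$ for every word $b$, and by linearity with all of $\mathscr R$.

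Now consider a generator $[w+w^*,q_i]$ of $\mathcal I_n^\R$. Its image is $[\pi(w)+\pi(w)^*,\pi(q_i)]$, with $\pi(w)+\pi(w)^*\in\sym\mathscr R$, so it vanishes. Its adjoint also vanishes. The kernel $\ker\pi$ is a two-sided $*$-ideal in $\mathcal F_n$, so it contains the $*$-ideal generated by these elements. By Lemma~\ref{real-scalar-identity}, that ideal is exactly $\mathcal I_n^\R$, which proves (i).

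The main point to watch is this last step: I must use the generator description of Lemma~\ref{real-scalar-identity} rather than try to argue about arbitrary scalar identities directly.

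For the bound on $s$, $\mathscr R$ is a real subspace of $B_{\H}(\mathscr H)$. With $\mathscr H\cong\H^r$, this space is identified with $\H^{r\times r}$, which has real dimension $4r^2$. This gives $s\le 4r^2$, and so (iii) or (iv) consists of finitely many matrix commutator checks.
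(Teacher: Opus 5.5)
Your proof is correct, but your argument for the substantive implication differs from the paper's.

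The paper proves (ii)$\Rightarrow$(i) geometrically. It applies Lemma~\ref{structural-result} to split $\mathscr H$ into one-dimensional quaternionic lines that reduce $\mathscr R$. On each line it identifies $\pi$ with scalar evaluation at a tuple $\ba^{(\ell)}$, and concludes that $\pi(p)$ annihilates every line whenever $p$ vanishes at all scalar points. That route needs only the definition of $\mathcal I_n^\R$ as the kernel of evaluation. It does, however, use finite-dimensionality and the classification of finite-dimensional real $C^*$-algebras. It also reuses the same line decomposition that produces the atoms in Theorem~\ref{thm:flat-truncated-quaternionic-moment}.

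You argue algebraically instead. Once symmetric elements are central, $\ker\pi$ is a two-sided $*$-ideal containing every generator $[w+w^*,q_i]$, and Lemma~\ref{real-scalar-identity} then gives $\mathcal I_n^\R\subseteq\ker\pi$. This is shorter and holds for an arbitrary $*$-representation of $\mathcal F_n$, with no flatness or finite dimension needed. The cost is that it rests entirely on the generator description in Lemma~\ref{real-scalar-identity}, and hence on the cited basis theorem for $*$-identities of $M_2$ with the symplectic involution.

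Your cyclic arrangement also spells out a step the paper compresses into the remark that the $\pi(q_i)$ and their adjoints generate $\mathscr R$. That step is the adjoint identity $[B,\pi(\bar q_i)]=-[B,\pi(q_i)]^*$ for $B=B^*$.
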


\begin{proof}
The implication (i)$\Rightarrow$(ii) is precisely the argument establishing $\sym\mathscr R\subseteq Z(\mathscr R)$ in the proof of Theorem~\ref{thm:flat-truncated-quaternionic-moment}.

Conditions (ii) and (iii) are equivalent by real bilinearity.  Indeed, if
\[
A=\sum_a\alpha_aF_a,
\qquad
B=\sum_b\beta_bF_b,
\qquad
\alpha_a,\beta_b\in\R,
\]
then
\[
[A+A^*,B]
=
\sum_{a,b}\alpha_a\beta_b[F_a+F_a^*,F_b].
\]
Thus (iii) implies that every Hermitian element of $\mathscr R$ is central,
and the converse is immediate.

It remains to prove (ii)$\Rightarrow$(i). Under (ii), the structural decomposition in the proof of Theorem~\ref{thm:flat-truncated-quaternionic-moment} yields an orthogonal decomposition of $\mathscr H$ into one-dimensional quaternionic lines reducing $\mathscr R$. On each such line, the subsequent evaluation argument identifies the representation with the scalar evaluation at a tuple
$\ba^{(\ell)}\in\H^n$:
$\pi(p)e_\ell=
e_\ell p\bigl(\ba^{(\ell)},\overline{\ba^{(\ell)}}\bigr), p\in\mathcal F_n.$
If $p\in \mathcal I_n^\R$, then
$p(\ba^{(\ell)},\overline{\ba^{(\ell)}})=0$ for every $\ell$.  Hence
$\pi(p)$ vanishes on every reducing line and therefore $\pi(p)=0$.  Thus
$\pi(\mathcal I_n^\R)=0$.

Conditions (iii) and (iv) are equivalent because the $\pi(q_i)$'s and their adjoints generate $\mathscr R$.
\end{proof}

Proposition~\ref{prop:finite-scalar-consistency-test} also yields another finite test for scalar consistency when $d\ge2$. Let $\R\langle\q,\bar\q\rangle_{2d}\subseteq\mathcal F_n$ denote the subspace of polynomials of degree at most $2d$.
\begin{corollary}\label{scalar-consistency-quartic}
Let $d\ge2$ and $(\mathscr H,\pi,\xi)$ be the canonical flat representation in Theorem~\ref{thm:flat-truncated-quaternionic-moment}. The scalar-consistency condition $\pi(\mathcal I_{n}^\R)=0$ is equivalent to
\begin{equation}\label{eq:flat-localized-identities}
\pi\left(\mathcal I_{n}^\R\cap\R\langle\q,\bar\q\rangle_{4}\right)=0.
\end{equation}
\end{corollary}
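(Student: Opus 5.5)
The implication from scalar consistency to \eqref{eq:flat-localized-identities} is trivial, since $\mathcal I_n^\R\cap\R\langle\q,\bar\q\rangle_{4}\subseteq\mathcal I_n^\R$. For the converse, the plan is to verify condition (iv) of Proposition~\ref{prop:finite-scalar-consistency-test}, namely $[F+F^*,\pi(q_i)]=0$ for every $F\in\mathscr R$ and every $i$. The degree-four hypothesis will be used through the elements $[w+w^*,q_i]$ with $|w|\le3$, which lie in $\mathcal I_n^\R$ by Lemma~\ref{real-scalar-identity} and have degree at most $4$. So \eqref{eq:flat-localized-identities} gives
\[
[\pi(w)+\pi(w)^*,\pi(q_i)]=0\qquad(|w|\le 3,\ i=1,\ldots,n).
\]

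The key reduction is to show that $\mathscr R$ is spanned by images of short words. By flatness and $d\ge2$, every vector of $\mathscr H$ has the form $\pi(p)\xi$ with $\deg p\le d-1$. However, $\mathscr R$ itself need not be spanned by $\pi(w)$ with $|w|\le3$, so I would not argue through a spanning set directly. Instead I will show that the set
\[
\mathscr S\coloneqq\{A\in\mathscr R:\ [A+A^*,\pi(q_i)]=0\ \text{for all } i\}
\]
contains $\pi(w)$ for every word $w$, by induction on $|w|$. The base cases $|w|\le 3$ are exactly the identities above.

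For the induction step I would use an algebraic identity in the free algebra, valid modulo the ideal generated by the low-degree generators. The aim is to express $w+w^*$, for a word $w=abc\,w'$, through Jordan-type products of shorter symmetric elements. The candidate identity is the symplectic fact that, in degree $\le4$, the identities imply the symmetric parts of letters and of products of two letters are central. Once the symmetrizations $x_i=\frac12(q_i+\bar q_i)$, $s_{ij}$ and $t_{ijk}$ from the proof of Lemma~\ref{lem:scalar-character-real} are known to be central in $\mathscr R$, the decomposition $\mathcal A=\mathcal B'+\mathcal C$ from that proof can be replicated inside $\mathscr R$. The first two lie in degree $\le2$ and $t_{ijk}$ in degree $3$, so their commutators with each $q_l$ lie in degree $\le4$.

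Concretely, I will replay the proof of Lemma~\ref{lem:scalar-character-real} with $\pi$ applied throughout. The dot/cross identities \eqref{eq:dot-cross-identities-a}--\eqref{eq:dot-cross-identities-d} have degree at most $4$, their differences lie in $\mathcal I_n^\R\cap\R\langle\q,\bar\q\rangle_4$, and so they hold for the images. The centrality of $\pi(x_i),\pi(s_{ij}),\pi(t_{ijk})$ also comes from degree-$\le4$ elements of $\mathcal I_n^\R$. Together these show that $\pi(\mathcal F_n)=\mathscr B'_\pi+\mathscr C_\pi$. Here $\mathscr B'_\pi$ is the commutative algebra generated by the central images, and $\mathscr C_\pi$ is the $\mathscr B'_\pi$-module spanned by $\pi(v_i)$ and $\pi(c_{ij})$, whose elements are skew-adjoint. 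Hence every symmetric element of $\mathscr R$ lies in $\mathscr B'_\pi$, which is central, and this gives (ii) of Proposition~\ref{prop:finite-scalar-consistency-test}.

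The main obstacle is the careful check that every relation used in the closure argument $(\mathscr B'+\mathscr C)(\mathscr B'+\mathscr C)\subseteq\mathscr B'+\mathscr C$ has degree at most $4$. This includes the centrality of products, which is immediate since products of central elements are central, and the relation $uv=-u\boldsymbol\cdot v+u\boldsymbol\times v$, which is trivial. The delicate cases are the cross products with $c_{ij}$: $c_{ij}\boldsymbol\times c_{kr}$ has degree $4$, and the dot product of $c_{ij}$ with $c_{kr}$ also has degree $4$. Both therefore fit, which is presumably why the threshold is $4$ and why $d\ge2$ is needed for these words to be seen by $\pi$ meaningfully.
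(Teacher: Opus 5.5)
Your concrete plan is exactly the paper's proof. It proves centrality of $\pi(x_i),\pi(s_{ij}),\pi(t_{ijk})$ from their commutators with the letters, which are scalar identities of degree at most $4$, together with adjoints. It transfers the dot/cross identities \eqref{eq:dot-cross-identities-a}--\eqref{eq:dot-cross-identities-d} to $\mathscr R$, writes $\mathscr R=\mathscr B'_\pi+\mathscr C_\pi$ with $\mathscr C_\pi$ skew-adjoint so that $\sym\mathscr R\subseteq Z(\mathscr R)$, and concludes by (ii)$\Rightarrow$(i) of Proposition~\ref{prop:finite-scalar-consistency-test}. The preliminary induction on word length is superfluous and never completed, and the hypothesis $d\ge2$ is not actually used in this argument, so you can drop both.
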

\begin{proof}
Assume $\pi\left(\mathcal I_{n}^\R\cap\R\langle\q,\bar\q\rangle_{4}\right)=0$.
Put
\[
X_i\coloneqq\pi(q_i),
\qquad
x_i\coloneqq\frac{X_i+X_i^*}{2},
\qquad
v_i\coloneqq\frac{X_i-X_i^*}{2}.
\]
Thus $x_i^*=x_i$ and $v_i^*=-v_i$.  For skew-adjoint operators $u,v$,
define
\[
u\mathbin{\boldsymbol\cdot}v
\coloneqq-\frac12(uv+vu),
\qquad
u\mathbin{\boldsymbol\times}v
\coloneqq\frac12(uv-vu),
\]
and put
\[
s_{ij}\coloneqq v_i\mathbin{\boldsymbol\cdot}v_j,
\qquad
c_{ij}\coloneqq v_i\mathbin{\boldsymbol\times}v_j,
\qquad
t_{ijk}\coloneqq v_i\mathbin{\boldsymbol\cdot}c_{jk}.
\]
We use the same symbols for the corresponding polynomials and their images
under $\pi$.
Under scalar quaternionic evaluations, $x_i,s_{ij},t_{ijk}$ are real-valued.
Consequently,
\[
[x_i,q_\ell],\qquad [s_{ij},q_\ell],\qquad [t_{ijk},q_\ell]
\]
belong to $\mathcal I_n^\R$ and have degrees at most $2$, $3$, and $4$,
respectively.  The hypothesis, followed by taking adjoints, shows that all
$x_i,s_{ij},t_{ijk}$ commute with every $X_\ell$ and $X_\ell^*$.
Therefore
\[
\mathscr B
\coloneqq
\R[x_i,s_{ij},t_{ijk}:1\leq i,j,k\leq n]
\]
consists of central Hermitian elements of
$\mathscr R=\pi(\mathcal F_n)$.
The differences between the two sides of
\eqref{eq:dot-cross-identities-a}--\eqref{eq:dot-cross-identities-d} are
scalar quaternionic identities of degree at most four, and hence those
identities hold in $\mathscr R$.  Let $\mathscr C$ be the
$\mathscr B$-module generated by the $v_i$ and $c_{ij}$.  The displayed
identities and the definitions of $s_{ij},c_{ij},t_{ijk}$ imply
\[
\mathscr C\mathbin{\boldsymbol\cdot}\mathscr C\subseteq\mathscr B,
\qquad
\mathscr C\mathbin{\boldsymbol\times}\mathscr C\subseteq\mathscr C.
\]
Since $uv=-u\mathbin{\boldsymbol\cdot}v
   +u\mathbin{\boldsymbol\times}v$
for skew-adjoint $u,v$, the space $\mathscr B+\mathscr C$ is a
subalgebra of $\mathscr R$.  It contains
$X_i=x_i+v_i$ and $X_i^*=x_i-v_i$,
so $\mathscr R=\mathscr B+\mathscr C$.
Every element of $\mathscr C$ is skew-adjoint, because its coefficients are
central and Hermitian.  If $A\in\sym\mathscr R$ and
$A=b+c$ with $b\in\mathscr B$ and $c\in\mathscr C$, then
$b+c=b-c$, whence $c=0$.  Thus
$\sym\mathscr R\subseteq\mathscr B\subseteq Z(\mathscr R)$.
Proposition~\ref{prop:finite-scalar-consistency-test} now gives
$\pi(\mathcal I_n^\R)=0$.
\end{proof}

\begin{corollary}
\label{cor:localized-quartic-scalar-consistency}
Choose $b_1,\ldots,b_r\in\mathcal W_{d-1}$ such that the corresponding columns of \(\M_d(\y)\) form a basis of its column space. Then $\pi\left(\mathcal I_{n}^\R\cap\R\langle\q,\bar\q\rangle_{4}\right)=0$ if and only if $\widetilde L_\y(b_\alpha^*pb_\beta)=0$
for every $p\in \mathcal I_{n}^\R\cap\R\langle\q,\bar\q\rangle_{4}$ and
$\alpha,\beta=1,\ldots,r$.
If, in addition, $d\geq2$ and
\[
\rank_{\H}\M_{d}(\y)=\rank_{\H}\M_{d-2}(\y),
\]
then the quartic operator condition is equivalent to
\begin{equation}\label{eq:localized-quartic-original-test}
L_\y(upv)=0
\qquad
\bigl(
p\in \mathcal I_{n}^\R\cap\R\langle\q,\bar\q\rangle_{4},\
u,v\in\mathcal W_{d-2}
\bigr).
\end{equation}
\end{corollary}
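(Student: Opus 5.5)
The first claim is a basis test. Since $\mathscr H=\mathscr H_{d-1}$ has quaternionic dimension $r$ and the chosen columns indexed by $b_1,\ldots,b_r\in\mathcal W_{d-1}$ are independent, the vectors $\bv_{b_\alpha}=\pi(b_\alpha)\xi$ form a right-$\H$ basis of $\mathscr H$. An operator $T=\pi(p)$ is zero if and only if $\langle\bv_{b_\alpha},T\bv_{b_\beta}\rangle=0$ for all $\alpha,\beta$. Indeed, right-linearity in the second slot and conjugate-linearity in the first extend this to $\langle x,Ty\rangle=0$ for all $x,y\in\mathscr H$, which forces $T=0$. Next I will use the definition \eqref{eq:canonical-flat-extension} together with $\pi(b_\alpha)^*=\pi(b_\alpha^*)$ to write
\[
\langle\bv_{b_\alpha},\pi(p)\bv_{b_\beta}\rangle=\langle\xi,\pi(b_\alpha^*pb_\beta)\xi\rangle=\widetilde L_\y(b_\alpha^*pb_\beta).
\]
Quantifying over all $p$ in the degree-four part of $\mathcal I_n^\R$ then gives the stated equivalence.

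For the second claim, assume $d\ge2$ and $\rank_{\H}\M_d(\y)=\rank_{\H}\M_{d-2}(\y)$. Since $\mathscr H_{d-2}\subseteq\mathscr H_{d-1}\subseteq\mathscr H_d$ and the dimensions are equal, we get $\mathscr H_{d-2}=\mathscr H$. So I may rechoose the basis from columns indexed by words in $\mathcal W_{d-2}$; more simply, the family $\{\bv_u:u\in\mathcal W_{d-2}\}$ spans $\mathscr H$. By the first part (the spanning version of the same argument), the quartic operator condition is equivalent to $\widetilde L_\y(u^*pv)=0$ for all $u,v\in\mathcal W_{d-2}$ and all such $p$. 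Because $\mathcal W_{d-2}$ is closed under $*$, this is the same as requiring $\widetilde L_\y(upv)=0$ for $u,v\in\mathcal W_{d-2}$.

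The key point, and the step needing care, is that $\deg(upv)\le 2(d-2)+4=2d$. Hence $upv$ is a real combination of words in $\mathcal W_{2d}$, on which $\widetilde L_\y$ agrees with $L_\y$, as shown in the proof of Theorem~\ref{thm:flat-truncated-quaternionic-moment}. So $\widetilde L_\y(upv)=L_\y(upv)$, which is exactly \eqref{eq:localized-quartic-original-test}.

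I expect the main obstacle to be bookkeeping rather than substance. First, the inner-product identity must respect the conjugate-linear convention: it should be checked on basis vectors and then extended by right-linearity, taking care that the scalars $c\in\H$ are placed correctly. Second, normal-form reduction of words does not increase length, so that $upv$ genuinely lies in the span of $\mathcal W_{2d}$ and the agreement of $\widetilde L_\y$ with $L_\y$ applies. Combined with Corollary~\ref{scalar-consistency-quartic}, this turns scalar consistency into finitely many linear conditions on the given moments.
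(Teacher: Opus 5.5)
Your proposal is correct and follows the paper's proof essentially step for step. The shared steps are the matrix-coefficient test on the basis $\pi(b_\alpha)\xi$, the passage to the spanning family indexed by $\mathcal W_{d-2}$ under the stronger rank condition, and the degree count $(d-2)+4+(d-2)=2d$, which lets $\widetilde L_\y$ be replaced by $L_\y$. You also note that $\mathcal W_{d-2}$ is closed under $*$, which is what lets you pass from $u^*pv$ to $upv$; the paper leaves that step implicit.
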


\begin{proof}
For all $\alpha,\beta$ and $p\in\mathcal I_{n}^\R\cap\R\langle\q,\bar\q\rangle_{4}$, one has $$\widetilde L_\y(b_\alpha^*pb_\beta)=
\left\langle
\pi(b_\alpha)\xi,\pi(p)\pi(b_\beta)\xi
\right\rangle.$$
The vectors $\pi(b_\alpha)\xi$ form a basis of $\mathscr H$, so all these
matrix coefficients vanish if and only if $\pi(p)=0$.
Under the additional rank equality, words of length at most $d-2$ already
span $\mathscr H$.  The same argument applies, and
$|u|+\deg(p)+|v|\leq(d-2)+4+(d-2)=2d$.
Thus $\widetilde L_\y$ agrees with $L_\y$ on every expression in
\eqref{eq:localized-quartic-original-test}.
\end{proof}

% The preceding finite test is not redundant.  Flatness and positivity alone may produce a finite-dimensional non-scalar representation, so they do not by themselves guarantee a scalar quaternionic representing measure.  The following example makes this obstruction explicit.
% \begin{example}
% Take $n=1$, $\mathscr H=\H^2$, and
% \[
% \xi=e_2,
% \qquad
% X=
% \begin{pmatrix}
% 0&1\\
% 0&0
% \end{pmatrix}.
% \]
% Let $\pi$ be the unital $*$-representation of
% $\mathcal F_1$ determined by $\pi(q)=X,\pi(q^*)=X^*$,
% and define
% \[
% L_\y(w)\coloneqq\langle\xi,\pi(w)\xi\rangle
% \qquad
% (|w|\leq4).
% \]
% Then $L_\y(1)=1$, the functional $L_\y$ is Hermitian, and $\M_2(\y)$ is the Gram matrix of the vectors $\pi(w)\xi$, $|w|\leq2$; hence $\M_2(\y)\succeq0$.  Moreover,
% \[
% \operatorname{span}_{\H}
% \{\xi,X\xi,X^*\xi\}
% =
% \operatorname{span}_{\H}\{e_2,e_1\}
% =
% \H^2.
% \]
% Thus $\rank_{\H}\M_1(\y)=2$, while all vectors defining $\M_2(\y)$ lie in
% $\H^2$, so $\rank_{\H}\M_2(\y)=2$ as well.
% The polynomial $p=q^*q-qq^*$
% belongs to $\mathcal I_1^\R$.  However,
% \[
% \pi(p)
% =X^* X-XX^*
% =
% \begin{pmatrix}
% -1&0\\
% 0&1
% \end{pmatrix},
% \qquad
% L_\y(p)=1.
% \]
% Every scalar quaternionic representing measure would annihilate $p$, contradicting $L_\y(p)=1$.  Hence no scalar representing measure exists.
% \end{example}

Finally, we solve the truncated quaternionic moment problem on the quaternionic semialgebraic set $K_{\bg,\bh}$ when $\bg$ and $\bh$ have real coefficients.
\begin{corollary}\label{extract-constrained}
Let $\y=(y_w)_{w\in\cW_{2d}}\in\H^{|\cW_{2d}|}$ be a truncated quaternionic pseudo-moment sequence with $y_1=1$. Assume that $\bg$ and $\bh$ have real coefficients. Set
\begin{equation}\label{def:delta}
\delta\coloneqq
\max\!\left\{
1,\,\left\lceil\frac{\deg(g_i)}2\right\rceil,i=1,\ldots,m,\,\left\lceil\frac{\deg(h_j)}2\right\rceil,
j=1,\ldots,\ell\right\}.
\end{equation}
Assume $d\geq\delta$,
\[
\M_d(\y)\succeq0,\quad\rank_{\H}\M_d(\y)=\rank_{\H}\M_{d-\delta}(\y)=r,\quad\M_{d-\lceil\deg(g_i)/2\rceil}(g_i\y)\succeq0,\,i=1,\dots,m,
\]
and
\[
L_{\y}(uh_jv)=0,\quad u,v\in\mathcal W,\,|u|+|v|\le 2d-\deg(h_j),\,j=1,\dots,\ell.
\]
If $d\ge2$, further assume the scalar-consistency condition
$\pi\left(\mathcal I_{n}^\R\cap\R\langle\q,\bar\q\rangle_{4}\right)=0$.
Then there exist distinct points
$\ba^{(1)},\ldots,\ba^{(N)}\in K_{\bg,\bh}$, $N\leq r$,
and weights
$\lambda_1,\ldots,\lambda_N>0$ with
$\sum_{\nu=1}^N\lambda_\nu=1$,
such that
$\mu=\sum_{\nu=1}^N\lambda_\nu\delta_{\ba^{(\nu)}}$ represents $\y$.
\end{corollary}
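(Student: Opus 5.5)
We will reduce to Theorem~\ref{thm:flat-truncated-quaternionic-moment} and then show that every atom lies in $K_{\bg,\bh}$. Since $\delta\ge1$, the rank condition $\rank_{\H}\M_d(\y)=\rank_{\H}\M_{d-\delta}(\y)$ together with the chain $\rank\M_{d-\delta}\le\rank\M_{d-1}\le\rank\M_d$ gives the flatness condition \eqref{eq:flat-truncated-PSD-rank}. We therefore obtain the canonical flat representation $(\mathscr H,\pi,\xi)$. Its key feature is that $\mathscr H=\operatorname{span}_{\H}\{\bv_w:w\in\mathcal W_{d-\delta}\}$, since the spaces $\mathscr H_t$ stabilize already at $t=d-\delta$.

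Scalar consistency is handled as follows. If $d\ge2$, Corollary~\ref{scalar-consistency-quartic} upgrades the assumed quartic condition to $\pi(\mathcal I_n^\R)=0$. If $d=1$, then $\delta=1$ and the rank is $\rank\M_0=1$; this is the situation of Remark~\ref{d-equal-one}. In that case $\mathscr H=\xi\H$ is one-dimensional and $\pi(q_j)\xi=\xi a_j$, so $\pi$ is a scalar evaluation and annihilates $\mathcal I_n^\R$ directly. In both cases Theorem~\ref{thm:flat-truncated-quaternionic-moment} applies. Its proof gives the reducing lines $e_\ell\H$, the tuples $\ba^{(\ell)}$, the weights $\lambda_\ell=|c_\ell|^2>0$ and the rotated atoms $\mathbf q^{(\ell)}=u_\ell^{-1}\ba^{(\ell)}u_\ell$ representing $\y$. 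It remains to place each atom in $K_{\bg,\bh}$. Because $g_i,h_j$ have real coefficients, $g_i(\mathbf q^{(\ell)})=g_i(\ba^{(\ell)})$ and $h_j(\mathbf q^{(\ell)})=u_\ell^{-1}h_j(\ba^{(\ell)})u_\ell$, so it suffices to check the points $\ba^{(\ell)}$.

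\emph{Equalities.} The main task is to show $\pi(h_j)=0$, because $\pi(h_j)e_\ell=e_\ell h_j(\ba^{(\ell)})$ then forces $h_j(\ba^{(\ell)})=0$. Take $u,v\in\mathcal W_{d-\delta}$. Then $|u|+|v|\le 2d-2\delta\le 2d-\deg h_j$, so
\[
\langle\bv_u,\pi(h_j)\bv_v\rangle=\widetilde L_\y(u^*h_jv)=L_\y(u^*h_jv)=0 .
\]
Here we use that $\widetilde L_\y$ agrees with $L_\y$ on degrees at most $2d$, and that $u^*h_jv$ has degree at most $2d$. Since the vectors $\bv_u$ with $|u|\le d-\delta$ span $\mathscr H$, we conclude $\pi(h_j)=0$.

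\emph{Inequalities.} Let $d_i=\lceil\deg g_i/2\rceil\le\delta$. For $c=\sum_{w\in\mathcal W_{d-\delta}}\bv_wc_w$, write $p=\sum_w w c_w$ (with left coefficients conjugated as in \eqref{eq:full-H-localizing-identity}). Then
\[
\langle c,\pi(g_i)c\rangle=\widehat c^{\,*}\M_{d-\delta}(g_i\y)\widehat c\ge0,
\]
because $\M_{d-\delta}(g_i\y)$ is a principal submatrix of $\M_{d-d_i}(g_i\y)\succeq0$, and again all degrees involved are at most $2d$. Hence $\pi(g_i)\succeq0$ on $\mathscr H$. Since $\pi(g_i)e_\ell=e_\ell\,g_i(\ba^{(\ell)})$ with $g_i(\ba^{(\ell)})$ real, testing against $e_\ell$ gives $g_i(\ba^{(\ell)})\ge0$.

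The main obstacle is the bookkeeping between $\widetilde L_\y$ and $L_\y$. Every moment used must have degree at most $2d$, which is exactly why the rank must stabilize at level $d-\delta$ rather than $d-1$. A secondary point is the quaternionic Hermitian identification of $\langle c,\pi(g_i)c\rangle$ with a quadratic form of the localizing matrix; here we reuse \eqref{eq:full-H-localizing-identity}, relying on the centrality of real symmetric $g_i$ in $\pi(\mathcal F_n)$ where needed. Finally, merging coincident atoms gives $N\le r$ distinct points.
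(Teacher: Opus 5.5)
Your proposal is correct and follows essentially the same route as the paper: you reduce to Theorem~\ref{thm:flat-truncated-quaternionic-moment} (via Remark~\ref{d-equal-one} for $d=1$ and Corollary~\ref{scalar-consistency-quartic} for $d\ge2$), use that the low-level vectors already span $\mathscr H$ to get $\pi(g_i)\succeq0$ and $\pi(h_j)=0$, and then read off the signs on the reducing lines. Your explicit use of real-coefficient equivariance to pass from the line tuples $\ba^{(\ell)}$ to the rotated atoms $u_\ell^{-1}\ba^{(\ell)}u_\ell$ makes precise a step the paper leaves implicit; the detour through $\widehat c$ and \eqref{eq:full-H-localizing-identity} is unnecessary, since $\langle\zeta,\pi(g_i)\zeta\rangle=c^*\M_{d-\delta}(g_i\y)c$ holds directly for $\zeta=\sum_w\bv_wc_w$.
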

\begin{proof}
By Theorem \ref{thm:flat-truncated-quaternionic-moment}, Remark~\ref{d-equal-one}, and Corollary~\ref{scalar-consistency-quartic}, there exists a finitely atomic measure $\mu=\sum_{\nu=1}^N\lambda_\nu\delta_{\ba^{(\nu)}}$ representing $\y$.
It remains to prove that every atom belongs to $K_{\bg,\bh}$.

Fix an inequality constraint $g_i\ge0$ and put
$s_i=\lceil\deg(g_i)/2\rceil$.  Since $s_i\leq\delta$, the rank assumption gives $\rank_{\H}\M_{d-s_i}(\y)=r$.
Hence the vectors
$\{\pi(w)\xi:|w|\leq d-s_i\}$ span $\mathscr H$.  For
$\zeta=\sum_{|w|\leq d-s_i}\pi(w)\xi c_w$, positivity of the localizing matrix $\M_{d-s_i}(g_i\y)$ gives
\[
\langle\zeta,\pi(g_i)\zeta\rangle
= c^*\M_{d-s_i}(g_i\y)c\geq0.
\]
Thus $\pi(g_i)\succeq0$ on $\mathscr H$.  In the one-dimensional reducing-line decomposition used in the proof of Theorem~\ref{thm:flat-truncated-quaternionic-moment}, $\pi(g_i)$ acts on the line corresponding to an atom by the real scalar $g_i(\ba^{(\nu)},\overline{\ba^{(\nu)}})$.  Consequently, $g_i(\ba^{(\nu)},\overline{\ba^{(\nu)}})\geq0$, $\nu=1,\ldots,N$.

Now fix an equality constraint $h_j=0$ and put
$t_j=\lceil\deg(h_j)/2\rceil$.  Again
$\rank_{\H}\M_{d-t_j}(\y)=r$, so
$\{\pi(w)\xi:|w|\leq d-t_j\}$ spans $\mathscr H$.  If
$|u|,|v|\leq d-t_j$, then
$|u|+|v|+\deg(h_j)\leq 2d$, and so
$$\langle\pi(u)\xi,\pi(h_j)\pi(v)\xi\rangle
=L_\y(u^*h_jv)=0.$$
All matrix coefficients of $\pi(h_j)$ therefore vanish, and hence $\pi(h_j)=0$.  Evaluating on each reducing line gives
$h_j(\ba^{(\nu)},\overline{\ba^{(\nu)}})=0, \nu=1,\ldots,N$. Thus every atom lies in $K_{\bg,\bh}$.
\end{proof}

The results of this section provide the moment-theoretic foundation for quaternionic polynomial optimization.  The full Archimedean characterizations identify the infinite-dimensional limit of the moment conditions, while the flat-extension theorem explains when finite truncated data already admit atomic extraction.  We now use these results, together with the Positivstellens\"atze of Section~\ref{sec:qsos}, to construct and analyze the Moment--QSOS hierarchy.

\section{The Moment--QSOS hierarchy}\label{sec:qpop}
The Positivstellens\"atze and moment results developed in the preceding sections now lead to semidefinite hierarchies for quaternionic polynomial optimization.  We first treat problems with real coefficients, and then pass to quaternionic coefficients.

\subsection{The case of real coefficients}
Consider the following QPOP with real coefficients:
\begin{equation}\label{qpop-r}
\begin{cases}
\inf\limits_{\q\in\H^n} &f(\q,\bar\q)\\
\,\,\mathrm{s.t.}&g_i(\q,\bar\q)\ge0,\quad i=1,\ldots,m,\\
&h_j(\q,\bar\q)=0,\quad j=1,\ldots,\ell,
\end{cases}\tag{QPOP-\mbox{$\R$}}
\end{equation}
where $f,g_1,\ldots,g_m\in\sym\R\langle\q,\bar\q\rangle$, $h_1,\ldots,h_\ell\in\R\langle\q,\bar\q\rangle$.
In this setting, the moment variables can be taken real, and positivity is expressed through the real moment and localizing matrices.
The order-$d$ moment relaxation of \eqref{qpop-r} is given by
\begin{equation}\label{rmoment-relax}
f_d\coloneqq\begin{cases}
\inf\limits_{\y\in\R^{|\cW_{2d}|}} & L_{\y}(f)\\
\,\quad\mathrm{s.t.} & y_1=1,\\
&y_w=y_{w^*},\quad w\in\cW_{2d},\\
&y_u=y_{v},\quad u\stackrel{\mathrm{cyc}}{\sim}v,\, u,v\in\cW_{2d},\\
& \M_d(\y)\succeq0,\\
& \M_{d-\lceil\deg(g_i)/2\rceil}(g_i\y)\succeq0,\quad i=1,\dots,m,\\
& L_{\y}(uh_jv)=0,\quad u,v\in\mathcal W,\,|u|+|v|\le 2d-\deg(h_j),\, j=1,\dots,\ell,\\
&L_\y(u[w+w^*,q_i]v)=0,
\quad u,v,w\in\cW,|u|+|v|+|w|\le 2d-1,i=1,\ldots,n.
\end{cases}
\tag{Mom-\mbox{$\R$}}
\end{equation}

Let
$\Sigma^{\R}_{d}\coloneqq\Sigma^{\R}\cap\R\langle\q,\bar\q\rangle_{2d}$. Define
\begin{equation}\label{eq:QR-module-truncate}
\begin{split}
\mathcal Q^{\R}(\bg,\bh)_{2d}\coloneqq\bigg\{&\sigma_0+\frac12\sum_{i=1}^m(\sigma_ig_i+g_i\sigma_i)
+\sum_{j=1}^\ell\sum_{\alpha}
\left(\tau_{j\alpha}h_j\nu_{j\alpha}
+\nu_{j\alpha}^*h_j^*\tau_{j\alpha}^*\right):\sigma_0\in\Sigma^{\R}_{d},\\
&\sigma_i\in\Sigma^{\R}_{d-\lceil\deg(g_i)/2\rceil},
\tau_{j\alpha},\nu_{j\alpha}\in\mathcal F_n,\deg(\tau_{j\alpha})+\deg(\nu_{j\alpha})\le 2d-\deg(h_j)\bigg\}.
\end{split}
\end{equation}
Then the corresponding order-$d$ QSOS relaxation reads
\begin{equation}\label{qsos-r-sdp}
f_d^*\coloneqq\begin{cases}
\sup\limits_{\lambda}&\lambda\\
\,\mathrm{s.t.}&f-\lambda\in\mathcal Q^{\R}(\bg,\bh)_{2d}+(\mathcal I_{n}^\R\cap\sym\R\langle\q,\bar\q\rangle_{2d}).
\end{cases}
\tag{QSOS-\mbox{$\R$}}
\end{equation}

\begin{theorem}\label{thm:real-moment-qsos-convergence}
Let $f,g_1,\ldots,g_m\in\sym\R\langle\q,\bar\q\rangle$, 
$h_1,\ldots,h_\ell\in\R\langle\q,\bar\q\rangle$.
Assume that $\mathcal Q^{\R}(\bg,\bh)$ is Archimedean. Then
$f_d^*\nearrow f_{\min}$ and $f_d\nearrow f_{\min}$ as $d\to\infty$.
\end{theorem}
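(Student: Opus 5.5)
The plan follows the standard Lasserre convergence argument, with weak duality, monotonicity and the Archimedean Positivstellensatz as the three ingredients.

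\emph{Weak duality and the upper bound.} First I show $f_d^*\le f_d\le f_{\min}$. For the right inequality, take any $\ba\in K_{\bg,\bh}$ and set $y_w\coloneqq\RR(w(\ba,\bar\ba))$ for $w\in\cW_{2d}$. This $\y$ satisfies $y_1=1$, adjoint symmetry and cyclicity, since $\RR(ab)=\RR(ba)$. Its moment and localizing matrices are PSD because they are integrals of rank-one Gram forms weighted by $g_i(\ba)\ge0$. The equality and scalar-identity moments vanish, since those polynomials vanish at $\ba$. Moreover $L_\y(f)=f(\ba)$. Infimizing over $\ba$ gives $f_d\le f_{\min}$; if $K_{\bg,\bh}=\varnothing$ we use the convention $f_{\min}=+\infty$.

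For the left inequality, let $\y$ be feasible for \eqref{rmoment-relax} and $f-\lambda=\sigma_0+\frac12\sum_i(\sigma_ig_i+g_i\sigma_i)+E+r$ be a certificate. Applying $L_\y$ gives the following:
\begin{itemize}
\item $L_\y(\sigma_0)\ge0$ and $L_\y(\frac12(\sigma_ig_i+g_i\sigma_i))\ge0$. Writing $\sigma_i=\sum_s p_s^*p_s$, cyclicity and adjoint symmetry turn each of these into $\widehat c^*\M(g_i\y)\widehat c\ge0$, as in \eqref{eq:full-H-localizing-identity}.
\item $L_\y(E)=0$ by the equality-moment constraints, which the degree bounds keep within order $2d$.
\item $L_\y(r)=0$ for $r\in\mathcal I_n^\R\cap\sym\R\langle\q,\bar\q\rangle_{2d}$.
\end{itemize}
Together these yield $L_\y(f)\ge\lambda$.

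The last bullet is the delicate point. It requires that every element of $\mathcal I_n^\R$ of degree at most $2d$ is a combination of terms $u[w+w^*,q_i]v$ with $|u|+|v|+|w|\le 2d-1$. Lemma~\ref{real-scalar-identity} gives membership in the ideal, but not the degree control. I expect this to be the main obstacle. To handle it, I would either prove a graded version of Lemma~\ref{real-scalar-identity}, using the homogeneity of the $*$-T-ideal basis, or, if that fails, interpret the truncated ideal in \eqref{qsos-r-sdp} as the degree-bounded span of these generators. Under the latter reading weak duality is immediate, and convergence below is unaffected because any certificate has finite degree.

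\emph{Monotonicity.} Both sequences are nondecreasing in $d$. For $f_d^*$ this holds because the truncated cones grow with $d$. For $f_d$ it holds because truncating a feasible sequence at order $d+1$ gives a feasible sequence at order $d$: the principal submatrices stay PSD and the linear constraints at order $d$ are a subset of those at order $d+1$.

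\emph{Convergence.} Fix $\varepsilon>0$. Then $f-f_{\min}+\varepsilon>0$ on $K_{\bg,\bh}$; in the empty case take any $\lambda$ instead of $f_{\min}-\varepsilon$. By Theorem~\ref{thm:putinar-real-equalities},
\[
f-(f_{\min}-\varepsilon)\in\mathcal Q^{\R}(\bg,\bh)+\sym\mathcal I_n^\R .
\]
This representation involves finitely many SOS multipliers, equality multipliers and ideal terms, so all of them have bounded degree. Choose $d$ at least half the maximal degree occurring. Then the certificate is feasible for \eqref{qsos-r-sdp}. The ideal remainder equals $f-(f_{\min}-\varepsilon)$ minus module terms of degree at most $2d$, so it lies in $\mathcal I_n^\R\cap\sym\R\langle\q,\bar\q\rangle_{2d}$. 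Hence $f_d^*\ge f_{\min}-\varepsilon$.

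Combining with $f_d^*\le f_d\le f_{\min}$ and monotonicity gives $f_d^*\nearrow f_{\min}$ and $f_d\nearrow f_{\min}$.
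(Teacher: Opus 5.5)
Your proposal is correct and follows essentially the same route as the paper's proof: weak duality $f_d^*\le f_d$, the point-evaluation bound $f_d\le f_{\min}$, monotonicity, and Theorem~\ref{thm:putinar-real-equalities} applied to $f-f_{\min}+\varepsilon$ (or to $f-M$ in the empty case). The degree-control issue you flag is settled by your first option, which is exactly what the paper does, so no reinterpretation of \eqref{qsos-r-sdp} is needed. The generators $[w+w^*,q_i]$ are homogeneous in word length, so $\mathcal I_n^\R$ is a graded $*$-ideal; taking homogeneous components of any ideal representation of an element of degree at most $2d$ yields degree-compatible multiples $u[w+w^*,q_i]^{\varepsilon}v$, and these are annihilated by the last line of \eqref{rmoment-relax} together with $y_w=y_{w^*}$.
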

\begin{proof}
We first record the weak-duality inequality.  Let $\y$ be feasible for \eqref{rmoment-relax}, and let $\lambda$ be feasible for \eqref{qsos-r-sdp}.  Applying $L_\y$ to a certificate for $f-\lambda$ gives a nonnegative value.  Positivity of $\M_d(\y)$ gives $L_\y(\sigma_0)\geq0$.  If
$\sigma_i=\sum_s p_{is}^*p_{is}$, cyclicity and adjoint symmetry give
\[
L_\y\!\left(\frac12(\sigma_i g_i+g_i\sigma_i)\right)
=\sum_s L_\y(p_{is}g_ip_{is}^*)
=\sum_s L_\y\bigl((p_{is}^*)^*g_i p_{is}^*\bigr)\geq0,
\]
where the last inequality follows from the corresponding localizing matrix.  The equality constraints annihilate every degree-compatible two-sided multiple of the $h_j$ and therefore also its adjoint.  Finally, the last line of \eqref{rmoment-relax} annihilates the truncated scalar-identity part.  Indeed, Lemma~\ref{real-scalar-identity} gives homogeneous generators $[w+w^*,q_i]$ of $\mathcal I_n^\R$; because the ideal is graded, every element of $\mathcal I_n^\R\cap\R\langle\q,\bar\q\rangle_{2d}$ is a linear combination of degree-compatible two-sided multiples of these generators and their adjoints.  Hence
$0\leq L_\y(f-\lambda)=L_\y(f)-\lambda$,
and therefore
\begin{equation}\label{eq:real-hierarchy-weak-duality}
 f_d^*\leq f_d.
\end{equation}

If $K_{\bg,\bh}=\varnothing$, adopt the standard convention $f_{\min}=+\infty$.  For every $M\in\R$, the polynomial $f-M$ is then vacuously strictly positive on $K_{\bg,\bh}$, so Theorem~\ref{thm:putinar-real-equalities} places $f-M$ in the full certificate cone.  Hence $f_d^*\geq M$ for all sufficiently large $d$.  Thus $f_d^*\to+\infty$, and \eqref{eq:real-hierarchy-weak-duality} gives $f_d\to+\infty$ as well.  We may therefore assume from now on that $K_{\bg,\bh}\neq\varnothing$.

For every $\ba\in K_{\bg,\bh}$, the moment sequence induced by the point mass $\delta_{\ba}$ in the sense of Section~\ref{sec:full-real-valued-quaternionic-moment}, namely
$y_w=\RR(w(\ba,\bar\ba))$, is feasible for \eqref{rmoment-relax} and has objective value $f(\ba,\bar\ba)$.  Consequently,
\begin{equation}\label{eq:real-moment-upper-bound}
 f_d\leq f_{\min}.
\end{equation}
The feasible sets of the moment relaxations shrink under projection as $d$ increases, whereas the truncated QSOS cones are nested.  Thus both $(f_d)_d$ and $(f_d^*)_d$ are nondecreasing.

It remains to identify their common limit.  Since the Archimedean condition makes $K_{\bg,\bh}$ compact and nonempty, $f_{\min}$ is attained.  Fix $\varepsilon>0$.  The polynomial $f-f_{\min}+\varepsilon$
is strictly positive on $K_{\bg,\bh}$.  By Theorem~\ref{thm:putinar-real-equalities},
\[
 f-f_{\min}+\varepsilon
 \in
 \mathcal Q^{\R}(\bg,\bh)
 +\bigl(\mathcal I_n^\R\cap\sym\R\langle\q,\bar\q\rangle\bigr).
\]
Hence $f_d^*\geq f_{\min}-\varepsilon$
for all sufficiently large $d$.  Together with
\eqref{eq:real-hierarchy-weak-duality}--\eqref{eq:real-moment-upper-bound}, this gives
\[
 f_{\min}-\varepsilon\leq f_d^*\leq f_d\leq f_{\min}.
\]
Letting $d\to\infty$ and then $\varepsilon\downarrow0$ proves both asserted convergences.
\end{proof}

\begin{remark}
If $h^*=h$, then the corresponding equality constraints in \eqref{rmoment-relax} can be replaced by
\begin{equation*}
L_{\y}(wh_j)=0,\quad w\in\mathcal W,\,|w|\le 2d-\deg(h_j),\, j=1,\dots,\ell.
\end{equation*}
\end{remark}

\begin{remark}
In practice, the scalar-identity constraints in \eqref{rmoment-relax} can be imposed by augmenting the equality constraints in \eqref{qpop-r}:
\begin{equation*}
\bh'\coloneqq\bh\cup\left\{[w+w^*,q_i]:w\in\cW,\ |w|\le 2d-1,\ i=1,\ldots,n\right\}.
\end{equation*}
\end{remark}

The scalar-identity constraints are not merely formal: omitting them can change the limiting bound even for a compact quadratic problem.  The following example isolates this phenomenon.

\begin{example}
Consider the optimization problem
\begin{equation}\label{qpop-counterexample}
    \min_{\q\in\mathbb{H}^3}\ f(\q,\bar\q)
\quad \mathrm{s.t.} \quad
h_j = 1 - |q_j|^2 = 0,\quad j=1,2,3,
\end{equation}
where
\[
f = 1 + a_1 a_2 + a_2 a_3 + a_3 a_1,
\quad
a_i = \frac{q_i+\bar q_i}{2}.
\]
For any feasible point, \(|q_i|=1\) implies \(a_i\in[-1,1]\). Moreover,
\[
2f
=
(1+a_1)(1+a_2)(1+a_3)
+
(1-a_1)(1-a_2)(1-a_3)
\ge 0.
\]
Equality is attained, for example, at
\(
q_1=1,q_2=-1,q_3=1.
\)
Hence $f_{\min}=0$.

We now show that, without imposing the scalar-identity constraints, the QSOS hierarchy cannot converge to \(f_{\min}\). Let
\[
X=
\begin{pmatrix}
0&1\\
1&0
\end{pmatrix},
\qquad
Z=
\begin{pmatrix}
1&0\\
0&-1
\end{pmatrix},
\]
and define
\[
Q_1=Z,\quad
Q_2=-\frac12 Z+\frac{\sqrt3}{2}X,\quad
Q_3=-\frac12 Z-\frac{\sqrt3}{2}X.
\]
These matrices satisfy
\[
Q_i^2=I,
\qquad
\frac12\operatorname{tr}(Q_iQ_j)=-\frac12
\quad(i\neq j).
\]
Consider the \(*\)-representation
\[
q_i\mapsto Q_i,\quad \bar q_i\mapsto Q_i,
\]
and define the linear functional
\[
L(p)\coloneqq \frac12\operatorname{tr}
\bigl(p(Q_1,Q_2,Q_3)\bigr).
\]
Since $Q_j^2=I$, one has
$L(r^*h_j r)=0$ for every quaternionic polynomial $r$ and every $j=1,2,3$.
Moreover, $L(s^*s)\geq0$ for every quaternionic polynomial $s$.
Since $a_i=(q_i+\bar q_i)/2$, the representation maps $a_i$ to $Q_i$. Therefore,
\[
\begin{aligned}
L(f)
&=
1+\frac12\operatorname{tr}(Q_1Q_2)
+\frac12\operatorname{tr}(Q_2Q_3)
+\frac12\operatorname{tr}(Q_3Q_1) \\
&=
1-\frac12-\frac12-\frac12
=
-\frac12.
\end{aligned}
\]
Suppose that an order-$d$ QSOS relaxation without the scalar-identity constraints certifies a lower bound $\lambda$, that is,
\[
f-\lambda
=
\sum_{\ell} s_\ell^*s_\ell
+
\sum_{j=1}^3 \Psi_j(h_j),
\]
where $\Psi_j(h_j)$ denotes the terms generated by the equality constraint $h_j=0$. Applying $L$ gives
\[
-\frac12-\lambda
=
L(f-\lambda)
=
\sum_{\ell} L(s_\ell^*s_\ell)
\ge 0.
\]
Thus $\lambda\leq-\frac12$ at every relaxation order $d$.

After imposing the scalar-identity constraints, the same polynomial admits the following third-order QSOS representation:
\[
\begin{aligned}
f
&= \frac{1}{16}
\bigl((1+q_1)(1+q_2)(1+q_3)\bigr)^*
\bigl((1+q_1)(1+q_2)(1+q_3)\bigr) \\
&\quad
+ \frac{1}{16}
\bigl((1-q_1)(1-q_2)(1-q_3)\bigr)^*
\bigl((1-q_1)(1-q_2)(1-q_3)\bigr) \\
&\quad
+ h_1A_1+h_2A_2+h_3A_3,
\end{aligned}
\]
where
\[
\begin{aligned}
A_1
&=
\tfrac12+\tfrac12 a_2a_3-\tfrac14 h_2-\tfrac14 h_3+\tfrac18 h_2h_3,\\
A_2
&=
\tfrac12+\tfrac12 a_1a_3-\tfrac14 h_3,\\
A_3
&=
\tfrac12+\tfrac12 a_1a_2.
\end{aligned}
\]
This identity uses the relations $a_iq_j=q_ja_i$ for $i\neq j$. Hence $f$ admits a third-order QSOS certificate with lower bound $0$. Since $f_{\min}=0$, the third-order relaxation with the scalar-identity constraints is exact.
\end{example}

\subsection{The case of quaternionic coefficients}
We now allow the objective to have quaternionic coefficients while retaining real-coefficient inequality and equality constraints, precisely the setting covered by Theorem~\ref{thm:putinar-quaternionic}.  The moment variables are then quaternion-valued, and positivity is expressed through the quaternionic moment and localizing matrices.  This gives a direct quaternionic analogue of the preceding hierarchy.

We consider the following QPOP:
\begin{equation}\label{qpop-h}
\begin{cases}
\inf\limits_{\q\in\H^n} &f(\q,\bar\q)\\
\,\,\mathrm{s.t.}&g_i(\q,\bar\q)\ge0,\quad i=1,\ldots,m,\\
&h_j(\q,\bar\q)=0,\quad j=1,\ldots,\ell,
\end{cases}\tag{QPOP-\mbox{$\H$}}
\end{equation}
where $f\in\sym\P\langle\q,\bar\q\rangle$, $g_1,\ldots,g_m\in\sym\R\langle\q,\bar\q\rangle$, $h_1,\ldots,h_\ell\in\R\langle\q,\bar\q\rangle$.
The order-$d$ moment relaxation of \eqref{qpop-h} is given by
\begin{equation}\label{qmoment-relax}
f_d\coloneqq
\begin{cases}
\inf\limits_{\y\in\H^{|\cW_{2d}|}} & L^{\RR}_{\y}(f)\\
\,\quad\mathrm{s.t.} & y_1=1,\\
&y_w=\bar y_{w^*},\quad w\in\cW_{2d},\\
&\RR(y_u)=\RR(y_{v}),\quad u\stackrel{\mathrm{cyc}}{\sim}v,\, u,v\in\cW_{2d},\\
& \M_d(\y)\succeq0,\\
& \M_{d-\lceil\deg(g_i)/2\rceil}(g_i\y)\succeq0,\quad i=1,\dots,m,\\
& L_{\y}(uh_jv)=0,\quad u,v\in\mathcal W,\,|u|+|v|\le 2d-\deg(h_j),\, j=1,\dots,\ell,\\
&L_\y(u[w+w^*,q_i]v)=0,
\quad u,v,w\in\cW,|u|+|v|+|w|\le 2d-1,i=1,\ldots,n.
\end{cases}
\tag{Mom-\mbox{$\H$}}
\end{equation}

Let $\P\langle\q,\bar\q\rangle_{2d}$ denote the subspace of polynomials of degree at most $2d$ in $\P\langle\q,\bar\q\rangle$, and set
$\Sigma^{\H}_{d}\coloneqq\Sigma^{\H}\cap\P\langle\q,\bar\q\rangle_{2d}$. Define
\begin{equation}\label{eq:QH-module-truncate}
\begin{split}
\mathcal Q^{\H}(\bg,\bh)_{2d}\coloneqq\bigg\{&
\sigma_0+\frac12\sum_{i=1}^m(\sigma_i g_i+g_i\sigma_i)+\sum_{j=1}^\ell\sum_{\alpha}
\left(\tau_{j\alpha}h_j\nu_{j\alpha}^*
+\nu_{j\alpha}h_j^*\tau_{j\alpha}^*\right):\sigma_0\in\Sigma^{\H}_{d},\\
&\sigma_i\in\Sigma^{\H}_{d-\lceil\deg(g_i)/2\rceil},
\tau_{j\alpha},\nu_{j\alpha}\in\mathcal L,\deg(\tau_{j\alpha})+\deg(\nu_{j\alpha})\le 2d-\deg(h_j)
\bigg\}.
\end{split}
\end{equation}
Then the corresponding order-$d$ QSOS relaxation reads
\begin{equation}\label{qsos-q-sdp}
f_d^*\coloneqq
\begin{cases}
\sup\limits_{\lambda}&\lambda\\
\,\mathrm{s.t.}&f-\lambda\in\mathcal Q^{\H}(\bg,\bh)_{2d}+(\mathcal I_{n}^\H\cap\sym\P\langle\q,\bar\q\rangle_{2d}).
\end{cases}
\tag{QSOS-\mbox{$\H$}}
\end{equation}

\begin{theorem}\label{thm:quaternionic-moment-qsos-convergence}
Let $f\in\sym\P\langle\q,\bar\q\rangle$, $g_1,\ldots,g_m\in\sym\R\langle\q,\bar\q\rangle$, 
$h_1,\ldots,h_\ell\in\R\langle\q,\bar\q\rangle$.
Assume that $\mathcal Q^{\H}(\bg,\bh)$ is Archimedean. Then
$f_d^*\nearrow f_{\min}$ and $f_d\nearrow f_{\min}$ as $d\to\infty$.
\end{theorem}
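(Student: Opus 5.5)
The plan follows the proof of Theorem~\ref{thm:real-moment-qsos-convergence} step by step. Theorem~\ref{thm:putinar-quaternionic} replaces Theorem~\ref{thm:putinar-real-equalities}, and quaternion-valued moments replace real ones. The argument has three pieces: weak duality $f_d^*\le f_d$, the upper bound $f_d\le f_{\min}$, and monotonicity plus the Positivstellensatz to identify the common limit.

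\textbf{Weak duality.} Let $\y$ be feasible for \eqref{qmoment-relax} and $\lambda$ feasible for \eqref{qsos-q-sdp}. I apply $L^{\RR}_\y$ to a certificate of $f-\lambda$ and show each block has nonnegative value.
\begin{itemize}
\item For $\sigma_0=\sum_s p_s^*p_s$ with $p_s\in\mathcal L$, identity \eqref{eq:full-H-localizing-identity} with $g=1$ gives $L^{\RR}_\y(p_s^*p_s)=\widehat c^*\M_d(\y)\widehat c\ge0$.
\item The same identity, now with $g=g_i$, shows that the symmetrized terms $\frac12(\sigma_ig_i+g_i\sigma_i)$ are controlled by $\M_{d-\lceil\deg(g_i)/2\rceil}(g_i\y)\succeq0$. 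The degree bookkeeping is as in the real case.
\item For the equality terms $\tau h_j\nu^*+\nu h_j^*\tau^*$, the argument of Proposition~\ref{prop:scalar-ideal-generator} applies. Each term is a real combination of pieces $e\,u h_j^{\varepsilon} v$ with $e\in\{1,\i,\j,\k\}$, and $L^{\RR}_\y$ of such a piece is $\RR(eL_\y(uh_j^\varepsilon v))$. The truncated moment constraints, together with Hermitian symmetry for the $h_j^*$ case, make this vanish.
\item For the scalar-identity part, Proposition~\ref{prop:Isa-internal-generators} expresses $\mathcal I_n^\H\cap\sym\P\langle\q,\bar\q\rangle$ as a span of $\kappa$- and $\lambda$-generators. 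The $\kappa$'s are annihilated by cyclicity of real parts, which is the pseudo-moment condition (ii). The $\lambda$'s are annihilated by the last constraint line of \eqref{qmoment-relax}.
\end{itemize}
This yields $0\le L^{\RR}_\y(f)-\lambda$, hence $f_d^*\le f_d$.

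\textbf{Upper bound and monotonicity.} If $K_{\bg,\bh}=\varnothing$, Theorem~\ref{thm:putinar-quaternionic} applied to $f-M$ puts it in the full certificate cone for every $M$, so $f_d^*\to+\infty$, and weak duality gives $f_d\to+\infty$. Otherwise, evaluation at $\ba\in K_{\bg,\bh}$ gives the quaternion-valued sequence $y_w=w(\ba,\bar\ba)$. It is feasible: PSD moment and localizing matrices come from the rank-one Gram structure, and the ideal constraints vanish at scalar points. Its value is $L^{\RR}_\y(f)=f(\ba)$ by the computation in Theorem~\ref{thm:quaternionic-riesz-haviland}(i)$\Rightarrow$(ii), so $f_d\le f_{\min}$. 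Both sequences are nondecreasing because the truncated cones are nested and moment feasible sets project down.

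\textbf{Limit.} The Archimedean condition makes $K_{\bg,\bh}$ compact; this is shown in Step~4 of the proof of Theorem~\ref{thm:putinar-quaternionic}. Hence $f_{\min}$ is attained. For $\varepsilon>0$, Theorem~\ref{thm:putinar-quaternionic} places $f-f_{\min}+\varepsilon$ in $\mathcal Q^\H(\bg,\bh)+(\mathcal I_n^\H\cap\sym\P\langle\q,\bar\q\rangle)$. Such a finite certificate lies in some truncated cone, so $f_{\min}-\varepsilon\le f_d^*\le f_d\le f_{\min}$ for $d$ large.

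\textbf{Expected obstacle.} The main obstacle is the truncation bookkeeping in the weak-duality step. A scalar-identity element of degree $\le 2d$ must expand into $\kappa$/$\lambda$ generators whose $\lambda$-parts satisfy the degree bound $|u|+|v|+|w|\le 2d-1$. I would handle this by grading. The reduction in Proposition~\ref{prop:Isa-internal-generators} preserves degree. The real ideal $\mathcal I_n^\R$ is generated by the homogeneous elements $[w+w^*,q_i]$ and is therefore graded. Together these let each homogeneous component be expanded within degree, as in the real case.
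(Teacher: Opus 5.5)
Your proposal is correct and follows the same route as the paper's proof. That route is weak duality via \eqref{eq:full-H-localizing-identity} and the ideal-annihilation arguments, the point-evaluation upper bound with monotonicity, the separate treatment of $K_{\bg,\bh}=\varnothing$, and Theorem~\ref{thm:putinar-quaternionic} applied to $f-f_{\min}+\varepsilon$; your grading argument for the truncated scalar-identity part makes explicit bookkeeping that the paper only sketches. One small imprecision: when both $\tau$ and $\nu$ carry quaternionic coefficients, the equality pieces have the form $c\,(uh_jv)\,\bar d$, with coefficients on both sides, rather than $e\,uh_j^{\varepsilon}v$, but cyclic invariance of the real part still gives $\RR\bigl(\bar d\,c\,L_\y(uh_jv)\bigr)=0$.
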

\begin{proof}
The argument parallels the proof of Theorem~\ref{thm:real-moment-qsos-convergence}, with $L_\y^{\RR}$ in place of $L_\y$.  Let $\y$ be feasible for \eqref{qmoment-relax} and let $\lambda$ be feasible for \eqref{qsos-q-sdp}.  The degree-truncated form of \eqref{eq:full-H-localizing-identity} shows that the moment and localizing matrix constraints make $L_\y^{\RR}$ nonnegative on the QSOS and weighted QSOS parts of $\mathcal Q^{\H}(\bg,\bh)_{2d}$.  Likewise, the degree-truncated argument in \eqref{eq:full-H-equality-equivalence} shows that the equality-moment constraints annihilate the equality-ideal terms, while the last line of \eqref{qmoment-relax} annihilates the scalar-identity term.  Therefore
$0\leq L_\y^{\RR}(f-\lambda)
=L_\y^{\RR}(f)-\lambda$,
so
\begin{equation}\label{eq:quaternionic-hierarchy-weak-duality}
 f_d^*\leq f_d.
\end{equation}

If $K_{\bg,\bh}=\varnothing$, then for every $M\in\R$ the polynomial $f-M$ is vacuously strictly positive on the feasible set.  Theorem~\ref{thm:putinar-quaternionic} therefore gives a finite-degree certificate for $f-M$, so $f_d^*\geq M$ eventually.  Hence $f_d^*\to+\infty$ and, by \eqref{eq:quaternionic-hierarchy-weak-duality}, also $f_d\to+\infty$.  Thus it remains only to consider the nonempty case.

For each $\ba\in K_{\bg,\bh}$, the point-moment sequence
$y_w=w(\ba,\bar\ba)$ is feasible for \eqref{qmoment-relax} and has objective value $f(\ba,\bar\ba)$.  Hence
\begin{equation}\label{eq:quaternionic-moment-upper-bound}
 f_d\leq f_{\min}.
\end{equation}
As before, projection of a feasible order-$(d+1)$ moment sequence gives a feasible order-$d$ sequence, whereas every order-$d$ QSOS certificate remains feasible at higher orders.  Thus $(f_d)_d$ and $(f_d^*)_d$ are nondecreasing.

Fix $\varepsilon>0$.  Since $f-f_{\min}+\varepsilon$ is strictly positive on $K_{\bg,\bh}$, Theorem~\ref{thm:putinar-quaternionic} gives
\[
 f-f_{\min}+\varepsilon
 \in
 \mathcal Q^{\H}(\bg,\bh)
 +\bigl(\mathcal I_n^\H\cap\sym\P\langle\q,\bar\q\rangle\bigr).
\]
The certificate has finite degree, and hence is feasible in \eqref{qsos-q-sdp} for all sufficiently large $d$.  Consequently,
$f_d^*\geq f_{\min}-\varepsilon$ eventually.  Combining this with
\eqref{eq:quaternionic-hierarchy-weak-duality} and
\eqref{eq:quaternionic-moment-upper-bound} yields
\[
 f_{\min}-\varepsilon\leq f_d^*\leq f_d\leq f_{\min}
\]
for all sufficiently large $d$.  Letting $\varepsilon\downarrow0$ proves the result.
\end{proof}

The preceding theorem is asymptotic.  Finite convergence can be certified from a flat optimal moment matrix, provided that flatness reaches far enough to cover the degrees of the defining constraints and that the associated flat representation is scalar-consistent.  The next result makes these two requirements explicit.

\begin{theorem}\label{thm:real-flat-exactness}
Let $\y=(y_w)_{w\in\cW_{2d}}\in\H^{|\cW_{2d}|}$ be an optimal solution of \eqref{qmoment-relax}, and let $\delta$ be defined by \eqref{def:delta}. Assume $d\geq\delta$ and
\[
\rank_{\H}\M_d(\y)=\rank_{\H}\M_{d-\delta}(\y)=r.
\]
If $d\ge2$ and $\delta=1$, further assume the scalar-consistency condition
$\pi\left(\mathcal I_{n}^\R\cap\R\langle\q,\bar\q\rangle_{4}\right)=0$.
Then $f_d=f_{\min}$. Moreover, 
there exist distinct points
$\mathbf q^{(1)},\ldots,\mathbf q^{(N)}\in K_{\bg,\bh}$, $N\leq r$,
and weights
$\lambda_1,\ldots,\lambda_N>0$ with
$\sum_{\nu=1}^N\lambda_\nu=1$,
such that
$\mu=\sum_{\nu=1}^N\lambda_\nu\delta_{\mathbf q^{(\nu)}}$ represents $\y$, and every $\mathbf q^{(\nu)}$ is an optimal solution of \eqref{qpop-h}.
\end{theorem}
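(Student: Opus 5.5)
The plan is to reduce the statement to Corollary~\ref{extract-constrained}, applied to the truncated optimal sequence $\y$, and then combine the resulting atomic representation with the bound $f_d\le f_{\min}$ from \eqref{eq:quaternionic-moment-upper-bound}.

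First I check the hypotheses of Corollary~\ref{extract-constrained}. Feasibility in \eqref{qmoment-relax} gives $y_1=1$, Hermitian and cyclic symmetry, $\M_d(\y)\succeq0$, the localizing constraints, and the equality moments. The rank condition is assumed. Scalar consistency splits into three cases.
\begin{itemize}
\item If $d=1$, Remark~\ref{d-equal-one} makes it redundant.
\item If $\delta=1$ and $d\ge2$, it is assumed.
\item If $\delta\ge2$, then $d\ge2$ and $\rank_{\H}\M_d(\y)=\rank_{\H}\M_{d-2}(\y)$, because ranks are monotone between $\M_{d-\delta}$ and $\M_d$.
\end{itemize}
In the third case I will use Corollary~\ref{cor:localized-quartic-scalar-consistency}. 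The last constraint line of \eqref{qmoment-relax} forces $L_\y(u[w+w^*,q_i]v)=0$ whenever $|u|+|v|+|w|\le 2d-1$. By Lemma~\ref{real-scalar-identity} and gradedness of $\mathcal I_n^\R$, every $p\in\mathcal I_n^\R\cap\R\langle\q,\bar\q\rangle_4$ is a combination of degree-compatible two-sided multiples of these generators and their adjoints. Hence $L_\y(upv)=0$ for $u,v\in\cW_{d-2}$, the total degree being at most $2d$. This is \eqref{eq:localized-quartic-original-test}, so scalar consistency holds.

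I expect this third case to be the main obstacle. The delicate point is confirming that the truncated scalar-identity constraints really span $\mathcal I_n^\R\cap\R\langle\q,\bar\q\rangle_4$ sandwiched between words of length $d-2$. The adjoint terms are covered by Hermitian symmetry of $\y$. I would first try to justify this through the homogeneous generating set. If that fails, I would instead prove directly that the generators of degree at most $4$ ($[x_i,q_\ell]$, $[s_{ij},q_\ell]$, $[t_{ijk},q_\ell]$, and the dot/cross identities used in Corollary~\ref{scalar-consistency-quartic}) are annihilated.

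Corollary~\ref{extract-constrained} then yields $\mu=\sum_\nu\lambda_\nu\delta_{\q^{(\nu)}}$ with $\q^{(\nu)}\in K_{\bg,\bh}$ representing $\y$ on $\cW_{2d}$. Since $\deg f\le 2d$, which is needed for $f$ to appear in the relaxation at all, the computation in Theorem~\ref{thm:quaternionic-riesz-haviland} (i)$\Rightarrow$(ii) gives
\[
f_d=L^{\RR}_\y(f)=\sum_\nu\lambda_\nu f(\q^{(\nu)})\ge f_{\min}.
\]
Together with $f_d\le f_{\min}$ this gives equality. Because $f(\q^{(\nu)})\ge f_{\min}$ for each $\nu$, the weights are positive, and the weighted average equals $f_{\min}$, every atom attains $f(\q^{(\nu)})=f_{\min}$ and is therefore an optimal solution of \eqref{qpop-h}.
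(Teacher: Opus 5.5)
Your proposal is correct and follows the paper's proof essentially step for step. You use the same three-case check of scalar consistency, with the case $\delta\ge2$ handled by rank monotonicity, degree-compatible annihilation of $\mathcal I_n^\R$ by the last constraint line, and Corollary~\ref{cor:localized-quartic-scalar-consistency}; then Corollary~\ref{extract-constrained} and the sandwich $f_{\min}\le\sum_\nu\lambda_\nu f(\q^{(\nu)})=f_d\le f_{\min}$ finish the argument. Your graded-generator justification in the third case is the same one the paper uses in the weak-duality part of Theorem~\ref{thm:real-moment-qsos-convergence}, so the fallback you mention is not needed.
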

\begin{proof}
We first verify the scalar-consistency hypothesis required in Corollary~\ref{extract-constrained}. If $d=1$, it is unnecessary by Remark~\ref{d-equal-one}. If $d\geq2$ and $\delta=1$, it is assumed explicitly. Finally, if $d\geq2$ and $\delta\geq2$, monotonicity of the moment-matrix ranks gives
$\rank_{\H}\M_d(\y)=\rank_{\H}\M_{d-2}(\y)$.
The scalar-identity constraints in \eqref{qmoment-relax} annihilate every degree-compatible element of $\mathcal I_n^\R$; hence \eqref{eq:localized-quartic-original-test} holds. Corollaries~\ref{cor:localized-quartic-scalar-consistency} and~\ref{scalar-consistency-quartic} then yield the required scalar consistency.
By Corollary~\ref{extract-constrained}, there exists a finitely atomic measure
$\mu=\sum_{\nu=1}^N\lambda_\nu\delta_{\mathbf q^{(\nu)}}$
representing $\y$.
We have $f_d=L^{\RR}_\y(f)
 =\sum_{\nu=1}^N\lambda_\nu
 f(\mathbf q^{(\nu)},\overline{\mathbf q^{(\nu)}})
 \geq f_{\min}$.
On the other hand, \eqref{eq:quaternionic-moment-upper-bound} gives $f_d\leq f_{\min}$, so $f_d=f_{\min}$.  Because all weights are strictly positive and every summand in the displayed average is at least $f_{\min}$, equality forces
$f(\mathbf q^{(\nu)},\overline{\mathbf q^{(\nu)}})=f_{\min}$ for every $\nu$. Furthermore, as $\mathbf q^{(\nu)}\in K_{\bg,\bh}$, every $\mathbf q^{(\nu)}$ is an optimal solution of \eqref{qpop-h}.
\end{proof}

The preceding convergence theorems rely essentially on the real-coefficient hypothesis for the constraints.  When the inequalities and equalities themselves have quaternionic coefficients, the general Positivstellensatz fails in the pure-word setting, as shown in Subsection~\ref{sec:counterexample-general-putinar}.  Nevertheless, it is useful to record the corresponding moment and QSOS relaxations.  For this purpose, consider \eqref{qpop-h} with
$f,g_1,\ldots,g_m\in\sym\P\langle\q,\bar\q\rangle$ and
$h_1,\ldots,h_\ell\in\P\langle\q,\bar\q\rangle$.
Let $p=\sum_{a,b}ap_{a,b}b\in\sym\P\langle\q,\bar\q\rangle$.
The \emph{$\RR$-localizing matrix}
$\M^{\RR}_{d-\lceil\deg(p)/2\rceil}(p\y)$ associated with $p$ is defined by
\begin{equation}
    \left[\M^{\RR}_{d-\lceil\deg(p)/2\rceil}(p\y)\right]_{u,v}\coloneqq L^{\RR}_{\y}(u^*pv)=\RR\left(\sum_{a,b}p_{a,b}y_{bvu^*a}\right),
\quad u,v\in\cW_{d-\lceil\deg(p)/2\rceil}.
\end{equation}

The order-$d$ moment relaxation of \eqref{qpop-h} is given by
\begin{equation}\label{qqmoment-relax}
\begin{cases}
\inf\limits_{\y\in\H^{|\cW_{2d}|}} & L^{\RR}_{\y}(f)\\
\,\quad\mathrm{s.t.} & y_1=1,\\
&y_w=\bar y_{w^*},\quad w\in\cW_{2d},\\
&\RR(y_u)=\RR(y_{v}),\quad u\stackrel{\mathrm{cyc}}{\sim}v,\, u,v\in\cW_{2d},\\
& \M_d(\y)\succeq0,\\
& \M^{\RR}_{d-\lceil\deg(g_i)/2\rceil}(g_i\y)\succeq0,\quad i=1,\dots,m,\\
& L^{\RR}_{\y}(uh_jv)=0,\quad u,v\in\mathcal W,\,|u|+|v|\le 2d-\deg(h_j),\, j=1,\dots,\ell,\\
&L_\y(u[w+w^*,q_i]v)=0,
\quad u,v,w\in\cW,|u|+|v|+|w|\le 2d-1,i=1,\ldots,n.
\end{cases}
\tag{Mom-\mbox{$\H,\R$}}
\end{equation}

Define
\begin{equation}\label{eq:QHR-module-truncate}
\begin{split}
\mathcal Q^{\H,\R}(\bg,\bh)_{2d}\coloneqq\bigg\{&
\sigma_0+\frac12\sum_{i=1}^m(\sigma_i g_i+g_i\sigma_i)+\sum_{j=1}^\ell\sum_{\alpha}
\left(\tau_{j\alpha}h_j\nu_{j\alpha}^*
+\nu_{j\alpha}h_j^*\tau_{j\alpha}^*\right):\sigma_0\in\Sigma^{\H}_{d},\\
&\sigma_i\in\Sigma^{\R}_{d-\lceil\deg(g_i)/2\rceil},
\tau_{j\alpha},\nu_{j\alpha}\in\mathcal F_n,\deg(\tau_{j\alpha})+\deg(\nu_{j\alpha})\le 2d-\deg(h_j)
\bigg\}.
\end{split}
\end{equation}
Then the corresponding order-$d$ QSOS relaxation reads
\begin{equation}\label{qsos-qq-sdp}
\begin{cases}
\sup\limits_{\lambda}&\lambda\\
\,\mathrm{s.t.}&f-\lambda\in\mathcal Q^{\H,\R}(\bg,\bh)_{2d}+(\mathcal I_{n}^\H\cap\sym\P\langle\q,\bar\q\rangle_{2d}).
\end{cases}
\tag{QSOS-\mbox{$\H,\R$}}
\end{equation}

Before closing this section, we present a tightness result under a rank-one moment matrix.
Rank-one moment matrices are substantially more rigid than higher-rank flat matrices.  Although the scalar-consistency hypothesis in Theorem~\ref{thm:flat-truncated-quaternionic-moment} is genuinely needed in higher rank, rank one forces multiplicativity of the truncated functional directly.  As a result, the moment sequence corresponds to an evaluation at a single scalar quaternionic point, which yields an immediate tightness criterion.

\begin{theorem}
\label{thm:rank-one-quaternionic-moment}
Consider \eqref{qpop-h} and one of the following two moment relaxations:
\begin{enumerate}[(i)]
\item the relaxation \eqref{qmoment-relax}, when $g_i\in\sym\R\langle\q,\bar\q\rangle$ and $h_j\in\R\langle\q,\bar\q\rangle$;
\item the relaxation \eqref{qqmoment-relax}, when $g_i,h_j\in\sym\P\langle\q,\bar\q\rangle$.
\end{enumerate}
Let $\y$ be an optimal solution of the corresponding relaxation. If
$\rank_{\H}\M_d(\y)=1$, then the relaxation is tight.  Moreover, the point
$\q^{\star}\coloneqq (y_{q_i})_{i=1}^n$ is an optimal solution of \eqref{qpop-h} and $\delta_{\q^{\star}}$ represents the truncated sequence $\y$.
\end{theorem}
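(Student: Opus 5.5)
The plan is to show that a rank-one moment matrix forces the truncated functional to be evaluation at a single scalar point. I will build the canonical flat representation of Section~\ref{sec:truncated-quaternionic-moment} directly, and I will use no scalar-consistency hypothesis.

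\textbf{Step 1: one-dimensional flat representation.} Since $y_1=1$, we have $\M_0(\y)=[1]$, so $\rank_{\H}\M_0(\y)=1$. By monotonicity of ranks, $\rank_{\H}\M_d(\y)=\rank_{\H}\M_{d-1}(\y)=\rank_{\H}\M_0(\y)=1$, so \eqref{eq:flat-truncated-PSD-rank} holds automatically. The construction preceding Lemma~\ref{structural-result} then gives $(\mathscr H,\pi,\xi)$ with $\dim_{\H}\mathscr H=1$ and $\|\xi\|^2=y_1=1$. This construction used only positivity and flatness.

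\textbf{Step 2: the operators are left multiplications.} Every right-$\H$-linear operator on $\mathscr H=\xi\H$ has the form $X(\xi c)=\xi\,\alpha c$ for a unique quaternion $\alpha$. Write $X_{q_j}\xi=\xi a_j$. Then $X_{\bar q_j}=X_{q_j}^*$ gives $X_{\bar q_j}\xi=\xi\bar a_j$, because $\langle\xi,X^*\xi\rangle=\overline{\langle\xi,X\xi\rangle}$. By induction on word length, in the given letter order,
\[
\pi(w)\xi=\xi\,w(\ba,\bar\ba),\qquad \ba=(a_1,\ldots,a_n).
\]
This is the only delicate point. One must check that right-linearity turns the composition $X_{w_1}\cdots X_{w_m}$ into the product $w_1(\ba)\cdots w_m(\ba)$ in the correct order. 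Here no scalar-identity ideal is needed, since a one-dimensional representation is automatically an evaluation.

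For $w\in\cW_{2d}$, write $w=u^*v$ with $|u|,|v|\le d$. As in the proof of Theorem~\ref{thm:flat-truncated-quaternionic-moment},
\[
y_w=\langle\pi(u)\xi,\pi(v)\xi\rangle=\langle\xi,\pi(w)\xi\rangle=w(\ba,\bar\ba).
\]
In particular $y_{q_j}=a_j$, so $\ba=\q^\star$, and $\delta_{\q^\star}$ represents $\y$. Consequently $L_\y^{\RR}(p)=\RR(p(\q^\star,\bar\q^\star))$ for every $p\in\P\langle\q,\bar\q\rangle_{2d}$. This follows from $\RR\bigl(p_{a,b}\,b(\q^\star)a(\q^\star)\bigr)=\RR\bigl(a(\q^\star)\,p_{a,b}\,b(\q^\star)\bigr)$, by cyclicity of the real part.

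\textbf{Step 3: feasibility and optimality.} Feasibility of $\q^\star$ is read off from the $(1,1)$ entries, i.e. the words $u=v=1$. In case (i), the $(1,1)$ entry of $\M_{d-\lceil\deg g_i/2\rceil}(g_i\y)\succeq0$ gives $g_i(\q^\star)\ge0$, and the equality constraints give $h_j(\q^\star)=L_\y(h_j)=0$; both are admissible since $\deg g_i,\deg h_j\le 2d$. In case (ii), the same entry of the $\RR$-localizing matrix gives $g_i(\q^\star)=L^{\RR}_\y(g_i)\ge0$, because $g_i$ is symmetric and hence real-valued. Likewise $h_j(\q^\star)=L^{\RR}_\y(h_j)=0$, since $h_j$ is symmetric and therefore real-valued. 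Thus $\q^\star\in K_{\bg,\bh}$.

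It follows that $f_d=L^{\RR}_\y(f)=f(\q^\star)\ge f_{\min}$. Conversely, the point-moment sequence of any feasible point is feasible for either relaxation, by the same argument as in \eqref{eq:quaternionic-moment-upper-bound}. This gives $f_d\le f_{\min}$, so equality holds and $\q^\star$ is optimal.
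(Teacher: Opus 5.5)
Your proof is correct, but it is organized differently from the paper's. The paper never builds the shift operators. From $[\M_d(\y)]_{1,1}=1$ and rank one, it reads off the column relation $C_v=C_1L_\y(v)$ for all $v\in\cW_d$, hence $L_\y(u^*v)=\overline{L_\y(u)}\,L_\y(v)$. This gives multiplicativity $L_\y(ab)=L_\y(a)L_\y(b)$ for $|a|,|b|\le d$, and then $L_\y(w)=w(\q^\star,\bar\q^\star)$ follows by induction for $|w|\le d$ and by splitting $w=ab$ for $d<|w|\le 2d$.

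You instead specialize the canonical flat representation to $r=1$, after checking that flatness is automatic. You then use that a right-$\H$-linear operator on $\xi\H$ is left multiplication by a quaternion, so the order check $\pi(w)\xi=\xi\,w(\ba,\bar\ba)$ plays the role of the paper's multiplicativity identity. The underlying fact is the same: $\bv_v=\xi\,L_\y(v)$ is exactly $C_v=C_1L_\y(v)$.

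The paper's version is shorter and self-contained. It needs only the Gram identity, not the well-definedness of the $X_a$ or the relation $X_{a^*}=X_a^*$. Yours makes transparent why rank one removes the scalar-consistency hypothesis of Theorem~\ref{thm:flat-truncated-quaternionic-moment}: a one-dimensional representation is automatically an evaluation. Remark~\ref{d-equal-one} uses the present theorem to settle the case $d=1$ of Theorem~\ref{thm:flat-truncated-quaternionic-moment}. You are therefore right to invoke only the construction of $(\mathscr H,\pi,\xi)$ and not that theorem, which keeps your argument non-circular.

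Your feasibility and optimality step coincides with the paper's. You also make explicit, via cyclicity of $\RR$, the identity $L^{\RR}_\y(p)=\RR\bigl(p(\q^\star,\bar\q^\star)\bigr)$, which the paper uses tacitly.
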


\begin{proof}
For $v\in\mathcal W_d$, let $C_v$ denote the column of $\M_d(\y)$ indexed by $v$. Since
$[\M_d(\y)]_{1,1}=L_\y(1)=1$, the column $C_1$ is nonzero. The rank-one assumption therefore implies that, for every $v\in\mathcal W_d$, there exists $c_v\in\H$ such that
$C_v=C_1c_v$. Taking the entry in the row indexed by $1$ gives
\[
L_\y(v)
=[\M_d(\y)]_{1,v}
=[\M_d(\y)]_{1,1}c_v
=c_v.
\]
Thus $C_v=C_1L_\y(v)$ for every $v\in\mathcal W_d$. Taking the entry in the row indexed by $u\in\mathcal W_d$ yields
\begin{equation}
L_\y(u^*v)
=[\M_d(\y)]_{u,v}
=[\M_d(\y)]_{u,1}L_\y(v)
=L_\y(u^*)L_\y(v)
=\overline{L_\y(u)}L_\y(v).
\label{eq:rank-one-Gram-identity}
\end{equation}
Let $a,b$ be words with $|a|,|b|\leq d$. Applying
\eqref{eq:rank-one-Gram-identity} with $u=a^*$ and $v=b$ gives
\begin{equation}\label{eq:rank-one-multiplicativity}
L_\y(ab)
=\overline{L_\y(a^*)}L_\y(b)
=L_\y(a)L_\y(b).
\end{equation}
By definition, $L_\y(q_j)=q_j^\star$ and
$L_\y(\bar q_j)=\overline{q_j^\star}$. Repeated application of
\eqref{eq:rank-one-multiplicativity} shows that
\[
L_\y(w)=w(\q^\star,\bar\q^\star)
\qquad(|w|\leq d).
\]
If $d<|w|\leq2d$, write $w=ab$ with $|a|,|b|\leq d$. Then
\eqref{eq:rank-one-multiplicativity} gives
\[
L_\y(w)
=L_\y(a)L_\y(b)
=a(\q^{\star},\bar\q^{\star})b(\q^{\star},\bar\q^{\star})
=w(\q^{\star},\bar\q^{\star}).
\]
Hence $\delta_{\q^{\star}}$ represents the truncated sequence $\y$.

In case~(i), the constraints of \eqref{qmoment-relax} give
\[
g_i(\q^{\star})=L_\y(g_i)\geq0,
\qquad
h_j(\q^{\star})=L_\y(h_j)=0.
\]
In case~(ii), symmetry of $g_i$ and $h_j$ and the constraints of
\eqref{qqmoment-relax} give instead
\[
g_i(\q^{\star})=L_\y^{\RR}(g_i)\geq0,
\qquad
h_j(\q^{\star})=L_\y^{\RR}(h_j)=0.
\]
Thus $\q^{\star}\in K_{\bg,\bh}$ in either case.  Since $f=f^*$, $f_{\min}
\leq f(\q^{\star})
=L_\y^{\RR}(f)
\leq f_{\min}$.  Hence equality holds throughout, the relaxation is tight, and $\q^{\star}$ is optimal.
\end{proof}

\begin{remark}
As in real and complex polynomial optimization, correlative sparsity~\cite{waki2006sums} and term sparsity~\cite{wang2021tssos,wang2022cs} provide natural means of reducing the size of the Moment--QSOS relaxations.
\end{remark}

\subsection{First-order relaxations of quaternionic QCQPs}\label{sec:qsos-vs-rsos}
For quaternionic QCQPs, this subsection establishes that working directly with quaternionic variables entails no loss of relaxation strength at the first order. After realifying a quaternionic QCQP, we construct an explicit positive, surjective map between the real and quaternionic lifted PSD cones and use it to show that the two first-order relaxations have the same optimal value.

Concretely, consider the quaternionic QCQP
\begin{equation}\label{eq:qpop1}
\begin{cases}
\inf\limits_{\q \in \H^n}
&
f(\q,\bar\q)=[\q]_1^*F[\q]_1
\\
\,\,\,\mathrm{s.t.}
&
g_i(\q,\bar\q)=[\q]_1^*G_i[\q]_1\ge0,
\quad i=1,\ldots,m,
\\
&
h_j(\q,\bar\q)=[\q]_1^*H_j[\q]_1=0,
\quad j=1,\ldots,\ell,
\end{cases}
\end{equation}
where $[\q]_1=[1,q_1,\dots,q_n]^\intercal$, and \(F,G_i,H_j\in\cH^{n+1}\).

\begin{theorem}
\label{prop:first-order-qcqp-realification}
The first-order quaternionic moment relaxation of \eqref{eq:qpop1},
constructed using the reduced monomial vector \( [\q]_1 \), has the same
optimal value as the first-order real moment relaxation of the QCQP obtained by writing
\[
\q=\x_1+\x_2\i+\x_3\j+\x_4\k,
\qquad
\x_s\in\R^n.
\]
\end{theorem}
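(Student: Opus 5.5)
The plan is to write both first-order relaxations as SDPs and to relate their feasible sets by explicit linear maps that preserve the normalization, the objective, and every constraint value. This gives inequalities between the optimal values in both directions.

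\emph{The two SDPs.} The first-order quaternionic relaxation, built on the reduced vector $[\q]_1$, is
\[
\inf\{\RR\langle F,Y\rangle:\ Y\in\cH^{n+1},\ Y\succeq0,\ Y_{11}=1,\ \RR\langle G_i,Y\rangle\ge0,\ \RR\langle H_j,Y\rangle=0\}.
\]
It is obtained by replacing $[\q]_1[\q]_1^*$ with $Y$. For the real side, write $\mathbf z=[1;\x_1;\x_2;\x_3;\x_4]\in\R^{4n+1}$. Let $P\in\R^{4(n+1)\times(4n+1)}$ be the embedding that places $\mathbf z$ into the realified vector $[\RR([\q]_1);\II([\q]_1);\JJ([\q]_1);\KK([\q]_1)]$. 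The three extra coordinates are the imaginary parts of the constant entry, which vanish identically. The computation in Lemma~\ref{lm1} gives
\[
f=\mathbf z^{\intercal}\widetilde F\mathbf z,\qquad \widetilde F\coloneqq P^{\intercal}\Lambda(F)P,
\]
and the same holds for each $G_i$ and $H_j$. The real Shor relaxation is therefore
\[
\inf\{\langle\widetilde F,X\rangle:\ X\in\mathcal S^{4n+1},\ X\succeq0,\ X_{11}=1,\ \langle\widetilde G_i,X\rangle\ge0,\ \langle\widetilde H_j,X\rangle=0\}.
\]

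\emph{Real to quaternionic ($v_{\H}\le v_{\R}$).} Given $X$ feasible for the real relaxation, form $Z\coloneqq PXP^{\intercal}\succeq0$. Then apply the averaging map from the proof of Theorem~\ref{thm1}, producing $\Psi(X)\coloneqq R_Z+I_Z\i+J_Z\j+K_Z\k$. That proof shows $\Lambda(\Psi(X))=\sum_{i=0}^{3}\widehat P_i^{-1}Z\widehat P_i\succeq0$, so Lemma~\ref{lm1} gives $\Psi(X)\succeq0$.

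The identity to verify is $\RR\langle C,\Psi(X)\rangle=\langle\Lambda(C),Z\rangle=\langle\widetilde C,X\rangle$ for every $C\in\cH^{n+1}$, up to a normalization constant; by Theorem~\ref{thm1} the constant is $1$ when $\Lambda$-pairings are used. This identity transfers the objective and all constraint values at once. The remaining point is $\Psi(X)_{11}$. Since the last three constant coordinates of $Z$ are zero, $R_Z$ at $(1,1)$ equals $X_{11}=1$. If the scaling of the averaging map alters this entry, rescale accordingly; since every constraint is homogeneous except $Y_{11}=1$, a uniform rescaling combined with the objective identity should fix it.

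\emph{Quaternionic to real ($v_{\R}\le v_{\H}$).} Given $Y$ feasible for the quaternionic relaxation, take $X\coloneqq P^{\intercal}\Lambda(Y)P$. This is a principal submatrix of $\Lambda(Y)\succeq0$, so it is PSD, and $X_{11}=Y_{11}=1$. The objective is computed as $\langle\widetilde F,X\rangle=\langle\Lambda(F),P P^{\intercal}\Lambda(Y)PP^{\intercal}\rangle$, and this must equal $\RR\langle F,Y\rangle$. Here lies the main obstacle. The compression $PP^{\intercal}$ discards the three rows and columns carrying the imaginary copies of the constant coordinate. In $\Lambda(Y)$, the constant entry $Y_{11}$ appears four times on the diagonal, while the cross terms $Y_{1k}$ are spread across all four block rows. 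A naive compression therefore weights the constant row and column of $F$ differently from the remaining block.

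To handle this, I would replace $X$ by $D\,P^{\intercal}\Lambda(Y)P\,D$, with $D=\mathrm{diag}(1,c I_{4n})$ or a similar correction, and verify entrywise that $\langle\widetilde C,X\rangle=\RR\langle C,Y\rangle$ for each of the three types of entries of $C$: the $(1,1)$ entry, the cross entries $(1,k)$, and the $(k,l)$ entries with $k,l\ge2$. If no diagonal correction works, the fallback is to use the $\i,\j,\k$ symmetry of the quaternionic feasible set instead. The idea is to average the realified first-order moment matrix over the three quaternion unit rotations, which leave every real-part pairing invariant, and then restrict to $\mathbf z$. This yields a matrix whose constant-row pattern matches $P$.

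\emph{Conclusion.} With both maps in hand, feasible points and objective values correspond in each direction, so the two optimal values coincide. The positive, surjective map announced before the statement is $\Psi$; surjectivity onto the feasible set is witnessed by composing $\Psi$ with the second map.
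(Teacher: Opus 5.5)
Your real-to-quaternionic direction is sound. Your $\Psi$ is precisely the real adjoint of $C\mapsto P^{\intercal}\Lambda(C)P$, which is the map $\mathcal E$ of the paper's proof. The identity $\RR\langle C,\Psi(X)\rangle=\langle P^{\intercal}\Lambda(C)P,X\rangle$ holds with constant $1$, and positivity follows from the averaging in Theorem~\ref{thm1}. The $(0,0)$ entry of $\Psi(X)$ (your $(1,1)$ entry) equals $X_{00}=1$ exactly, so no rescaling is needed.

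The gap is the converse direction, and it cannot be closed along the lines you propose. Write $Y=\begin{bmatrix}Y_{00}&\mathbf w^*\\ \mathbf w&W\end{bmatrix}$ and $\tau(\bv)\coloneqq[t;\x_1+\x_2\i+\x_3\j+\x_4\k]$. A direct computation, or the rank-one formula $\Psi(\bv\bv^{\intercal})=\tau(\bv)\tau(\bv)^*$, gives
\[
\Psi\bigl(P^{\intercal}\Lambda(Y)P\bigr)=\begin{bmatrix}Y_{00}&\mathbf w^*\\ \mathbf w&4W\end{bmatrix}.
\]
So, for the objective $|q_1|^2$, your $X$ returns four times the quaternionic value. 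Congruence by $D=\mathrm{diag}(1,cI_{4n})$ produces $\begin{bmatrix}Y_{00}&c\mathbf w^*\\ c\mathbf w&4c^2W\end{bmatrix}$, which would require $c=1$ and $c=\frac12$ at once.

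Your fallback fails too, as does any construction linear in $Y$, because $\Psi$ has no linear PSD-preserving right inverse. Take $n=1$ and $\hat\q=[1;b]$, and suppose $X=\sum_k\bv_k\bv_k^{\intercal}$ satisfies $\Psi(X)=\hat\q\hat\q^*$. Then $\sum_k\tau(\bv_k)\tau(\bv_k)^*=\hat\q\hat\q^*$ has rank one, so $\tau(\bv_k)=\hat\q u_k$. The real zeroth coordinate forces $u_k\in\R$. Hence the only PSD preimage is $X=\bv_b\bv_b^{\intercal}$ with $\bv_b=[1;\RR(b);\II(b);\JJ(b);\KK(b)]$. Now $\hat\q\hat\q^*$ summed over $b=\pm\i$ gives $2I_2$, and so does the sum over $b=\pm\j$. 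The forced preimages, however, sum to $2\,\mathrm{diag}(1,0,1,0,0)$ and $2\,\mathrm{diag}(1,0,0,1,0)$ respectively, contradicting linearity.

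The missing idea is that the preimage must depend nonlinearly on $Y$. It should exploit the phase freedom $\hat\q\mapsto\hat\q u$ with $|u|=1$, which leaves $\hat\q\hat\q^*$ unchanged. The paper's proof proceeds as follows:
\begin{enumerate}[(i)]
\item decompose $Y=\sum_s\hat\q_s\hat\q_s^*$;
\item right-multiply each $\hat\q_s$ by a unit quaternion $u_s$ that makes its zeroth coordinate real;
\item choose $\bv_s\in\R^{4n+1}$ with $\tau(\bv_s)=\hat\q_su_s$, and set $X=\sum_s\bv_s\bv_s^{\intercal}$.
\end{enumerate}
Then $X\succeq0$, $\Psi(X)=Y$ by the rank-one formula, and $X_{00}=Y_{00}=1$. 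Adjointness then transfers the objective and every constraint value. Your closing claim that surjectivity is ``witnessed by composing $\Psi$ with the second map'' rests on exactly the step that fails.
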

\begin{proof}
For $\bv=
\begin{bmatrix}
t&\x_1^{\intercal}&\x_2^{\intercal}&
 \x_3^{\intercal}&\x_4^{\intercal}
\end{bmatrix}^{\intercal}
\in\R^{4n+1}$,
define
\begin{equation}\label{eq:tau-reduced-realification}
\tau(\bv)
\coloneqq
\begin{bmatrix}
 t\\
 \x_1+\x_2\i+\x_3\j+\x_4\k
\end{bmatrix}
\in\H^{n+1}.
\end{equation}
Let $P:\R^{4n+1}\longrightarrow\R^{4(n+1)}$
be the coordinate-insertion map determined by
\begin{equation}\label{eq:P-coordinate-insertion}
P\bv
= [t\quad \x_1^{\intercal}\quad 0\quad \x_2^{\intercal}\quad 0\quad \x_3^{\intercal}\quad 0\quad \x_4^{\intercal}]^{\intercal}.
\end{equation}
For \(A\in\cH^{n+1}\), define the compressed realification
\begin{equation}\label{eq:Gamma-compressed-realification}
\Gamma(A)
\coloneqq
P^{\intercal}\Lambda(A)P
\in\mathcal S^{4n+1},
\end{equation}
where \(\Lambda\) is the real representation from Lemma~\ref{lm1}.
By the defining property of \(\Lambda\),
\begin{equation}\label{eq:quadratic-form-compressed-realification}
\tau(\bv)^*A\tau(\bv)
=\bv^{\intercal}\Gamma(A)\bv
\qquad
\bigl(A\in\cH^{n+1},\ \bv\in\R^{4n+1}\bigr).
\end{equation}
In particular, for
\[
\widehat\x
\coloneqq
\begin{bmatrix}
1&\x_1^{\intercal}&\x_2^{\intercal}&
\x_3^{\intercal}&\x_4^{\intercal}
\end{bmatrix}^{\intercal},
\]
the real QCQP equivalent to \eqref{eq:qpop1} is
\begin{equation}\label{eq:realified-qcqp}
\begin{cases}
\inf\limits_{\widehat\x\in\R^{4n+1}} & \widehat\x^{\intercal}\Gamma(F)\widehat\x,\\
\quad\mathrm{s.t.}
&\widehat\x^{\intercal}\Gamma(G_i)\widehat\x\geq0,
\quad i=1,\ldots,m,\\
&\widehat\x^{\intercal}\Gamma(H_j)\widehat\x=0,
\quad j=1,\ldots,\ell.
\end{cases}
\end{equation}

On \(\cH^{n+1}\), use the real inner product
\begin{equation}\label{eq:real-Hermitian-pairing}
\langle A,Z\rangle_{\R}
\coloneqq
\RR\!\left(\Tr(A^*Z)\right).
\end{equation}
For every \(A\in\cH^{n+1}\) and \(\hat\q\in\H^{n+1}\), cyclic invariance of
the real part gives
\begin{equation}\label{eq:rank-one-quaternionic-lifting-pairing}
\langle A,\hat\q \hat\q^*\rangle_{\R}=\hat\q^*A\hat\q.
\end{equation}
Since all coefficient matrices in \eqref{eq:qpop1} are Hermitian, the
first-order quaternionic moment relaxation is
\begin{equation}\label{eq:first-order-quaternionic-Shor}
\rho_{\H}^{(1)}
\coloneqq
\begin{cases}
\inf\limits_{Z\in\cH^{n+1}}
&\langle F,Z\rangle_{\R}\\
\quad\mathrm{s.t.}
&\langle G_i,Z\rangle_{\R}\geq0,
\quad i=1,\ldots,m,\\
&\langle H_j,Z\rangle_{\R}=0,
\quad j=1,\ldots,\ell,\\
&Z_{00}=1,\qquad Z\succeq0.
\end{cases}
\end{equation}
Indeed, before relaxation the lifted matrix is
\(Z=[\q]_1[\q]_1^*\), and \eqref{eq:first-order-quaternionic-Shor}
results from dropping its rank-one constraint.  Similarly, the first-order real moment relaxation of \eqref{eq:realified-qcqp} is
\begin{equation}\label{eq:first-order-real-Shor}
\rho_{\R}^{(1)}
\coloneqq
\begin{cases}
\inf\limits_{Y\in\mathcal S^{4n+1}}
&\langle\Gamma(F),Y\rangle\\
\quad\mathrm{s.t.}
&\langle\Gamma(G_i),Y\rangle\geq0,
\quad i=1,\ldots,m,\\
&\langle\Gamma(H_j),Y\rangle=0,
\quad j=1,\ldots,\ell,\\
&Y_{00}=1,\qquad Y\succeq0,
\end{cases}
\end{equation}
where the pairing on real matrices is the Frobenius inner product.

Let
\[
\mathcal E:\mathcal S^{4n+1}\longrightarrow\cH^{n+1}
\]
be the real adjoint of \(\Gamma\), that is,
\begin{equation}\label{eq:E-adjoint-definition}
\langle A,\mathcal E(Y)\rangle_{\R}
=
\langle\Gamma(A),Y\rangle
\qquad
\bigl(A\in\cH^{n+1},\ Y\in\mathcal S^{4n+1}\bigr).
\end{equation}
We claim that
\begin{equation}\label{eq:E-rank-one-formula}
\mathcal E(\bv\bv^{\intercal})
=
\tau(\bv)\tau(\bv)^*
\qquad
\bigl(\bv\in\R^{4n+1}\bigr).
\end{equation}
Indeed, for every \(A\in\cH^{n+1}\),
\begin{equation*}
\langle A,\mathcal E(\bv\bv^{\intercal})\rangle_{\R}
=
\langle\Gamma(A),\bv\bv^{\intercal}\rangle=
\bv^{\intercal}\Gamma(A)\bv=
\tau(\bv)^*A\tau(\bv)=
\langle A,\tau(\bv)\tau(\bv)^*\rangle_{\R},
\end{equation*}
where the third equality is \eqref{eq:quadratic-form-compressed-realification}.
The nondegeneracy of \eqref{eq:real-Hermitian-pairing} proves
\eqref{eq:E-rank-one-formula}.

Equation~\eqref{eq:E-rank-one-formula} immediately implies that
\begin{equation}\label{eq:E-positive-direction}
Y\succeq0
\quad\Longrightarrow\quad
\mathcal E(Y)\succeq0.
\end{equation}
In fact, $\mathcal E$ maps the real PSD cone onto the quaternionic PSD cone.  To prove surjectivity, let
\(Z\succeq0\).  Choose a quaternionic Gram decomposition
\begin{equation}\label{eq:quaternionic-Gram-decomposition-Shor}
Z=\sum_{s=1}^N \hat\q_s\hat\q_s^*,
\qquad
\hat\q_s\in\H^{n+1}.
\end{equation}
For every \(s\), choose a unit quaternion \(u_s\) such that the zeroth
coordinate of \(\hat\q_su_s\) is real: if \((\hat\q_s)_0\neq0\), take
\[
u_s=\frac{\overline{(\hat\q_s)_0}}{|(\hat\q_s)_0|},
\]
and otherwise take \(u_s=1\).  There is then a vector
\(\bv_s\in\R^{4n+1}\) satisfying
$\tau(\bv_s)=\hat\q_su_s$.
Since \(|u_s|=1\),
$\tau(\bv_s)\tau(\bv_s)^*
=\hat\q_su_s\overline{u_s}\hat\q_s^*=\hat\q_s\hat\q_s^*$.
Consequently, with
$Y\coloneqq\sum_{s=1}^N\bv_s\bv_s^{\intercal}\succeq0$,
formula \eqref{eq:E-rank-one-formula} gives
$\mathcal E(Y)=Z$.
Thus
\begin{equation}\label{eq:E-cone-surjectivity}
\mathcal E\bigl(\{Y\in\mathcal S^{4n+1}:Y\succeq0\}\bigr)
=
\{Z\in\cH^{n+1}:Z\succeq0\}.
\end{equation}
The map also preserves the normalization constraint.  Let \(E_{00}\in\cH^{n+1}\) be the
matrix with a single \(1\) in its \((0,0)\) entry, and let
\(\be_0\in\R^{4n+1}\) be the first coordinate vector.  By construction,
$\Gamma(E_{00})=\be_0\be_0^{\intercal}$.
Hence, for every \(Y\in\mathcal S^{4n+1}\),
\begin{equation}\label{eq:E-preserves-normalization}
[\mathcal E(Y)]_{00}
=
\langle E_{00},\mathcal E(Y)\rangle_{\R}
=
\langle \be_0\be_0^{\intercal},Y\rangle
=
Y_{00}.
\end{equation}

Now let \(Y\) be feasible for \eqref{eq:first-order-real-Shor} and set
\(Z=\mathcal E(Y)\).  Equations
\eqref{eq:E-adjoint-definition}, \eqref{eq:E-positive-direction}, and
\eqref{eq:E-preserves-normalization} show that \(Z\) is feasible for
\eqref{eq:first-order-quaternionic-Shor}, and all constraint values and the
objective value are preserved.  Therefore
$\rho_{\H}^{(1)}\leq\rho_{\R}^{(1)}$.
Conversely, let \(Z\) be feasible for
\eqref{eq:first-order-quaternionic-Shor}.  By
\eqref{eq:E-cone-surjectivity}, there exists \(Y\succeq0\) such that
\(\mathcal E(Y)=Z\).  Equation \eqref{eq:E-preserves-normalization} gives
\(Y_{00}=Z_{00}=1\), while \eqref{eq:E-adjoint-definition} shows that
\(Y\) satisfies every realified constraint and has the same objective value.
Thus $\rho_{\R}^{(1)}\leq\rho_{\H}^{(1)}$.
We conclude that
$\rho_{\H}^{(1)}=\rho_{\R}^{(1)}$.
\end{proof}

\section{Numerical experiments}\label{sec:numerical-experiments}
In this section, we evaluate the proposed Moment--QSOS framework on randomly generated problems and application-motivated instances. For each problem, we compare the QSOS relaxation with the real SOS relaxation (denoted by RSOS) obtained from an equivalent real reformulation. Both QSOS and RSOS relaxations are generated using the Julia package \texttt{TSSOS}\footnote{{\tt TSSOS} is freely available at \href{https://github.com/wangjie212/TSSOS}{https://github.com/wangjie212/TSSOS}.}~\cite{magron2021tssos}.

All computations were performed on a laptop equipped with an Intel Core i7 processor and 16~GB of RAM\@. The resulting SDPs were solved with \texttt{MOSEK 11.0}. In the tables below, ``opt'' denotes the reported SDP optimal value, and ``time'' denotes the runtime in seconds. Moreover, ``-'' means that \texttt{MOSEK} runs out of memory.

For brevity, define the standard constraint sets
\[
\mathbf{B}_n
\coloneqq
\left\{\q\in\H^n:\sum_{i=1}^n |q_i|^2\leq1\right\},
\qquad
\mathbf{K}_n
\coloneqq
\left\{\q\in\H^n:|q_i|^2=1,\ i=1,\ldots,n\right\}.
\]

\subsection{Minimizing quadratic quaternionic polynomials}
We first consider two classes of quaternionic QCQPs. The first uses the quadratic form studied in Section~\ref{sec:qsos-vs-rsos}:
\begin{equation}\label{ne1}
\min\limits_{\q\in\mathbf{B}_n\text{ or }\mathbf{K}_n}[\q]_1^*Q[\q]_1,
\end{equation}
where $Q\in\mathcal{S}^{n+1}$ with entries drawn from a uniform distribution on \([0,1]\).
For each $n\in\{20,40,60\}$, we generate three random instances and solve the corresponding first-order relaxations. The results are reported in Table~\ref{table1}.

As the table shows, the two methods return identical optimal values at the displayed precision for every tested instance, consistent with the equivalence established in Section~\ref{sec:qsos-vs-rsos}. QSOS also has substantially shorter runtimes, and the difference increases with the problem size. The observed speedup ranges from approximately two orders of magnitude for $n=20$ to several thousandfold for $n=60$.

\begin{table}[htbp!]
\centering
\caption{Results for Problem~\eqref{ne1} when $Q$ is real.}
\label{table1}
\begin{tabular}{ccccccc}
\toprule
\multirow{2}{*}{$n$} & \multirow{2}{*}{constraints} & \multirow{2}{*}{trial} & \multicolumn{2}{c}{QSOS} & \multicolumn{2}{c}{RSOS} \\
\cmidrule(lr){4-5} \cmidrule(lr){6-7}
 & & & opt & time & opt & time \\
\midrule

\multirow{6}{*}{20} 
& \multirow{3}{*}{$\mathbf{B}_n$} 
& 1 & -3.82219 & 0.03 & -3.82219 & 3.65 \\
& & 2 & -2.54142 & 0.03 & -2.54142 & 3.71\\
& & 3 & -3.26249 & 0.03 & -3.26249 & 4.03 \\
\cmidrule(lr){2-7}

& \multirow{3}{*}{$\mathbf{K}_n$}
& 1 & -41.3916 & 0.03 & -41.3916 & 4.47 \\
& & 2 & -37.9063& 0.03& -37.9063 & 3.19 \\
& & 3 & -41.1027 & 0.04 & -41.1027 & 5.65 \\
\midrule

% 40
\multirow{6}{*}{40} 
& \multirow{3}{*}{$\mathbf{B}_n$} 
& 1 & -5.47739 & 0.17& -5.47739 & 168 \\
& & 2 & -4.48873 &  0.19 & -4.48873 & 158\\
& & 3 & -5.02283 & 0.17 & -5.02283 & 187 \\
\cmidrule(lr){2-7}

& \multirow{3}{*}{$\mathbf{K}_n$}
& 1 & -121.101 & 0.18 & -121.101 & 172 \\
& & 2 & -114.615& 0.19 & -114.615 & 221 \\
& & 3 & -125.411 & 0.17 & -125.411 & 154\\
\midrule
% 60
\multirow{6}{*}{60} 
& \multirow{3}{*}{$\mathbf{B}_n$} 
& 1 & -6.49712 & 0.92 & -6.49712 & 2788 \\
& & 2 & -5.34838 & 0.94& -5.34838 & 3299\\
& & 3 & -5.88301 & 0.96 & -5.88301 & 2893 \\
\cmidrule(lr){2-7}

& \multirow{3}{*}{$\mathbf{K}_n$}
& 1 & -228.302 & 0.96 & -228.302 & 3846 \\
& & 2 & -214.703& 0.99 & -214.703 & 3252 \\
& & 3 & -229.074 & 0.98  & -229.074 & 3778\\

\bottomrule
\end{tabular}
\end{table}
\FloatBarrier

We next let $Q$ be a random Hermitian quaternionic matrix. For each $n\in\{20,30,40\}$, we generate three random instances and solve the corresponding first-order relaxations. Table~\ref{table1q} shows that the two methods again return identical optimal values at the displayed precision, while QSOS has shorter runtimes.

\begin{table}[htbp!]
\centering
\caption{Results for Problem~\eqref{ne1} when $Q$ is quaternionic.}
\label{table1q}
\begin{tabular}{ccccccc}
\toprule
\multirow{2}{*}{$n$} & \multirow{2}{*}{constraints} & \multirow{2}{*}{trial} & \multicolumn{2}{c}{QSOS} & \multicolumn{2}{c}{RSOS} \\
\cmidrule(lr){4-5} \cmidrule(lr){6-7}
 & & & opt & time & opt & time \\
\midrule
\multirow{6}{*}{20} 
& \multirow{3}{*}{$\mathbf{B}_n$} 
& 1 & -5.09712 & 0.19& -5.09712 & 2.55 \\
& & 2 & -5.35170 &  0.23 & -5.35170 & 2.92\\
& & 3 & -4.97638 & 0.18 & -4.97638 & 2.71 \\
\cmidrule(lr){2-7}

& \multirow{3}{*}{$\mathbf{K}_n$}
& 1 & -62.0785 & 0.21 & -62.0785 & 3.08 \\
& & 2 & -64.0942& 0.22 & -64.0942 & 2.41 \\
& & 3 & -58.3504 & 0.24 & -58.3504 & 3.62\\
\midrule
% 30
\multirow{6}{*}{30} 
& \multirow{3}{*}{$\mathbf{B}_n$} 
& 1 & -6.39956 & 1.16 &-6.39956 & 36.1 \\
& & 2 & -6.90228 & 1.24& -6.90228 & 35.9\\
& & 3 & -6.23483 & 1.07 & -6.23483 & 30.4 \\
\cmidrule(lr){2-7}

& \multirow{3}{*}{$\mathbf{K}_n$}
& 1 &  -115.696 & 1.27 &  -115.696 & 40.0 \\
& & 2 & -116.395& 1.16 & -116.395 & 32.7 \\
& & 3 & -119.159 & 1.28  & -119.159 & 36.6\\
\midrule
% 40
\multirow{6}{*}{40} 
& \multirow{3}{*}{$\mathbf{B}_n$} 
& 1 & -7.37614 & 2.78 &-7.37614 & 154\\
& & 2 &  -7.53049 & 3.08& -7.53049 & 167\\
& & 3 & -7.23953 & 2.64 & -7.23953 & 162\\
\cmidrule(lr){2-7}

& \multirow{3}{*}{$\mathbf{K}_n$}
& 1 & -178.980 & 3.05 &  -178.980 & 178 \\
& & 2 & -179.488& 3.34 & -179.488 & 159 \\
& & 3 & -188.870 & 2.97  & -188.870 & 168\\

\bottomrule
\end{tabular}
\end{table}
\FloatBarrier

The second class consists of quaternionic QCQPs with a more general quadratic objective:
\begin{equation}\label{ne2}
\min\limits_{\q\in\mathbf{B}_n\text{ or }\mathbf{K}_n}\cW_1^*Q\cW_1,
\end{equation}
where $Q$ is a random symmetric real matrix with entries drawn from a uniform distribution on \([0,1]\).
For each $n\in\{20,40,60\}$, we generate three random instances and solve the corresponding first-order relaxations. The results are reported in Table~\ref{table2}.

The table shows that QSOS and RSOS return identical optimal values at the displayed precision for every instance, whereas QSOS has substantially shorter runtimes. The observed difference increases with the problem size.

\begin{table}[htbp!]
\centering
\caption{Results for Problem~\eqref{ne2}.}
\label{table2}
\begin{tabular}{ccccccc}
\toprule
\multirow{2}{*}{$n$} & \multirow{2}{*}{constraints} & \multirow{2}{*}{trial} & \multicolumn{2}{c}{QSOS} & \multicolumn{2}{c}{RSOS} \\
\cmidrule(lr){4-5} \cmidrule(lr){6-7}
 & & & opt & time & opt & time \\
\midrule

\multirow{6}{*}{20} 
& \multirow{3}{*}{$\mathbf{B}_n$} 
& 1 & -7.07036 & 0.25 & -7.07036 & 3.41 \\
& & 2 & -5.41837 & 0.24 & -5.41837 & 3.96\\
& & 3 & -5.94149 & 0.22 & -5.94149 & 3.50 \\
\cmidrule(lr){2-7}

& \multirow{3}{*}{$\mathbf{K}_n$}
& 1 & -86.1869 & 0.29 & -86.1869 & 5.13\\
& & 2 & -79.5948 & 0.31 & -79.5948 & 4.42 \\
& & 3 & -86.9110 & 0.27 & -86.9110 & 4.47 \\
\midrule

% 40
\multirow{6}{*}{40} 
& \multirow{3}{*}{$\mathbf{B}_n$} 
& 1 & -9.76842 & 1.49& -9.76842 & 65.9 \\
& & 2 & -7.71524 &  1.54 & -7.71524 & 60.2\\
& & 3 & -8.25736 & 1.51 & -8.25736 & 67.6 \\
\cmidrule(lr){2-7}

& \multirow{3}{*}{$\mathbf{K}_n$}
& 1 & -239.239 & 1.76 & -239.239 & 69.3 \\
& & 2 & -244.122 & 1.75& -244.122 & 64.6 \\
& & 3 & -253.876 & 1.80 &-253.876 &  71.2\\
\midrule
% 60
\multirow{6}{*}{60} 
& \multirow{3}{*}{$\mathbf{B}_n$} 
& 1 & -12.5970 & 8.85 & -12.5970 &  5055 \\
& & 2 & -10.6487 & 7.22 & -10.6487 & 4698\\
& & 3 & -9.61947 & 9.58 & -9.61947 & 4729 \\
\cmidrule(lr){2-7}

& \multirow{3}{*}{$\mathbf{K}_n$}
& 1 & -484.756 & 9.43 & -484.756 & 6135 \\
& & 2 & -488.419 & 8.10 & -488.419 & 3291\\
& & 3 &  -461.723 & 10.1 & -461.723 & 3799\\

\bottomrule
\end{tabular}
\end{table}
\FloatBarrier

\subsection{Minimizing quartic quaternionic polynomials}
We next minimize a random quartic quaternionic polynomial over $\mathbf{B}_n$ or \(\mathbf{K}_n\):
\begin{equation}\label{ne3}
\min\limits_{\q\in\mathbf{B}_n\text{ or }\mathbf{K}_n}[\q]_2^*Q[\q]_2,
\end{equation}
where $[\q]_2$ denotes the vector of quaternionic words generated by $\q$ of length at most $2$, and $Q$ is a random symmetric real matrix with entries drawn from a uniform distribution on \([0,1]\).
For each $n\in\{2,4,6\}$, we generate three random instances and solve the corresponding second-order relaxations. The results are reported in Table~\ref{table3}.

The table shows that QSOS and RSOS return identical optimal values at the displayed precision for all tested instances, while QSOS has substantially shorter runtimes. For $n=6$, for example, the QSOS runtimes are approximately two orders of magnitude smaller than the RSOS runtimes.

\begin{table}[htbp!]
\centering
\caption{Results for Problem~\eqref{ne3}.}
\label{table3}
\begin{tabular}{ccccccc}
\toprule
\multirow{2}{*}{$n$} & \multirow{2}{*}{constraints} & \multirow{2}{*}{trial} & \multicolumn{2}{c}{QSOS} & \multicolumn{2}{c}{RSOS} \\
\cmidrule(lr){4-5} \cmidrule(lr){6-7} 
 & & & opt & time & opt & time \\
\midrule
% 2
\multirow{6}{*}{2} 
& \multirow{3}{*}{$\mathbf{B}_n$} 
& 1 & -2.31395 & 0.02 & -2.31395 & 0.05 \\
& & 2 & -2.85354 & 0.03 & -2.85354 & 0.06\\
& & 3 & -1.45489 & 0.02 & -1.45489 & 0.06 \\
\cmidrule(lr){2-7}

& \multirow{3}{*}{$\mathbf{K}_n$}
& 1 & -1.75350 & 0.02 & -1.75350 & 0.06 \\
& & 2 & -6.86193 & 0.02  & -6.86193 & 0.07 \\
& & 3 & -4.71116 & 0.02  & -4.71116 & 0.06\\
\midrule

\multirow{6}{*}{4} 
& \multirow{3}{*}{$\mathbf{B}_n$} 
& 1 & -4.63091 & 0.42 & -4.63091 & 7.05\\
& & 2 & -3.20566 & 0.39 & -3.20566 & 6.61\\
& & 3 &  -4.61548 & 0.34  & -4.61548 & 6.11 \\
\cmidrule(lr){2-7}

& \multirow{3}{*}{$\mathbf{K}_n$}
& 1 &  -29.2331 & 0.45 &  -29.2331 & 8.61 \\
& & 2 & -24.3238 & 0.42 &  -24.3238 & 7.85\\
& & 3 & -23.5111 & 0.39 & -23.5111 & 6.58\\
\midrule

%6
\multirow{6}{*}{6} 
& \multirow{3}{*}{$\mathbf{B}_n$} 
& 1 & -6.29210 & 3.42 & -6.29210 & 323 \\
& & 2 & -6.10003 & 3.98 & -6.10003 & 356\\
& & 3 & -3.88157 & 3.24 & -3.88157 & 341 \\
\cmidrule(lr){2-7}

& \multirow{3}{*}{$\mathbf{K}_n$}
& 1 & -70.7152 & 3.76 & -70.7152 & 344 \\
& & 2 & -74.0446 & 2.89 & -74.0446 & 318 \\
& & 3 & -68.5696 & 2.81 & -68.5696 & 371\\
\bottomrule
\end{tabular}
\end{table}
\FloatBarrier

\subsection{Minimizing sparse quaternionic polynomials}
In this subsection, we test two classes of sparse problems.
The first class has the form
\begin{equation}\label{ne5}
\begin{cases}
\inf\limits_{\q\in\H^n} & \sum_{i=1}^k[\q_i]_1^*Q_i[\q_i]_1\\
\,\,\,\mathrm{s.t.}&\|\q_i\|_2^2=1,\quad i=1,\ldots,k,
\end{cases}
\end{equation}
where $\q_i=\{q_{3i-2},q_{3i-1},q_{3i},q_{3i+1}\}$, and $Q_i$ is a random real symmetric matrix with entries drawn from a uniform distribution on \([0,1]\), $i=1,\ldots,k$.
For each $n\in\{151,301,451\}$, we generate three random instances and solve the corresponding sparse first-order relaxations. The results are reported in Table~\ref{table5}.

The second class has the form
\begin{equation}\label{ne6}
\begin{cases}
\inf\limits_{\q\in\H^n} & \sum_{i=1}^k[\q_i]_2^*Q_i[\q_i]_2\\
\,\,\,\mathrm{s.t.}&\|\q_i\|_2^2=1,\quad i=1,\ldots,k,
\end{cases}
\end{equation}
where $\q_i=\{q_{3i-2},q_{3i-1},q_{3i},q_{3i+1}\}$, and $Q_i$ is a random real symmetric matrix with entries drawn from a uniform distribution on \([0,1]\), $i=1,\ldots,k$.
For each $n\in\{61,91,121\}$, we generate three random instances and solve the corresponding sparse second-order relaxations. The results are reported in Table~\ref{table6}.

Tables~\ref{table5} and~\ref{table6} show that QSOS and RSOS return identical optimal values at the displayed precision, while QSOS has substantially shorter runtimes.

\begin{table}[htbp!]
\centering
\caption{Results for Problem~\eqref{ne5}.}
\label{table5}
\begin{tabular}{cccccc}
\toprule
\multirow{2}{*}{$n$} &\multirow{2}{*}{trial} & \multicolumn{2}{c}{QSOS} & \multicolumn{2}{c}{RSOS} \\
\cmidrule(lr){3-4} \cmidrule(lr){5-6}
 & &opt & time & opt & time \\
\midrule

% 151
\multirow{3}{*}{151}
& 1 &  -41.2313 & 0.14 & -41.2313 & 0.44 \\
& 2 & -47.7061 & 0.13 & -47.7061 & 0.42 \\
& 3 & -47.1512 & 0.12 & -47.1512 & 0.51\\
\midrule

% 301
\multirow{3}{*}{301} 
& 1 & -82.0192 & 0.33  & -82.0192 & 0.95\\
& 2 & -93.0616 & 0.25 & -93.0616 & 1.01 \\
& 3 & -93.9201 & 0.24  & -93.9201 & 0.98\\
\midrule
% 451
\multirow{3}{*}{451} 
& 1 & -127.617 & 0.46  & -127.617 & 1.87\\
& 2 & -142.243 & 0.40 & -142.243 & 1.79 \\
& 3 & -142.823 & 0.41 & -142.823 & 1.85\\
\bottomrule
\end{tabular}
\end{table}

%results for test 6.5
\begin{table}[htbp!]
\centering
\caption{Results for Problem~\eqref{ne6}.}
\label{table6}
\begin{tabular}{cccccc}
\toprule
\multirow{2}{*}{$n$} & \multirow{2}{*}{trial} & \multicolumn{2}{c}{QSOS} & \multicolumn{2}{c}{RSOS} \\
\cmidrule(lr){3-4} \cmidrule(lr){5-6} 
 & & opt & time & opt & time \\
\midrule

% 31
\multirow{3}{*}{31}
& 1 & -23.3275 & 4.11 & -23.3275 & 112 \\
& 2 & -21.8058 & 4.50 & -21.8058 & 104 \\
& 3 & -20.5047 & 3.78  & -20.5047 & 102 \\
\midrule

% 61
\multirow{3}{*}{61} 
& 1 & -43.2442 & 7.06  & - & -  \\
& 2 & -44.8020 & 7.52 & - & -  \\
& 3 & -44.0988 & 8.01  & - & - \\
\midrule

% 91
\multirow{3}{*}{91} 
& 1 & -62.7327 & 12.45 & - & - \\
& 2 & -65.4687 & 12.43 & - & - \\
& 3 &  -64.3736 & 11.59 & - & -\\

\bottomrule
\end{tabular}
\end{table}
\FloatBarrier

\section{Two applications}\label{sec:app}
We illustrate the Moment--QSOS framework through two applications: quaternionic feature extraction for image recognition and quaternionic rotation synchronization.

\subsection{Application to the quaternion-based maximum margin criterion}
The maximum margin criterion (MMC) is a widely used feature-extraction method for image recognition. The quaternion-based MMC (QMMC)~\cite{liu2017quaternion} extends this formulation to quaternion-valued features by representing color images as quaternionic vectors.

Suppose that the training set contains $C$ classes, with class $c$ consisting of $N_c$ samples indexed by $\mathcal I_c$. Each color image is represented by a quaternionic feature vector $\x^{(i)}\in\H^n$, where $n$ is the feature dimension. Let $\boldsymbol\mu_c$ denote the mean of class $c$, and let $\boldsymbol\mu$ denote the global sample mean.
QMMC seeks a projection vector $\v\in\H^n$ that maximizes between-class scatter while penalizing within-class scatter:
\begin{equation}\label{mmc}
\begin{cases}
\sup\limits_{\v\in\H^n} & J(\v) = \v^* S_B \v - \gamma \v^* S_W \v\\
\,\,\,\mathrm{s.t.}&\|\v\| = 1,
\end{cases}
\end{equation}
where $\gamma > 0$ is a chosen trade-off parameter and
\(
S_B = \sum_{c=1}^{C} N_c (\boldsymbol{\mu}_c - \boldsymbol{\mu})(\boldsymbol{\mu}_c - \boldsymbol{\mu})^*, \,
S_W = \sum_{c=1}^{C} \sum_{i\in\mathcal{I}_c} (\x^{(i)} - \boldsymbol{\mu}_c)(\x^{(i)} - \boldsymbol{\mu}_c)^*.
\)
Define
$Q\coloneqq-(S_B-\gamma S_W)\in\cH^{n}$. Then \eqref{mmc} is equivalent to
\begin{equation}\label{mmcinf}
\begin{cases}
\inf\limits_{\v\in\H^n} & \v^*Q\v\\
\,\,\,\mathrm{s.t.}&\|\v\| = 1.\\
\end{cases}
\end{equation}

In the numerical experiments, we take $C=2$ and $N_1=N_2=5$. Each sample is represented by a randomly generated quaternionic feature vector. For each feature dimension $n\in\{20,30,40\}$, we generate three random instances and solve the first-order relaxations. The results are reported in Table~\ref{tablea1}.
The table shows that QSOS and RSOS return identical optimal values at the displayed precision for all instances, while QSOS consistently has shorter runtimes.

 \begin{table}[htbp!]
\centering
\caption{Results for Problem~\eqref{mmcinf}.}
\label{tablea1}
\begin{tabular}{cccccc}
\toprule
\multirow{2}{*}{$n$} & \multirow{2}{*}{trial} & \multicolumn{2}{c}{QSOS} & \multicolumn{2}{c}{RSOS} \\
\cmidrule(lr){3-4} \cmidrule(lr){5-6}
 & & opt & time & opt & time \\
\midrule

\multirow{3}{*}{20} 
& 1 & -36.6714 & 0.63  & -36.6714 & 2.27 \\
 & 2 & -47.8868 & 0.56 & -47.8868 & 2.28 \\
 & 3 & -47.5936 & 0.58 & -47.5936 & 2.45 \\
 \midrule
 
 \multirow{3}{*}{30}
& 1 & -77.0697 & 4.61  & -77.0697 & 23.2 \\
& 2 & -76.9280 & 5.93 & -76.9280 & 21.4  \\
& 3 & -81.6673 & 4.52  & -81.6673 & 21.4 \\
\midrule

 \multirow{3}{*}{40}
& 1 & -109.739 & 7.68 & -109.739 & 47.0 \\
& 2 & -99.2365 & 8.50 & -99.2365 & 58.4 \\
& 3 & -105.405 & 12.2 & -105.405 & 60.3 \\

\bottomrule
\end{tabular}
\end{table}
\FloatBarrier

\subsection{Application to quaternionic rotation synchronization}
Rotation synchronization estimates unknown absolute orientations from noisy pairwise relative measurements~\cite{hartley2013rotation}. Representing the orientations by unit quaternions reduces the task to recovering a vector $\q\in\mathbf{K}_n$.

Let $E$ be a measurement graph.
Given noisy relative-orientation measurements $Q_{ij}\approx q_i\bar q_j$ for $(i,j)\in E$, the rotation synchronization problem can be formulated as
\begin{equation}\label{qs}
\begin{cases}
\inf\limits_{\q\in\H^n} & \sum_{(i,j)\in E} \|q_i\bar q_j-Q_{ij}\|^2\\
\,\,\,\mathrm{s.t.}&\|q_i\| = 1,\quad i=1,\ldots,n.\\
\end{cases}
\end{equation}
Under the unit-norm constraints, the objective expands as
\[
\sum_{(i,j)\in E} \|q_i\bar q_j-Q_{ij}\|^2
=
\sum_{(i,j)\in E} \left(1+\|Q_{ij}\|^2\right)
-2\sum_{(i,j)\in E} \RR\!\left(\bar Q_{ij}q_i\bar q_j\right).
\]
Therefore, up to a positive affine transformation of the objective, \eqref{qs} is equivalent to the following quaternionic QCQP:
\begin{equation}\label{qs-qcqp}
\begin{cases}
\displaystyle \inf_{\q\in\H^n}
& \displaystyle -\frac{1}{2} \sum_{(i,j)\in E}
\left(\bar q_iQ_{ij}q_j+\bar q_j\bar Q_{ij}q_i\right), \\
\,\,\,\mathrm{s.t.}
& \|q_i\|=1,\quad i=1,\ldots,n.
\end{cases}
\end{equation}

In our experiments, the relative measurements are generated as $Q_{ij} = q_i \bar q_j + \varepsilon_{ij}$,
where $\varepsilon_{ij}$ is a random unit quaternion scaled by a fixed noise level $0.2$.
The measurement graph $E$ is sampled from the random graph model $G(n,p)$ with $p=0.2$.
For each $n\in\{20,40,60\}$, we generate three random instances and solve the first-order relaxations. The results are reported in Table~\ref{tablea2}. The column labeled ``GT value'' reports the objective value of \eqref{qs-qcqp} evaluated at the ground-truth quaternions.

The table shows that QSOS and RSOS return the same optimal values at the displayed precision for all instances, while QSOS is consistently faster. The relaxation values are also close to the objective values at the ground-truth quaternions. These experiments illustrate the computational advantage of QSOS on the tested synchronization instances.

\begin{table}[htbp!]
\centering
\caption{Results for Problem~\eqref{qs-qcqp}.}
\label{tablea2}
\begin{tabular}{ccccccc}
\toprule
\multirow{2}{*}{$n$} & \multirow{2}{*}{trial} & \multicolumn{2}{c}{QSOS} & \multicolumn{2}{c}{RSOS} & \multirow{2}{*}{GT value}\\
\cmidrule(lr){3-4} \cmidrule(lr){5-6}
 & & opt & time & opt & time \\
\midrule

\multirow{3}{*}{20}
% & \multirow{3}{*}{$\mathbf{K}_n$}
& 1 & -38.5388 & 0.11  & -38.5388 & 0.28 & -38.2176\\
& 2 & -37.4241 & 0.09 & -37.4241 & 0.24 & -37.1489 \\
& 3 & -55.3438 & 0.14 & -55.3438 & 0.31 & -55.0636\\
\midrule

\multirow{3}{*}{40} 
% & \multirow{3}{*}{$\mathbf{K}_n$}
& 1 & -134.807 & 3.72  & -134.807 & 21.2 & -134.265\\
 & 2 & -156.891 & 4.29 & -156.891 & 27.5 & -156.414 \\
 & 3 & -191.478 & 5.37 & -191.478& 22.3 & -190.857\\
 \midrule

 \multirow{3}{*}{60}
% & \multirow{3}{*}{$\mathbf{K}_n$}
& 1 & -341.507 & 13.3 & -341.507 & 140 & -340.621 \\
& 2 & -340.900 & 12.4 & -340.900 & 116 & -340.042 \\
& 3 & -367.784 & 12.2 & -367.784 & 102 & -366.823\\

\bottomrule
\end{tabular}
\end{table}

\section{Conclusions and future work}\label{sec:con}
We have developed a Moment--QSOS framework for quaternionic polynomial optimization over the standard quaternion algebra.  The central difficulty is that scalar quaternionic variables exhibit both noncommutative multiplication and hidden commutative relations among their real components.  To accommodate this structure, we introduced quaternionic sums of squares together with the scalar-identity ideals that distinguish formal free-algebra relations from identities valid under scalar quaternionic evaluation.  These ingredients provide a direct quaternionic counterpart of the Moment--SOS methodology without first expanding every quaternionic variable into four real variables.

On the algebraic side, we established Archimedean Positivstellens\"atze for two classes of quaternionic polynomials.  For real-coefficient polynomials, quotienting by the real scalar-identity ideal reduces the symmetric part of the algebra to a commutative setting, allowing the use of Jacobi's representation theorem.  For objectives with quaternionic middle coefficients and real-coefficient constraints, we developed an operator-valued lift and applied an Archimedean $*$-Positivstellensatz.  The univariate counterexample in Subsection~\ref{sec:counterexample-general-putinar} shows that the real-coefficient assumption on the constraints is essential for the present pure-word certificate cone, even in the presence of the scalar-identity quotient and a sphere constraint.

On the moment side, we characterized both real-valued and quaternion-valued full moment sequences by Riesz--Haviland-type positivity conditions and derived Archimedean moment--localizing matrix criteria.  For truncated quaternion-valued data, we proved a flat-extension theorem showing that positivity, flatness, and scalar consistency yield a finitely atomic scalar quaternionic representing measure.  These results lead to convergent Moment--QSOS hierarchies and finite-exactness criteria.  In addition, for quaternionic QCQPs we proved that the first-order quaternionic moment relaxation has the same optimal value as the first-order relaxation of the equivalent real QCQP.

The numerical experiments support the computational motivation for working directly with quaternionic structure.  Across the tested quadratic, quartic, and sparse instances, QSOS and the corresponding real SOS formulation return the same optimal values at the displayed precision, while the quaternionic formulation is substantially faster in the reported examples.  The experiments on the quaternion-based maximum margin criterion and rotation synchronization further illustrate how quaternionic models arising in applications can be handled directly within the proposed framework.

Several directions remain open.  First, the counterexample for quaternionic-coefficient constraints suggests that stronger certificate cones are needed beyond the present left-coefficient pure-word construction.  It would be useful to identify enlargements that recover an Archimedean Positivstellensatz while retaining tractable semidefinite representations.  Second, the finite-exactness theory can be sharpened by developing generic flatness conditions, more easily verifiable scalar-consistency criteria, and robust extraction procedures for higher-rank solutions.  Third, correlative and term sparsity, symmetry reduction, and chordal decomposition should be incorporated systematically into the Moment--QSOS hierarchy to extend the range of tractable problem sizes.  Finally, the equivalence proved here between quaternionic and real relaxations at the first order raises the question of when analogous equivalence or dominance relations hold at higher relaxation orders.  These questions are natural next steps toward a more complete theory and scalable implementation of quaternionic polynomial optimization.

\section*{Acknowledgements}
The authors acknowledge the use of AI to assist with brainstorming, mathematical development, and manuscript drafting. The final content, analysis, and conclusions remain the sole responsibility of the authors.

\section*{Funding}
This work was jointly funded by the National Key R\&D Program of China under grant No.~2023YFA1009401 and the Natural Science Foundation of China under grant Nos.~12201618 and 12171324.

\section*{Conflict of interest}
The authors declare that they have no conflicts of interest.

\section*{Data availability}
The authors confirm that all data generated or analyzed during this study are included in this article.

\bibliographystyle{siamplain}
\bibliography{refer}
\end{document}